\definecolor{pinegreen}{rgb}{0.0, 0.47, 0.44}
\pgfplotsset{compat=newest}
\tikzset{
-Latex,auto,node distance =1 cm and 1 cm,semithick,
state/.style ={ellipse, draw, minimum width = 0.7 cm},
point/.style = {circle, draw, inner sep=0.04cm,fill,node contents={}},
bidirected/.style={Latex-Latex,dashed},
el/.style = {inner sep=2pt, align=left, sloped}
}
\def\E{{\mathbb E}}
\def\Pr{{\mathbb{P}}}
\def\Re{\mathbb{R}}
\def\Ne{\mathbb{N}}
\def\I{\mathbb{I}}
\def\Be{\mathcal{B}}
\def\hat{\widehat}
\def \P{\mathcal{P}}
\def \F{\mathcal{F}}
\def\one{\mathbbm{1}}
\def\F{{\mathcal F}}
\def\P{{\mathcal P}}
\def\X{{\mathcal X}}
\def\Re{{\mathbb R}}
\def\Te{{\mathcal T}}
\newcommand{\rsigma}{\bm{\mathrm{{\Sigma}}}}
\def\CVaR{{\mathrm{CVaR}}}
\def\VaR{{\mathrm{VaR}}}
\def\alsox{{\mathrm{ALSO}{\text-}\mathrm{X}}}
\def\alsoxt{{\mathbf{ALSO}{\text-}\mathbf{X}}}
\def\alsoxs{{\mathrm{ALSO}{\text-}\mathrm{X}\#}}
\def\alsoxus{{\mathrm{ALSO}{\text-}\mathrm{X}\underline{\#}}}
\def\aus{A\underline{\#}}
\def\as{A\#}
\newcommand{\trxi}{\tilde{\bm{\xi}}}
\newcommand{\trzeta}{\tilde{\bm{\zeta}}}
\newcommand{\tzeta}{\tilde{{\zeta}}}
\newcommand{\rzeta}{{\bm{\zeta}}}
\DeclarePairedDelimiter\floor{\lfloor}{\rfloor}
\DeclareMathOperator{\sign}{sign}
\DeclareMathOperator{\supp}{supp}
\DeclareMathOperator{\Diag}{Diag}
\renewcommand{\S}{\mathcal{\bm S}}
\newcommand{\rxi}{\bm{\xi}}
\def\x{\vect{x}}
\renewcommand{\S}{\mathcal{S}}
\DeclareMathOperator*{\argmin}{argmin}
\newcommand*{\QEDA}{\hfill\ensuremath{\square}}
\newcommand*{\QEDB}{\hfill\ensuremath{\diamond}}
\definecolor{mygreen}{RGB}{28,172,0} % color values Red, Green, Blue
\definecolor{mypurple}{RGB}{170,55,241}
\definecolor{myorange}{rgb}{0.77,0.38,0.06}
\title{ $\alsoxt\#$: Better Convex Approximations for Distributionally Robust Chance Constrained Programs}
\date{\today}
\titlerunning{$\alsoxs$}
\author{Nan Jiang \and Weijun Xie}
\institute{First Author: Nan Jiang \at
Affiliation: Georgia Institute of Technology, Atlanta, GA\\
\email{nanjiang@gatech.edu}
\and Corresponding Author: Weijun Xie \at
Affiliation: Georgia Institute of Technology, Atlanta, GA\\
\email{wxie@gatech.edu}
}
\edef\sign{\pgfmathresult}%
\edef\x{\pgfmathresult}%
\edef\t{\pgfmathresult}%
\edef\y{\pgfmathresult}%
\begin{document}
\maketitle

\begin{abstract} 
This paper studies distributionally robust chance constrained programs (DRCCPs), where the uncertain constraints must be satisfied with at least a probability of a prespecified threshold for all probability distributions from the Wasserstein ambiguity set. As DRCCPs are often nonconvex and challenging to solve optimally, researchers have been developing various convex inner approximations. Recently, ALSO-X has been proven to outperform the conditional value-at-risk (CVaR) approximation of a regular chance constrained program when the deterministic set is convex. In this work, we relax this assumption by introducing a new ALSO-X\# method for solving DRCCPs. Namely, in the bilevel reformulations of ALSO-X and CVaR approximation, we observe that the lower-level ALSO-X is a special case of the lower-level CVaR approximation and the upper-level CVaR approximation is more restricted than the one in ALSO-X. This observation motivates us to propose the ALSO-X\#, which still resembles a bilevel formulation -- in the lower-level problem, we adopt the more general CVaR approximation, and for the upper-level one, we choose the less restricted ALSO-X. We show that ALSO-X\# can always be better than the $\CVaR$ approximation and can outperform ALSO-X under regular chance constrained programs and type $\infty-$Wasserstein ambiguity set.  We also provide new sufficient conditions under which ALSO-X\# outputs an optimal solution to a DRCCP. We apply the proposed ALSO-X\# to a wireless communication problem and numerically demonstrate that the solution quality can be even better than the exact method.
% its efficiency.
% This paper proposes a new $\alsoxs$ method for solving distributionally robust chance constrained programs (DRCCPs) with Wasserstein ambiguity set, where the uncertain constraints must be satisfied with a probability of at least a prespecified threshold for all probability distributions of the uncertain parameters that lie within a chosen Wasserstein distance from a reference distribution.
% We provide mild sufficient conditions under which $\alsoxs$ always outperforms $\alsox$, where the latter is recently proven to outperform the conditional value-at-risk ($\CVaR$) approximation of a regular chance constrained program \cite{jiang2022also}. We also show that $\alsoxs$ can deliver an optimal solution to a DRCCP. We further 
% We numerically demonstrate its efficiency.
\end{abstract}

\vspace{0.2in}
\noindent\textbf{Keywords.}  Chance Constraint, CVaR, Distributionally Robust, Conservative Approximation

\newpage
\section{Introduction}
\label{alsox_sha}
In this paper, we consider
% This paper studies 
a Distributionally Robust Chance Constrained Program (DRCCP) of form:
\begin{align}
{\rm (DRCCP)}\quad v^*=\min_{\bm x\in\X}\left\{\bm c^\top \bm x\colon \inf_{\Pr\in \P} \Pr\left\{\trxi\colon  \bm a_i(\bm x)^\top \trxi \leq  b_i(\bm x), \forall i\in[I] \right\}\geq 1-\varepsilon \right\}. \label{eq_drccp}
\end{align}
In a DRCCP, the objective is to minimize a linear objective function over a deterministic set $\X$ and an uncertain constraint system specified by possibly multiple linear constraints $ \bm a_i(\bm x)^\top \trxi \leq  b_i(\bm x)$ for all $i\in[I]$, where the uncertain constraints are required to be satisfied with probability $1-\varepsilon$ for any probability distribution $\Pr$ from an ambiguity set $\P$. Here, the scalar $\varepsilon\in(0,1)$ denotes a preset risk parameter and set $\P$ is formally defined as a subset of probability distributions $\Pr$ from a measurable space $(\Omega, \F)$ equipped with the sigma algebra $\F$ and induced by the random parameters $\trxi$ %, where $\trxi: \Omega \to \Xi$ denotes a measurable mapping from $\Omega$ to $\Re^m$ 
with support set $\Xi\subseteq \Re^m$. For each uncertain constraint $i\in[I]$, the affine mappings $\bm a_i(\bm x)$ and $b_i(\bm x)$ are defined as $\bm a_i(\bm x)={\bm{A}}_i \bm{x}+{\bm a}_i\in \Re^{m}$ with ${\bm{A}}_i\in \Re^{m\times n},{\bm a}_i\in \Re^m$ and $b_i(\bm x)={\bm{B}}_i^\top \bm{x}+{b}_i\in \Re$ with ${\bm{B}}_i\in \Re^{ n}, {b}_i\in \Re$.  When there is only $I=1$ uncertain constraint, problem \eqref{eq_drccp} is a \textit{single DRCCP} and otherwise, it is a \textit{joint DRCCP}. Notably, when the ambiguity set $\P$ is a singleton (i.e., $\P=\{\Pr\}$), DRCCP \eqref{eq_drccp} reduces to a regular Chance Constrained Program (CCP). %C

\subsection{Wasserstein Ambiguity Set}
This paper studies the data-driven $q-$Wasserstein ambiguity set defined as
%In this work, we consider the DRCCP under $q-$Wasserstein ambiguity set, which is defined as 
% The $q-$Wasserstein ambiguity set is 
% i.e., we make the following assumptions on the ambiguity set $\P$,
% The $q-$Wasserstein ambiguity set is defined as
\begin{equation*}
\mathcal{P}_q =\left\{ \Pr\colon\Pr\left\{ \tilde{\rxi}\in {\Xi}\right\}=1,W_q(\Pr,\Pr_{\tilde\rzeta})\leq \theta \right\},
\end{equation*}
where for any $q\in[1,\infty]$, the $q-$Wasserstein distance is  
\begin{equation*}
W_q(\Pr_1,\Pr_2)=\inf\left\{\left[ \int_{{\Xi}\times{\Xi}}\left\|\bm{\xi}^1-\bm{\xi}^2\right\|^q\mathbb{Q}(d\bm{\xi}^1,d\bm{\xi}^2)\right ]^\frac{1}{q}\colon
\begin{aligned}
& \mathbb{Q} \text{ is a joint distribution of } \trxi^1 \text{ and } \trxi^2\\
& \text{ with marginals }\Pr_1 \text{ and } \Pr_2, \text{ respectively }
\end{aligned}
\right\},
\end{equation*}
 $\theta\geq0$ is the Wasserstein radius, and $\Pr_{\trzeta}$ denotes the reference distribution induced by random parameters $\trzeta$. Recently, there are many exciting works on DRCCP under type $q-$Wasserstein ambiguity set 
\cite{xie2021distributionally,chen2022data,kuccukyavuz2022chance,jiang2022also,ji2021data,ho2022distributionally,shen2022chance,chen2022approximations}. Particularly, according to the equivalent reformulation in proposition 8 of \cite{jiang2022also}, we write DRCCP \eqref{eq_drccp} under type $\infty-$Wasserstein ambiguity set  as 
\begin{align}
v^*=\min_{\bm x\in\X}\left\{\bm c^\top \bm x\colon \Pr\left\{\trzeta\colon  \theta \left\| \bm a_i(\bm x) \right\|_*+ \bm a_i(\bm x)^\top \trzeta \leq  b_i(\bm x), \forall i\in[I] \right\}\geq 1-\varepsilon \right\}. \label{eq_drccp_infty}
\end{align}

% we focus on type $\infty-$Wasserstein ambiguity set in this work, where the $\infty-$Wasserstein distance between two probability distributions $\Pr_1,\Pr_2$ is simplified as
% \begin{equation*}
% W_\infty(\Pr_1,\Pr_2)=\inf\left\{ {\rm{ess.sup}}_{\Qe}\left\|\rxi^1-\rxi^2\right\|\colon
% \begin{aligned}
% & \Qe \text{ is a joint distribution of } \trxi^1 \text{ and } \trxi^2\\
% & \text{ with marginals }\Pr_1 \text{ and } \Pr_2, \text{ respectively }
% \end{aligned}
% \right\}.
% \end{equation*}
Throughout the paper, we make the following assumption:
\begin{enumerate}[label={A\arabic*}]
\setcounter{enumi}{0}
%\item \label{A_31} Support $\Xi=\Re^m$;
%\item \label{A_1} The ambiguity set $\P$ is weakly compact;
\item \label{A_1} The reference distribution $\Pr_{\trzeta}$ is sub-Gaussian, that is, $\Pr_{\trzeta}\{{\trzeta}:\|\trzeta\|\geq \tau\}\leq C_1\exp(-C_2\tau^2)$ for some positive constants $C_1,C_2$.
\end{enumerate}
It is worth noting that the sub-Gaussian assumption ensures the weak compactness of the Wasserstein ambiguity set and thus enjoys the strong duality of reformulating the worst-case expectation under type $q-$Wasserstein ambiguity set. Particularly, this paper mainly focuses on empirical or elliptical reference distributions, which clearly satisfy Assumption~\ref{A_1}.

% Under this setting, DRCCP \eqref{eq_drccp} can be written as
% \begin{align}
% v^*=\min_{\bm x\in\X}\left\{\bm c^\top \bm x\colon  \inf_{\Pr\in \P_\infty} \Pr\left\{\trxi\colon  \bm a_i(\bm x)^\top \trxi \leq  b_i(\bm x), \forall i\in[I] \right\}\geq 1-\varepsilon \right\}. \label{eq_drccp_q}
% \end{align}

% For the notational convenience, we denote the decision space induced by the worst-case chance constraint in DRCCP \eqref{eq_drccp} as the following Distributionally Robust Chance Constrained (DRCC) set
% \begin{align}
% \label{drcc}
% Z_q\colon= \left\{\bm{x}\in \Re^n\colon  \inf_{\Pr\in \P_q} \Pr\left\{\trxi\colon  \bm a_i(\bm x)^\top \trxi \leq  b_i(\bm x), \forall i\in[I] \right\}\geq 1-\varepsilon \right\}.
% \end{align}

\subsection{Relevant Literature}
Distributionally robust chance constrained programs (DRCCPs) have gained much attention recently
% as a more effective alternative to regular CCPs, particularly 
when the knowledge about the probability distribution is limited (see, e.g., \cite{hanasusanto2015distributionally,hanasusanto2017ambiguous,xie2018deterministic,zymler2013distributionally,chen2022data,xie2021distributionally,ho2022distributionally,kuccukyavuz2022chance,jiang2022also,ji2021data,chen2022approximations,shen2022chance,ho2022strong,shen2021convex}). As DRCCPs' feasible regions are often nonconvex, some existing research has worked on identifying conditions under which the feasible region in DRCCP \eqref{eq_drccp}  is convex (see, e.g., \cite{prekopa2013stochastic,lagoa2005probabilistically,ghaoui2003worst,calafiore2006distributionally,shapiro2021lectures,shen2021convex,cheng2014distributionally,xie2018deterministic,hanasusanto2015distributionally,li2019ambiguous}). For a single DRCCP \eqref{eq_drccp}, the authors in \cite{shen2021convex} showed that its feasible region is convex if the reference distribution is Gaussian under type $1-$Wasserstein ambiguity set. Similar convexity results apply to a single DRCCP when the ambiguity set $\P$ comprises all probability distributions with known first and second moments (see, e.g., \cite{ghaoui2003worst,calafiore2006distributionally}), known support of $\trxi$ (see, e.g., \cite{cheng2014distributionally}), arbitrary convex mapping of $\trxi$  (see, e.g., \cite{xie2018deterministic}), or the unimodality property of $\Pr$ (see, e.g., \cite{hanasusanto2015distributionally,li2019ambiguous}). %Another common method to solve DRCCP is to approximate chance constraints using sampling average approximation (SAA) (see, e.g., \cite{luedtke2008sample,ahmed2008solving}) and retrieve the solution with arbitrary accuracy. A mixed-integer program can be reformulated of a regular chance constrained program with SAA. 
Researchers have also proposed convex inner approximations of the nonconvex chance constraint  (e.g., \cite{nemirovski2006scenario,calafiore2006scenario,nemirovski2007convex,ahmed2017nonanticipative,chen2022approximations,jiang2022also}).  For example, the well-known conditional value-at-risk ($\CVaR$) approximation is to replace the chance constraint in DRCCP \eqref{eq_drccp} with the more conservative $\CVaR$ constraint (see the details in \cite{nemirovski2006scenario}). Recently, $\alsox$, a convex approximation method proposed in \cite{ahmed2017nonanticipative}, has been  proven to outperform the $\CVaR$ approximation of a regular chance constrained program (see, e.g., theorem 1 in \cite{jiang2022also}). 

Despite the challenges, DRCCPs are effective in decision-making under uncertainty and have been applied to a wide range of problems, including portfolio optimization \cite{pagnoncelli2009computational,pagnoncelli2012risk}, energy management \cite{shi2018distributionally,cai2009community}, supply chain management \cite{gupta2000mid,guillen2010global}, facility location problems \cite{lejeune2016solving}, 
 and wireless communication network \cite{atawia2016joint,wang2014outage,li2019coping,li2022maximizing}.  For example, chance constraints have been used in the design and optimization of wireless communication networks to ensure that the probability of certain operational constraints being violated, such as capacity limits or reliability requirements, is within an acceptable limit (see, e.g., \cite{li2019coping,li2022maximizing}). In portfolio optimization, the objective is to maximize the expected return of the portfolio while ensuring that the probability of portfolio losses does not exceed a specified level (see, e.g., \cite{pagnoncelli2009computational,pagnoncelli2012risk}). We refer interested readers to \cite{ahmed2008solving} for more applications. For a comprehensive review of DRCCPs, interested readers are referred to a recent survey from 
\cite{kuccukyavuz2022chance}.
 
% There are three main drawbacks of $\alsox$:  (i) in the DRCCP setting with $q-$Wasserstein ambiguity set, $q\in[1,\infty)$,  $\alsox$ may not be better than $\CVaR$ approximation; (ii) the main contribution of \cite{jiang2022also}, i.e., theorem 1 in  \cite{jiang2022also}, relies on the convexity assumption; that is to say, $\alsox$ approximation may not be better than $\CVaR$ approximation when set $\X$ is discrete (see, e.g., example 2 in  \cite{jiang2022also}); (iii) the proposed $\alsox+$ algorithm in \cite{jiang2022also} cannot be simply extended to DRCCP with $q-$Wasserstein ambiguity set when $q\in[1,\infty)$. 
% % We illustrate these drawbacks in detail later in this paper. 
% To remedy the aforementioned issues, we propose the $\alsoxs$ method to improve $\alsox$ and $\alsox+$. 

\subsection{Contributions}
% \todo{draw the roadmap}

In this paper, we study a new method, % that integrates $\alsox$ and $\CVaR$ approximation, 
termed ``$\alsoxs$," which advances the recent $\alsox$ in \cite{jiang2022also} in the following three main aspects: (a) for any closed deterministic set $\X$, $\alsoxs$ is always better than $\CVaR$ approximation under any ambiguity set; (b) $\alsoxs$ can be better than $\alsox$; and (c) $\alsoxs$ admits new conditions under which its output solution is also optimal to DRCCPs. More specifically, 
\begin{enumerate}[label=(\roman*)]
\item We prove that $\alsoxs$ is better than $\CVaR$ approximation under a general ambiguity set (beyond Wasserstein ambiguity set) and a closed deterministic set $\X$. This result significantly improves that of theorem 1 in \cite{jiang2022also}), which relies on the convexity of the deterministic set $\X$;
\item We show that under type $\infty-$ Wasserstein ambiguity set, $\alsoxs$ is better than $\alsox$ when the lower-level $\alsox$ admits a unique solution. When the reference distribution is constructed by i.i.d. samples from a continuous nondegerate distribution, or the reference distribution is continuous and nondedegerate, the lower-level $\alsox$ indeed presents a unique solution; 
\item We present new sufficient conditions under which $\alsoxs$ yields an optimal solution to a DRCCP. For example, one sufficient condition is that for a binary DRCCP with an empirical reference distribution; and 
\item  We extend the afromentioned results of $\alsoxs$ to solve a DRCCP \eqref{eq_drccp} under type $q-$Wasserstein ambiguity set with $q\in[1,\infty)$. %where we show that $\alsoxs$ can output an optimal solution.
\end{enumerate}
% \nan{
% This paper differs from the recent work \cite{jiang2022also} in the following three main aspects:
% \begin{enumerate}[label=(\roman*)]
% 	\item 
% 		\item  $\alsoxs$ is an improvement to $\alsox$ and we show that $\alsoxs$ is better than  $\alsox$ under mild conditions; and
% 		\item We extend $\alsoxs$ to solve a DRCCP \eqref{eq_drccp} under type $q-$Wasserstein ambiguity set with $q\in[1,\infty)$, where we show that $\alsoxs$ can output an optimal solution.
% \end{enumerate}
% }
% \nan{The results in this paper significantly extend the work from \cite{jiang2022also}. In particular, the new method, $\alsoxs$, addresses one main limitation of the previous $\alsox$. That is, theorem 1 in \cite{jiang2022also}
% % which has one main limitation of $\alsox$: theorem 1 in \cite{jiang2022also} 
% relies on the convex deterministic set $\X$. For example, $\alsox$ may not be superior to $\CVaR$ approximation when set $\X$ is discrete (see, e.g., example 2 in  \cite{jiang2022also}). 
% % In light of this limitation, 
% Therefore, we propose the $\alsoxs$ method as an improvement to $\alsox$ and $\CVaR$ approximation.}

\noindent\textbf{Organization.}
The remainder of the paper is organized as follows. Section~\ref{sec_alsox_sharp_intro} reviews $\alsox$ and $\CVaR$ approximation and introduces the 
$\alsoxs$. Section~\ref{sec_better} shows that $\alsoxs$ is better than $\alsox$ and $\CVaR$ approximation. Section~\ref{sec_unique_alsox}
explores conditions under which $\alsoxs$ is better than $\alsox$.
Section~\ref{sec_exactness} provides the conditions under which $\alsoxs$ outputs an exact optimal solution. 
Section~\ref{sec_extension} extends $\alsoxs$ to solve DRCCPs under type $q-$Wasserstein ambiguity set with $q\in[1,\infty)$. 
Section~\ref{numerical_study} numerically illustrates the proposed methods. Section~\ref{sec_conclusion} concludes the paper.

\noindent\textbf{Notation.} The following notation is used throughout the paper. We use bold letters (e.g., $\bm {x},\bm {A}$) to denote vectors and matrices and use corresponding non-bold letters to denote their components. Given a vector or matrix $\bm{x}$, its zero norm $ \|\bm{x}\|_0$ denotes the number of its nonzero elements. We let $\|\cdot\|_*$ denote the dual norm of a general norm $\|\cdot\|$. 
% We let $\e$ be the vector or matrix of all ones, and let $\e_i$ be the $i$th standard basis vector.
Given an integer $n$, we let $[n]:=\{1,2,\cdots,n\}$ and use $\Re_+^n:=\{\bm {x}\in \Re^n:x_i\geq0, \forall i\in [n]\}$. Given a real number $\tau$, let $(\tau)_+:=\max\{\tau,0\}$. Given a finite set $I$, let $|I|$ denote its cardinality. We let $\trxi$ denote a random vector and denote its realizations by $\rxi$. Given a vector $\bm{x}\in \Re^n$, let $\mathrm{supp}(\bm{x})$ be its support, i.e., $\mathrm{supp}(\bm{x}):=\{i\in [n]: x_i\neq0\}$. Given a probability distribution $\Pr$ on $\Xi$, we use $\Pr\{A\}$ to denote $\Pr\{\rxi:\text{condition} \ A(\rxi) \ \text{holds}\}$ when $A(\rxi)$ is a condition on $\rxi$, and to denote $\Pr\{\rxi\colon \rxi \in A\}$ when $A \subseteq \Xi$ is $\Pr-$measurable. 
%{We follow the convention of CCP literature (see, e.g., \cite{ruszczynski2002probabilistic,nemirovski2007convex,luedtke2008sample}) for the definition of an indicator function, that is, given a set $R$,
%, the characteristic function $\chi_{R}(\bm x)=0$ if $\bm x \in R$, and $\infty$, otherwise, and 
%its normal cone at point $\x\in R$ is denoted {by} $\N_R(\x):=\{\bm{h}: \bm{h}^\top(\hat{\x}-\x)\leq 0, \forall \hat{\x}\in R\}$, {and $\emptyset$ if $\x\notin R$;} and the indicator function $\I(\bm x \in R) =1$ if $\bm x \in R$, and $0$, otherwise.} 
We use $\floor{x}$ to denote the largest integer $y$ satisfying $y\leq x$, for any $x\in \Re$. We use the phrase ``Better Than" to indicate ``at least as good as." Additional notations will be introduced as needed.

\section{$\alsoxs$}
\label{sec_alsox_sharp_intro}
In this section, we first review two convex approximations of DRCCP, $\alsox$ and $\CVaR$ approximation. Then we present $\alsoxs$ for solving DRCCP \eqref{eq_drccp_infty} and show its connections to $\alsox$ and $\CVaR$ approximation. To begin with, we first introduce the notions of $\VaR_{1-\varepsilon}(\cdot)$ and $\CVaR_{1-\varepsilon}(\cdot)$. For a given risk parameter $\varepsilon$ and a given random variable $\tilde{\bm{X}}$ with probability distribution $\Pr$ and cumulative distribution function $F_{\tilde{\bm{X}}}(\cdot)$, $(1-\varepsilon)$ Value-at-Risk ($\VaR$) of $\tilde{\bm{X}}$ is defined as
\begin{align*}
\VaR_{1-\varepsilon}(\tilde{\bm{X}}):=\min_s \left\{ s: F_{\tilde{\bm{X}}}(s)\geq 1-\varepsilon\right\},
\end{align*}
and the corresponding Conditional Value-at-Risk ($\CVaR$) is 
\begin{align*}
\CVaR_{1-\varepsilon}(\tilde{\bm{X}}):=\min_\beta \left\{ \beta+\frac{1}{\varepsilon}\E_{\Pr}[ \tilde{\bm{X}}-\beta ]_+\right\}.
\end{align*}

\subsection{State-of-the-art Convex Approximations}
\label{section_convex_appro}
In general, solving DRCCP \eqref{eq_drccp_infty} is NP-hard (see, e.g., \cite{xie2020bicriteria}). Thus, in this work, instead of solving DRCCP \eqref{eq_drccp_infty} directly, we review two known convex approximations, i.e., the popular $\CVaR$ approximation and the recent $\alsox$. 
% It has been shown in \cite{jiang2022also} that when the deterministic set $\X$ is convex under type $\infty-$Wasserstein ambiguity set, $\alsox$ yields a better solution than $\CVaR$ approximation. 

The $\alsox$ method with a bilevel structure can be adapted to solve DRCCP \eqref{eq_drccp_infty}.  In the lower-level $\alsox$, we solve the hinge-loss approximation with a given objective upper bound. We then check whether its optimal solution $\bm{x}^*$ satisfies the worst-case chance constraint or not. 
The upper-level $\alsox$ is to search the best upper bound of the objective value.
Formally, $\alsox$ admits the form:
\begin{align*}
v^A =\min _{ {t}}\quad & t,\\
\textup{s.t.}\quad&\bm x^*\in\argmin _{\bm {x}\in \X}\, \sup _{\Pr\in\P_\infty} \left\{ \E_\Pr\left[\max_{i\in [I]}\left(\bm a_i(\bm x)^\top \trxi-b_i(\bm x)\right)_+\right]\colon \bm{c}^\top \bm{x} \leq t \right\},\\
& \inf_{\Pr\in \P_\infty} \Pr\left\{\trxi\colon  \bm a_i(\bm x^*)^\top \trxi \leq  b_i(\bm x^*), \forall i\in[I] \right\}\geq 1-\varepsilon.
\end{align*}
Based on the equivalent reformulation in proposition 9 of \cite{jiang2022also}, we consider the following $\alsox$:
\begin{subequations}
\label{drccp_alsox_formulation}
\begin{align}
v^A =\min _{ {t}}\quad & t,\\
\textup{s.t.}\quad&\bm x^*\in\argmin _{\bm {x}\in \X} \left\{ \E_{\Pr_{\trzeta}}\left[\max_{i\in [I]}\left(\theta \left\| \bm a_i(\bm x) \right\|_*+\bm a_i(\bm x)^\top \trzeta-b_i(\bm x)\right)_+\right]\colon \bm{c}^\top \bm{x} \leq t \right\},\label{drccp_alsox}\\
&  \Pr\left\{\trzeta\colon  \theta \left\| \bm a_i(\bm x^*) \right\|_*+ \bm a_i(\bm x^*)^\top \trzeta \leq  b_i(\bm x^*), \forall i\in[I] \right\}\geq 1-\varepsilon. \label{alsox_drccp_formualtion}
\end{align}
\end{subequations} 
Under type $\infty-$Wasserstein ambiguity set $\P_\infty$, it has been shown that when the deterministic set $\X$ is convex, $\alsox$ is better than $\CVaR$ approximation (see, e.g., theorem 7 in \cite{jiang2022also}). 
% $\alsox$ is known to be better than $\CVaR$ approximation when the deterministic set $\X$ is convex (see, e.g., theorem 7 in \cite{jiang2022also}). 
However, this result, in general, does not hold for a DRCCP when set $\X$ is nonconvex (see example 2 in \cite{jiang2022also} with Wasserstein radius $\theta=0$). For notational convenience,
let us denote $v^A(t)$ and $\hat {F}(\bm x)$ to be the optimal value and the objective function  of the lower-level $\alsox$ \eqref{drccp_alsox}, respectively. %we use $\hat {F}(\bm x)$ to denote the objective function in the lower-level $\alsox$ \eqref{drccp_alsox
% \nan{Following the notations in \cite{jiang2022also}, we use $v^A(t)$ to denote the optimal value of the lower-level $\alsox$ \eqref{drccp_alsox}.}

The $\CVaR$ approximation has been shown to work quite well for solving DRCCPs (see, e.g., \cite{chen2022data,xie2021distributionally}). For DRCCP \eqref{eq_drccp_infty}, its $\CVaR$ approximation is defined by replacing the worst-case chance constraint by the worst-case $\CVaR$ constraint as below
\begin{equation*}
%\label{worse_case_cvar_q}
v^{\CVaR}=\min_{\bm x\in\X}\left\{\bm c^\top \bm x \colon \sup _{{\Pr}\in\P_\infty} \inf_{\beta\leq 0}\left[ \beta+ \frac{1}{\varepsilon}\E_{\Pr}\left[ \left( \max_{i\in[I]} \left(\bm a_i(\bm x)^\top \trxi-b_i(\bm{x}) \right)-\beta \right)_+ \right] \right] \leq 0 \right\}.
\end{equation*}
From the equivalent reformulation in proposition 9 of \cite{jiang2022also}, we consider the following $\CVaR$ approximation of DRCCP \eqref{eq_drccp_infty}:
\begin{equation*}
% \label{worse_case_cvar_q}
v^{\CVaR}=\min_{\bm x\in\X}\left\{\bm c^\top \bm x \colon  \inf_{\beta\leq 0}\left[ \beta+ \frac{1}{\varepsilon}\E_{\Pr_{\trzeta}}\left[ \left( \max_{i\in[I]} \left( \theta \left\| \bm a_i(\bm x) \right\|_*+ \bm a_i(\bm x)^\top \trzeta-b_i(\bm{x}) \right)-\beta \right)_+ \right] \right] \leq 0 \right\}.
\end{equation*}
Equivalently, we also recast the $\CVaR$ approximation as a bilevel program, where the upper-level problem is to search the best objective value and the lower-level problem is to minimize the left-hand of $\CVaR$ constraint given that the objective function is upper-bounded
% bounded from above 
by a given value. That is,
\begin{subequations}
\label{drccp_cvar_formulation}
\begin{align}
v^{\CVaR}=\min _{ {t}}\, & t,\\
\textup{s.t.}\,&\left(\bm x^*,\beta^*\right)\in\argmin _{\begin{subarray}{c}\bm {x}\in \X, \bm{c}^\top \bm{x} \leq t,\\ \beta\leq 0 \end{subarray}}\left\{ \varepsilon\beta+ \E_{\Pr_{\trzeta}}\left[ \left( \max_{i\in[I]} \left( \theta \left\| \bm a_i(\bm x) \right\|_*+ \bm a_i(\bm x)^\top \trzeta-b_i(\bm{x}) \right)-\beta \right)_+ \right] \right\},\label{drccp_cvar}\\
&\varepsilon\beta^*+ \E_{\Pr_{\trzeta}}\left[ \left( \max_{i\in[I]} \left( \theta \left\| \bm a_i(\bm x^*) \right\|_*+ \bm a_i(\bm x^*)^\top \trzeta-b_i(\bm{x}^*) \right)-\beta^* \right)_+ \right]\leq 0.\label{cvar_drccp_formualtion}
\end{align}
\end{subequations} 
 We call the objective function in the lower-level $\CVaR$ approximation \eqref{drccp_cvar} ``$\CVaR$-loss." We observe that if we let variable $\beta=0$ in the lower-level $\CVaR$ approximation \eqref{drccp_cvar}, then we recover the hinge-loss approximation \eqref{drccp_alsox}. In other words,
% That is, 
the lower-level $\alsox$ \eqref{drccp_alsox} and the lower-level $\CVaR$ approximation \eqref{drccp_cvar} coincide when $\beta=0$. This observation inspires us to improve $\alsox$ by replacing its lower-level hinge-loss objective function with the $\CVaR$-loss approximation, which is elaborated in the subsequent subsections. 

It is worth noting that, when the deterministic set $\X$ is discrete, $\CVaR$ approximation can outperform  $\alsox$ when solving DRCCP \eqref{eq_drccp_infty}, as demonstrated by the following example.
% the following example shows that $\CVaR$ approximation is better than $\alsox$ when solving a DRCCP. 
\begin{example}
\label{cvar_better_nonconvex}
\rm
Consider a single DRCCP under type $\infty-$Wasserstein ambiguity set  with $\theta=1$. Assume that the empirical distribution has $4$ equiprobable scenarios (i.e., $N=4$, $\Pr\{\trzeta=\rzeta^i\}=1/N$), risk parameter $\varepsilon=1/2$, deterministic set $\X=\{0,1\}$, function $ \bm a_1(x)^\top \rzeta-b_1({x}) ={\zeta}_1 x - {\zeta}_2$, 
$\zeta_1^1=-48$, $\zeta_1^2=\zeta_1^3=\zeta_1^4=100$, $\zeta_2^1=-51$, and $\zeta_2^2=\zeta_2^3=\zeta_2^4=100$. In this example, DRCCP \eqref{eq_drccp_infty} resorts to
\begin{equation*}
v^*=\min_{x\in \{0,1\}}\left\{-x\colon  \I(49x\geq 50)+\I(101x\leq 99)+\I(101x\leq 99)+\I(101x\leq 99)\geq 2 \right\},
\end{equation*}
 where $\CVaR$ approximation returns the optimal solution and $\alsox$ fails to find any feasible solution (see, example 2 in \cite{jiang2022also}).  
\QEDB
\end{example}

\subsection{What is $\alsoxs$?}
\label{section_alsox_sharp_intro}
To overcome the limitations of $\alsox$ and $\CVaR$ approximation, we introduce the new ``$\alsoxs$.''  %As previously discussed,
As discussed in the previous subsection, 
the lower-level $\alsox$ \eqref{drccp_alsox} can be viewed as a special case of the lower-level $\CVaR$ approximation \eqref{drccp_cvar} by letting $\beta=0$. Thus, one may want to replace the hinge-loss objective function with the $\CVaR$-loss one. On the other hand, one disadvantage of the CVaR approximation \eqref{drccp_cvar_formulation} is that it relies on a more conservative $\CVaR$ constraint \eqref{cvar_drccp_formualtion} for the feasibility check.
% On the other hand, in the $\CVaR$ approximation \eqref{drccp_cvar_formulation}, the main disadvantage is that it relies on a more conservative constraint \eqref{cvar_drccp_formualtion} for the feasibility check. 
To improve the $\CVaR$ approximation \eqref{drccp_cvar_formulation}, we can use chance constraint \eqref{alsox_drccp_formualtion} for the feasibility check, leading to $\alsoxs$, an integration of $\CVaR$ approximation and  $\alsox$.
% which integrates the benefits of both $\CVaR$ approximation and  $\alsox$. 
Formally, $\alsoxs$ admits the form of
% Thus, $\alsoxs$ is an integration of $\CVaR$ approximation and  $\alsox$, which admits the form of
%\begin{subequations}
%\label{eq_also_cvar}
\begin{align*}
v^{\as} =\min _{ {t}}\quad & t,\\
\textup{s.t.}\quad &(\bm x^*,\beta^*)\in\argmin _{\bm {x}\in \X,\beta\leq 0}\, \sup _{\Pr\in\P_\infty} \left\{ \varepsilon\beta+ \E_{\Pr}\left[  \max_{i\in[I]} \left(\bm a_i(\bm x)^\top \trxi-b_i(\bm{x}) \right)-\beta \right]_+ \colon \bm{c}^\top \bm{x} \leq t \right\},\\
&  \inf_{\Pr\in \P_\infty} \Pr\left\{\trxi\colon  \bm a_i(\bm x^*)^\top \trxi \leq  b_i(\bm x^*), \forall i\in[I] \right\}\geq 1-\varepsilon.
\end{align*}

%\end{subequations}
According to the reformulations in Section~\ref{section_convex_appro}, the $\alsoxs$ is equivalent to
\begin{subequations}
\label{eq_also_cvar_q_wass}
\begin{align}
v^{\as} =\min _{ {t}}\quad &  t, \label{eq_also_cvar_q_wass_a} \\
\textup{s.t.}\quad&\left(\bm x^*,\beta^*\right)\in\argmin _{\begin{subarray}{c}\bm {x}\in \X, \bm{c}^\top \bm{x} \leq t,\\ \beta\leq 0 \end{subarray}}\left\{ \varepsilon\beta+ \E_{\Pr_{\trzeta}}\left[ \left( \max_{i\in[I]} \left( \theta \left\| \bm a_i(\bm x) \right\|_*+ \bm a_i(\bm x)^\top \trzeta-b_i(\bm{x}) \right)-\beta \right)_+ \right] \right\},\label{eq_also_cvar_q_wass_b}\\
& \Pr\left\{\trzeta\colon  \theta \left\| \bm a_i(\bm x^*) \right\|_*+ \bm a_i(\bm x^*)^\top \trzeta \leq  b_i(\bm x^*), \forall i\in[I] \right\}\geq 1-\varepsilon.\label{eq_also_cvar_q_wass_c}
\end{align}
\end{subequations} 
In the proposed $\alsoxs$ \eqref{eq_also_cvar_q_wass}, given a current objective value $t$, the lower-level $\alsoxs$ \eqref{eq_also_cvar_q_wass_b} is to solve the $\CVaR$ approximation first and then the upper-level $\alsoxs$ is to check whether the lower-level solution satisfies the worst-case chance constraint \eqref{eq_also_cvar_q_wass_c} or not. %Notice that condition \eqref{eq_also_cvar_q_wass_c} is the same condition in  $\alsox$ \eqref{alsox_drccp_formualtion}.  
In this way, we introduce the new convex approximation of DRCCP \eqref{eq_drccp_infty}, where we have $v^*\leq v^{\as}$.

Algorithm~\ref{alg_alsox_sharp} summarizes the solution procedure of $\alsoxs$ \eqref{eq_also_cvar_q_wass}, where 
% Correspondingly, we introduce the $\alsoxs$ Algorithm~\ref{alg_alsox_sharp}. That is, 
for a given $t$ of the upper-level problem, we solve the lower-level $\CVaR$ approximation \eqref{eq_also_cvar_q_wass_b} with an optimal solution $(\bm{x}^*,\beta^*)$ and check whether $\bm x^*$ is feasible to DRCCP \eqref{eq_drccp_infty} or not, i.e., check if $\bm x^*$ satisfies \eqref{eq_also_cvar_q_wass_c} or not. If the answer is YES, we decrease the value of $t$; otherwise, increase it. In the implementation, we search the optimal $t$ by using the binary search method with a stopping tolerance $\delta_1$.
% , as detailed in Algorithm~\ref{alg_alsox_sharp}. 
The implementation details follow similarly to algorithm 1 in \cite{jiang2022also} and the remarks therein.
% (e.g., we choose $\delta_1=10^{-2}$ in numerical study),
\begin{algorithm}[htbp]
\caption{The Proposed $\alsoxs$ Algorithm}
\label{alg_alsox_sharp}
\begin{algorithmic}[1]
\State \textbf{Input:} Let $\delta_1$ denote the stopping tolerance parameter, $t_L$ and $t_U$ be the known lower and upper bounds of the optimal value of DRCCP \eqref{eq_drccp_infty}, respectively 
\While {$t_U-t_L>\delta_1$}
\State Let $t=(t_L+t_U)/2$ and $(\bm{x}^*,\beta^*)$ be an optimal solution of the lower-level $\alsoxs$ \eqref{eq_also_cvar_q_wass_b}
\State Let $t_L=t$ if $\bm x^*$ satisfies \eqref{eq_also_cvar_q_wass_c}; otherwise, $t_U=t$
%\State \rev{ Let $t_L=t$ if the solution $(\bm{x}^*,{s}^*(\cdot))$ is infeasible to the CCP \eqref{eq_ccp}, i.e., $\Pr\{\tilde{\bm\xi} \colon{{s}^*}(\tilde{\rxi})>0\} >\varepsilon$; otherwise, $t_U=t$}
\EndWhile
\State \textbf{Output:} A feasible solution $\bm x^*$ and its objective value $\bar v^{\as}$ to DRCCP \eqref{eq_drccp_infty} %$(\bar{x},\bar{w})$ 
\end{algorithmic}
\end{algorithm}
% We make the following remarks about $\alsoxs$ Algorithm~\ref{alg_alsox_sharp}:
% \begin{enumerate}[label=(\roman*)]
%     \item At each iteration, we can warm-start the process with the solution found in the previous iteration;
% \end{enumerate}

Note that we inherit the constraint $\beta\leq 0$ in the lower-level $\alsoxs$ \eqref{eq_also_cvar_q_wass_b} from the $\CVaR$ approximation. One may want to relax this constraint and arrive at the following ``weak" formulation of $\alsoxs$, termed as ``$\alsoxus$:"
\begin{subequations}
\label{eq_also_cvar_q_wass_weak}
\begin{align}
v^{\aus} =\min _{ {t}}\,  & t, \label{eq_also_cvar_q_wass_weak_a} \\
\textup{s.t.}\,&\left(\bm x^*,\beta^*\right)\in\argmin _{\bm {x}\in \X, \bm{c}^\top \bm{x} \leq t,\beta}\left\{ \varepsilon\beta+ \E_{\Pr_{\trzeta}}\left[ \left( \max_{i\in[I]} \left( \theta \left\| \bm a_i(\bm x) \right\|_*+ \bm a_i(\bm x)^\top \trzeta-b_i(\bm{x}) \right)-\beta \right)_+ \right] \right\},\label{eq_also_cvar_q_wass_weak_b}\\
& \Pr\left\{\trzeta\colon  \theta \left\| \bm a_i(\bm x^*) \right\|_*+ \bm a_i(\bm x^*)^\top \trzeta \leq  b_i(\bm x^*), \forall i\in[I] \right\}\geq 1-\varepsilon.\label{eq_also_cvar_q_wass_weak_c}
\end{align}
\end{subequations} 
In our numerical study, we find that $\alsoxs$ consistently outperforms $\alsoxus$. 
% Below is an example. 
The following example shows that $\alsoxs$ can be superior to $\alsoxus$, $\alsox$, $\alsox+$, and $\CVaR$ approximation.
We formally prove this result in the next section.
% under mild conditions.
\begin{example}
\label{sharp_better_nonconvex}
\rm
Consider a single DRCCP under type $\infty-$Wasserstein ambiguity set  with $\theta=1$. Assume that the empirical distribution has $4$ equiprobable scenarios (i.e., $N=4$, $\Pr\{\trzeta=\rzeta^i\}=1/N$), risk parameter $\varepsilon=1/2$, deterministic set $\X=\{0,1\}$, function $ \bm a_1(x)^\top \rzeta-b_1({x}) ={\zeta}_1 x - {\zeta}_2$, 
$\zeta_1^1=-8$, $\zeta_1^2=\zeta_1^3=\zeta_1^4=3$, $\zeta_2^1=-25/2$, and $\zeta_2^2=\zeta_2^3=\zeta_2^4=5/2$. In this example, DRCCP \eqref{eq_drccp_infty} resorts to
\begin{equation*}
v^*=\min_{x\in \{0,1\}}\left\{-x\colon  \I\left(9x\geq \frac{23}{2}\right)+\I\left(4x\leq \frac{3}{2}\right)+\I\left( 4x\leq \frac{3}{2}\right)+\I\left( 4x\leq \frac{3}{2}\right)\geq 2 \right\}.
\end{equation*}
%The $\CVaR$ approximation of this CCP is
%\begin{align*}
%v^\CVaR=\min_{x\in\{0,1\},\beta\leq 0,\bm s}\biggl\{-x\colon  {\begin{array}{l}
%\displaystyle-9x+10\leq s_1,4x-1\leq s_2, 4x-1\leq s_3,\\ 
%\displaystyle 4x-1\leq s_4, \frac{1}{4}\sum_{i\in[4]}s_i-\beta/2\leq 0, s_i\geq \beta,\forall i\in[4] 
%\end{array}}\biggr\}.
%\end{align*}
%By the simple calculation, the optimal value is $v^*=0$ with the optimal solution $x^*=0$. However, the $\CVaR$ approximation is infeasible.   
The weak formulation of $\alsoxs$ $v^{\aus} $  \eqref{eq_also_cvar_q_wass_weak} can  be written as
\begin{align*}
&v^{\aus} =\min _{ {t}}\, \Biggl\{t\colon  \sum_{i\in[4]}\I({s^*_i}>0) \leq 2,\\
%&\begin{aligned}
&(x^*,\bm s^*,\beta^*)\in\argmin_{\begin{subarray}{c}
x\in \{0,1\}, \bm s, \beta\end{subarray}}\biggl\{ \frac{1}{4} \sum_{i\in[4]}s_i - \frac{1}{2}\beta\colon \begin{array}{l}
\displaystyle-9x+\frac{23}{2}\leq s_1, 4x-\frac{3}{2}\leq s_2,4x-\frac{3}{2}\leq s_3, \\
\displaystyle 4x-\frac{3}{2}\leq s_4,-x\leq t, s_i\geq \beta,\forall i\in[4]\end{array}\biggr\}
\Biggr\}.
\end{align*}
Particularly, for any $t\geq-1$, the  $\alsoxus$  returns a solution with $s_1^*=s_2^*=s_3^*=s_4^*=5/2$, $ x^*=1$. Since the support size of $\bm{s}^*$ is greater than $2$, we have to increase the objective bound $t$ to be infinite. However, if we enforce $\beta \leq 0$, i.e., consider the $\alsoxs$, we have $s_1^*=23/2,  s_2^*=s_3^*=s_4^*=-3/2, \beta^*=-3/2$, $x^*=0$.
Therefore, in this example, $\alsoxs$ always returns the optimal solution, but $\alsoxus$ fails to find any feasible solution. Notice that in this example, $\alsox$, $\CVaR$ approximation, and $\alsox+$ (see, e.g., section 4 in \cite{jiang2022also}) all fail to find any feasible solution.   
\QEDB
\end{example}
It is worthy of mentioning that albeit the formulation of $\alsoxus$ tends to be weaker, the unconstrained $\beta$ variable is useful to prove the strength of  $\alsoxs$ under an elliptical reference distribution. 

\section{$\alsoxs$ is Better Than $\alsox$ and $\CVaR$ Approximation}
\label{sec_better}

In this section, we prove that $\alsoxs$ is better than $\alsox$, $\alsoxus$, and $\CVaR$ approximation, respectively. %under a mild condition.

\subsection{$\alsoxs$ is Better Than $\alsox$}
\label{sec_alsoxs_better_alsox}
Note that $\alsoxs$ can be viewed as an integration of  $\CVaR$ approximation and $\alsox$. In fact, if the lower-level $\alsox$ provides a feasible solution to DRCCP \eqref{eq_drccp_infty}, then at least one optimal solution of the lower-level $\alsoxs$ is also feasible to DRCCP \eqref{eq_drccp_infty}. Particularly, under type $\infty-$Wasserstein ambiguity set and for a given objective upper bound $t$ such that the optimal value of the lower-level $\alsox$ \eqref{drccp_alsox} is positive (i.e., $v^A(t)>0$), the lower-level $\alsox$ \eqref{drccp_alsox} has a unique optimal solution, which is feasible to DRCCP \eqref{eq_drccp_infty}. % and  the lower-level $\alsox$ admits a unique solution, 
Then any optimal solution of the lower-level $\alsoxs$ is also feasible to DRCCP \eqref{eq_drccp_infty}.
\begin{theorem}
\label{also_cvar_better_alsox}
Suppose that for any objective upper bound $t$ such that $t\geq \min_{\bm{x}\in \X}\bm c^\top \bm x$ and $v^A(t)>0$, the lower-level $\alsox$ \eqref{drccp_alsox}  admits a unique optimal solution. Then $\alsoxs$ \eqref{eq_also_cvar_q_wass} is better than  $\alsox$ \eqref{drccp_alsox_formulation}, i.e., $v^{\as}\leq v^A$.
\end{theorem}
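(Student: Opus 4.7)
The plan is to reduce $v^{\as} \leq v^A$ to the following per-$t$ statement: for every objective upper bound $t$ at which the lower-level $\alsox$ \eqref{drccp_alsox} admits an optimum $\bm{x}^A$ satisfying the upper-level feasibility condition \eqref{alsox_drccp_formualtion}, every optimal solution $(\bm{x}^*,\beta^*)$ of the lower-level $\alsoxs$ \eqref{eq_also_cvar_q_wass_b} also satisfies \eqref{eq_also_cvar_q_wass_c}. Since the accepted $t$'s for $\alsox$ would then sit inside those for $\alsoxs$, passing to the infima yields $v^{\as}\leq v^A$.

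Fix such a $t$ and abbreviate $g(\bm{x},\rzeta):=\max_{i\in[I]}\bigl(\theta\|\bm{a}_i(\bm{x})\|_*+\bm{a}_i(\bm{x})^\top\rzeta - b_i(\bm{x})\bigr)$. The point $(\bm{x}^A,0)$ is feasible in the lower-level $\alsoxs$, so the optimality of $(\bm{x}^*,\beta^*)$ against this witness, together with the fact that $\bm{x}^A$ minimizes the hinge-loss over $\{\bm{x}\in\X:\bm{c}^\top\bm{x}\leq t\}$, produces the chain
\begin{equation*}
\varepsilon\beta^* + \E_{\Pr_{\trzeta}}\!\left[\bigl(g(\bm{x}^*,\trzeta)-\beta^*\bigr)_+\right] \leq v^A(t) = \E_{\Pr_{\trzeta}}\!\left[g(\bm{x}^A,\trzeta)_+\right] \leq \E_{\Pr_{\trzeta}}\!\left[g(\bm{x}^*,\trzeta)_+\right].
\end{equation*}
The key technical ingredient is the elementary pointwise bound, valid for every real $Y$ and every $\beta\leq 0$,
\begin{equation*}
(Y-\beta)_+ - Y_+ \geq (-\beta)\cdot\one(Y\geq 0),
\end{equation*}
which I would verify by a three-case split on the signs of $Y$ and $Y-\beta$. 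Taking expectations and substituting into the display above collapses the chain to $\varepsilon\beta^* \leq \beta^* \Pr_{\trzeta}\{g(\bm{x}^*,\trzeta)\geq 0\}$.

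A dichotomy on $\beta^*$ then closes the argument. If $\beta^*<0$, dividing by the negative $\beta^*$ flips the inequality into $\Pr_{\trzeta}\{g(\bm{x}^*,\trzeta)\geq 0\}\leq \varepsilon$, which is exactly \eqref{eq_also_cvar_q_wass_c} for $\bm{x}^*$. If $\beta^*=0$, the lower-level $\alsoxs$ objective at $(\bm{x}^*,0)$ reduces to $\E_{\Pr_{\trzeta}}[g(\bm{x}^*,\trzeta)_+]$, which must therefore equal $v^A(t)$; when $v^A(t)>0$ the uniqueness hypothesis forces $\bm{x}^*=\bm{x}^A$ and hence feasibility, while when $v^A(t)=0$ we instead obtain $g(\bm{x}^*,\trzeta)\leq 0$ almost surely, which implies \eqref{eq_also_cvar_q_wass_c} trivially. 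I expect the main obstacle to be pinning down the pointwise bound: it has to be sharp enough that, after subtracting $\varepsilon\beta^*$, the residual exposes precisely the violation probability $\Pr_{\trzeta}\{g(\bm{x}^*,\trzeta)\geq 0\}$. The uniqueness assumption only enters in the degenerate endpoint $\beta^*=0$ with $v^A(t)>0$, where without it Example~\ref{sharp_better_nonconvex} indicates that a hinge-loss-optimal but DRCCP-infeasible $\bm{x}^*$ could otherwise be selected by the lower-level $\alsoxs$.
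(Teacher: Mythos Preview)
Your argument is correct and shares the paper's overall architecture: reduce to a per-$t$ claim and split on whether $\beta^*=0$ or $\beta^*<0$, with the $\beta^*=0$ sub-cases (both $v^A(t)>0$ via uniqueness and $v^A(t)=0$ via the hinge loss vanishing) handled just as in the paper.

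The difference is in the $\beta^*<0$ branch. The paper observes that since $\beta^*<0$ lies in the interior of $\{\beta\leq 0\}$, it must be a global minimizer of $\beta\mapsto\varepsilon\beta+\E_{\Pr_{\trzeta}}[(g(\bm{x}^*,\trzeta)-\beta)_+]$ over all of $\Re$, and then invokes the Rockafellar--Uryasev characterization of CVaR minimizers to conclude $\VaR_{1-\varepsilon}\{g(\bm{x}^*,\trzeta)\}\leq\beta^*<0$, whence the chance constraint. You instead sandwich the $\alsoxs$ optimum between $v^A(t)$ and $\E_{\Pr_{\trzeta}}[g(\bm{x}^*,\trzeta)_+]$ and use the elementary pointwise bound $(Y-\beta)_+-Y_+\geq(-\beta)\one(Y\geq 0)$ to extract $\Pr_{\trzeta}\{g(\bm{x}^*,\trzeta)\geq 0\}\leq\varepsilon$ directly. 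Your inequality is essentially a bare-hands computation of the one-sided derivative of the CVaR objective in $\beta$ at $\beta=0$, so the two arguments are close in spirit; yours is more self-contained and avoids citing the VaR characterization, while the paper's is shorter once that external result is in hand. One small remark: what you obtain is $\Pr_{\trzeta}\{g\geq 0\}\leq\varepsilon$, which is marginally stronger than the chance constraint $\Pr_{\trzeta}\{g>0\}\leq\varepsilon$, so the implication \eqref{eq_also_cvar_q_wass_c} is immediate rather than literally ``exactly'' that inequality.
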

\begin{proof}
	It is sufficient to show that for any objective upper bound $t$, if the lower-level $\alsox$ \eqref{drccp_alsox} is feasible to DRCCP \eqref{eq_drccp_infty}, then the lower-level $\alsoxs$ \eqref{eq_also_cvar_q_wass_b} is also feasible to DRCCP \eqref{eq_drccp_infty}.
% We first rewrite these two lower-level problems by introducing a $\Pr_{\trzeta}$-measurable auxiliary variable $s(\trzeta)$. That is, for the lower-level problem of $\alsox$, we have
%\begin{subequations}
%\label{worst_alsox_eq_formulation}
%\begin{align}
%v_q^A(t)= \min _{\bm {x}, s(\cdot)}  \, & \E_{\Pr_{\trzeta}}[s(\trzeta)],\\
%\textup{s.t.}\quad & s(\trzeta) \geq \bm a_i(\bm x)^\top \trzeta-b_i(\bm{x})+ \theta \left\| \bm a_i(\bm x^*) \right\|_*,\forall i\in[I],\\
%& \bm{c}^\top \bm{x} \leq t, \bm {x}\in \X,\lambda\geq 0, s(\cdot)\geq 0;
%\end{align}
%\end{subequations}
%and for the lower-level problem of $\alsoxs$, we have
%\begin{subequations}\label{eq_also_cvar_q_wass_2} 
%\begin{align}
%v_q^{\as}(t)= \min _{\bm {x},  \beta, s(\cdot)}  \, & \E_{\Pr_{\trzeta}}[s(\trzeta)]-(1-\varepsilon)\beta,\\
%\textup{s.t.}\quad & s(\trzeta) \geq \bm a_i(\bm x)^\top \trzeta-b_i(\bm{x})+ \theta \left\| \bm a_i(\bm x^*) \right\|_*,\forall i\in[I],\\
%& \bm{c}^\top \bm{x} \leq t, \bm {x}\in \X,\lambda\geq 0, s(\cdot)\geq \beta, \beta\leq 0.
%\end{align}
%\end{subequations}
%For any upper bound $t$ in the lower-level problems of $\alsoxs$ \eqref{eq_also_cvar_q_wass} and $\alsox$ \eqref{drccp_alsox_formulation}, l
Let $(\hat{\bm x},\hat{\beta})$ denote an optimal solution from the lower-level $\alsoxs$ \eqref{eq_also_cvar_q_wass_b} and let $\bar{\bm x}$ denote an optimal solution from the lower-level $\alsox$ \eqref{drccp_alsox}. We split the proof into two steps by discussing $v^A(t)=0$ or $v^A(t)>0$.

\noindent\textbf{Step I.} When the optimal value of the lower-level $\alsox$ \eqref{drccp_alsox} is $v^A(t)=0$, since $v^A(t)$ is an upper bound of the lower-level $\alsoxs$ \eqref{eq_also_cvar_q_wass_b}. Thus, the optimal value of the lower-level $\alsox$ \eqref{drccp_alsox} is less than or equal to zero. Due to the fact that the objective value of $\alsoxs$ divided by $\varepsilon$ less than or equal to zero implies a conservative approximation of distributionally robust chance constraint \cite{nemirovski2007convex}, we must have that $\hat{\bm x}$ is feasible to DRCCP \eqref{eq_drccp_infty}.
% we have
% \begin{align*}
% \varepsilon\hat{\beta}+ \E_{\Pr_{\trzeta}}\left[ \left( \max_{i\in[I]} \left( \theta \left\| \bm a_i(\hat{\bm x}) \right\|_*+ \bm a_i(\hat{\bm x})^\top \trzeta-b_i(\hat{\bm x}) \right)-\hat{\beta} \right)_+ \right] \leq 0, \hat{\beta} \leq 0,
% \end{align*}
% which implies that 
% \begin{align*}
% 	\theta \left\| \bm a_i(\bar{\bm x}) \right\|_*+\bm a_i(\bar{\bm x})^\top \trzeta-b_i(\bar{\bm x})\leq 0, \forall i\in[I].
% \end{align*}
% Thus, $\hat{\bm x}$ is feasible to the DRCCP.}

\noindent\textbf{Step II.}  
Next, we consider the case when the optimal value from the lower-level $\alsox$ \eqref{drccp_alsox} is positive, i.e., $v^A(t)>0$.  We discuss two cases on whether $\hat\beta=0$ or not. \par

\noindent\textbf{Case I.}  If $\hat{\beta}=0$, then the lower-level $\alsoxs$ \eqref{eq_also_cvar_q_wass_b} and the lower-level $\alsox$ \eqref{drccp_alsox} coincide. Since the lower-level $\alsox$ \eqref{drccp_alsox} admits a unique optimal solution, we must have $\bar{\bm x}=\hat{\bm x}$. That is, $\alsoxs$ and $\alsox$ are the same. Thus, if $\bar{\bm x}$ is feasible to the DRCCP, i.e., $\bar{\bm x}$ satisfies \eqref{alsox_drccp_formualtion}, $\hat{\bm x}$ is also feasible to DRCCP \eqref{eq_drccp_infty}.

\noindent\textbf{Case II.} Suppose that $\hat{\beta}<0$. From the discussions in section 3 of \cite{rockafellar2000optimization}, we have
\begin{align*}
\VaR_{1-\varepsilon} \left\{ \theta \left\| \bm a_i(\hat{\bm x}) \right\|_*+ \bm a_i(\hat{\bm x})^\top \trzeta -  b_i(\hat{\bm x}), \forall i\in[I] \right\} <0,
\end{align*}
which implies that 
\begin{align*}
 \Pr\left\{\trzeta\colon  \theta \left\| \bm a_i(\hat{\bm x}) \right\|_*+ \bm a_i(\hat{\bm x})^\top \trzeta \leq  b_i(\hat{\bm x}), \forall i\in[I] \right\}\geq 1-\varepsilon. 
\end{align*}
Thus, $\alsoxs$ provides a feasible solution to DRCCP \eqref{eq_drccp_infty}. Since the solution of the lower-level $\alsox$ \eqref{drccp_alsox} may not be feasible to DRCCP \eqref{eq_drccp_infty}, $\alsoxs$ is better than $\alsox$. 
\QEDA
\end{proof}
We make the following remarks about Theorem~\ref{also_cvar_better_alsox}:
\begin{enumerate}[label=(\roman*)]
\item %$\alsox$ is known to be better than $\CVaR$ approximation when the deterministic set $\X$ is convex and $\theta=0$ (see, e.g., theorem 1 in \cite{jiang2022also}). However, this conclusion does not always hold under the nonconvex setting (see, e.g., example 2 in \cite{jiang2022also}).
 %By using $\alsoxs$, we provide a way to improve $\alsox$ and $\CVaR$ approximation without 
 Different from the work \cite{jiang2022also}, our proof does not require the convexity assumption of set $\X$; 
 \item The uniqueness condition is satisfied by many DRCCPs 
 % \eqref{eq_drccp}
 as well as their regular counterparts, as formally proved in the next section; and
 %following subsections; and 
\item In Example~\ref{sharp_better_nonconvex}, the lower-level $\alsox$ has a unique optimal solution when $t\geq -1$, which, however, is not feasible to the DRCCP. On the contrary, the proposed $\alsoxs$ can find the optimal solution, which is better than $\alsox$ according to Theorem~\ref{also_cvar_better_alsox}.
%\todo{why? Because of Uniqueness?} 
\end{enumerate}

The uniqueness assumption in Theorem~\ref{also_cvar_better_alsox} is, in fact, necessary. Below is an example showing that $\alsoxs$ can be worse than $\alsox$. % when the objective function in the lower-level problem of the $\alsox$ is positive and there exist multiple optimal solutions. Particularly, we use the following example to illustrate.
\begin{example}\rm
\label{example_worse_convex}
Consider a single DRCCP under type $\infty-$Wasserstein ambiguity set  with $\theta=1/2$ and $\|\cdot\|_*=\|\cdot\|_1$. Assume that the empirical distribution has $3$ equiprobable scenarios (i.e., $N=3$, $\Pr\{\trzeta=\rzeta^i\}=1/N$), risk parameter $\varepsilon=1/2$, deterministic set $\X=[0,10]^2$, function $ \bm a_1(\bm x)^\top \rzeta-b_1({\bm x}) =- \bm x^\top \rzeta +1$, 
$\rzeta^1=(5/2,7/2)^\top$, $\rzeta^2=(5/2,3/2)^\top$, and $\rzeta^3=(3/2,5/2)^\top$. In this example, DRCCP 
 \eqref{eq_drccp_infty} resorts to
\begin{equation*}
v^*=\min_{\bm x\in [0,10]^2}\left\{x_1+x_2\colon  \I(2x_1+3x_2\geq 1)+ \I(2x_1+x_2\geq 1)+ \I(x_1+2x_2\geq 1)\geq 2 \right\},
\end{equation*}
where the optimal value $v^*=1/2$. Its $\alsox$ counterpart admits the following form:
\begin{align*}
&v^{A} =\min _{ {t}}\, \biggl\{t\colon  \sum_{i\in[3]}\I({s^*_i}>0) \leq 2,\\
%&\begin{aligned}
&(\bm x^*,\bm s^*)\in\argmin_{
\bm x\in [0,10]^2, \bm s\in \Re_+^3}\biggl\{ \frac{1}{3} \sum_{i\in[3]}s_i \colon \begin{array}{l}
\displaystyle 2x_1+3x_2\geq 1-s_1, 2x_1+x_2\geq 1-s_2,\\
\displaystyle x_1+2x_2\geq 1-s_3,x_1+x_2\leq t \end{array}\biggr\}
\biggr\}.
\end{align*}
When $t=1/2$, one optimal solution of the lower-level $\alsox$ is $x_1^*=0,x_2^*=1/2,s_1^*=0, s_2^* = 1/2, s_3^*=0$, which is feasible to the DRCCP. In this case, $\alsox$ can find an optimal solution of the DRCCP with $v^A=v^*=1/2$. However, when $t=1/2$, another optimal solution of the lower-level $\alsox$ is $x_1^*=1/4, x_2^*=1/4, s_1^*=0, s_2^* = 1/4, s_3^*=1/4$, which is infeasible to the DRCCP. Hence, it violates the uniqueness assumption. % Hence, it is possible that we encounter this solution when applying $\alsox$ or $\alsoxs$ with the $\beta^*=0$.

Now let us consider corresponding $\alsoxs$:
\begin{align*}
&v^{\as} =\min _{ {t}}\, \biggl\{t\colon  \sum_{i\in[3]}\I({s^*_i}>0) \leq 2,\\
%&\begin{aligned}
&(\bm x^*,\bm s^*,\beta^*)\in\argmin_{
\bm x\in [0,10]^2, \bm s,\beta\leq 0}\biggl\{ \frac{1}{3} \sum_{i\in[3]}s_i -\frac{1}{2}\beta \colon \begin{array}{l}
\displaystyle 2x_1+3x_2\geq 1-s_1, 2x_1+x_2\geq 1-s_2,\\
\displaystyle x_1+2x_2\geq 1-s_3,x_1+x_2\leq t, s_i\geq \beta,\forall i\in[3] \end{array}\biggr\}
\biggr\}.
\end{align*}
When $t=1/2$, one of its optimal solution is $x_1^*=1/4, x_2^*=1/4, s_1^*=0, s_2^* = 1/4, s_3^*=1/4, \beta^*=0$, which is infeasible to the DRCCP. Thus, $\alsoxs$ may not be able to find an optimal solution of the DRCCP. That is, we can have $v^{\as}> v^{A}$ when the uniqueness assumption is violated. \QEDB
\end{example}
% \nan{$\alsox+$ in this example can find the optimal solution. This example also shows that $\alsoxs$ and $\alsox+$ are not comparable}

\subsection{$\alsoxs$ is Better Than $\alsoxus$}
In the lower-level $\alsoxs$ \eqref{eq_also_cvar_q_wass_b}, we impose the constraint $\beta\leq 0$ based on the $\CVaR$ approximation \eqref{drccp_cvar_formulation}. By relaxing this constraint, we obtain a weaker $\alsoxus$ \eqref{eq_also_cvar_q_wass_weak}. We show that when $\alsoxus$ provides a feasible solution to DRCCP \eqref{eq_drccp_infty}, the lower-level $\alsoxs$ is equivalent to that of $\alsoxus$. Particularly, under type $\infty-$Wasserstein ambiguity set and for a given objective upper bound $t$, when there exists an optimal solution from the lower-level $\alsoxus$ \eqref{eq_also_cvar_q_wass_weak_b} that is feasible to DRCCP \eqref{eq_drccp_infty}, any optimal solution of the lower-level $\alsoxs$ is also feasible to DRCCP \eqref{eq_drccp_infty}.

\begin{theorem}
\label{also_sharp_better_weak_sharp}
Suppose that for any objective upper bound $t$ such that $t\geq \min_{\bm{x}\in \X}\bm c^\top \bm x$, $\alsoxs$ \eqref{eq_also_cvar_q_wass} is better than  $\alsoxus$ \eqref{eq_also_cvar_q_wass_weak}, i.e., $v^{\as}\leq v^{\aus}$.
\end{theorem}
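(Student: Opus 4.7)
The plan is to show that every objective upper bound $t$ that is accepted by $\alsoxus$ is also accepted by $\alsoxs$; since both $v^{\as}$ and $v^{\aus}$ are the infima over their respective accepted $t$'s, this directly yields $v^{\as}\leq v^{\aus}$. So I would fix such a $t$ and prove the existence of an optimal solution $(\bm x^*,\beta^*)$ of the lower-level $\alsoxs$ \eqref{eq_also_cvar_q_wass_b} with $\beta^*\leq 0$ such that $\bm x^*$ satisfies the chance constraint \eqref{eq_also_cvar_q_wass_c}.

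Let $L(\bm x,\trzeta):=\max_{i\in [I]}\bigl(\theta\|\bm a_i(\bm x)\|_*+\bm a_i(\bm x)^\top\trzeta-b_i(\bm x)\bigr)$ and set $\bar g(\bm x):=\min_{\beta\in\Re}\{\varepsilon\beta+\E_{\Pr_{\trzeta}}[(L(\bm x,\trzeta)-\beta)_+]\}=\varepsilon\CVaR_{1-\varepsilon}(L(\bm x,\trzeta))$ and $\underline g(\bm x):=\min_{\beta\leq 0}\{\varepsilon\beta+\E_{\Pr_{\trzeta}}[(L(\bm x,\trzeta)-\beta)_+]\}$. Clearly $\underline g\geq \bar g$ pointwise, so for the fixed $t$ the lower-level $\alsoxus$ has optimal value $\min \bar g$ and the lower-level $\alsoxs$ has optimal value $\min \underline g$, both over the same feasible set $\{\bm x\in\X:\bm c^\top\bm x\leq t\}$. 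Suppose $\alsoxus$ accepts $t$, i.e., there exists $(\bm x^*,\beta^*)$ in the argmin of the lower-level $\alsoxus$ such that $\bm x^*$ is feasible to DRCCP \eqref{eq_drccp_infty}. Then $\VaR_{1-\varepsilon}(L(\bm x^*,\trzeta))\leq 0$, because the chance constraint \eqref{eq_also_cvar_q_wass_weak_c} holding at $\bm x^*$ is exactly the statement $F_{L(\bm x^*,\trzeta)}(0)\geq 1-\varepsilon$.

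Now I would invoke the Rockafellar--Uryasev characterization (section~3 of \cite{rockafellar2000optimization}): the set of minimizers of $\beta\mapsto \varepsilon\beta+\E_{\Pr_{\trzeta}}[(L(\bm x^*,\trzeta)-\beta)_+]$ is the interval $[\VaR_{1-\varepsilon}(L(\bm x^*,\trzeta)),\VaR_{1-\varepsilon}^+(L(\bm x^*,\trzeta))]$ between the lower and upper $(1-\varepsilon)$-quantiles. Since the left endpoint is $\leq 0$, this interval intersects $(-\infty,0]$, which gives a minimizer $\widetilde\beta\leq 0$. Consequently $\underline g(\bm x^*)=\bar g(\bm x^*)$.

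Combining these, for every $\bm x$ in the lower-level feasible set I obtain the chain $\underline g(\bm x)\geq \bar g(\bm x)\geq \bar g(\bm x^*)=\underline g(\bm x^*)$, so $\bm x^*$ also attains the minimum of $\underline g$ and thus $(\bm x^*,\widetilde\beta)$ lies in the argmin of the lower-level $\alsoxs$ \eqref{eq_also_cvar_q_wass_b}. Because $\bm x^*$ still satisfies the chance constraint \eqref{eq_also_cvar_q_wass_c}, the bound $t$ is accepted by $\alsoxs$, completing the proof. The only subtle point, and the step I would be most careful with, is the quantile argument that turns ``DRCCP-feasibility of $\bm x^*$'' into ``the unconstrained $\CVaR$ minimization admits a non-positive optimal $\beta$''; the rest is routine. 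Degenerate cases (empty ambiguity set realizations, non-attainment of $\bar g$) can be handled by passing to an approximately optimal $(\bm x^*,\beta^*)$ with the same argument, since the quantile inequality is preserved in the limit.
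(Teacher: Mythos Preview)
Your proposal is correct and hinges on the same key observation as the paper: once the lower-level $\alsoxus$ produces a DRCCP-feasible $\bm x^*$, the Rockafellar--Uryasev quantile characterization guarantees that the unconstrained $\beta$-minimization at $\bm x^*$ admits a minimizer $\widetilde\beta\leq 0$, so the constraint $\beta\leq 0$ is nonbinding and the two lower-level problems share the same optimal value.

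The one noteworthy difference is in scope. You exhibit \emph{one} optimal solution of the lower-level $\alsoxs$ (namely the inherited $(\bm x^*,\widetilde\beta)$) that satisfies the chance constraint; this suffices under the optimistic reading of the bilevel program. The paper goes a step further: after establishing that the two lower-level problems have the same optimal value, it takes an \emph{arbitrary} optimizer $(\hat{\bm x},\hat\beta)$ of the lower-level $\alsoxs$, argues it must also be optimal for the unconstrained lower-level $\alsoxus$, and then applies the quantile characterization once more to conclude $\VaR_{1-\varepsilon}(L(\hat{\bm x},\trzeta))\leq\hat\beta\leq 0$. This yields the stronger statement that \emph{every} lower-level $\alsoxs$ optimizer is DRCCP-feasible, which also covers the pessimistic bilevel interpretation and matches how Algorithm~\ref{alg_alsox_sharp} behaves (it picks ``an'' optimal solution, not a specific one). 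Your argument can be upgraded to this stronger conclusion with exactly this extra step; nothing else changes.
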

\begin{proof}
It is sufficient to show that for a given objective upper bound $t$, if the lower-level $\alsoxus$ \eqref{eq_also_cvar_q_wass_weak_b}  obtains a feasible solution to DRCCP \eqref{eq_drccp_infty}, then the lower-level $\alsoxs$ \eqref{eq_also_cvar_q_wass_b} is also feasible. 

Let $(\hat{\bm x}, \hat{\beta})$ denote an optimal solution from the lower-level  $\alsoxs$ \eqref{eq_also_cvar_q_wass_b} and let $(\bar{\bm x}, \bar{\beta})$ denote an optimal solution from the lower-level $\alsoxus$ \eqref{eq_also_cvar_q_wass_weak_b}. Suppose that $\bar{\bm x}$ is feasible to DRCCP \eqref{eq_drccp_infty}, i.e., $\bar{\bm x}$ satisfies \eqref{alsox_drccp_formualtion}. Now let
\begin{align*}
 \bar{\beta}^*: =\VaR_{1-\varepsilon} \left\{ \theta \left\| \bm a_i(\bar{\bm x}) \right\|_*+ \bm a_i(\bar{\bm x})^\top \trzeta - b_i(\bar{\bm x}), \forall i\in[I] \right\}\leq 0.   
\end{align*}
According to theorem 1 in \cite{rockafellar2000optimization} (see, e.g., equation (7) in \cite{rockafellar2000optimization}), then we have that $(\bar{\bm x}, \bar{\beta}^*)$ is another optimal solution to the lower-level $\alsoxus$ \eqref{eq_also_cvar_q_wass_weak_b}. Since the only difference between  the lower-level $\alsoxs$ \eqref{eq_also_cvar_q_wass_b} and the lower-level $\alsoxus$ \eqref{eq_also_cvar_q_wass_weak_b} is the constraint $\beta\leq 0$, the solution $(\bar{\bm x}, \bar{\beta}^*)$ is also optimal to the lower-level $\alsoxs$ \eqref{eq_also_cvar_q_wass_b}. That is, for a given objective upper bound $t$, both lower-level problems have the same optimal value.

In this case, for any optimal solution from  the lower-level $\alsoxs$ \eqref{eq_also_cvar_q_wass_b}, it should also be optimal to the lower-level $\alsoxus$ \eqref{eq_also_cvar_q_wass_weak_b}. Hence, $(\hat{\bm x}, \hat{\beta})$ is also optimal to the lower-level $\alsoxus$ \eqref{eq_also_cvar_q_wass_weak_b}.
 Based on theorem 1 in \cite{rockafellar2000optimization} (see, e.g., equation (7) in \cite{rockafellar2000optimization}), we have 
\begin{align*}
\hat\beta\geq \VaR_{1-\varepsilon} \left\{ \theta \left\| \bm a_i(\hat{\bm x}) \right\|_*+ \bm a_i(\hat{\bm x})^\top \trzeta -  b_i(\hat{\bm x}), \forall i\in[I] \right\}.
\end{align*}
Combining with the condition that $\hat\beta\leq 0$, we have
\begin{align*}
\VaR_{1-\varepsilon} \left\{ \theta \left\| \bm a_i(\hat{\bm x}) \right\|_*+ \bm a_i(\hat{\bm x})^\top \trzeta -  b_i(\hat{\bm x}), \forall i\in[I] \right\} \leq 0,
\end{align*}
which implies that $\hat{\bm x}$ satisfies \eqref{alsox_drccp_formualtion}, i.e., $\hat{\bm x}$ is also feasible to DRCCP \eqref{eq_drccp_infty}.
This completes the proof.
 \QEDA
\end{proof}
%\noindent\textbf{Case I}  Suppose that $\hat{\beta}=0$, then there are two cases for $\bar{\beta}$, i.e., $\bar{\beta}>0$ or $\bar{\beta}=0$: (i) if $\bar{\beta}>0$, by the interpretation of $\beta$ (see the discussions in section 3 of \cite{rockafellar2000optimization}), we know $\bar{\bm x}$ is infeasible to the DRCCP, i.e., $\hat{\bm x}$ violates \eqref{alsox_drccp_formualtion}; (ii)    if $\bar{\beta}=0$,  then the lower-level problems of $\alsoxs$ \eqref{eq_also_cvar_q_wass} and the lower-level problems of $\alsoxus$ \eqref{eq_also_cvar_q_wass_weak} coincide, which is the same formulation from the lower-level problem of $\alsox$ \eqref{drccp_alsox_formulation}. Since the lower-level problem of $\alsox$ \eqref{drccp_alsox_formulation} admits a unique optimal solution, we must have $\bar{\bm x}=\hat{\bm x}$. Hence, in this case, $\alsoxs$ is better than $\alsoxus$.
%
%\noindent\textbf{Case II.} When $\hat{\beta}<0$ and we know that $\alsoxs$ is feasible. There are two cases for $\bar{\beta}$, i.e., $\bar{\beta}<0$ or $\bar{\beta}\geq0$, which implies that the solution of $\alsoxus$ can be either feasible or infeasible to the DRCCP. Hence, $\alsoxs$ is better than $\alsox$.

We make the following remarks about Theorem~\ref{also_sharp_better_weak_sharp}:
\begin{enumerate}[label=(\roman*)]
\item Note that in Theorem~\ref{also_sharp_better_weak_sharp}, we show that $\alsoxs$ is better than $\alsoxus$;
and
\item One may expect that $\alsox$ and $\alsoxus$ are comparable. In fact, they are not. In Example~\ref{cvar_better_nonconvex}, we can show that  $\alsoxus$ returns a better solution (i.e., $v^{\aus}=0$ and $\alsox$ fails to find any feasible solution), while in Example~\ref{example_worse_convex}, $\alsox$ can have a better solution (i.e., $v^A=1/2 <v^{\aus}=2/3$).
\end{enumerate}

\subsection{$\alsoxus$ is Better Than $\CVaR$ Approximation}
Recall that the differences between $\alsoxus$ and $\CVaR$ approximation lie in the corresponding upper-level and lower-level problems, where the checking condition in the upper-level $\CVaR$ approximation is more restricted than the one in the upper-level $\alsoxus$, and the lower-level $\alsoxus$ is a relaxation of the lower-level $\CVaR$ approximation. As a result, we show that when $\CVaR$ approximation provides a feasible solution to DRCCP \eqref{eq_drccp_infty}, any optimal solution of the lower-level $\alsoxus$ must be feasible to DRCCP \eqref{eq_drccp_infty}.
\begin{theorem}
\label{also_sharp_better_cvar}
$\alsoxus$ \eqref{eq_also_cvar_q_wass_weak} is better than $\CVaR$ approximation \eqref{drccp_cvar_formulation}, i.e., $v^{A\underline\#}\leq v^{\CVaR}$.
\end{theorem}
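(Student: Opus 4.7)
The plan is to show that whenever an objective upper bound $t$ is feasible for the $\CVaR$ approximation \eqref{drccp_cvar_formulation}, it is also feasible for $\alsoxus$ \eqref{eq_also_cvar_q_wass_weak}; this will give $v^{\aus}\leq v^{\CVaR}$. The argument rests on two simple observations: (i) the lower-level $\alsoxus$ \eqref{eq_also_cvar_q_wass_weak_b} is a pure relaxation of the lower-level $\CVaR$ \eqref{drccp_cvar} obtained by dropping the sign constraint $\beta\leq 0$, so its optimal value can only decrease; and (ii) the Rockafellar--Uryasev identity
\[
\min_{\beta}\left\{\varepsilon\beta+\E_{\Pr_{\trzeta}}\left[\max_{i\in[I]}\bigl(\theta\|\bm a_i(\bm x)\|_*+\bm a_i(\bm x)^\top\trzeta-b_i(\bm x)\bigr)-\beta\right]_+\right\}=\varepsilon\,\CVaR_{1-\varepsilon}\bigl(\tilde{X}(\bm x)\bigr),
\]
where $\tilde{X}(\bm x):=\max_{i\in[I]}\bigl(\theta\|\bm a_i(\bm x)\|_*+\bm a_i(\bm x)^\top\trzeta-b_i(\bm x)\bigr)$, ties the lower-level objective directly to $\CVaR$.

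Concretely, fix any $t$ feasible for the $\CVaR$ approximation, and let $(\bar{\bm x},\bar\beta)$ be an optimal solution of the lower-level $\CVaR$ \eqref{drccp_cvar}. The feasibility check \eqref{cvar_drccp_formualtion} says that the optimal value of \eqref{drccp_cvar} is at most $0$. Since $(\bar{\bm x},\bar\beta)$ is feasible for the relaxed lower-level $\alsoxus$ \eqref{eq_also_cvar_q_wass_weak_b}, the optimal value of \eqref{eq_also_cvar_q_wass_weak_b} is also at most $0$. Now let $(\hat{\bm x},\hat\beta)$ be any optimal solution of \eqref{eq_also_cvar_q_wass_weak_b}; then
\[
\varepsilon\hat\beta+\E_{\Pr_{\trzeta}}\!\left[\tilde{X}(\hat{\bm x})-\hat\beta\right]_+\leq 0.
\]
Applying the identity above at $\bm x=\hat{\bm x}$ and minimizing over $\beta$ gives $\varepsilon\,\CVaR_{1-\varepsilon}(\tilde{X}(\hat{\bm x}))\leq 0$, hence $\CVaR_{1-\varepsilon}(\tilde{X}(\hat{\bm x}))\leq 0$. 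Because $\VaR_{1-\varepsilon}\leq \CVaR_{1-\varepsilon}$, this forces $\VaR_{1-\varepsilon}(\tilde{X}(\hat{\bm x}))\leq 0$, which is exactly the chance-constraint feasibility condition \eqref{eq_also_cvar_q_wass_weak_c}. Therefore $t$ is feasible for $\alsoxus$, which implies $v^{\aus}\leq v^{\CVaR}$.

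There is no real obstacle: the proof is essentially the combination of ``relaxation decreases the optimal value'' with the well-known $\CVaR$ variational representation. The only mild subtlety is that one should be careful that $(\hat{\bm x},\hat\beta)$ need not coincide with the $\CVaR$-minimizing choice of $\beta$; however, one only needs the one-sided inequality $\varepsilon\,\CVaR_{1-\varepsilon}(\tilde{X}(\hat{\bm x}))\leq \varepsilon\hat\beta+\E_{\Pr_{\trzeta}}[\tilde{X}(\hat{\bm x})-\hat\beta]_+$, which follows from the Rockafellar--Uryasev identity applied as a lower bound over $\beta$. No convexity of $\X$ or uniqueness of solutions is needed anywhere.
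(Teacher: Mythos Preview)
Your proposal is correct and follows essentially the same approach as the paper: both argue that the lower-level $\alsoxus$ is a relaxation of the lower-level $\CVaR$ problem, so its optimal value is also non-positive whenever the $\CVaR$ feasibility check \eqref{cvar_drccp_formualtion} passes, and a non-positive $\CVaR$-loss forces the chance constraint to hold. The only difference is cosmetic---the paper cites \cite{nemirovski2007convex} for the last implication, whereas you spell it out via the Rockafellar--Uryasev identity and the inequality $\VaR_{1-\varepsilon}\leq \CVaR_{1-\varepsilon}$.
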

\begin{proof}
Notice that for a given objective upper bound $t$, the lower-level $\alsoxus$ is a relaxation of the lower-level $\CVaR$ approximation. Thus, if the optimal value of the lower-level $\CVaR$ approximation is non-positive (i.e., constraint \eqref{cvar_drccp_formualtion} holds), then the optimal value of the lower-level $\alsoxus$ must be non-positive, which ensures that any optimal solution is feasible to the DRCCP \eqref{eq_drccp_infty} according to \cite{nemirovski2007convex}.
% According to \cite{nemirovski2007convex}, the fact that the optimal value of the lower-level $\alsoxus$ is non-positive ensures that any optimal solution is feasible to the DRCCP \eqref{eq_drccp_infty}. 
Therefore, for a given $t$, $\alsoxus$ must find a feasible solution to DRCCP \eqref{eq_drccp_infty} if  $\CVaR$ approximation finds one. This completes the proof.
% know that for a given upper bound of the objective function $t$, the solutions of from the lower-level problem of $\alsoxs$ and the lower-level problem of $\CVaR$ approximation are the same. 
%Since the checking condition is more restricted in \eqref{cvar_drccp_formualtion} than the one in \eqref{eq_also_cvar_q_wass_c}, we have $v^{\as}\leq v^{\CVaR}$. 
\QEDA
\end{proof}
We remark that the result in Theorem~\ref{also_sharp_better_cvar}  can be extended to any general ambiguity set, that is, $\alsoxus$ is better than  $\CVaR$ approximation under a general ambiguity set. However, this does not hold for Theorem~\ref{also_cvar_better_alsox}, since the worst-case distributions in the lower-level $\alsox$ \eqref{drccp_alsox} and the upper-level  $\alsox$ \eqref{alsox_drccp_formualtion} may not be the same. Hence, $\alsoxs$ and $\alsox$ are not comparable under the general ambiguity set.
% We make the following remarks about Theorem~\ref{also_sharp_better_cvar}: 
% \begin{enumerate}[label=(\roman*)]
% \item \nan{Example~\ref{sharp_better_nonconvex}} \nan{new example} shows that $\alsoxus$ is better than $\CVaR$ approximation, where $\CVaR$ approximation fails to find any optimal solution; and
% \item This result can be extended to any general ambiguity set, that is, $\alsoxus$ is better than  $\CVaR$ approximation under a general ambiguity set. However, this does not hold for $\alsox$, since the worst-case distributions in the lower-level $\alsox$ \eqref{drccp_alsox} and the upper-level  $\alsox$ \eqref{alsox_drccp_formualtion} may not be the same. Hence, $\alsox$ and $\CVaR$ approximation are not comparable under the general ambiguity set.
% \end{enumerate}
Below is an example to illustrate Theorem~\ref{also_sharp_better_cvar}.
\begin{example}
\label{weak_sharp_better_cvar}
\rm
Consider a single DRCCP under type $\infty-$Wasserstein ambiguity set  with $\theta=1/2$. Assume that the empirical distribution has $3$ equiprobable scenarios (i.e., $N=3$, $\Pr\{\tilde\zeta=\zeta^i\}=1/N$), risk parameter $\varepsilon=1/2$, deterministic set $\X=\Re_+$, function $ a_1(x)^\top \zeta-b_1({x}) = x - {\zeta}$, 
$\zeta^1=5/2$, $\zeta^2=3/2$, and $\zeta^3=1/2$. In this example, DRCCP \eqref{eq_drccp_infty} resorts to
\begin{equation*}
v^*=\min_{x\geq 0}\left\{x\colon  \I\left(x\geq 3\right)+\I\left(x\geq 2\right)+\I\left( x\geq 1\right)\geq 2 \right\},
\end{equation*}
Its $\CVaR$ approximation is
\begin{align*}
v^\CVaR= \min_{x\geq 0,\bm s,\beta\leq 0} \left\{ x\colon x\geq 3-s_1, x\geq 2-s_2, x\geq 1-s_3, \frac{1}{3}\sum_{i\in[3]}s_i-\frac{\beta}{2}\leq 0, s_i\geq \beta, \forall i\in[3]\right\},
\end{align*}
%By the simple calculation, the optimal value is $v^*=0$ with the optimal solution $x^*=0$. However, the $\CVaR$ approximation is infeasible.   
and the weak formulation of $\alsoxs$   \eqref{eq_also_cvar_q_wass_weak} can be written as
\begin{align*}
&v^{\aus} =\min _{ {t}}\, \Biggl\{t\colon  \sum_{i\in[3]}\I({s^*_i}>0) \leq 2,\\
%&\begin{aligned}
&(x^*,\bm s^*,\beta^*)\in\argmin_{\begin{subarray}{c}
x\geq 0, \bm s, \beta\end{subarray}}\biggl\{ \frac{1}{3} \sum_{i\in[3]}s_i - \frac{1}{2}\beta\colon \begin{array}{l}
\displaystyle x \geq 3-s_1, x\geq 2-s_2,x\geq 1-s_3, \\
\displaystyle x\leq t, s_i\geq \beta,\forall i\in[3]\end{array}\biggr\}
\Biggr\}.
\end{align*}
By the straightforward calculation, we have  $v^* =2$, $v^\CVaR =8/3$, $v^{\aus}=2$. Therefore, in this example, $\alsoxus$ returns the optimal solution, but $\CVaR$ approximation cannot.   
\QEDB
\end{example}

\subsection{Summary of Comparisons}
Finally, we conclude this section by providing theoretical comparisons among the output objective values of  $\alsoxs$, $\alsoxus$, $\alsox$, and $\CVaR$ approximation, which are shown in Figure~\ref{summary_figure}.

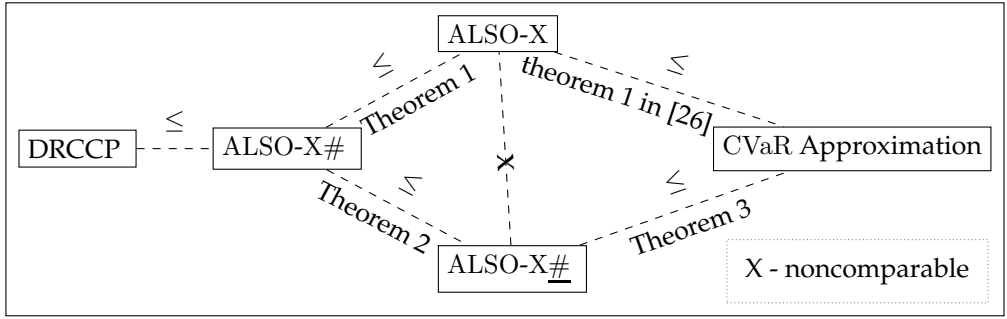
\begin{figure}[htbp]\centering
	\begin{tikzpicture}[framed,
		every path/.style={>=latex},
		every node/.style={draw}
		]
		\node[rectangle, align=left] (sharp) { $\alsoxs$ };
		\node[rectangle, left = of sharp, align=left] (drccp) { DRCCP };
			\draw [dashed,-,circle] (drccp) -- (sharp); 
		\draw [dashed,-,circle,draw opacity=0] (drccp) -- (sharp) node[above,midway,sloped]{$\leq$}; 
		
		\node[rectangle, below right= of sharp, align=left] (wsharp) {$\alsoxus$ };
		\node[rectangle, above right= of sharp, align=left] (alsox) { $\alsox$};
		
		\node[rectangle, above right= of wsharp, align=left] at (4.6,-1.3) (cvar) {$\CVaR$ Approximation};  
		
		\draw [dashed,-,circle] (sharp) -- (wsharp); 
		\draw [dashed,-,circle,draw opacity=0] (sharp) -- (wsharp) node[above,midway,sloped]{$\leq$}; 
		\draw [dashed,-,rectangle,draw opacity=0] (sharp) -- (wsharp) 
		node[below,sloped,pos=0.4]{ Theorem~\ref{also_sharp_better_weak_sharp}};
		
		\draw [dashed,-,circle] (sharp) -- (alsox); 
		\draw [dashed,-,circle,draw opacity=0] (sharp) -- (alsox) node[above,midway,sloped]{$\leq$}; 
		\draw [dashed,-,rectangle,draw opacity=0] (sharp) -- (alsox) 
		node[below,sloped,pos=0.6]{Theorem~\ref{also_cvar_better_alsox}};
		
		\draw [dashed,-,circle] (alsox) -- (cvar); 
		\draw [dashed,-,circle,draw opacity=0] (alsox) -- (cvar) node[above,midway,sloped]{$\leq$}; 
% 		\draw [dashed,-,rectangle,draw opacity=0] (alsox) -- (cvar)
% 		node[below,sloped,pos=0.45]{ Convex set $\X$};
		\draw [dashed,-,rectangle,draw opacity=0] (alsox) -- (cvar)
		node[below,sloped,pos=0.3]{ {theorem 1 in \cite{jiang2022also}}};
		
		\draw [dashed,-,circle] (wsharp) -- (cvar); 
		\draw [dashed,-,circle,draw opacity=0] (wsharp) -- (cvar) node[above,midway,sloped]{$\leq$}; 
		\draw [dashed,-,rectangle,draw opacity=0] (wsharp) -- (cvar)
		node[below,sloped,pos=0.5]{Theorem~\ref{also_sharp_better_cvar} };
		
		\draw [dashed,-,circle] (wsharp) -- (alsox); 
		\draw [dashed,-,circle,draw opacity=0] (wsharp) -- (alsox) 
  node[left,pos=0.3,sloped]{\LARGE x};
  
  % \draw [dashed,-,rectangle,draw opacity=0] (wsharp) -- (alsox) 
  %   node[above= 0.2cm,sloped,pos=0.45]{\scriptsize{non-comparable} }; 

  	\matrix [densely dotted,draw opacity=0.5,below] at (7.5,-1.2) {
	\node[font=\fontsize{1pt}{3pt},draw opacity=0] { X - noncomparable}; \\
		};

		%	\path [dashed] (sharp) -- node [midway,above,align=center ] {$\geq$} (wsharp);
		
	\end{tikzpicture}
	%\vspace{-1.5em}
	%\vspace{-1.1em}
	\caption{Summary of Comparisons}
	\label{summary_figure}
	%\vspace{-2em}
\end{figure} 

\section{The Optimal Solution of the Lower-level $\alsox$ is Unique}
\label{sec_unique_alsox}
This section investigates conditions under which the lower-level $\alsox$ can provide a unique optimal solution, a sufficient condition guaranteeing that $\alsoxs$ is better than $\alsox$ according to Theorem \ref{also_cvar_better_alsox}.
Notably, we prove that the lower-level $\alsox$ \eqref{drccp_alsox} admits a unique optimal solution if one of the following conditions hold: (i) empirical data are sampled from continuous nondegenerate distributions and set $\X$ is arbitrary; (ii) a single DRCCP with continuous reference distribution and set $\X$ is convex;  (iii) a joint DRCCP with right-hand uncertainty, a continuous reference distribution, and a convex set $\X$; or (iv) a joint DRCCP with left-hand uncertainty and set $\X$ is convex.

\subsection{Uniqueness: DRCCPs with an i.i.d. Empirical Reference Distribution Sampling from a Continuous Distribution}
In this subsection, we consider the case under which lower-level $\alsox$ \eqref{drccp_alsox} can provide a unique solution when the reference distribution is of finite support and is constructed by  i.i.d. samples from a continuous nondegenerate distribution. 
For the given i.i.d. samples of the random parameters $\trzeta$, we consider the following $\alsox$:
\begin{subequations}
\label{eq_also_x_discrete_formulation}
\begin{align}
v^A =\min _{ {t}}\quad & t,\label{eq_also_x_discrete_a}\\
\textup{s.t.}\quad&\bm x^*\in\argmin_{\bm x}\left\{\frac{1}{N} \sum_{j\in[N]}\max_{i\in [I]} \left[\theta\left\|\bm{a}_i(\bm{x}) \right\|_*+\bm{a}_i(\bm{x})^\top{\rzeta^j}-b_i(\bm{x})\right]_+\colon \bm{c}^\top \bm{x} \leq t,\bm {x}\in \X\right\},\label{eq_also_x_discrete}\\
& \sum_{j\in[N]}\I\left\{\max_{i\in [I]}\left[\theta\left\|\bm{a}_i(\bm{x}^*) \right\|+\bm{a}_i(\bm{x}^*)^\top {\rzeta^j}-b_i(\bm{x}^*)\right]_+=0\right\} \geq N-\floor{N\varepsilon}.\label{eq_also_x_discrete_checking}
\end{align}
\end{subequations}
In fact, we show that with  probability $1$, for any objective upper bound $t$ such that $t\geq \min_{\bm{x}\in \X}\bm c^\top \bm x$ and the optimal value of the lower-level $\alsox$ \eqref{drccp_alsox} is positive (i.e., $v^A(t)>0$), the lower-level $\alsox$ \eqref{drccp_alsox} admits a unique optimal solution. Note that if $v^A(t)=0$, then any optimal solution of the lower-level $\alsox$ \eqref{drccp_alsox} is feasible to DRCCP \eqref{eq_drccp_infty}. Thus, we focus on the non-trivial case when $v^A(t)>0$.

\begin{theorem}
\label{alsox_unique_sampling_linear}
Suppose (i) for any $\bm x_1, \bm x_2\in\X$ with $\bm x_1\not= \bm x_2$ and any pair $(i_1,i_2)\in[I]\times [I]$, $\bm{a}_{i_1}(\bm{x}_1)\not=\bm{a}_{i_2}(\bm{x}_2)$; (ii) the true distribution $\Pr^*$ of the random parameters $\trzeta$ is continuous and nondegenerate; and (iii)  $\rzeta^1,\rzeta^2,\cdots, \rzeta^N$ are i.i.d. samples of the random parameters $\trzeta$.  Then the lower-level $\alsox$ \eqref{drccp_alsox}  admits a unique optimal solution when $t\geq \min_{\bm x\in \X}\bm c^\top \bm x$ and $v^A(t)>0$.
% , even when set $\X$ is not compact.
%Then, under type $\infty-$Wasserstein ambiguity set, $\alsoxs$ is better than $\alsox$ with probability $1$.
\end{theorem}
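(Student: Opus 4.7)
My plan is to argue by contradiction and exploit the absolute continuity of the samples together with assumption (i) to show that non-uniqueness of the lower-level $\alsox$ optimum is a Lebesgue measure-zero event. Suppose for contradiction that, on a set of realizations $(\rzeta^1,\ldots,\rzeta^N)$ of positive probability, the lower-level $\alsox$ \eqref{drccp_alsox} admits two distinct optima $\bm x_1\ne \bm x_2\in\X$ with $\hat F(\bm x_1)=\hat F(\bm x_2)=v^A(t)>0$. Because the true distribution $\Pr^*$ is continuous and nondegenerate and the samples are i.i.d., the joint law of $(\rzeta^1,\ldots,\rzeta^N)$ is absolutely continuous with respect to Lebesgue measure on $\Re^{mN}$, so this bad event has positive Lebesgue measure---which is what I will contradict.

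For each $k\in\{1,2\}$ and $j\in[N]$, I would introduce the active index $T_k^j\in\{0\}\cup[I]$ by setting $T_k^j:=\min\arg\max_{i\in[I]} f_i(\bm x_k,\rzeta^j)$ if $\max_i f_i(\bm x_k,\rzeta^j)>0$ and $T_k^j:=0$ otherwise, where $f_i(\bm x,\rzeta):=\theta\|\bm a_i(\bm x)\|_*+\bm a_i(\bm x)^\top\rzeta-b_i(\bm x)$. The identity $\hat F(\bm x_1)=\hat F(\bm x_2)$ then becomes the linear equation
\[
\sum_{j=1}^N c_j^\top \rzeta^j = D,\qquad c_j:=\bm a_{T_1^j}(\bm x_1)\mathbf{1}[T_1^j\ne 0]-\bm a_{T_2^j}(\bm x_2)\mathbf{1}[T_2^j\ne 0],
\]
with $D$ depending on $(\bm x_1,\bm x_2,T_1,T_2)$ but not on $\rzeta$. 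Since $v^A(t)>0$, at least one sample $j^*$ has $T_k^{j^*}\ne 0$ for some $k$. Assumption (i) then forces the coefficient vector to be nonzero: if both $T_1^{j^*},T_2^{j^*}\ne 0$, then $\bm a_{T_1^{j^*}}(\bm x_1)\ne \bm a_{T_2^{j^*}}(\bm x_2)$ gives $c_{j^*}\ne 0$; the edge case where only one side is active is resolved by observing that assumption (i) permits at most one $\bm x\in\X$ to satisfy $\bm a_i(\bm x)=\bm 0$ for each fixed $i$, so the remaining $\bm a$-vector is nonzero and still yields $c_{j^*}\ne 0$.

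Having established that $\hat F(\bm x_1)=\hat F(\bm x_2)$ is a nontrivial affine constraint on $(\rzeta^1,\ldots,\rzeta^N)$, I would stratify the bad event according to the type pair $(T_1,T_2)\in(\{0\}\cup[I])^{2N}$---a finite set---so that within each stratum the event lies in a hyperplane of $\Re^{mN}$ and hence has Lebesgue measure zero; the finite union is still measure zero, yielding the desired contradiction.

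\textbf{Main obstacle.} The principal difficulty is that within each type stratum the pair $(\bm x_1,\bm x_2)$ ranges over the potentially uncountable set $\X\times\X$ and the hyperplane direction $(c_j)_j$ depends on $(\bm x_1,\bm x_2)$ through the affine maps $\bm a_{T_k^j}(\cdot)$; a naive union of uncountably many hyperplanes need not be measure zero. Resolving this requires a parametric reduction---e.g., invoking a measurable selection of an optimal $\bm x^*(\rzeta)$ so that only countably many distinct hyperplanes actually appear, or further partitioning $\X\times\X$ into finer activity-pattern cells on which the coefficient vector is determined entirely by $(T_1,T_2)$ rather than by the particular $(\bm x_1,\bm x_2)$. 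Carrying out this reduction cleanly, together with the coefficient edge-case check, is the key technical hurdle.
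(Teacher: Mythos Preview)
Your core strategy---fixing the active-piece pattern $(T_1,T_2)$ and reducing $\hat F(\bm x_1)=\hat F(\bm x_2)$ to a nontrivial affine equation in $(\rzeta^1,\ldots,\rzeta^N)$---is exactly what the paper does in its Step~I. You have also correctly isolated the real difficulty: the uncountable union over $(\bm x_1,\bm x_2)\in\X\times\X$.

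However, neither of your proposed resolutions of that obstacle will work as stated. A measurable selection $\bm x^*(\rzeta)$ still takes uncountably many values and, worse, makes the ``hyperplane'' coefficients themselves functions of $\rzeta$, so you no longer have a fixed affine constraint at all. The second idea---partitioning $\X\times\X$ into activity-pattern cells on which $(c_j)_j$ depends only on $(T_1,T_2)$---fails because $c_j=\bm a_{T_1^j}(\bm x_1)-\bm a_{T_2^j}(\bm x_2)$ is an affine, hence genuinely $(\bm x_1,\bm x_2)$-dependent, quantity; no finite refinement of the type data freezes it.

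The paper sidesteps the uncountable union by a three-step limiting argument rather than a parametric reduction. Step~I treats compact \emph{discrete} (hence finite) $\X$, where the union over pairs is finite and your hyperplane argument goes through directly. Step~II handles compact $\X$ by discretizing to a dyadic grid $\X^{\nu(\tau)}$ with $\nu(\tau)=2M/2^{\tau}$, applying Step~I at each level, and then invoking monotone convergence of the nested measure-zero events as $\tau\to\infty$. Step~III removes compactness by intersecting with balls $\X\cap\Be(0,r)$ and letting $r\to\infty$, again via monotone convergence. This discretize-then-pass-to-the-limit device is the missing ingredient in your plan; it replaces the need for any measurable-selection or cell-decomposition machinery.
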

% \qes{ 
% Assume $\theta=0, b=0$, 
% \begin{align}
% &\Pr^*\left\{\trzeta\colon F(\bm x_1,\S_1)=F(\bm x_2,\S_2)>0 \right\}\\ 
% = & \Pr^*\left\{\trzeta\colon \frac{1}{N}\sum_{j\in[N]} {\rzeta^j}^\top \bm x_1= \frac{1}{N}\sum_{j\in[N]} {\rzeta^j}^\top \bm x_2>0 \right\}
% =  \Pr^*\left\{\trzeta\colon \frac{1}{N}\sum_{j\in[N]} {\rzeta^j}^\top (\bm x_1- \bm x_2) =0  \right\}
% \end{align}
% Assume the distribution is
% 	 \begin{align*}
% 	f(\zeta_1,\zeta_2)  =\left\{\begin{aligned}
% 	\frac{1}{\sqrt{2\pi}\sigma}e^{-\frac{\zeta_1^2}{2\sigma^2}}, \quad & \zeta_2=0, \\
% 		0, \quad  & \zeta_2\neq 0
% 	\end{aligned}
% 	\right..
% \end{align*}
% and set $\X=\{ (x_1,x_2)\in \Re^2:x_1=k\}$, then
% \begin{align}
%  \Pr^*\left\{\trzeta\colon \frac{1}{N}\sum_{j\in[N]} {\rzeta^j}^\top (\bm x_1- \bm x_2) =0  \right\}= 1
% 	\end{align}
%  }
\begin{proof}
We first write the lower-level $\alsox$ \eqref{eq_also_x_discrete} as
%For the lower-level problem of $\alsox$ \eqref{eq_also_x_discrete}, we first consider the following problem:
\begin{align}
F(\bm x,\S_{\bm x})\colon= \frac{1}{N}\sum_{j\in[\S_{\bm x}]} \max_{i\in [I]} \left[\theta\left\|\bm{a}_i(\bm{x}) \right\|_*+\bm{a}_i(\bm{x})^\top {\rzeta^j}-b_i(\bm{x})\right],
%\colon
%\begin{array}{l}
%\displaystyle {\rxi^j}^{\top}\bm{a}_i(\bm{x})-b_i(\bm{x})\geq 0, \forall j\in\S,  \forall i\in[I], \\ 
%\displaystyle {\rxi^j}^{\top}\bm{a}_i(\bm{x})-b_i(\bm{x})< 0, \forall j\in[N]\setminus\S, \forall i\in[I]
%\end{array}
\label{alsox_discrete_set_eq}
\end{align}
where set $\S_{\bm x}$ is defined as  $\S_{\bm x}=\{ j\in[N]:\theta\left\|\bm{a}_i(\bm{x}) \right\|_*+ \bm{a}_i(\bm{x})^\top {\rzeta^j}-b_i(\bm{x})\geq 0, \forall i\in[I]  \}$.
% Without loss of generality, we consider the case such that the optimal value from the lower-level problem \eqref{eq_also_x_discrete} is positive. 
Suppose there exist two different solutions $\bm x_1\not=\bm x_2\in\X$ in \eqref{alsox_discrete_set_eq}. In this proof, we  suppress the notations as $F(\bm x_1,\S_1)=F(\bm x_1,\S_{\bm x_1})$ and $F(\bm x_2,\S_2)=F(\bm x_2,\S_{\bm x_2})$. 
% When $F(\bm x_1,\S_{1})=F(\bm x_2,\S_{2})=0$, both solutions $\bm x_1,\bm x_2$ are feasible to DRCCP, i.e., satisfying the condition \eqref{eq_also_x_discrete_checking}, which implies that the solutions from $\alsox$ and $\alsoxs$ are feasible.  
We consider $F(\bm x_1,\S_{1})=F(\bm x_2,\S_{2})>0$ since $v^A(t)>0$. We split the remaining proof into three steps based on the deterministic set $\X$. 

\noindent\textbf{Step I.} Suppose that set $\X$ is compact and discrete. Let $i^j_1$ and $i^j_2$ denote the maximum pieces of the sample $j\in[N]$ in the objective function \eqref{alsox_discrete_set_eq} corresponding to $\bm x_1, \bm x_2$, respectively. There are two cases to discuss. 

\textbf{Case 1).} When $\S_1 = \S_2=\Te\subseteq[N]$ and $\Te\not= \emptyset$, by the definition of $F(\bm x_1,\S_{1})=F(\bm x_2,\S_{2})$, we have
\begin{align*}
&\Pr^*\left\{\trzeta\colon F(\bm x_1,\S_1)=F(\bm x_2,\S_2)>0\middle| \Te\not=\emptyset,i^j_1,i^j_2, \forall j\in[N] \right\}\\ 
\leq &\Pr^*\biggl\{\trzeta\colon \sum_{j\in\Te}\left[ \left[\bm{a}_{i^j_1}(\bm{x}_1) - \bm{a}_{i^j_2}(\bm{x}_2)  \right]^\top{\rzeta^j} \right] \\
 &\quad\quad\quad = \sum_{j\in\Te} \left[ b_{i^j_1}(\bm{x}_1)-b_{i^j_2}(\bm{x}_2)+\theta\left[\left\|\bm{a}_{i^j_2}(\bm{x}) \right\|_* - \left\|\bm{a}_{i^j_1}(\bm{x}) \right\|_* \right]\right] \bigg| \Te\not=\emptyset, i^j_1,i^j_2, \forall j\in[N]\biggr\}.
\end{align*}
According to the presumption (i), for any $\bm x_1\not=\bm x_2$ and any pair $(i_1,i_2)\in[I]\times [I]$,  we have $\bm{a}_{i^j_1}(\bm{x}_1)\not=\bm{a}_{i^j_2}(\bm{x}_2)$ for all $j\in[N]$. Since the random parameters $\trzeta$ is continuous and nondegenerate (see, e.g., definition 24.16 in \cite{ken1999levy}), then
\begin{align*}
&\Pr^*\biggl\{\trzeta\colon \sum_{j\in\Te}\left[ \left[\bm{a}_{i^j_1}(\bm{x}_1) - \bm{a}_{i^j_2}(\bm{x}_2)  \right]^\top{\rzeta^j} \right] \\
 &\quad\quad\quad = \sum_{j\in\Te} \left[ b_{i^j_1}(\bm{x}_1)-b_{i^j_2}(\bm{x}_2)+\theta\left[\left\|\bm{a}_{i^j_2}(\bm{x}) \right\|_* - \left\|\bm{a}_{i^j_1}(\bm{x}) \right\|_* \right]\right]\bigg| \Te\not=\emptyset, i^j_1,i^j_2, \forall j\in[N]\biggr\} = 0.
\end{align*}
%\noindent\textbf{Case II.} When $\S_1 = \S_2=\Te\subseteq[N]$ and $\Te\not= \emptyset$, $i_1\not= i_2$, we have
%
%According to the similar discussions in Case I, we have
%\begin{align*}
%\Pr\left\{\trxi\colon \sum_{j\in\Te}\left[ {\rxi^j}^\top\left[\bm{a}_{i_1}(\bm{x}_1) - \bm{a}_{i_2}(\bm{x}_2)  \right] \right] =  b_{i_1}(\bm{x}_1)-b_{i_2}(\bm{x}_2)\middle| \Te\not=\emptyset, i_1\not=i_2\right\} =0.
%\end{align*}

\textbf{Case 2).} When $\S_1 \not= \S_2$, by the definition of $F(\bm x_1,\S_{1})=F(\bm x_2,\S_{2})$,  we have
\begin{align*}
&\Pr^*\left\{\trzeta\colon F(\bm x_1,\S_1)=F(\bm x_2,\S_2)>0 \middle| \S_1 \not= \S_2, i^j_1,i^j_2,\forall j\in[N] \right\}  \\
\leq  &\Pr^*\biggl\{\trzeta\colon \sum_{j\in\S_1\setminus \S_2}\left[  \bm{a}_{i^j_1}(\bm{x}_1)^\top {\rzeta^j}\right] -\sum_{j\in\S_2\setminus \S_1}\left[   \bm{a}_{i^j_2}(\bm{x}_2) ^\top {\rzeta^j} \right] 
+\sum_{j\in\S_1\cap \S_2}\left[  \left[\bm{a}_{i^j_1}(\bm{x}_1)- \bm{a}_{i^j_2}(\bm{x}_2)  \right]^\top {\rzeta^j}  \right] \\
& \quad =\sum_{j\in\Te} \left[ b_{i^j_1}(\bm{x}_1)-b_{i^j_2}(\bm{x}_2)+\theta\left[\left\|\bm{a}_{i^j_2}(\bm{x}) \right\|_* - \left\|\bm{a}_{i^j_1}(\bm{x}) \right\|_* \right]\right] \bigg| \S_1 \not= \S_2, i^j_1,i^j_2, \forall j\in[N]  \biggr\}.
\end{align*}
%\begin{align*}
%&\Pr\left\{\trxi\colon F(\bm x_1,\S_1)=F(\bm x_2,\S_2)>0\middle| \S_1 \not= \S_2, i_1=i_2=\bar{i} \right\}  \\
% = &\Pr\biggl\{\trxi\colon \sum_{j\in\S_1\setminus \S_2}\left[  {\rxi^j}^\top\bm{a}_{\bar{i}}(\bm{x}_1) \right] -\sum_{j\in\S_2\setminus \S_1}\left[  {\rxi^j}^\top \bm{a}_{\bar{i}}(\bm{x}_2)  \right]   +\sum_{j\in\S_1\cap \S_2}\left[  {\rxi^j}^\top\left[\bm{a}_{\bar{i}}(\bm{x}_1) - \bm{a}_{\bar{i}}(\bm{x}_2)  \right] \right] \\
% & \quad= b_{\bar{i}}(\bm{x}_1) -b_{\bar{i}}(\bm{x}_2) \bigg| \S_1 \not= \S_2, i_1=i_2=\bar{i}\biggr\}.
%\end{align*}
Since at least one of the sets $ \S_1\setminus \S_2$, $\S_2\setminus \S_1$, $ \S_1\cap \S_2$ is nonempty, together with the fact that the distribution of random parameters $\trzeta$ is continuous and nondegenerate, we have 
\begin{align*}
&\Pr^*\biggl\{\trzeta\colon \sum_{j\in\S_1\setminus \S_2}\left[  \bm{a}_{i^j_1}(\bm{x}_1)^\top {\rzeta^j}\right] -\sum_{j\in\S_2\setminus \S_1}\left[   \bm{a}_{i^j_2}(\bm{x}_2) ^\top {\rzeta^j} \right] 
+\sum_{j\in\S_1\cap \S_2}\left[  \left[\bm{a}_{i^j_1}(\bm{x}_1)- \bm{a}_{i^j_2}(\bm{x}_2)  \right]^\top {\rzeta^j}  \right] \\
& \quad =\sum_{j\in\Te} \left[ b_{i^j_1}(\bm{x}_1)-b_{i^j_2}(\bm{x}_2)+\theta\left[\left\|\bm{a}_{i^j_2}(\bm{x}) \right\|_* - \left\|\bm{a}_{i^j_1}(\bm{x}) \right\|_* \right]\right] \bigg| \S_1 \not= \S_2, i^j_1,i^j_2, \forall j\in[N]  \biggr\} = 0.
\end{align*}
%\noindent\textbf{Case IV.} When $\S_1 \not= \S_2$ and $i_1\not=i_2$, we have
%\begin{align*}
%&\Pr\left\{\trxi\colon F(\bm x_1,\S_1)=F(\bm x_2,\S_2)>0 \middle| \S_1 \not= \S_2, i_1\not =i_2 \right\}  \\
%= &\Pr\biggl\{\trxi\colon \sum_{j\in\S_1\setminus \S_2}\left[  {\rxi^j}^\top\bm{a}_{i_1}(\bm{x}_1)\right] -\sum_{j\in\S_2\setminus \S_1}\left[  {\rxi^j}^\top \bm{a}_{i_2}(\bm{x}_2)  \right] 
%+\sum_{j\in\S_1\cap \S_2}\left[  {\rxi^j}^\top\left[\bm{a}_{i_1}(\bm{x}_1) - \bm{a}_{i_2}(\bm{x}_2)  \right] \right] \\
%& \quad =b_{i_1}(\bm{x}_1) - b_{i_2}(\bm{x}_2) \bigg| \S_1 \not= \S_2, i_1\not =i_2  \biggr\}.
%\end{align*}
%According to the similar discussions in Case III, we have
%\begin{align*}
%&\Pr\biggl\{\trxi\colon \sum_{j\in\S_1\setminus \S_2}\left[  {\rxi^j}^\top\bm{a}_{i_1}(\bm{x}_1)\right] -\sum_{j\in\S_2\setminus \S_1}\left[  {\rxi^j}^\top \bm{a}_{i_2}(\bm{x}_2)  \right] 
%+\sum_{j\in\S_1\cap \S_2}\left[  {\rxi^j}^\top\left[\bm{a}_{i_1}(\bm{x}_1) - \bm{a}_{i_2}(\bm{x}_2)  \right] \right] \\
%& \quad =b_{i_1}(\bm{x}_1) - b_{i_2}(\bm{x}_2) \bigg| \S_1 \not= \S_2, i_1\not =i_2  \biggr\}=0.
%\end{align*}
Combining these two cases,  for a given $\bm x_1\not=\bm x_2$,  we have
\begin{align*}
& \Pr^*\{\trzeta\colon F(\bm x_1,\S_1)=F(\bm x_2,\S_2)>0 \} \\
= & \sum_{\Te\subseteq[N],i^j_1\in[I],i^j_2\in[I],\forall j\in \Te} \Pr^*\left\{ \Te\not=\emptyset, i^j_1,i^j_2, \forall j\in \Te \right\}\Pr^*\left\{\trzeta\colon F(\bm x_1,\S_1)=F(\bm x_2,\S_2)>0\middle| \Te\not=\emptyset,i^j_1,i^j_2, \forall j\in[N]  \right\} \\
%& + \sum_{\Te\subseteq[N],i_1,i_2\in[I],i_1\not=i_2} \Pr\left\{ \Te\not=\emptyset, i_1\not=i_2 \right\}\Pr\left\{\trxi\colon F(\bm x_1,\S_1)=F(\bm x_2,\S_2)>0\middle| \Te\not=\emptyset,i_1\not=i_2 \right\}  \\
%&+ \sum_{\S_1,\S_2\subseteq[N], \S_1\not=\S_2,\bar{i}\in[I]} \Pr\left\{ \S_1 \not= \S_2, i_1=i_2=\bar{i} \right\}\Pr\left\{\trxi\colon F(\bm x_1,\S_1)=F(\bm x_2,\S_2)>0\middle| \S_1 \not= \S_2, i_1=i_2=\bar{i}  \right\}  \\
& + \sum_{\S_1,\S_2\subseteq[N], \S_1\not=\S_2,i^j_1\in ,i^j_2\in\S_2,\forall j\in \S_1\cup\S_2} \Pr^*\left\{ \S_1 \not= \S_2,i^j_1,i^j_2,\forall j\in \S_1\cup\S_2 \right\}\\
&\Pr^*\left\{\trzeta\colon F(\bm x_1,\S_1)=F(\bm x_2,\S_2)>0\middle| \S_1 \not= \S_2,i^j_1,i^j_2, \forall j\in[N]  \right\}  \\
=& 0.
\end{align*}
Hence, for any $\bm x_1\not=\bm x_2\in\X$, we further have
\begin{align*}
\Pr^*\left\{\trzeta\colon \bigcup_{ \begin{subarray}{c}
\bm x_1\in\X, \bm x_2\in\X,\\
\bm x_1\not = \bm x_2
\end{subarray}
}  F(\bm x_1,\S_1)=F(\bm x_2,\S_2)>0 \right\} =0.
\end{align*}
Therefore, we show that when set $\X$ is discrete and compact, there exists a unique optimal solution in the lower-level \eqref{alsox_discrete_set_eq} with probability $1$.

%\noindent\textbf{Step II.} In the more general setting, i.e., when the compact set $\X$ is not discrete. The similar discussions in \ref{alsox_unique_sampling} follow.

\noindent\textbf{Step II.} Suppose that set $\X$ is compact but may not be discrete.  Suppose set $\X\subseteq [-M,M]^n$. Then for some small $\nu>0$, by discretization, we have for any $\bm x\in \X$, there exists $\bm y\in\X^\nu$, such that $\|\bm x-\bm y\|_\infty\leq \nu$ and $|\X^\nu|\leq |2M/\nu|^n$. Instead of optimizing over $\X$, we consider optimizing over $\X^\nu$ in \eqref{alsox_discrete_set_eq}. Here, we choose $\nu$ as $\nu(\tau)=2M/2^\tau,\tau\in\Ne$. Following the similar procedures in Step I, we assume that there exist two different solutions $\bm x_1(\nu(\tau))\not=\bm x_2(\nu(\tau))$ such that $ \bm x_1(\nu(\tau)), \bm x_2(\nu(\tau))\in\X^\nu(\tau) , F(\bm x_1(\nu(\tau)),\S_1)=F(\bm x_2(\nu(\tau)),\S_2)>0$.  For any $\tau\in\Ne$, we have
\begin{align*}
\Pr^*\left\{\trzeta\colon\bigcup_{ \begin{subarray}{c}
\bm x_1(\nu(\tau))\in\X^\nu(\tau),\bm x_2(\nu(\tau))\in\X^\nu(\tau),\\
\bm x_1(\nu(\tau))\not=\bm x_2(\nu(\tau))
\end{subarray}
} F(\bm x_1(\nu(\tau)),\S_1)=F(\bm x_2(\nu(\tau)),\S_2)>0 \right\}=0.
\end{align*}
When $\tau$ increases, the number of feasible solutions increases. That is, the measurable sequence 
\begin{align*}
\bigcup_{ \begin{subarray}{c}
\bm x_1(\nu(\tau))\in\X^\nu(\tau),\bm x_2(\nu(\tau))\in\X^\nu(\tau),\\
\bm x_1(\nu(\tau))\not=\bm x_2(\nu(\tau))
\end{subarray}
} \left\{ F(\bm x_1(\nu(\tau)),\S_1)=F(\bm x_2(\nu(\tau)),\S_2)>0 \right\}
\end{align*}
%\[ \cup_{\bm x_1(\nu(t))\in\X^\nu(t),\bm x_2(\nu(t))\in\X^\nu(t),\bm x_1(\nu(t))\not=\bm x_2(\nu(t)) } \{F(\bm x_1(\nu(t)),\S_1)=F(\bm x_2(\nu(t)),\S_2)>0\}\] 
is monotone nondecreasing as $\tau$ increases. Following the Monotone Convergence Theorem for sequences of measurable sets (see, e.g., theorem 1.26 in \cite{yeh2006real}),  when $\tau\to \infty$, the limit of this measurable sequence  exists. Thus, we have
\begin{align*}
&\Pr^*\left\{\trzeta\colon \lim_{\tau\to \infty} \bigcup_{ \begin{subarray}{c}
\bm x_1(\nu(\tau))\in\X^\nu(\tau),\bm x_2(\nu(\tau))\in\X^\nu(\tau),\\
\bm x_1(\nu(\tau))\not=\bm x_2(\nu(\tau))
\end{subarray}
} F(\bm x_1(\nu(\tau)),\S_1)=F(\bm x_2(\nu(\tau)),\S_2)>0 \right\}\\
=&\lim_{\tau\to \infty}\Pr^*\left\{\trzeta\colon  \bigcup_{ \begin{subarray}{c}
\bm x_1(\nu(\tau))\in\X^\nu(\tau),\bm x_2(\nu(\tau))\in\X^\nu(\tau),\\
\bm x_1(\nu(\tau))\not=\bm x_2(\nu(\tau))
\end{subarray}
} F(\bm x_1(\nu(\tau)),\S_1)=F(\bm x_2(\nu(\tau)),\S_2)>0 \right\}=0.
% \\
% =&  \Pr\left\{ \trxi\colon  \lim_{\nu\to 0}\bigcup_{\bm x_1(\nu)\in\X^\nu,\bm x_2(\nu)\in\X^\nu} F(\bm x_1(\nu),\S_1)=F(\bm x_2(\nu),\S_2)>0 \right\} = 0,
\end{align*}
Thus, when set $\X$ is compact but not discrete, there exists a unique solution of \eqref{alsox_discrete_set_eq} with probability $1$.

\noindent\textbf{Step III.} The result holds when set $\X$ is not compact. Let $\hat{\X}_r=\X\cap \Be(0,r)$ with $r>0$. By definition, $\hat{\X}_r$ is compact. Then following the similar procedures in Step I and Step II, there exist two different solutions $\bm x_1\not=\bm x_2$ such that $\bm x_1, \bm x_2\in\hat{\X}_r, F(\bm x_1,\S_1)=F(\bm x_2,\S_2)>0$. For any $r>0$, since set $\hat{\X}_r$ is bounded, according to Step II, we have 
\begin{align*}
&\Pr^*\left\{\trzeta\colon  \bigcup_{ \begin{subarray}{c}
\bm x_1\in\hat{\X}_r,\bm x_2\in\hat{\X}_r,\\
\bm x_1\not=\bm x_2
\end{subarray}
} F(\bm x_1,\S_1)=F(\bm x_2,\S_2)>0 \right\}=0.
\end{align*}
Due to Monotone Convergence Theorem for sequences of measurable sets (see, e.g., theorem 1.26 in \cite{yeh2006real}), we have
\begin{align*}
&\Pr^*\left\{\trzeta\colon  \lim_{r\to \infty}\bigcup_{ \begin{subarray}{c}
\bm x_1\in\hat{\X}_r,\bm x_2\in\hat{\X}_r,\\
\bm x_1\not=\bm x_2
\end{subarray}
} F(\bm x_1,\S_1)=F(\bm x_2,\S_2)>0 \right\}
\\=&
\lim_{r\to \infty}\Pr^*\left\{\trzeta\colon  \bigcup_{ \begin{subarray}{c}
\bm x_1\in\hat{\X}_r,\bm x_2\in\hat{\X}_r,\\
\bm x_1\not=\bm x_2
\end{subarray}
} F(\bm x_1,\S_1)=F(\bm x_2,\S_2)>0 \right\}=0.
\end{align*}
Therefore, when set $\X$ is not compact, there exists a unique solution of \eqref{alsox_discrete_set_eq} with probability $1$. This completes the proof.
\QEDA
\end{proof}
We make the following remarks on Theorem~\ref{alsox_unique_sampling_linear}:
\begin{enumerate}[label=(\roman*)]
%\item[(i)] Our analysis is different from the one in \cite{kleywegt2002sample}, where \cite{kleywegt2002sample} makes the uniqueness assumption of the SAA problem; however, we prove the uniqueness in the SAA version of our problem;
\item%[(i)] 
The proof shows that any objective value of the lower-level $\alsox$ is unique; and
% The uniqueness result does not only apply to the optimal solution of the lower-level $\alsox$; 
\item  The key of the proof is to exploit the properties of the linear uncertain constraints and continuous nondegenerate distribution. Relaxing any of them, the result in Theorem~\ref{alsox_unique_sampling_linear} may not hold. %, which makes the discussions more general compared to the optimal solutions;
% \item \nan{The result holds even when} the deterministic set $\X$ is not compact.
%\item[(iii)] Different from the discrete distribution mentioned in  Example~\ref{example_worse_convex}, we assume the distribution, in this case, is continuous and nondegenerate.
\end{enumerate}

Sampling from a continuous distribution helps us find a unique solution of the lower-level $\alsox$ \eqref{drccp_alsox}. However, 
if the conditions in Theorem~\ref{alsox_unique_sampling_linear} were not met, there might not be a unique optimal solution.
% we may not be able to find a unique solution if the conditions are violated in Theorem~\ref{alsox_unique_sampling_linear}. 
For example,  when the true distribution of random parameters $\trxi$ is of discrete support, the lower-level $\alsox$ may not have a unique solution, i.e., in Example~\ref{example_worse_convex}, when $t=1/2$, the lower-level $\alsox$ provides two optimal solutions, of which one is feasible to DRCCP \eqref{eq_drccp_infty} and another one is not. 

We use the following example to show that the lower-level $\alsox$ \eqref{drccp_alsox} does not admit a unique solution either when the reference distribution is continuous.
\begin{example}\rm
 Consider a single DRCCP under type $\infty-$Wasserstein ambiguity set with a Gaussian distribution $\trzeta\thicksim\mathcal{N}(\bm{\Bar{\mu}},\bar{\rsigma})$, and $\theta=1/2$) with $n=3,\bm{\Bar{\mu}}= [1,1,1]^\top$, $ \bm{\Bar{\mathrm{\Sigma}}}=\left[ \begin{smallmatrix} 1 & 0 & 0\\0 & 1 & 0 \\ 0 & 0 & 1 \end{smallmatrix} \right]$, risk parameter $\varepsilon=0.40$, set $\X=\{0,1\}^3$ and function $\bm a_1(\bm x)^\top \rzeta-b_1({\bm x})=-2+\bm x^\top \rzeta$.  In this example, 
%  the DRCCP \eqref{eq_drccp_infty} becomes 
%  \begin{equation*}
%  	v^* = \min_{\bm x\in\{0,1\}^3} \left\{-x_1-x_2-x_3:2-x_1-x_2-x_3\geq \left(\theta+\mathrm{\Phi}^{-1}(1-\varepsilon)\right)\sqrt{x_1^2+x_2^2+x_3^2} \right\},
%  \end{equation*}
% and its optimal value is $v^* =-1$.
the lower-level $\alsox$ \eqref{drccp_alsox} can be written as 
 \begin{align*}
\bm x^*\in\argmin_{\bm x}\left\{ \E_{\Pr_{\trzeta}}\left[\theta\left\| \bm x \right\|_*+\tilde{\zeta}_1 x_1 +\tilde{\zeta}_2 x_2+\tilde{\zeta}_3 x_3 -2\right]_+\colon -x_1-x_2-x_3 \leq t,\bm {x}\in \{0,1\}^3\right\},
\end{align*}
where the dual norm is $\|\cdot\|_2$.
Let $t=-2$. A simple calculation shows that there are three optimal solutions from the lower-level $\alsox$ \eqref{drccp_alsox} with the same positive objective value, i.e., $(\bm x^1)^*=(1,1,0)^\top$, $(\bm x^2)^*=(1,0,1)^\top$, $(\bm x^3)^*=(0,1,1)^\top$. Therefore, in this case, there is no unique solution from the lower-level $\alsox$.
\QEDB
\end{example}
This motivates us to restrict set $\X$ to be convex when the reference distribution is continuous in the next subsections.

\subsection{Uniqueness: Single DRCCPs with Continuous Reference Distributions}
\label{sec_unique_single_drccp_continous}
We consider the case when the reference distribution is continuous.  For a single DRCCP \eqref{eq_drccp_infty}, i.e., $I=1$, when the affine mappings are $\bm{a}_1(\bm{x})=\bm x$, $b_1(\bm x)=b_1$, the random parameters $\trzeta$ is continuous and the deterministic set $\X$ is convex, the lower-level $\alsox$ \eqref{drccp_alsox} is equivalent to
\begin{align*}
\bm x^* \in \argmin_{\bm x}\left\{\E_{\Pr_{\trzeta}}\left[\theta\left\|\bm x \right\|_*+\bm x^\top{\trzeta}-b_1\right]_+\colon \bm{c}^\top \bm{x} \leq t,\bm {x}\in \X\right\}.
\end{align*}
Recall that $\hat {F}(\bm x)$ denotes the objective function in the lower-level $\alsox$ \eqref{drccp_alsox}. In this case, we have
\begin{align*}
	\hat {F}(\bm x)=\E_{\Pr_{\trzeta}}\left[\theta\left\|\bm x \right\|_*+\bm x^\top{\trzeta}-b_1\right]_+.
\end{align*}

\begin{restatable}{theorem}{theoremdrccpuniquecontinuous}\label{theorem_drccp_unique_continuous} 
Suppose that in a single DRCCP \eqref{eq_drccp_infty}, the deterministic set $\X$ is convex, the reference distribution ${\Pr_{\trzeta}}$ is continuous and nondegenerate with support $\Re^n$, $\|\cdot\|_*=\|\cdot\|_p$ with $p\in (1,\infty)$ and affine mappings $\bm{a}_1(\bm{x})=\bm x$ and $b_1(\bm x)=b_1$. Then the lower-level $\alsox$ \eqref{drccp_alsox} admits a unique optimal solution when $t\geq \min_{\bm x\in \X}\bm c^\top \bm x$ and $v^A(t)>0$.
\end{restatable}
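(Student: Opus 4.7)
The plan is to establish strict convexity of $\hat F(\bm x) = \E_{\Pr_{\trzeta}}\bigl[(\theta \|\bm x\|_p + \bm x^\top \trzeta - b_1)_+\bigr]$ on the positive level set $\{\bm x \in \X \colon \hat F(\bm x) > 0\}$. Since the feasible region $\X \cap \{\bm x \colon \bm c^\top \bm x \leq t\}$ is convex and the hypothesis $v^A(t) > 0$ places any optimal solution on this level set, strict convexity there immediately gives uniqueness. Convexity of $\hat F$ is standard; for strict convexity I argue by contradiction. Suppose $\bm x_1 \neq \bm x_2$ in $\X$ are both optimal. Convexity of the feasible set and optimality force $\hat F$ to be constant at value $v^A(t) > 0$ along the entire segment $[\bm x_1, \bm x_2]$. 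I then split into two cases depending on whether $\bm x_1, \bm x_2$ are linearly independent or lie on a common line through the origin.

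In the \emph{linearly independent} case, strict convexity of $\|\cdot\|_p$ for $p \in (1, \infty)$ away from common rays yields the pointwise strict inequality $g_{1/2}(\trzeta) < (g_1(\trzeta) + g_2(\trzeta))/2$, where $g_i(\trzeta) := \theta \|\bm x_i\|_p + \bm x_i^\top \trzeta - b_1$ and $\bm x_{1/2} := (\bm x_1 + \bm x_2)/2$. Linear independence lets me choose $\bm v$ with $\bm x_1^\top \bm v > 0$ and $\bm x_2^\top \bm v > 0$; together with the full support of $\Pr_{\trzeta}$ this produces an open set of $\trzeta$ on which $g_1, g_2 > 0$ simultaneously. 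On that set $(g_{1/2})_+ < ((g_1)_+ + (g_2)_+)/2$ strictly, so taking expectations contradicts the identity $\hat F(\bm x_{1/2}) = (\hat F(\bm x_1) + \hat F(\bm x_2))/2$.

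In the \emph{collinear} case, $\bm x_1 = s_1 \bm x^*$ and $\bm x_2 = s_2 \bm x^*$ for some $\bm x^* \neq 0$ and scalars $s_1 \neq s_2$ (this absorbs the antiparallel sub-case and the sub-case where one of $\bm x_1, \bm x_2$ equals $\bm 0$), so the segment lies on the line $\{s \bm x^* : s \in \Re\}$. Define $\phi(s) := \hat F(s \bm x^*)$; constancy of $\hat F$ on the segment forces $\phi$ to be constant on a non-degenerate interval. Separating $s \geq 0$ from $s \leq 0$ to absorb the $|s|$ inside $\|s \bm x^*\|_p$, for $s \geq 0$ I obtain $\phi(s) = \E\bigl[(sU - b_1)_+\bigr]$ with $U := \theta \|\bm x^*\|_p + (\bm x^*)^\top \trzeta$. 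Because $\bm x^* \neq 0$ and $\Pr_{\trzeta}$ is continuous and nondegenerate with full support $\Re^n$, the projection $U$ inherits full support on $\Re$ and admits a density $f_U$ that is positive on every open interval of $\Re$. Differentiating twice under the integral (justified since $\{sU = b_1\}$ has measure zero) yields $\phi''(s) = (b_1^2 / s^3) f_U(b_1/s)$ for $s > 0$, which is positive on a subset of positive Lebesgue measure of every subinterval of $(0, \infty)$ whenever $b_1 \neq 0$; consequently $\phi$ is strictly convex on $(0, \infty)$, and a symmetric computation handles $s < 0$. Strict convexity on either half-line contradicts constancy on any non-degenerate subinterval intersecting that half-line. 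The edge sub-case $b_1 = 0$ collapses $\phi$ on $s \geq 0$ to the linear function $s \mapsto s\, \E[U_+]$; constancy then forces $\E[U_+] = 0$, which fails because the full-support density of $U$ guarantees $\E[U_+] > 0$ (an analogous contradiction arises on $s \leq 0$). The main obstacle is reconciling the non-smoothness of both $(\cdot)_+$ and $\|\cdot\|_p$; the organizational key is to separate their strict-convexity contributions, with $\|\cdot\|_p$ driving the linearly independent case and the plus function driving the collinear case through the second-derivative identity above.
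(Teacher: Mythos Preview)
Your approach is correct and takes a genuinely different route from the paper. The paper proceeds by computing the Hessian $H_{\hat F}(\bm x^*)$ explicitly---a boundary-integral term plus $\theta$ times the Hessian of $\|\cdot\|_p$---and verifying positive definiteness at an optimum with $|\supp(\bm x^*)|=n$; because $\|\cdot\|_p$ is not twice differentiable at points with zero coordinates, the paper then needs a separate convex-combination argument to rule out optima with smaller support. Your argument sidesteps that machinery by working directly with strict convexity along segments: the linearly-independent branch uses the strict rotundity of $\|\cdot\|_p$ (the norm side), while the collinear branch uses the strict convexity of $(\cdot)_+$ at sign changes via the identity $\phi''(s)=(b_1^2/s^3)f_U(b_1/s)$ (the hinge side). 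This is more elementary---no differentiation under the integral in the main case, no support splitting---and the two decompositions are in a sense orthogonal: the paper's ``degenerate'' case is lack of full support (a smoothness obstacle), whereas yours is collinearity (a strict-convexity obstacle). Both proofs ultimately handle the same $b_1=0$ versus $b_1\neq0$ dichotomy in the collinear/proportional situation.

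One caveat worth patching: your linearly-independent step relies on the pointwise strict inequality $g_{1/2}<(g_1+g_2)/2$, which comes from the norm and therefore needs $\theta>0$. In the boundary case $\theta=0$ that inequality collapses to an equality, but the conclusion survives by instead invoking strict convexity of $(\cdot)_+$: linear independence of $\bm x_1,\bm x_2$ makes the map $\trzeta\mapsto(\bm x_1^\top\trzeta,\bm x_2^\top\trzeta)$ onto $\Re^2$, so by full support there is a set of positive measure with $g_1>0>g_2$, on which $(g_{1/2})_+<\bigl((g_1)_++(g_2)_+\bigr)/2$ strictly. The paper's Hessian proof absorbs $\theta=0$ automatically through its boundary-integral term, so this is the one place where the Hessian route is slightly more uniform.
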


\proof
	See Appendix~\ref{proof_theorem_drccp_unique_continuous}.
\QEDA
 
% \begin{theorem}
% \label{theorem_drccp_unique_continuous}
% Suppose that in a single DRCCP \eqref{eq_drccp_infty}, the deterministic set $\X$ is convex, the reference distribution ${\Pr_{\trzeta}}$ is continuous and nondegenerate with support $\Re^n$, $\|\cdot\|_*=\|\cdot\|_p$ with $p\in (1,\infty)$ and affine mappings $\bm{a}_1(\bm{x})=\bm x$ and $b_1(\bm x)=b_1$. Then the lower-level $\alsox$ \eqref{drccp_alsox} admits a unique optimal solution when $t\geq \min_{\bm x\in \X}\bm c^\top \bm x$ and $v^A(t)>0$.
% \end{theorem}

% We remark that the result in Theorem~\ref{theorem_drccp_unique_continuous} can be generalized to  $\|\cdot\|_*=\|\cdot\|_q$ with $q\in \{1,\infty\}$.
Note that our analysis in Theorem~\ref{theorem_drccp_unique_continuous} shows that the uniqueness of the lower-level $\alsox$ applies to any general continuous nondegenerate distribution. Then the following corollary shows that the lower-level $\alsox$ \eqref{drccp_alsox} returns a unique optimal solution when we know the upper and lower bounds of the support $\Xi$.
% We make the following remarks on Theorem~\ref{theorem_drccp_unique_continuous}:
% \begin{enumerate}[label=(\roman*)]
% \item
% Our analysis shows that the uniqueness of the lower-level $\alsox$ applies to any general continuous distribution; 
% \end{enumerate}

\begin{corollary}
\label{corollary_drccp_unique_continuous_support}
Suppose that in a single DRCCP \eqref{eq_drccp_infty}, the deterministic set $\X$ is convex, $\|\cdot\|_*=\|\cdot\|_p$ with $p\in (1,\infty)$ and affine mappings $\bm{a}_1(\bm{x})=\bm x$ and $b_1(\bm x)=b_1$, and the reference distribution ${\Pr_{\trzeta}}$ is continuous and nondegenerate with a closed convex support $\Xi$ such that
\begin{align*}
    & \exists \rzeta\in \Xi \colon \min_{\bm x\in \X, \bm c^\top \bm x\leq t} \left\{\theta\|\bm x\|_*+\bm x^\top \rzeta \right\}>b_1, \textup{ and } \exists \rzeta\in \Xi \colon \max_{\bm x\in \X, \bm c^\top \bm x\leq t} \left\{\theta\|\bm x\|_*+\bm x^\top \rzeta \right\}<b_1.
\end{align*}
 Then the lower-level $\alsox$ \eqref{drccp_alsox} admits a unique optimal solution when $t\geq \min_{\bm x\in \X}\bm c^\top \bm x$ and $v^A(t)>0$.
\end{corollary}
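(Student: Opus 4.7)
\bigskip

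The plan is to show that the proof of Theorem~\ref{theorem_drccp_unique_continuous} extends essentially verbatim once two positive-measure properties are verified under the weaker support hypothesis. Inspecting the argument of Theorem~\ref{theorem_drccp_unique_continuous}, the only structural role played by the full-support assumption $\supp \Pr_{\trzeta} = \Re^n$ is to guarantee that, for every $\bm x$ feasible to the lower-level $\alsox$ (i.e., $\bm x \in \X$ with $\bm c^\top \bm x \leq t$), both of the sets
\begin{align*}
A_+(\bm x) := \left\{\rzeta \in \Xi : \theta\|\bm x\|_p + \bm x^\top \rzeta - b_1 > 0 \right\}, \quad A_-(\bm x) := \left\{\rzeta \in \Xi : \theta\|\bm x\|_p + \bm x^\top \rzeta - b_1 < 0 \right\}
\end{align*}
carry positive probability under $\Pr_{\trzeta}$. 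These two properties drive the uniqueness proof because, along any segment joining two candidate optima, the strict convexity of $\|\cdot\|_p$ for $p \in (1, \infty)$ combines with the positive-measure active region $A_+$ to produce strict convexity of the hinge-loss objective $\hat F$, contradicting its constancy on the segment.

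Having isolated these two properties, I would next derive them from the two qualification conditions. The first condition supplies some $\rzeta^\star \in \Xi$ satisfying $\theta\|\bm x\|_* + \bm x^\top \rzeta^\star > b_1$ for every feasible $\bm x$. Since the map $\rzeta \mapsto \theta\|\bm x\|_* + \bm x^\top \rzeta - b_1$ is continuous in $\rzeta$ and strictly positive at $\rzeta^\star$, there exists an open Euclidean ball $U$ around $\rzeta^\star$ on which the same strict inequality persists. Because $\rzeta^\star \in \Xi = \supp \Pr_{\trzeta}$ and $\Pr_{\trzeta}$ is continuous and nondegenerate, every open neighborhood of $\rzeta^\star$ carries positive $\Pr_{\trzeta}$-measure, so $\Pr_{\trzeta}(A_+(\bm x)) \geq \Pr_{\trzeta}(U \cap \Xi) > 0$. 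A fully symmetric argument from the second qualification condition yields $\Pr_{\trzeta}(A_-(\bm x)) > 0$. Since both properties hold uniformly in the feasible $\bm x$, I would invoke the strict convexity argument of Theorem~\ref{theorem_drccp_unique_continuous} and conclude that the lower-level $\alsox$ admits a unique optimal solution whenever $t \geq \min_{\bm x \in \X} \bm c^\top \bm x$ and $v^A(t) > 0$.

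The main obstacle, and indeed the entire new content of the corollary compared to Theorem~\ref{theorem_drccp_unique_continuous}, is replacing the full-support hypothesis by weaker conditions that still guarantee the above positive-measure properties. The two qualification conditions are designed precisely to enforce them in a uniform way: a single $\rzeta^\star$ (respectively $\rzeta^{\star\star}$) must simultaneously strictly violate (respectively satisfy) the underlying constraint at every feasible $\bm x$. This uniformity, together with continuity in $\rzeta$ and the nondegeneracy of $\Pr_{\trzeta}$, is what transfers isolated pointwise strict inequalities in $\Xi$ into the positive-measure statements required to rerun the Theorem~\ref{theorem_drccp_unique_continuous} argument. Without such uniformity, one would have to track a feasibility-dependent point $\rzeta(\bm x)$, and the resulting region of positive measure could shrink to zero along a sequence of candidate solutions, which is why the qualification conditions are stated with an outer existential quantifier rather than per-$\bm x$.
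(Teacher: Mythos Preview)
Your reduction to the two positive-measure conditions $\Pr_{\trzeta}(A_+(\bm x))>0$ and $\Pr_{\trzeta}(A_-(\bm x))>0$ is a sensible first step, and your derivation of them from the two qualification hypotheses is correct. The gap is in how you think these feed back into Theorem~\ref{theorem_drccp_unique_continuous}. That proof does \emph{not} obtain uniqueness from strict convexity of $\|\cdot\|_p$ on $A_+$ alone. The Hessian of $\hat F$ at $\bm x^*$ splits as
\[
H_{\hat F}(\bm x^*)=\frac{1}{\|\bm x^*\|_2}\int_{\partial\mu(\bm x^*)}\Bigl(\tfrac{\partial(\theta\|\bm x^*\|_p)}{\partial\bm x}+\rzeta\Bigr)\Bigl(\tfrac{\partial(\theta\|\bm x^*\|_p)}{\partial\bm x}+\rzeta\Bigr)^{\!\top}\Pr(d\rzeta)
+\theta\int_{\mu(\bm x^*)}\tfrac{\partial^2\|\bm x^*\|_p}{\partial\bm x^2}\,\Pr(d\rzeta),
\]
where $\partial\mu(\bm x^*)=\{\rzeta:\theta\|\bm x^*\|_p+(\bm x^*)^\top\rzeta=b_1\}$. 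Your mechanism captures only the second (volume) term, which is PSD of rank $n-1$ with null direction $\bm x^*$; it gives nothing when two candidate optima are proportional, $\bm x_2=\ell\bm x_1$, since $\|\cdot\|_p$ is linear along rays. The paper closes exactly that direction with the \emph{surface} term over $\partial\mu(\bm x^*)$ (for $b_1\neq 0$) and a separate scaling argument (for $b_1=0$). So the structural role of full support in Theorem~\ref{theorem_drccp_unique_continuous} is positivity of that surface integral, not merely $\Pr_{\trzeta}(A_\pm)>0$.

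What actually needs verifying for the corollary is that $\partial\mu(\bm x^*)\cap\Xi$ has positive $(n{-}1)$-dimensional measure. Your $A_\pm$ conditions become relevant here, but only after one step you omit: the qualification hypotheses supply points $\rzeta^\star,\rzeta^{\star\star}\in\Xi$ strictly on opposite sides of the hyperplane $\partial\mu(\bm x^*)$, and convexity of $\Xi$ then forces the hyperplane to meet $\Int\Xi$, so $\partial\mu(\bm x^*)\cap\Xi$ is $(n{-}1)$-dimensional with nonempty relative interior and hence carries positive surface measure under the continuous nondegenerate density. With this in hand the Hessian computation of Theorem~\ref{theorem_drccp_unique_continuous} goes through verbatim, which is precisely the paper's one-line proof.
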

\begin{proof}
Using the fact that in Part (i) of the proof of Theorem \ref{theorem_drccp_unique_continuous}, we also have $\int_{\partial \mu(\bm x^*)}  ( \bm y^\top{\partial (\theta\|\bm x\|_*)}/{\partial \bm x}+\bm y^\top\rzeta)^2\Pr(d\rzeta)>0$ according to the presumption, the proof is almost identical to that of Theorem \ref{theorem_drccp_unique_continuous}  and is thus  omitted.
    \QEDA
\end{proof}
%\label{theorem_drccp_unique_continuous}

We remark that the result in Theorem~\ref{theorem_drccp_unique_continuous} can also be generalized to  $\|\cdot\|_*=\|\cdot\|_p$ with $p\in \{1,\infty\}$. Due to the page limit, we refer interested readers to Appendix~\ref{proof_corollary_drccp_unique_continuous} for the proof.

 \subsection{Uniqueness: Joint DRCCPs with a Continuous Reference Distribution}
\label{sec_unique_joint_drccp_independent}
In this subsection, we consider a joint DRCCP with right-hand uncertainty and a continuous reference distribution. In particular, we assume that $I=n$, the uncertainty constraint is $\bm a_i(\bm x)^\top \rzeta -  b_i(\bm x) = {\zeta}_i - x_i$, and the random parameter $\tilde{\zeta}_i$  is continuous for each $i\in[n]$. That is, we consider the following DRCCP:
\begin{align*}
v^* = \min_{\bm x\in \X}\left\{\bm c^\top \bm x\colon	\inf_{\Pr\in\P_\infty}\Pr\left\{ \trxi\colon\tilde{\xi}_i\leq x_i, \forall i\in[n] \right\}\geq 1-\varepsilon\right\}, %\label{joint_drccp}
\end{align*}
that is,
\begin{align}
v^* = \min_{\bm x\in \X}\left\{\bm c^\top \bm x\colon	\Pr\left\{\trzeta\colon \tilde{\zeta}_i+\theta\leq x_i  , \forall i\in[n] \right\}\geq 1-\varepsilon\right\}. \label{joint_drccp_eq_multi}
\end{align}
In this case, the lower-level $\alsox$ \eqref{drccp_alsox} is 
\begin{align*}
v^A(t)=	\min_{\bm x\in\X}\left\{\E_{\Pr_{\trzeta}} \left[ \max_{i\in[n]} \left\{  \tilde{\zeta}_i + \theta - x_i\right\}_+ \right]\colon \bm c^\top \bm x\leq t \right\}.
\end{align*}
Our proof idea is to show the positive definiteness of the Hessian of the objective function in the lower-level $\alsox$ \eqref{drccp_alsox} to prove the uniqueness. 

\begin{theorem}
\label{theorem_drccp_unique_joint_continuous}
Suppose that in a joint DRCCP \eqref{joint_drccp_eq_multi}, the deterministic set $\X$ is convex, and the reference distribution ${\Pr_{\trzeta}}$ is continuous with support $\Re^n$. Then the lower-level $\alsox$ \eqref{drccp_alsox} admits a unique optimal solution when $t\geq \min_{\bm x\in \X}\bm c^\top \bm x$ %$x_i> \theta$ for all $i\in[n]$, 
and $v^A(t)>0$.
\end{theorem}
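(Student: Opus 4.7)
The goal is to show that the lower-level objective
$\hat F(\bm x) = \E_{\Pr_{\trzeta}}\bigl[\max_{i\in[n]}\{\tilde\zeta_i + \theta - x_i\}_+\bigr]$
is strictly convex on the convex feasible set $\{\bm x \in \X \colon \bm c^\top \bm x \leq t\}$; together with the convexity of this set, strict convexity delivers uniqueness of the minimizer. Following the hint preceding the theorem, my plan is to compute the Hessian of $\hat F$ explicitly and verify that it is positive definite at every feasible $\bm x$.

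For each $\bm x$, I would partition $\Re^n$ into the inactive region $R_0(\bm x) = \{\rzeta \colon \zeta_i \leq x_i - \theta,\, \forall i\}$, on which the integrand vanishes, and the $j$-active regions $R_j(\bm x) = \{\rzeta \colon \zeta_j - x_j > \zeta_i - x_i,\, \forall i \neq j,\, \zeta_j > x_j - \theta\}$ for $j \in [n]$. Continuity of $\Pr_{\trzeta}$ renders the tie boundaries measure-zero, so a dominated-convergence argument yields $\nabla_j \hat F(\bm x) = -\Pr(R_j(\bm x))$. Differentiating once more (via a co-area/Fubini change of variables slicing along level sets of $\zeta_j - x_j$) produces the Hessian entries
\[
H_{jj}(\bm x) = g_{j,0}(\bm x) + \sum_{k \neq j} g_{jk}(\bm x), \qquad H_{jk}(\bm x) = -g_{jk}(\bm x) \text{ for } j \neq k,
\]
where $g_{jk}(\bm x) \geq 0$ is the density-weighted surface mass on the shared boundary $\{\zeta_j - x_j = \zeta_k - x_k > -\theta\} \cap \{\zeta_i - x_i < \zeta_j - x_j,\, \forall i \neq j,k\}$ and $g_{j,0}(\bm x) \geq 0$ is the density-weighted surface mass on $\{\zeta_j = x_j - \theta\} \cap \{\zeta_i < x_i - \theta,\, \forall i \neq j\}$. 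Rearranging produces the clean diagonalization
\[
\bm y^\top H(\bm x)\bm y = \sum_{j \in [n]} g_{j,0}(\bm x)\, y_j^2 + \sum_{1 \leq j < k \leq n} g_{jk}(\bm x)\,(y_j - y_k)^2 \geq 0.
\]

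To upgrade non-negativity to strict positive definiteness, I would invoke the full-support assumption: since $\Pr_{\trzeta}$ assigns positive measure to every open subset of $\Re^n$, each $g_{j,0}(\bm x)$ is strictly positive because its defining region is a non-empty relatively open subset of a hyperplane on which the density is positive. Hence the first sum alone forces $\bm y^\top H(\bm x)\bm y > 0$ for every $\bm y \neq 0$, yielding the desired strict convexity and hence the uniqueness claim.

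The main technical obstacle is rigorously justifying the second differentiation, since for each realization of $\rzeta$ the integrand is only piecewise linear in $\bm x$; the Hessian exists only by virtue of the smoothing effect of taking expectation. I would overcome this by writing $\Pr(R_j(\bm x))$ as an iterated integral in which a change of variables $u_i = \zeta_i - x_i$ localizes all $\bm x$-dependence onto the boundary, so that the surface integrals $g_{j,\cdot}(\bm x)$ emerge directly as derivatives under the integral sign. Should this route prove delicate under weak regularity on the density, the same uniqueness conclusion can be reached by a direct strict-convexity argument along any segment $\bm x(\lambda) = (1-\lambda)\bm x^1 + \lambda \bm x^2$ with $\bm x^1 \neq \bm x^2$: choosing any coordinate $j$ with $x_j^1 \neq x_j^2$, the positive-measure set of $\rzeta$ with $\zeta_j \in (\min\{x_j^1,x_j^2\} - \theta,\, \max\{x_j^1,x_j^2\} - \theta)$ and other $\zeta_i$ sufficiently negative forces $\lambda \mapsto \max_i(\zeta_i + \theta - x_i(\lambda))_+$ to develop a kink strictly inside $(0,1)$, contradicting the possibility that two distinct optimizers yield identical objective values.
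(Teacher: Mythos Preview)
Your proposal is correct and follows essentially the same route as the paper: both compute the Hessian of $\hat F$ explicitly, identify the structure $H_{jj}=g_{j,0}+\sum_{k\neq j}g_{jk}$ and $H_{jk}=-g_{jk}$, and use the full-support assumption to get $g_{j,0}>0$. The only cosmetic difference is the last step---the paper invokes strict diagonal dominance together with the Gershgorin circle theorem, whereas you write out the quadratic form $\bm y^\top H\bm y=\sum_j g_{j,0}y_j^2+\sum_{j<k}g_{jk}(y_j-y_k)^2$ directly; these are equivalent ways of reading off positive definiteness from the same Hessian structure.
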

\begin{proof}
% Note that if the lower-level $\alsox$ \eqref{drccp_alsox} provides a solution with $x_i\leq \theta$ for some $i\in [n]$, then the solution is infeasible to the DRCCP. Hence, without loss of generality, we assume that the lower-level $\alsox$ \eqref{drccp_alsox} provides a solution $\bm x^*$ such that $x_i^*> \theta$ for all $i\in[n]$.
Suppose that there exists an optimal solution $\bm{x}^*$ to the lower-level $\alsox$ \eqref{drccp_alsox}. We split the proof into two steps to show that $\bm{x}^*$ is the unique solution.
 
\noindent{\textbf{Step I.}}	We first provide the closed-form expression of the lower-level $\alsox$ \eqref{drccp_alsox} and its first-order and second-order derivatives, where the objective function in the lower-level $\alsox$ \eqref{drccp_alsox} is $\hat F(\bm x^*) = \max_{i\in[n]} \left\{  \zeta_i +\theta- x_i^*\right\}_+$. According to the continuity of function $f(\tau) = \max\{\tau, 0\}$ and theorem 1 in \cite{rockafellar1982interchange}, we can interchange the
subdifferential operator and expectation, the first-order derivative for each $i\in [n]$ of $\hat F(\bm x^*)$ is
	 \begin{align*}
	 	\frac{\partial{\hat {F}(\bm x^*)}}{\partial x_i}= -\int_{\mu_i(\bm x^*)}  \Pr(d\rzeta),
	 \end{align*}
where $\mu_i(\bm x)= \{\rzeta: \zeta_i +\theta- x_i \geq 0, \zeta_i +\theta- x_i\geq \zeta_j +\theta- x_j, \forall j\in [n]\setminus \{i\} \}$. Let us take its second derivative, i.e.,
for each $i\in [n]$, the diagonal entry of the Hessian matrix $H_{\hat F}(\bm x^*)$ is
	 \begin{align*}
	 	\frac{\partial^2{\hat {F}(\bm x^*)}}{\partial x_i^2}= \int_{\partial\mu_{i0}(\bm x^*)}  \Pr(d\rzeta)+\sum_{\tau \in [n]\setminus \{i\}} \int_{\partial\mu_{i\tau}(\bm x^*)} \Pr(d\rzeta),
	 \end{align*}
	where 
 \begin{align*}
  \partial\mu_{i0}(\bm x)= \left\{\rzeta: \zeta_i +\theta- x_i = 0\geq \zeta_j +\theta- x_j, \forall j\in [n]\setminus \{i\} \right\},   
 \end{align*}
 and 
 \begin{align*}
\partial \mu_{i\tau}(\bm x)= \left\{\rzeta: \zeta_i +\theta- x_i \geq 0, \zeta_i +\theta- x_i=\zeta_\tau +\theta- x_\tau\geq  \zeta_j +\theta- x_j, \forall j\in [n]\setminus \{i\} \right\}.
 \end{align*}
For each $\tau\in [n]\setminus \{i\}$, the off-diagonal entry of the Hessian matrix $H_{\hat F}(\bm x^*)$ is
	 \begin{align*}
	 	\frac{\partial^2{\hat {F}(\bm x^*)}}{\partial x_i\partial x_\tau}= - \int_{\partial\mu_{i\tau}(\bm x^*)} \Pr(d\rzeta).
	 \end{align*}

\noindent{\textbf{Step II.}}	Next, we show that the Hessian matrix $H_{\hat F}(\bm x^*)$ is strictly diagonally dominant. We split the following proof into two parts: (i) $\int_{\partial\mu_{i0}(\bm x^*)}  \Pr(d\rzeta)>0$ for each $i\in[n]$; and (ii) $\int_{\partial\mu_{i\tau}(\bm x^*)} \Pr(d\rzeta)>0$ for each $i\in[n]$ and $\tau\in [n]\setminus \{i\}$. 

\noindent{\textit{Part (i).}} The fact that $\int_{\partial\mu_{i0}(\bm x^*)}  \Pr(d\rzeta)>0$ for each $i\in[n]$ is because set $\partial\mu_{i0}(\bm x^*)$ has a dimension of $n-1$ and set $\partial\mu_{i0}(\bm{x}^*)$ has a nonempty relative interior. 
% Thus, for each $i\in[n]$, $\int_{\partial\mu_{i0}(\bm x^*)}  \Pr(d\rzeta)>0$.

\noindent{\textit{Part (ii).}} The fact that $\int_{\partial\mu_{i\tau}(\bm x^*)} \Pr(d\rzeta)>0$ for each $i\in[n]$ and $\tau\in [n]\setminus \{i\}$ is because set $\partial\mu_{i\tau}(\bm x^*)$ has a dimension of $n-1$ and set $\partial\mu_{i\tau}(\bm x^*)$ has a nonempty relative interior. 
% Thus, for each $i\in[n]$ and $\tau\in [n]\setminus \{i\}$, $\int_{\partial\mu_{i\tau}(\bm x^*)} \Pr(d\rzeta)>0$.

Thus, the Hessian matrix $H_{\hat F}(\bm x^*)$ is strictly diagonally dominant, i.e., the following two conditions satisfied: (i) for each $i\in[n]$ and $\tau\in [n]\setminus \{i\}$, $\partial^2{\hat {F}(\bm x^*)}/\partial x_i\partial x_\tau<0$; and (ii) for each $i\in[n]$, $\partial^2{\hat {F}(\bm x^*)}/\partial x_i^2 +\sum_{\tau\in[n]\setminus\{j\}} \partial^2{\hat {F}(\bm x^*)}/\partial x_i\partial x_\tau>0$. According to Gershgorin circle theorem (see, e.g., theorem 6.1.10 in \cite{horn2012matrix}), the Hessian matrix $H_{\hat F}(\bm x^*)$ is positive definite. Therefore, the optimal solution $\bm x^*$ is unique.    \QEDA
% with its boundary $\partial \mu_i(\bm x)= \{\rzeta: \zeta_i +\theta- x_i = 0,  \}$.

% Let $\bar{\mu}(\bm x)= \{\rzeta: \max_{i\in[n]}\{\zeta_i +\theta- x_i\}\leq 0 \}$ and $\bar{\mu}(x_i)= \{\zeta_i:\{\zeta_i +\theta- x_i\}\leq 0 \}$, we have
%  \begin{align*}
% 	\hat {F}'(\bm x)= \int_{\mu(\bm x)} \frac{\partial \max_{i\in[n]} \left\{  \zeta_i +\theta- x_i\right\}}{\partial \bm x} \Pr(d\rzeta)= \int_{\Xi\setminus\bar{\mu}(\bm x)} \frac{\partial \max_{i\in[n]} \left\{  \zeta_i +\theta- x_i\right\}}{\partial \bm x} \Pr(d\rzeta)
% 	 \end{align*}
\end{proof}

The following corollary shows that the lower-level $\alsox$ \eqref{drccp_alsox} returns a unique optimal solution when  $\Xi$ is closed and convex with mild conditions.
\begin{corollary}
\label{corollary_drccp_unique_joint_continuous}
Suppose support $\Xi$ of $\trzeta$ is closed and convex. When $t\geq \min_{\bm x\in \X}\bm c^\top \bm x$
and $v^A(t)>0$, for any $\bm x\in \X\cap\{\bm c^\top \bm x\leq t\}$ with a positive lower-level $\alsox$ objective value, the set
\begin{align*}
  \partial\mu_{i0}(\bm x)= \left\{\rzeta \in \Xi: \zeta_i +\theta- x_i = 0\geq \zeta_j +\theta- x_j, \forall j\in [n]\setminus \{i\} \right\}  
 \end{align*}
 has a dimension of $n-1$. In a joint DRCCP \eqref{joint_drccp_eq_multi}, when the deterministic set $\X$ is convex and the reference distribution ${\Pr_{\trzeta}}$ is continuous with support $\Xi$. Then the lower-level $\alsox$ \eqref{drccp_alsox} admits a unique optimal solution when $t\geq \min_{\bm x\in \X}\bm c^\top \bm x$ and $v^A(t)>0$.
\end{corollary}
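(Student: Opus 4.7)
The plan is to adapt the proof of Theorem~\ref{theorem_drccp_unique_joint_continuous} almost verbatim, the only changes being that every integration domain should be understood as intersected with the support $\Xi$ and that the dimensionality of $\partial\mu_{i0}(\bm x)$ must now be invoked from the hypothesis rather than deduced from $\Xi=\Re^n$. Starting from any optimal solution $\bm{x}^*$ of the lower-level $\alsox$ \eqref{drccp_alsox}, I would carry over Step~I of the proof of Theorem~\ref{theorem_drccp_unique_joint_continuous}---interchanging subdifferential and expectation via theorem 1 in \cite{rockafellar1982interchange} and then differentiating a second time---to obtain the Hessian $H_{\hat F}(\bm x^*)$ with diagonal entries $\int_{\partial\mu_{i0}(\bm x^*)}\Pr(d\rzeta)+\sum_{\tau\neq i}\int_{\partial\mu_{i\tau}(\bm x^*)}\Pr(d\rzeta)$ and off-diagonal entries $H_{i\tau}=-\int_{\partial\mu_{i\tau}(\bm x^*)}\Pr(d\rzeta)\leq 0$ for $\tau\neq i$.

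The observation that replaces Part~(ii) of that theorem is a pure algebraic cancellation: since each off-diagonal entry in row $i$ is non-positive, the Gershgorin radius equals $\sum_{\tau\neq i}\int_{\partial\mu_{i\tau}(\bm x^*)}\Pr(d\rzeta)$, and hence
\begin{equation*}
H_{ii}-\sum_{\tau\neq i}|H_{i\tau}|=\int_{\partial\mu_{i0}(\bm x^*)}\Pr(d\rzeta).
\end{equation*}
Thus the magnitudes of the off-diagonal integrals are irrelevant, and strict diagonal dominance reduces to the single condition $\int_{\partial\mu_{i0}(\bm x^*)}\Pr(d\rzeta)>0$ for every $i\in[n]$. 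This is exactly what the corollary's hypothesis supplies: the set $\partial\mu_{i0}(\bm x^*)\subseteq\Xi$ has dimension $n-1$, and since the reference distribution is continuous the induced surface measure on this set is strictly positive. Gershgorin's theorem (theorem 6.1.10 in \cite{horn2012matrix}) then yields that $H_{\hat F}(\bm x^*)$ is positive definite; combined with the convexity of $\hat F$ (an expectation of convex functions) and of $\X\cap\{\bm x\colon \bm c^\top\bm x\leq t\}$, this rules out a second optimum and establishes uniqueness.

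The main obstacle will be confirming that the Hessian formula of Theorem~\ref{theorem_drccp_unique_joint_continuous} remains valid when the reference distribution is supported on a proper closed convex set $\Xi\subsetneq\Re^n$. The worry is whether additional boundary pieces of $\mu_i(\bm x)$ coming from $\partial\Xi$ contribute to the second-order derivative. They do not, because such pieces of the boundary are fixed (they depend on $\Xi$ but not on $\bm x$), so they drop out in the differentiation with respect to $\bm x$; this can be made rigorous by a standard differentiation-under-the-integral-sign argument, or by noting that Rockafellar's interchange theorem produces the same subdifferential regardless of whether the support is $\Re^n$ or $\Xi$. Once this point is verified, the rest is a clean transfer from Theorem~\ref{theorem_drccp_unique_joint_continuous}.
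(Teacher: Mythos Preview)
Your proposal is correct and follows essentially the same route as the paper: invoke the hypothesis to guarantee $\int_{\partial\mu_{i0}(\bm x^*)}\Pr(d\rzeta)>0$ and then repeat the Hessian/Gershgorin argument of Theorem~\ref{theorem_drccp_unique_joint_continuous}. Your explicit observation that the off-diagonal contributions cancel in the Gershgorin test---so that strict diagonal dominance reduces solely to positivity of $\int_{\partial\mu_{i0}(\bm x^*)}\Pr(d\rzeta)$ and Part~(ii) of the theorem's proof is not needed---is a nice clarification that the paper leaves implicit.
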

\begin{proof}
Using the fact that similar to part (ii) in the proof of Theorem \ref{theorem_drccp_unique_joint_continuous}, we also have $\int_{\partial\mu_{i0}(\bm x^*)}  \Pr(d\rzeta)>0$ for each $i\in[n]$ according to the presumption, the proof is almost identical to that of Theorem \ref{theorem_drccp_unique_joint_continuous}  and is thus omitted.
    \QEDA
\end{proof}

% \subsection{Uniqueness: Joint DRCCPs with Right-hand Side Uncertainty}
% \label{sec_unique_joint_drccp_right}
% In this subsection, we consider a joint DRCCP with right-hand side uncertainty and a continuous reference distribution. In particular, we assume that
% $I=n$, the uncertainty constraint is $\bm a_i(\bm x)^\top \rzeta -  b_i(\bm x) = \bm e^\top \rzeta_i -  b_i(\bm x) $, and the random parameter $\rzeta$  is continuous. That is, we consider the following DRCCP:
% \begin{align*}
% v^* = \min_{\bm x\in \X}\left\{\bm c^\top \bm x\colon	\inf_{\Pr\in\P}\Pr\left\{ \bm e^\top \rzeta_i\leq b_i(\bm x), \forall i\in[n] \right\}\geq 1-\varepsilon\right\}, %\label{joint_drccp}
% \end{align*}
% that is,
% \begin{align}
% v^* = \min_{\bm x\in \X}\left\{\bm c^\top \bm x\colon	\Pr\left\{\trzeta\colon \theta\left\|\bm e\right\|_*+\bm e^\top \rzeta_i\leq b_i(\bm x)  , \forall i\in[n] \right\}\geq 1-\varepsilon\right\}. \label{joint_drccp_eq}
% \end{align}
% In this case, the lower-level $\alsox$ \eqref{drccp_alsox} is 
% \begin{align*}
% v^A(t)=	\min_{\bm x\in\X}\left\{\E_{\Pr_{\trzeta}} \left[ \max_{i\in[n]} \left\{  \bm e^\top \rzeta_i + \theta\left\|\bm e\right\|_* - b_i(\bm x)\right\}_+ \right]\colon \bm c^\top \bm x\leq t \right\}.
% \end{align*}
% Our proof idea is to show the positive definiteness of the Hessian of the objective function in the lower-level $\alsox$ \eqref{drccp_alsox} to prove the uniqueness. 

% The coefficient of $\bm x$ should be positive.

 \subsection{Uniqueness: Joint DRCCPs with Left-hand Side Uncertainty}
\label{sec_unique_joint_drccp_knapsack}
We consider the case when the reference distribution is continuous. For a joint DRCCP \eqref{eq_drccp_infty} with left-hand side uncertainty and knapsack constraints, we assume that the affine mappings are $\bm{a}_i(\bm{x})=\bm x$, $b_i(\bm x)=b_i$ for each $i\in[I]$, the random parameters $\trzeta$ is continuous with $\rzeta:=[\rzeta_1,\cdots,\rzeta_I]$ such that $\rzeta_i$ and $\rzeta_j$ do not overlap for each $i\neq j$,
and the deterministic set $\X$ is convex. That is, we consider the following DRCCP:
\begin{align*}
v^* = \min_{\bm x\in \X}\left\{\bm c^\top \bm x\colon	\inf_{\Pr\in\P_\infty}\Pr\left\{ \trxi\colon \bm x^\top \trxi_i\leq b_i, \forall i\in[I] \right\}\geq 1-\varepsilon\right\}, %\label{joint_drccp}
\end{align*}
that is,
\begin{align}
v^* = \min_{\bm x\in \X}\left\{\bm c^\top \bm x\colon	\Pr\left\{\trzeta\colon \theta\left\|\bm x\right\|_*+\bm x^\top \trzeta_i\leq b_i  , \forall i\in[I] \right\}\geq 1-\varepsilon\right\}. \label{joint_drccp_eq}
\end{align}
In this case, the lower-level $\alsox$ \eqref{drccp_alsox} is 
\begin{align*}
v^A(t)=	\min_{\bm x\in\X}\left\{\E_{\Pr_{\trzeta}} \left[ \max_{i\in[I]} \left\{ \theta\left\|\bm x\right\|_*+ \bm x^\top \trzeta_i- b_i\right\}_+ \right]\colon \bm c^\top \bm x\leq t \right\}.
\end{align*}
Recall that $\hat {F}(\bm x)$ denotes the objective function in the lower-level $\alsox$ \eqref{drccp_alsox}. Under this circumstance, we have
\begin{align*}
	\hat {F}(\bm x)=\E_{\Pr_{\trzeta}} \left[ \max_{i\in[I]} \left\{ \theta\left\|\bm x\right\|_*+ \bm x^\top \trzeta_i- b_i\right\}_+ \right].
\end{align*}

\begin{theorem}
\label{theorem_drccp_unique_continuous_joint}
Suppose that in a joint DRCCP \eqref{joint_drccp_eq}, the deterministic set $\X$ is convex, the reference distribution ${\Pr_{\trzeta}}$ is continuous and nondegenerate with support $\Re^n$, $\|\cdot\|_*=\|\cdot\|_p$ with $p\in (1,\infty)$, affine mappings $\bm{a}_i(\bm{x})=\bm x$ and $b_i(\bm x)=b_i$ for each $i\in[I]$, and the random parameters $\trzeta$ is continuous with $\rzeta:=[\rzeta_1,\cdots,\rzeta_I]$ such that $\rzeta_i$ and $\rzeta_j$ do not overlap for each $i\neq j$. Then the lower-level $\alsox$ \eqref{drccp_alsox} admits a unique optimal solution when $t\geq \min_{\bm x\in \X}\bm c^\top \bm x$ and $v^A(t)>0$.
\end{theorem}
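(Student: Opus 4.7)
The plan is to show that the lower-level $\alsox$ objective $\hat{F}(\bm{x})$ is strictly convex at every candidate optimum $\bm{x}^{*}$ satisfying $v^{A}(t)>0$, from which uniqueness follows directly. The strategy combines the boundary-integral strict-convexity argument behind Theorem~\ref{theorem_drccp_unique_continuous} for single DRCCPs with the regionwise Hessian decomposition used in Theorem~\ref{theorem_drccp_unique_joint_continuous} for joint DRCCPs with right-hand uncertainty.

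First, I would decompose the sample space into activation regions $\mu_i(\bm{x}^{*}) = \{\rzeta:\ \phi_i(\bm{x}^{*},\rzeta) > 0 \text{ and } \phi_i(\bm{x}^{*},\rzeta) \geq \phi_j(\bm{x}^{*},\rzeta),\ \forall j\neq i\}$, where $\phi_i(\bm{x},\rzeta) = \theta\|\bm{x}\|_p + \bm{x}^{\top}\rzeta_i - b_i$, so that $\hat{F}(\bm{x}^{*}) = \sum_i \int_{\mu_i(\bm{x}^{*})} \phi_i(\bm{x}^{*},\rzeta)\,\Pr(d\rzeta)$, and because $v^{A}(t)>0$, at least one region has positive probability. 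Invoking theorem~1 of \cite{rockafellar1982interchange} to exchange differentiation and expectation, I would compute $\nabla^{2}\hat{F}(\bm{x}^{*})$, which splits into a \emph{bulk} term $\theta\Pr\{\cup_i \mu_i(\bm{x}^{*})\}\nabla^{2}\|\bm{x}^{*}\|_p$ and \emph{boundary} surface integrals over the zero-level interfaces $\partial\mu_i^{0}(\bm{x}^{*}) = \{\rzeta: \phi_i(\bm{x}^{*},\rzeta)=0,\ i\text{ is argmax}\}$ and the tied interfaces $\partial\mu_{ij}(\bm{x}^{*}) = \{\phi_i=\phi_j>0\}$, weighted by the density of $\Pr_{\trzeta}$.

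To establish $\bm{y}^{\top}\nabla^{2}\hat{F}(\bm{x}^{*})\bm{y} > 0$ for every $\bm{y}\neq\bm{0}$, I would split into two cases. If $\bm{y}$ is not a scalar multiple of $\bm{x}^{*}$, strict convexity of $\|\cdot\|_p$ for $p\in(1,\infty)$ ensures $\bm{y}^{\top}\nabla^{2}\|\bm{x}^{*}\|_p\bm{y}>0$, so the bulk term alone is strictly positive. If $\bm{y}=c\bm{x}^{*}$, the bulk contribution vanishes by $1$-homogeneity (Euler's identity yields $\nabla^{2}\|\bm{x}^{*}\|_p\bm{x}^{*}=\bm{0}$), and one must rely on the boundary contributions: on $\partial\mu_i^{0}(\bm{x}^{*})$ the integrand $(\bm{y}^{\top}\nabla_{\bm{x}}\phi_i)^{2}$ evaluates to $c^{2}b_i^{2}$ via Euler, while on $\partial\mu_{ij}(\bm{x}^{*})$ the integrand $(\bm{y}^{\top}(\rzeta_i-\rzeta_j))^{2}$ evaluates to $c^{2}(b_i-b_j)^{2}$. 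The non-overlapping $\rzeta_i$-structure and the nondegenerate full-support of $\Pr_{\trzeta}$ guarantee each such interface is a genuine $(m{-}1)$-dimensional hypersurface of strictly positive weighted surface measure, so at least one $c^{2}b_i^{2}$ or $c^{2}(b_i-b_j)^{2}$ term is nonzero provided the $b_i$'s are not all zero and equal; the residual all-zero $b_i$ case makes $\hat{F}$ positively $1$-homogeneous along the ray through $\bm{x}^{*}$ and is then handled via strict convexity in transverse directions (two distinct optima cannot lie on the same ray when $\hat{F}$ is linear along it and differs in value proportionally to the norm).

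The main obstacle is the case $\bm{y}\parallel\bm{x}^{*}$: strict convexity of the $p$-norm degenerates by $1$-homogeneity, and one must carefully invoke the non-overlapping $\rzeta_i$-structure to secure a codimension-one hypersurface whose weighted Lebesgue mass under $\Pr_{\trzeta}$ is strictly positive. This step is the joint-constraint analogue of Part~(i) of the proof of Theorem~\ref{theorem_drccp_unique_continuous}, now block-localized in the $\rzeta$-coordinates, and is precisely where the non-overlapping assumption earns its keep by allowing the single-block boundary argument to be applied index by index.
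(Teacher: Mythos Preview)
Your proposal is correct and follows essentially the same route as the paper: the paper also decomposes the Hessian into the bulk term $\theta\int_{\mu_i}\nabla^2\|\bm x\|_p$ plus the two families of boundary surface integrals (zero-level $\partial\mu_i^{0}$ with integrand $(\nabla_{\bm x}\phi_i)(\nabla_{\bm x}\phi_i)^{\top}$ and tied $\partial\mu_{i\tau}$ with integrand $(\rzeta_i-\rzeta_\tau)(\rzeta_i-\rzeta_\tau)^{\top}$), then defers the positive-definiteness argument verbatim to the proof of Theorem~\ref{theorem_drccp_unique_continuous}, which is exactly the $\bm y\not\parallel\bm x^{*}$ versus $\bm y\parallel\bm x^{*}$ case split, the Euler-identity evaluation to $c^{2}b_i^{2}$, and the homogeneity fallback for the all-zero $b_i$ case that you describe. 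Your identification of where the non-overlapping hypothesis enters (to ensure each interface carries genuine $(m{-}1)$-dimensional surface mass under the full-support density) matches the paper's implicit use.
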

\begin{proof}
% We write down the closed-form expression of the first-order and second-order derivatives of the lower-level $\alsox$ \eqref{drccp_alsox}. 
The first-order derivative of $\hat {F}(\bm x)$ is
\begin{align*}
\hat {F}'(\bm x)=\frac{\partial{\hat {F}(\bm x)}}{\partial \bm x}=\sum_{i\in[I]} \int_{ \mu_i(\bm x)} \frac{\partial}{\partial \bm x} \left[ \theta\left\|\bm x \right\|_*+\bm x^\top{\rzeta}_i \right] \Pr(d\rzeta),
\end{align*}
where for each $i\in[I]$, $\mu_i(\bm x)= \{\rzeta: \theta\|\bm x \|_*+\bm x^\top{\rzeta}_i\geq b_i, \bm x^\top{\rzeta}_i- b_i \geq \bm x^\top{\rzeta}_j- b_j, \forall j\in[I]\setminus \{i\}\}$; and the Hessian of $\hat {F}(\bm x)$ is
\begin{align*}
H_{\hat F}(\bm x)
=& \sum_{i\in[I]}\biggl[    \frac{1}{\left\|\bm x\right\|_2}\int_{ \mu_i(\bm x)\cap \{\theta\left\|\bm x \right\|_*+\bm x^\top{\rzeta}_i= b_i \}} 
 \left( \frac{\partial \theta\left\|\bm x\right\|_*}{\partial \bm x}+\rzeta_i\right)\left( \frac{\partial \theta\left\|\bm x\right\|_*}{\partial \bm x}+\rzeta_i\right)^\top \Pr(d\rzeta)\\
 & \quad + \frac{1}{\left\|\bm x\right\|_2}\sum_{\tau\in[I]\setminus\{i\}}\int_{ \mu_i(\bm x)\cap \{\bm x^\top{\rzeta}_i- b_i =\bm x^\top{\rzeta}_\tau- b_\tau \}} 
 \left( \rzeta_i-\rzeta_\tau\right)\left( \rzeta_i-\rzeta_\tau\right)^\top \Pr(d\rzeta) 
  % +& \sum_{\tau\in[I]\setminus \{i\}}\frac{1}{\left\|\bm x\right\|_2}\int_{ \mu_\tau(\bm x)\cap \{\bm x^\top{\rzeta}_\tau- b_\tau =\bm x^\top{\rzeta}_i- b_i \}} \left( \frac{\partial \theta\left\|\bm x\right\|_*}{\partial \bm x}+\rzeta_\tau\right)\left( \rzeta_\tau-\rzeta_i\right)^\top \Pr(d\rzeta) \\
 + 
  \theta\int_{ \mu_i(\bm x)}\frac{\partial^2 \|\bm x\|_*}{\partial \bm x^2}\Pr(d\rzeta) \biggr].
\end{align*}
The remaining proof is similar to that of Theorem~\ref{theorem_drccp_unique_continuous} and is thus omitted for brevity.
% Following the proof in Theorem~\ref{theorem_drccp_unique_continuous}, suppose that there exists an optimal solution $\bm{x}^*$ to the lower-level $\alsox$ such that $\supp(\bm{x}^*)=n$.  According to the proof in Theorem~\ref{theorem_drccp_unique_continuous}, if ${\bm y}\not\propto \bm{x}^*$, then we must have $\bm y^\top  ({\partial^2 \|\bm x\|_*}/{\partial \bm x^2} ) \bm y>0$, that is, $H_{\hat F}$ at $\bm x^*$ is PD. Then, it remains to show that if the solution $\bm x^*$ is not unique, all the optimal solutions should be proportional to $\bm x^*$. We split the proof into two steps.
% Defining $\hat{\rzeta}_{i\tau}= \rzeta_i-\rzeta_\tau$, 
% discuss (i) $b_i=b_\tau$ for all $i,\tau\in[I]$; (ii) $b_i\neq b_\tau$ for some $i,\tau\in[I]$.
\QEDA
\end{proof}

Similarly, when the upper and lower bounds of the support $\Xi$ are accessible, the following corollary shows that the lower-level $\alsox$ \eqref{drccp_alsox} returns a unique optimal solution.
\begin{corollary}
    Suppose support $\Xi$ of $\trzeta$ is closed and convex. When $t\geq \min_{\bm x\in \X}\bm c^\top \bm x$
and $v^A(t)>0$, for any $\bm x\in \X\cap\{\bm c^\top \bm x\leq t\}$ with a positive lower-level $\alsox$ objective value, the set
\begin{align*}
  \partial\mu_{i}(\bm x)= \left\{\rzeta \in \Xi: \theta\left\|\bm x \right\|_*+\bm x^\top{\rzeta}_i-b_i = 0\geq \theta\left\|\bm x \right\|_*+\bm x^\top{\rzeta}_j-b_j, \forall j\in [n]\setminus \{i\} \right\}  
 \end{align*}
 has a dimension of $n-1$. In a joint DRCCP \eqref{joint_drccp_eq}, when the deterministic set $\X$ is convex and the reference distribution ${\Pr_{\trzeta}}$ is continuous support $\Xi$. Then the lower-level $\alsox$ \eqref{drccp_alsox} admits a unique optimal solution when $t\geq \min_{\bm x\in \X}\bm c^\top \bm x$ and $v^A(t)>0$.
\end{corollary}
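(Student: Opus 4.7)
The plan is to follow the proof of Theorem~\ref{theorem_drccp_unique_continuous_joint} verbatim, replacing only the step that invokes the full support $\Re^n$. Recall that in that theorem, uniqueness was obtained by establishing strict convexity of the lower-level $\alsox$ objective $\hat F(\bm x)$ on the convex feasible set $\X\cap\{\bm c^\top \bm x\leq t\}$ via positive-definiteness of the Hessian $H_{\hat F}(\bm x)$, which decomposes into three positive semidefinite pieces: a boundary integral coming from the hinge $(\cdot)_+$ becoming active at $\theta\|\bm x\|_*+\bm x^\top \rzeta_i = b_i$, a boundary integral from ties between the $I$ linear pieces of the inner max, and an interior integral weighted by $\partial^2\|\bm x\|_*/\partial \bm x^2$ (which is PSD for $p\in(1,\infty)$).

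First, I would reproduce the first-order and second-order derivative computation exactly as in Theorem~\ref{theorem_drccp_unique_continuous_joint}, arriving at the same three-term decomposition of $H_{\hat F}(\bm x)$. Second, I would observe that under the corollary's hypotheses, the set $\partial\mu_i(\bm x)\subseteq \Xi$ is assumed to retain its $(n{-}1)$-dimensional structure at every $\bm x\in\X\cap\{\bm c^\top\bm x\leq t\}$ with positive lower-level objective value; combined with the continuity and nondegeneracy of $\Pr_{\trzeta}$ on $\Xi$, this yields
\begin{equation*}
\int_{\partial\mu_i(\bm x)}\left(\frac{\partial\theta\|\bm x\|_*}{\partial \bm x}+\rzeta_i\right)\left(\frac{\partial\theta\|\bm x\|_*}{\partial \bm x}+\rzeta_i\right)^\top\Pr(d\rzeta)\succ 0
\end{equation*}
along the directions spanned by $\bm x$, precisely as in part (ii) of the theorem's proof. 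Since the other two pieces are at least PSD, the Hessian is positive definite on the feasible region, $\hat F$ is strictly convex there, and its minimizer is unique.

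The main obstacle, as the corollary itself signals by pointing to part (ii) of the theorem's proof, is verifying that the $(n{-}1)$-dimensional hypothesis on $\partial\mu_i(\bm x)$ is sufficient to preserve strict positivity of the boundary integral once the density is confined to $\Xi$ rather than spread over $\Re^n$. This reduces to checking that $\partial\mu_i(\bm x)$ has nonempty relative interior inside $\Xi$ and that the nondegenerate density does not vanish there, both of which follow immediately from closedness and convexity of $\Xi$ and the continuity/nondegeneracy of $\Pr_{\trzeta}$. Once this is in hand, the remainder of the argument is mechanical and no additional work beyond Theorem~\ref{theorem_drccp_unique_continuous_joint} is required, which is why the proof can legitimately be replaced by a one-line reference.
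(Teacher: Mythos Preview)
Your proposal is correct and follows the same approach the paper uses: the paper gives no explicit proof for this corollary, but by the pattern of the analogous Corollaries~\ref{corollary_drccp_unique_continuous_support} and~\ref{corollary_drccp_unique_joint_continuous}, the intended argument is exactly what you describe---repeat the Hessian computation from Theorem~\ref{theorem_drccp_unique_continuous_joint} and observe that the $(n{-}1)$-dimensionality hypothesis on $\partial\mu_i(\bm x)$ replaces the full-support assumption in guaranteeing strict positivity of the boundary integral. One minor point: there is no labeled ``part (ii)'' in the proof of Theorem~\ref{theorem_drccp_unique_continuous_joint} itself (that proof defers to Theorem~\ref{theorem_drccp_unique_continuous}), so your reference should point to Part~(i) of the proof of Theorem~\ref{theorem_drccp_unique_continuous} where the boundary-integral positivity is actually established.
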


\section{Exactness: $\alsoxs$ Provides an Optimal Solution to a DRCCP }
\label{sec_exactness}

% Under the moment ambiguity set, we know that $\alsoxs$ is able to provide an optimal solution to a DRCCP. 
In this section, we provide sufficient conditions under which $\alsoxs$ provides an optimal solution to a DRCCP under type $\infty-$Wasserstein ambiguity set. 
According to Theorem~\ref{also_sharp_better_weak_sharp}, $v^{\as}\leq v^{\aus}$. Thus, for ease of analysis, we focus on $\alsoxus$ \eqref{eq_also_cvar_q_wass_weak}. 
% if one of the following conditions holds: (i) a single DRCCP with elliptical reference distribution with additional assumptions; (ii) when set $\X$ is discrete with additional assumptions.
%Under some particular distributions, we can provide the closed-form reformulations of DRCCP and the corresponding $\alsoxs$, where we can further show the exactness conditions of the $\alsoxs$. 
% In particular, our goal is to 
To begin with, we recast DRCCP \eqref{eq_drccp_infty} and $\alsoxus$ \eqref{eq_also_cvar_q_wass_weak} in the following forms:
\begin{align}
v^* = \min_{\bm x\in\X} \left\{ \bm c^\top \bm x \colon  \hat{G}_{\theta}\left(\bm x^\top \bm h\right)\leq 0\right\}\label{drccp_exact_eq};
\end{align}
and
\begin{subequations}
    \label{alsoxus_exact_eq}
\begin{align}
	v^{\aus} =  \min_{t} \quad & t,     \label{alsoxus_exact_eq_a}\\
	\textup{s.t.} \quad & \bm x^*\in \argmin_{\bm x\in\X} \left\{\overline{F}_{\theta}\left(\bm x^\top \bm h\right) \colon  \bm c^\top \bm x\leq t \right\},\label{alsoxus_exact_eq_b}\\
	&  \hat{G}_{\theta}\left((\bm x^*)^\top\bm h\right)\leq 0.\label{alsoxus_exact_eq_c}
\end{align}
\end{subequations}
We see that if both functions $\hat{G}_{\theta}(\cdot)$ and $\overline F_{\theta}(\cdot)$ are monotone nondecreasing, then $\alsoxus$ can find an optimal solution to DRCCP. So is $\alsoxs$ according to Theorem \ref{also_sharp_better_weak_sharp}.
\begin{theorem}
\label{exactness_alsoxus_monotone}
 Suppose that in DRCCP \eqref{drccp_exact_eq} and $\alsoxus$ \eqref{alsoxus_exact_eq}, both functions $\hat{G}_{\theta}(\cdot)$ and $\overline F_{\theta}(\cdot)$ are monotone nondecreasing. Then $\alsoxs$ is exact.
\end{theorem}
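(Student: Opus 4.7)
Since $\alsoxs$ is a conservative approximation of DRCCP \eqref{drccp_exact_eq}, we always have $v^*\le v^{\as}$, and Theorem~\ref{also_sharp_better_weak_sharp} supplies $v^{\as}\le v^{\aus}$. To establish exactness of $\alsoxs$, it therefore suffices to prove the reverse inequality $v^{\aus}\le v^*$ under the monotonicity hypothesis, which collapses the whole chain to $v^{\as}=v^*$. My plan is to exhibit, at the candidate upper-level value $t=v^*$, a lower-level optimizer of \eqref{alsoxus_exact_eq_b} that satisfies the chance constraint \eqref{alsoxus_exact_eq_c}.

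\textbf{Key construction.} Let $\bm x^0$ be any optimal solution of DRCCP \eqref{drccp_exact_eq}, so that $\bm c^\top \bm x^0=v^*$ and $\hat{G}_{\theta}\bigl((\bm x^0)^\top \bm h\bigr)\le 0$. Set $t=v^*$ in \eqref{alsoxus_exact_eq_a}; then $\bm x^0$ is feasible to the lower level \eqref{alsoxus_exact_eq_b}, so its feasible region is nonempty. I would then introduce the auxiliary scalar-minimization problem
\begin{align*}
\bm x^\dagger \in \argmin_{\bm x \in \X} \left\{ \bm x^\top \bm h \colon \bm c^\top \bm x \le t \right\},
\end{align*}
whose optimal value satisfies $(\bm x^\dagger)^\top \bm h\le (\bm x^0)^\top \bm h$. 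The monotone nondecreasing property of $\overline{F}_{\theta}$ then forces $\overline{F}_{\theta}(\bm x^\top \bm h)\ge \overline{F}_{\theta}\bigl((\bm x^\dagger)^\top \bm h\bigr)$ for every feasible $\bm x$, so $\bm x^\dagger$ is itself an optimal solution of the lower-level problem \eqref{alsoxus_exact_eq_b}.

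\textbf{Chance-constraint verification.} With $\bm x^\dagger$ in hand I would use monotonicity of $\hat{G}_{\theta}$ together with $(\bm x^\dagger)^\top \bm h\le (\bm x^0)^\top \bm h$ to conclude
\begin{align*}
\hat{G}_{\theta}\bigl((\bm x^\dagger)^\top \bm h\bigr) \le \hat{G}_{\theta}\bigl((\bm x^0)^\top \bm h\bigr) \le 0,
\end{align*}
so $\bm x^\dagger$ certifies upper-level feasibility \eqref{alsoxus_exact_eq_c} at $t=v^*$. Thus $(t,\bm x^\dagger)=(v^*,\bm x^\dagger)$ is admissible in \eqref{alsoxus_exact_eq}, which yields $v^{\aus}\le v^*$. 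Combined with $v^*\le v^{\as}\le v^{\aus}$ from Theorem~\ref{also_sharp_better_weak_sharp}, this closes the loop: $v^{\as}=v^*$, i.e., $\alsoxs$ is exact.

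\textbf{Main obstacle.} The delicate point is that the lower level \eqref{alsoxus_exact_eq_b} only commits to \emph{some} element of the $\argmin$, so a generic implementation (such as Algorithm~\ref{alg_alsox_sharp}) might return a lower-level optimizer other than $\bm x^\dagger$. The argument therefore proves the existence of a qualifying $\bm x^*$, and I expect the write-up to require either a tiebreaking remark (e.g.\ solving a lexicographic refinement that secondarily minimizes $\bm x^\top \bm h$) or, more simply, the observation that the value-level identity $v^{\as}=v^*$ is unaffected by which lower-level minimizer is selected because $\bm x^\dagger$ is available as a certificate that the binary search in Algorithm~\ref{alg_alsox_sharp} cannot be forced above $v^*$.
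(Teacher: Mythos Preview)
Your proposal is correct and follows essentially the same route as the paper: show $v^{\aus}\le v^*$ by fixing $t=v^*$, exhibiting a lower-level optimizer that inherits feasibility to the chance constraint via the shared monotonicity in the scalar $\bm x^\top\bm h$, and then invoking Theorem~\ref{also_sharp_better_weak_sharp} to pass from $\alsoxus$ to $\alsoxs$.

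The one noteworthy difference is in how the lower-level optimizer is selected. The paper simply takes an arbitrary minimizer $\hat{\bm x}$ of \eqref{alsoxus_exact_eq_b} and asserts that $\overline F_{\theta}(\hat{\bm x}^\top\bm h)\le \overline F_{\theta}((\bm x^*)^\top\bm h)$ implies $\hat G_{\theta}(\hat{\bm x}^\top\bm h)\le \hat G_{\theta}((\bm x^*)^\top\bm h)$; this inference is a little casual when $\overline F_{\theta}$ is merely nondecreasing (flat regions could in principle allow $\hat{\bm x}^\top\bm h>(\bm x^*)^\top\bm h$). Your explicit construction of $\bm x^\dagger\in\argmin\{\bm x^\top\bm h:\bm x\in\X,\ \bm c^\top\bm x\le t\}$ sidesteps this by first minimizing the scalar argument itself, which guarantees both that $\bm x^\dagger$ is a lower-level optimizer and that $(\bm x^\dagger)^\top\bm h\le(\bm x^0)^\top\bm h$, making the transfer to $\hat G_{\theta}$ airtight. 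Your ``main obstacle'' remark about tie-breaking is exactly the right caveat, and it applies equally to the paper's proof; at the level of optimal values both arguments are fine.
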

\begin{proof}
Let $v_1,v_2$ be the optimal values of DRCCP \eqref{drccp_exact_eq} and $\alsoxus$ \eqref{alsoxus_exact_eq}, respectively. Since $\alsoxus$ \eqref{alsoxus_exact_eq} is a conservative approximation, we must have $v_1\leq v_2$. Then it remains to show that $v_2\leq v_1$. 
%\begin{enumerate}[label={}]
%	\item ($v_1\leq v_2$) Let $(\bm x^*, t^*)$ be an optimal solution of $\alsoxus$ \eqref{alsoxus_exact_eq}. According to \eqref{alsoxus_exact_eq_c}, we have $ \hat{G}_{\theta}\left((\bm x^*)^\top \bm h\right)\leq b$. From \eqref{alsoxus_exact_eq_b}, we know $\bm c^\top \bm x^* \leq t^*$. Thus, $\bm x^*$ is feasible to DRCCP \eqref{drccp_exact_eq}, and we have $v_1\leq t^*=v_2$.

Let $\bm x^*$ be an optimal solution of DRCCP \eqref{drccp_exact_eq} and $t^* = \bm c^\top \bm x^*$. Plug $t^*$ into the lower-level $\alsoxus$ and let $\hat{\bm x}$ be its optimal solution, that is,
  \begin{align*}
      \hat{\bm x}\in \argmin_{\bm x\in\X} \left\{\overline{F}_{\theta}\left(\bm x^\top \bm h\right) \colon  \bm c^\top \bm x\leq t^* \right\}.
  \end{align*}
  Since $\bm x^*$ is feasible to the lower-level $\alsoxus$ with $t^* = \bm c^\top \bm x^*$, we must have $\overline{F}_{\theta}(\hat{\bm x}^\top\bm h) \leq \overline{F}_{\theta}((\bm x^*)^\top \bm h)$. According to the monotonicity assumption of the function $\hat{G}_{\theta}(\bm x^\top \bm h)$, we further have 
  $\hat{G}_{\theta}(\hat{\bm x}^\top\bm h  ) \leq \hat{G}_{\theta}((\bm x^*)^\top\bm h)$, which implies that $\hat{G}_{\theta}( \hat{\bm x}^\top\bm h)\leq 0$. Hence, we have $v_2\leq t^*=v_1$. That is, $\alsoxus$ is exact.
  
 According to Theorem \ref{also_sharp_better_weak_sharp}, $\alsoxs$ is less conservative than $\alsoxus$. Thus, $\alsoxs$  is also exact. This completes the proof.
% \end{enumerate}
\QEDA
\end{proof}

Next, we identify three special families of DRCCPs satisfying the conditions in Theorem \ref{exactness_alsoxus_monotone}, namely, single DRCCPs with elliptical, multinomial, and finite-support reference distributions, respectively.
%as an illustration of Proposition~\ref{exactness_alsoxus_monotone}.

\subsection{Special Case I: Single DRCCPs with Elliptical Reference Distributions}
\label{sec_exactness_elliptical}
We consider a single DRCCP when the reference distribution is elliptical. 
Note that an elliptical distribution $\Pr_{\mathrm{E}}(\bm{\mu},\rsigma,\hat g)$ is described by three parameters, a location parameter $\bm{\mu}$, a positive semi-definite matrix $\rsigma$, and a generating function $\hat g$, 
%The name of an elliptical distribution is based on the fact that the contours of its density are ellipsoids in $\bm x\in\Re^n$. That is, 
and its probability density function $\hat{f}$ has the following form:
\begin{equation*}
\hat{f}(\bm{x})=\bar{k}\cdot \hat g\left(\frac{1}{2}(\bm{x}-\bm{\mu})^\top\rsigma^{-1}(\bm{x}-\bm{\mu})\right) 
\end{equation*}
with a positive normalization scalar $\bar{k}$. The probability density function of the standard univariate elliptical distribution $\Pr_{\mathrm{E}}(0,1,\hat g)$ is $\varphi(z)=\bar{k}\hat g(z^2/2)$, and the corresponding cumulative distribution function is $\mathrm{\Phi}(\tau)=\int_{-\infty}^\tau\bar{k}\hat g(z^2/2)dz$. For the single DRCCP \eqref{eq_drccp_infty}, i.e., $I=1$, when the affine mappings are $\bm{a}_1(\bm{x})=\bm x$, $b_1(\bm x)=b_1$, the random parameters $\trzeta$ follow a joint elliptical distribution with $\trzeta\thicksim\Pr_{\mathrm{E}}(\bm{\mu},\rsigma,\hat g)$, and the norm defining the Wasserstein distance is the generalized Mahalanobis norm associated with the matrix $\rsigma$, i.e., $\|\bm{y}\|=\sqrt{\bm{y}^\top\rsigma^{\dagger}\bm{y}}$, for some $\bm y\in \Re^n$, where $\rsigma^{\dagger}$ is the pseudo-inverse. According to the reformulations in proposition 10 of \cite{jiang2022also}, DRCCP \eqref{eq_drccp_infty} resorts to 
\begin{align*}
v^*=\min_{\bm x\in\X}\left\{ \bm c^\top \bm  x\colon \bm \mu^\top \bm x+ \left( \mathrm{\Phi}^{-1}(1-\varepsilon)+\theta\right) \sqrt{\bm{x}^\top\rsigma\bm{x}} -b_1\leq 0\right\},
\end{align*}
the lower-level $\alsoxus$ \eqref{eq_also_cvar_q_wass_weak_b} is equivalent to
\begin{align*}
 \bm x^*\in\argmin_{\bm{x}\in \X,\bm{c}^\top \bm{x} \leq t} \left\{ \bm \mu^\top \bm x +\left[\overline{G}\left(\left(\mathrm{\Phi}^{-1}(1-\varepsilon)\right)^2/2\right)/\varepsilon+\theta\right]\sqrt{\bm x^\top\rsigma\bm x} -b_1 \right\},
\end{align*}
where $\overline{G}(\tau) = G(\infty)-G(\tau)$ and $ G(\tau) = \bar{k}\int_{0}^{\tau} \hat g(z)dz$.
% and the lower-level $\alsox$ \eqref{drccp_alsox} is equivalent to
% \begin{align*}
%     (\bm x^*, \alpha^*) \in \argmin_{\bm x\in\X, \alpha} \left\{ C(\bm x)  F(\alpha(\bm{x})-\theta)\colon \bm c^\top \bm x\leq t, \alpha(\bm{x}) = \frac{b_1-\bm\mu^\top \bm x}{ \sqrt{\bm x^\top\rsigma\bm x}} \right\},
% \end{align*}
% where $C(\bm x)=\sqrt{\bm x^\top\rsigma\bm x}$, $F(\alpha) = \overline{G}(\alpha^2/2)-\alpha+\alpha\mathrm{\Phi}(\alpha)$, $\overline{G}(\tau) = G(\infty)-G(\tau)$, and $ G(\tau) = \bar{k}\int_{0}^{\tau} \hat g(z)dz$.
% Recall that in a single DRCCP with elliptical reference distribution, we show that $\alsoxs$ delivers a better solution quality than $\alsox$. In this subsection, we provide sufficient conditions such that $\alsoxs$ returns an exact solution to DRCCP \eqref{eq_drccp_infty} with elliptical reference distribution.

Then we study the exactness of $\alsoxus$ for the following two conditions.
% which further implies that $\alsoxs$ is exact.

\noindent\textbf{Condition I.} For a single DRCCP under an elliptical reference distribution, suppose that $\rsigma=\bm \mu\bm \mu^\top$ and $\bm \mu^\top \bm x\geq 0$ for any $\bm{x}\in \X$. In this case, we can simplify DRCCP \eqref{eq_drccp_infty} and the lower-level $\alsoxus$ \eqref{eq_also_cvar_q_wass_weak_b} as
\begin{subequations}
	\begin{align}
		&v^*=\min_{\bm x\in \X} \left\{ \bm c^\top \bm x\colon \hat{G}_{\theta}(\bm \mu ^\top \bm x)=\left(1+\mathrm{\Phi}^{-1}(1-\varepsilon) +\theta\right)\bm \mu ^\top \bm x   - b_1 \leq 0\right\},\label{drccp_exact_eq_c1}\\
  &\bm x^*\in\argmin_{\bm{x}\in \X} \left\{   \overline{F}_{\theta}(\bm \mu ^\top \bm x)= \left(1+\overline{G}\left(\left(\mathrm{\Phi}^{-1}(1-\varepsilon)\right)^2/2\right)/\varepsilon+\theta\right)\bm \mu ^\top \bm x -b_1\colon  \bm{c}^\top \bm{x} \leq t\right\},\label{alsoxus_exact_eq_b_c1}
	\end{align}
 \end{subequations}
%and %\eqref{eq_also_cvar_q_wass_weak_b} can be written as
	respectively. The exactness result readily follows from Theorem~\ref{exactness_alsoxus_monotone}, which is summarized below.
\begin{corollary}
\label{cor_drccp_exact_eq_c1}
	Suppose that in a single DRCCP \eqref{eq_drccp_infty},  the reference distribution ${\Pr_{\trzeta}}$ is elliptical with affine mappings $\bm{a}_1(\bm{x})=\bm x$, $b_1(\bm x)=b_1$, $\rsigma=\bm \mu\bm \mu^\top$, $\bm \mu^\top \bm x\geq 0$ for any $\bm{x}\in \X$, and $1+\mathrm{\Phi}^{-1}(1-\varepsilon)+\theta\geq 0$. Then $\alsoxs$ is exact.
\end{corollary}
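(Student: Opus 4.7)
The plan is to directly invoke Theorem \ref{exactness_alsoxus_monotone}: once we recognize that both constraint functions in \eqref{drccp_exact_eq_c1} and \eqref{alsoxus_exact_eq_b_c1} are \emph{affine} functions of the single scalar $\bm\mu^\top\bm x$, exactness of $\alsoxs$ reduces to checking that both slopes are nonnegative. The role of the structural assumptions is (i) $\rsigma = \bm\mu\bm\mu^\top$ together with $\bm\mu^\top\bm x\geq 0$ on $\X$ implies $\sqrt{\bm x^\top\rsigma\bm x} = |\bm\mu^\top\bm x| = \bm\mu^\top\bm x$, which is precisely what collapses the general elliptical reformulations into the simplified forms \eqref{drccp_exact_eq_c1} and \eqref{alsoxus_exact_eq_b_c1}; and (ii) the sign assumption $1+\mathrm{\Phi}^{-1}(1-\varepsilon)+\theta\geq 0$ controls the monotonicity of $\hat G_\theta$.

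With this simplification in hand, I would match the notation of Theorem~\ref{exactness_alsoxus_monotone} by setting $\bm h = \bm\mu$, so that $\hat G_\theta(\bm x^\top\bm h) = (1+\mathrm{\Phi}^{-1}(1-\varepsilon)+\theta)\,\bm\mu^\top\bm x - b_1$ and $\overline F_\theta(\bm x^\top \bm h) = (1+\overline G((\mathrm{\Phi}^{-1}(1-\varepsilon))^2/2)/\varepsilon+\theta)\,\bm\mu^\top\bm x - b_1$. The monotonicity of $\hat G_\theta$ on $\{\bm\mu^\top\bm x\geq 0\}$ is then immediate from the hypothesis $1+\mathrm{\Phi}^{-1}(1-\varepsilon)+\theta\geq 0$.

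For $\overline F_\theta$, I would argue that its slope is automatically nonnegative, without additional assumptions: since the generator $\hat g$ is nonnegative, the map $\tau\mapsto G(\tau)=\bar k\int_0^\tau \hat g(z)\,dz$ is nondecreasing, so $\overline G(\tau)=G(\infty)-G(\tau)\geq 0$ for every $\tau$. Combined with $\varepsilon>0$ and $\theta\geq 0$, we get $1+\overline G(\cdot)/\varepsilon+\theta\geq 1>0$, and hence $\overline F_\theta$ is monotone nondecreasing on $\{\bm\mu^\top\bm x\geq 0\}$. Applying Theorem~\ref{exactness_alsoxus_monotone} then yields the conclusion that $\alsoxs$ is exact.

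There is no substantive obstacle; the only points that deserve explicit care are (a) justifying the drop of the absolute value $|\bm\mu^\top\bm x|=\bm\mu^\top\bm x$ via the nonnegativity hypothesis (otherwise $\sqrt{\bm x^\top\rsigma\bm x}$ would not reduce to the linear form $\bm\mu^\top\bm x$ used in \eqref{drccp_exact_eq_c1}--\eqref{alsoxus_exact_eq_b_c1}), and (b) noting that $\overline G\geq 0$ follows from the construction of $G$, so the sign hypothesis is only needed for $\hat G_\theta$. Invoking Theorem~\ref{also_sharp_better_weak_sharp} at the very end (or noting that Theorem~\ref{exactness_alsoxus_monotone} already wraps $\alsoxs$ into its statement) then transfers exactness from $\alsoxus$ to $\alsoxs$.
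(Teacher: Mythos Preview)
Your proposal is correct and follows essentially the same route as the paper: verify that both $\hat G_\theta$ and $\overline F_\theta$ are affine in $\bm\mu^\top\bm x$ with nonnegative slopes, then invoke Theorem~\ref{exactness_alsoxus_monotone}. If anything, you are more explicit than the paper about \emph{why} the slope of $\overline F_\theta$ is automatically nonnegative (via $\overline G\geq 0$) and about the role of the assumption $\bm\mu^\top\bm x\geq 0$ in removing the absolute value from $\sqrt{\bm x^\top\rsigma\bm x}$.
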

\begin{proof} 
According to the reformulations \eqref{drccp_exact_eq_c1} and \eqref{alsoxus_exact_eq_b_c1} and the assumptions that  $\bm \mu^\top \bm x\geq 0$ for any $\bm{x}\in \X$ and $1+\mathrm{\Phi}^{-1}(1-\varepsilon)+\theta\geq 0$, 
both functions $\hat{G}_{\theta}(\cdot)$ and $\overline F_{\theta}(\cdot)$ are monotone nondecreasing.
Hence, conditions in Theorem~\ref{exactness_alsoxus_monotone} are satisfied, and we have that $\alsoxs$ is exact.
    \QEDA
\end{proof}

\noindent\textbf{Condition II.} 
For a single DRCCP under an elliptical reference distribution, suppose that $\X\subseteq\{0,1\}^n$, $\bm{\mu}\geq \bm 0$, and $\rsigma= \Diag (\bm \mu)$. In this case, DRCCP \eqref{eq_drccp_infty} and the lower-level $\alsoxus$ \eqref{eq_also_cvar_q_wass_weak_b} can be simplified as
\begin{subequations}
	\begin{align}
	& 	v^*=\min_{\bm x\in \{0,1\}^n} \left\{ \bm c^\top \bm x\colon \hat{G}_{\theta}(\bm \mu ^\top \bm x)=\bm \mu ^\top \bm x+\left(\mathrm{\Phi}^{-1}(1-\varepsilon)+\theta \right)\sqrt{\bm \mu^\top \bm x} - b_1 \leq 0 \right\},\label{drccp_exact_eq_c2} \\
 & \bm x^*\in\argmin_{\bm{x}\in \{0,1\}^n} \left\{   \overline{F}_{\theta}(\bm \mu ^\top \bm x)=  \bm \mu ^\top \bm x +
\left[\overline{G}\left(\left(\mathrm{\Phi}^{-1}(1-\varepsilon)\right)^2/2\right)/\varepsilon+\theta\right]\sqrt{\bm \mu^\top \bm x}  -b_1 \colon \bm{c}^\top \bm{x} \leq t \right\},\label{alsoxus_exact_eq_b_c2}
	\end{align}
 \end{subequations}
respectively. According to Theorem~\ref{exactness_alsoxus_monotone}, we have the following exactness result.
\begin{corollary}
\label{cor_drccp_exact_eq_c2}
	Suppose that in a single DRCCP \eqref{eq_drccp_infty},  the reference distribution ${\Pr_{\trzeta}}$ is elliptical with affine mappings $\bm{a}_1(\bm{x})=\bm x$, $b_1(\bm x)=b_1$, $\X\subseteq\{0,1\}^n$, $\bm{\mu}\geq \bm 0$,  $\rsigma= \Diag (\bm \mu)$, and $\mathrm{\Phi}^{-1}(1-\varepsilon)+\theta\geq 0$. Then $\alsoxs$ is exact.
\end{corollary}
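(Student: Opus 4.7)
The plan is to reduce the claim to Theorem~\ref{exactness_alsoxus_monotone} by verifying, in the present setting, that both $\hat G_\theta(\cdot)$ and $\overline F_\theta(\cdot)$ are monotone nondecreasing scalar functions of $u:=\bm\mu^\top \bm x$. I would start by briefly justifying the reformulations \eqref{drccp_exact_eq_c2} and \eqref{alsoxus_exact_eq_b_c2}: since $\bm x\in \X\subseteq\{0,1\}^n$ gives $x_i^2=x_i$ and $\rsigma=\Diag(\bm\mu)$, we have $\bm x^\top\rsigma\bm x=\sum_{i\in[n]}\mu_i x_i^2=\bm\mu^\top \bm x$, so $\sqrt{\bm x^\top\rsigma\bm x}=\sqrt{\bm\mu^\top\bm x}$. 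Substituting this identity into the standard elliptical reformulations recalled at the start of Section~\ref{sec_exactness_elliptical} produces \eqref{drccp_exact_eq_c2} and \eqref{alsoxus_exact_eq_b_c2}, both of which depend on $\bm x$ only through the nonnegative scalar $u=\bm\mu^\top\bm x$ (nonnegativity follows from $\bm\mu\geq \bm 0$ and $x_i\in\{0,1\}$).

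Next, I would argue the two monotonicity claims on $[0,\infty)$. For $\hat G_\theta(u)=u+(\mathrm{\Phi}^{-1}(1-\varepsilon)+\theta)\sqrt u-b_1$, the hypothesis $\mathrm{\Phi}^{-1}(1-\varepsilon)+\theta\geq 0$ writes $\hat G_\theta$ as a sum of two nondecreasing functions of $u\geq 0$, so $\hat G_\theta$ is nondecreasing. The analysis of $\overline F_\theta$ has the same structure but requires first checking that the coefficient of $\sqrt u$ is nonnegative. Using $G(\tau)=\bar k\int_0^\tau \hat g(z)\,dz$ and $\overline G(\tau)=G(\infty)-G(\tau)$, I would observe that $\bar k\hat g$ serves as (a nonnegative multiple of) the integrand appearing in the univariate elliptical density $\varphi(z)=\bar k\hat g(z^2/2)$, so $\hat g\geq 0$, $G$ is nondecreasing with $G(\infty)<\infty$, and $\overline G\geq 0$. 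Combined with $\varepsilon>0$ and $\theta\geq 0$, the coefficient $\overline G((\mathrm{\Phi}^{-1}(1-\varepsilon))^2/2)/\varepsilon+\theta$ is nonnegative, so $\overline F_\theta$ is also a sum of two nondecreasing functions on $[0,\infty)$, hence nondecreasing.

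With both $\hat G_\theta$ and $\overline F_\theta$ nondecreasing in the common scalar $u=\bm\mu^\top\bm x$, Theorem~\ref{exactness_alsoxus_monotone} applies verbatim and yields exactness of $\alsoxs$. The only step I expect to need any real care, and thus the main (if modest) obstacle, is the sign of $\overline G$: that argument rests on reading nonnegativity of $\hat g$ off the density representation $\varphi(z)=\bar k\hat g(z^2/2)$ of the standard univariate elliptical distribution. After this, the proof is a direct parallel of Corollary~\ref{cor_drccp_exact_eq_c1}, with $\bm\mu^\top \bm x$ replacing the linear argument and the $\sqrt{\cdot}$ term absorbed harmlessly thanks to the binarity of $\X$.
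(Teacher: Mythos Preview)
Your proposal is correct and follows essentially the same approach as the paper: verify that both $\hat G_\theta$ and $\overline F_\theta$ are nondecreasing in $u=\bm\mu^\top\bm x$ and then invoke Theorem~\ref{exactness_alsoxus_monotone}. In fact you supply strictly more detail than the paper's own two-line proof, which simply asserts the monotonicity from the reformulations and the stated hypotheses; in particular, your explicit check that $\overline G\geq 0$ (hence the coefficient of $\sqrt{u}$ in $\overline F_\theta$ is nonnegative without needing the $\mathrm{\Phi}^{-1}(1-\varepsilon)+\theta\geq 0$ hypothesis) and your derivation of $\bm x^\top\rsigma\bm x=\bm\mu^\top\bm x$ from binarity are left implicit in the paper.
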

\begin{proof}
According to the reformulations \eqref{drccp_exact_eq_c2} and \eqref{alsoxus_exact_eq_b_c2} and the assumptions that $\bm \mu\geq \bm 0$ and $\mathrm{\Phi}^{-1}(1-\varepsilon)+\theta\geq 0$, both functions $\hat{G}_{\theta}(\cdot)$ and $\overline F_{\theta}(\cdot)$ are monotone nondecreasing. Hence, according to Theorem~\ref{exactness_alsoxus_monotone}, we have that $\alsoxs$ is exact.
    \QEDA
\end{proof}
We remark that the results in Corollary~\ref{cor_drccp_exact_eq_c1} and Corollary~\ref{cor_drccp_exact_eq_c2} hold for general type $q-$Wasserstein ambiguity set, as shown in Section~\ref{extension_exact}.

\subsection{Special Case II: Single DRCCPs with i.i.d. Random Parameters and Binary Decision} 
% $\bm x$}

In this subsection, we study the exactness of a single DRCCP \eqref{eq_drccp_infty} with the binary decision variables, where we can provide the closed-form expression of the lower-level $\alsoxus$. 

Let us first consider a single packing DRCCP \eqref{eq_drccp_infty}, where the deterministic set $\X\subseteq \{0,1\}^n$ is binary, the affine mappings are $\bm{a}_1(\bm{x})=\bm x$, $b_1(\bm x)= b_1 \geq 0$, and  support is nonnegative $\Xi\subseteq \Re_+^n$ with i.i.d. random parameters $\trzeta$. That is, we  consider the following DRCCP \eqref{eq_drccp_infty}:
\begin{align*}
v^*=	\min_{\bm x\in\X}\left\{\bm c^\top\bm x\colon \inf_{\Pr\in\P_\infty}	\Pr\left\{ \trxi\colon  \bm x^\top{\trxi}\leq b_1\right\} \geq 1-\varepsilon  \right\}. %\label{drccp_packing_infty}
\end{align*}
In this case, DRCCP \eqref{eq_drccp_infty} is equivalent to
\begin{subequations}
\begin{align}
v^*=		\min_{\bm x\in\X}\left\{\bm c^\top\bm x\colon	\hat{G}_{\theta}(\bm e^\top  \bm x)=1-\varepsilon -\Pr\left\{ \trzeta\colon \max_{\rxi\in\Re_+^n}\left\{ \sum_{i\in[n]}\xi_i x_i\colon \| \rxi-\trzeta\|_p \leq \theta  \right\}\leq b_1\right\} \leq 0 \right\}. \label{drccp_packing_infty}
\end{align}
And the lower-level $\alsoxus$ \eqref{eq_also_cvar_q_wass_weak_b} is equivalent to
\begin{align}
 \bm x^*\in\argmin_{\bm{x}\in \X,\bm{c}^\top \bm{x} \leq t} \left\{  \overline{F}_{\theta}(\bm e^\top \bm x )=\CVaR_{1-\varepsilon} \left\{-b_1+\max_{\rxi\in\Re_+^n}\left[ \sum_{i\in[n]}\xi_i x_i\colon \| \rxi-\trzeta\|_p \leq \theta  \right]\right\}  \right\}.\label{alsoxus_packing_infty}
\end{align}
\end{subequations}
In this case, we can show that $\alsoxs$ is exact.
\begin{corollary}
	\label{packing_exact_cor}
	Consider a single  DRCCP 
 \eqref{eq_drccp_infty} with affine mappings $\bm{a}_1(\bm{x})=\bm x$, $b_1(\bm x)=b_1 \geq 0$, the deterministic set $\X\subseteq \{0,1\}^n$, and the random parameters $\trzeta$ are i.i.d. and nonnegative. Then $\alsoxs$ is exact.
\end{corollary}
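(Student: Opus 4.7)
The plan is to derive the result directly from Theorem~\ref{exactness_alsoxus_monotone} by verifying that the two scalar functions $\hat{G}_\theta(\cdot)$ and $\overline{F}_\theta(\cdot)$ appearing in the reformulations \eqref{drccp_packing_infty}--\eqref{alsoxus_packing_infty} are indeed functions of the cardinality $\bm e^\top \bm x$ and that both are monotone nondecreasing in that cardinality on $\{0,1,\ldots,n\}$. Once this is done, Theorem~\ref{exactness_alsoxus_monotone} gives exactness of $\alsoxus$, and Theorem~\ref{also_sharp_better_weak_sharp} then transfers exactness to $\alsoxs$.

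The argument has two ingredients, which I would establish in order. \textbf{Permutation invariance.} Because the components of $\trzeta$ are i.i.d., its joint law is exchangeable. Moreover, the support $\Re_+^n$ and the $p$-norm ball $\{\rxi:\|\rxi-\trzeta\|_p\leq\theta\}$ are permutation-symmetric, and the inner linear objective $\sum_i\xi_ix_i$ transforms covariantly under simultaneous permutations of $\trzeta$ and $\bm x$. Therefore, for any permutation $\sigma$ and any $\bm x\in\{0,1\}^n$, both
$\Pr\{\trzeta:\max_{\rxi\in\Re_+^n}\{\sum_i\xi_ix_i:\|\rxi-\trzeta\|_p\leq\theta\}\leq b_1\}$
and the corresponding $\CVaR_{1-\varepsilon}$ quantity are unchanged when $\bm x$ is replaced by $\sigma(\bm x)$. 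In particular, on $\{0,1\}^n$ they depend on $\bm x$ only through $\bm e^\top\bm x$, which already justifies the functional notation in \eqref{drccp_packing_infty}--\eqref{alsoxus_packing_infty}. \textbf{Componentwise monotonicity.} If $\bm x_1\leq \bm x_2$ coordinatewise in $\{0,1\}^n$, then for every $\rxi\in\Re_+^n$ one has $\sum_i\xi_ix_{1,i}\leq \sum_i\xi_ix_{2,i}$; since the feasible set of the inner maximization does not depend on $\bm x$, the inner maximum is pathwise nondecreasing in $\bm x$ over realizations of $\trzeta$. This containment of level sets of the maximum immediately yields $\hat{G}_\theta(\bm e^\top\bm x_1)\leq \hat{G}_\theta(\bm e^\top\bm x_2)$, and pathwise dominance of the random argument inside the CVaR combined with monotonicity of $\CVaR_{1-\varepsilon}$ under almost-sure ordering (a direct consequence of its Rockafellar--Uryasev representation) yields $\overline{F}_\theta(\bm e^\top\bm x_1)\leq \overline{F}_\theta(\bm e^\top\bm x_2)$.

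To lift componentwise monotonicity to monotonicity in the cardinality for arbitrary $\bm x_1,\bm x_2\in\{0,1\}^n$ with $\bm e^\top\bm x_1\leq\bm e^\top\bm x_2$, I would pick a permutation $\sigma$ such that $\supp(\bm x_1)\subseteq \supp(\sigma(\bm x_2))$, apply componentwise monotonicity to $\bm x_1\leq \sigma(\bm x_2)$, and then invoke permutation invariance to replace $\sigma(\bm x_2)$ by $\bm x_2$ without changing the function values. Thus $\hat{G}_\theta$ and $\overline{F}_\theta$ are monotone nondecreasing in $\bm e^\top\bm x$, Theorem~\ref{exactness_alsoxus_monotone} applies, and the conclusion follows. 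The only nontrivial point I anticipate is the permutation-to-cardinality lift, which is handled cleanly by this coupling; the remaining items (monotonicity of CVaR with respect to almost-sure dominance, preservation of the inner max under nonnegativity of $\rxi$) are immediate, so no serious obstacle is expected.
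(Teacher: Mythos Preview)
Your proposal is correct and follows the same overall strategy as the paper: verify that $\hat G_\theta$ and $\overline F_\theta$ depend on $\bm x$ only through the cardinality $\bm e^\top\bm x$ and are nondecreasing in that cardinality, then invoke Theorem~\ref{exactness_alsoxus_monotone}. The execution differs in a minor but notable way. The paper first performs an explicit shift of the inner maximization to separate the random part from a deterministic norm term, obtaining that the inner maximum is equal in distribution to $\sum_{i\in[\ell]}\tzeta_i+\max_{\rxi\in\Re_+^\ell}\{\sum_{i\in[\ell]}\xi_i:\|\rxi\|_p\leq\theta\}$ whenever $\bm e^\top\bm x=\ell$, and then reads off monotonicity directly from nonnegativity of the $\tzeta_i$. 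You avoid this algebraic decomposition entirely by combining pathwise componentwise monotonicity of the inner maximum on $\Re_+^n$ with a permutation coupling to pass from componentwise order to cardinality order. Your route is slightly more conceptual and does not require tracking what happens to the nonnegativity constraint under the shift; the paper's route is more explicit and yields a concrete distributional description of the inner maximum. Either way, the appeal to Theorem~\ref{exactness_alsoxus_monotone} (and then Theorem~\ref{also_sharp_better_weak_sharp}) closes the proof.
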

\begin{proof}
It is sufficient to show that functions $\hat{G}_{\theta}(\cdot)$ and $\overline{F}_{\theta}(\cdot)$ indeed exist and share the same monotonicity. We first notice that
\begin{align*}
\max_{\rxi\in\Re_+^n}\left\{ \sum_{i\in[n]}\xi_i x_i\colon \| \rxi-\trzeta\|_p \leq \theta  \right\}
=\sum_{i\in[n]}\tzeta_i x_i+\max_{\rxi\in\Re_+^n}\left\{ \sum_{i\in[n]}\xi_i x_i\colon \| \rxi\|_p \leq \theta  \right\}.
\end{align*}
Since $\{\tzeta_i\}_{i\in [n]}$ are i.i.d. nonnegative random parameters, for any $\bm{x}\in \X$ such that $\bm e^\top \bm x=\ell$, we have
\begin{align*}
\max_{\rxi\in\Re_+^n}\left\{ \sum_{i\in[n]}\xi_i x_i\colon \| \rxi-\trzeta\|_p \leq \theta  \right\}
\overset{\Pr_{\trzeta}}{\sim} \sum_{i\in[\ell]}\tzeta_i+\max_{\rxi\in\Re_+^\ell}\left\{ \sum_{i\in[\ell]}\xi_i\colon \| \rxi\|_p \leq \theta  \right\}.%\label{eq_robust_binary}
\end{align*}
Hence, let us define
\begin{align*}
\hat{G}_{\theta}(\bm e^\top  \bm x)=\hat{G}_{\theta}(\ell)=1-\varepsilon -\Pr\left\{ \trzeta\colon \max_{\rxi\in\Re_+^n}\left\{ \sum_{i\in[n]}\xi_i x_i\colon \| \rxi-\trzeta\|_p \leq \theta  \right\}\leq b_1\right\}.
\end{align*}
Similarly, we also define
\begin{align*}
\overline{F}_{\theta}(\bm e^\top  \bm x)=\overline{F}_{\theta}(\ell)=\CVaR_{1-\varepsilon} \left\{ -b_1+\max_{\rxi\in\Re_+^n}\left[ \sum_{i\in[n]}\xi_i x_i\colon \| \rxi-\trzeta\|_p \leq \theta  \right]\right\}.
\end{align*}
Since all the random parameters $\{\tzeta_i\}_{i\in [n]}$ are nonnegative, we have
\begin{align*}
\sum_{i\in[\ell+1]}\tzeta_i+\max_{\rxi\in\Re_+^{\ell+1}}\left\{ \sum_{i\in[\ell+1]}\xi_i\colon \| \rxi\|_p \leq \theta  \right\}\geq \sum_{i\in[\ell]}\tzeta_i+\max_{\rxi\in\Re_+^\ell}\left\{ \sum_{i\in[\ell]}\xi_i\colon \| \rxi\|_p \leq \theta  \right\}
\end{align*}
almost surely. Thus, we have
\begin{align*}
\hat{G}_{\theta}(\ell)\leq \hat{G}_{\theta}(\ell+1),\overline{F}_{\theta}(\ell) \leq \overline{F}_{\theta}(\ell+1). 
\end{align*}
According to Theorem~\ref{exactness_alsoxus_monotone},  $\alsoxs$ is exact.
\QEDA
\end{proof}
We remark that the result in Corollary~\ref{packing_exact_cor} also holds for a covering DRCCP. That is, let us consider the following 
\begin{align*}
v^*=	\min_{\bm x\in\X}\left\{\bm c^\top\bm x\colon \inf_{\Pr\in\P_\infty}	\Pr\left\{ \trxi\colon \bm x^\top {\trxi} \geq b_1\right\} \geq 1-\varepsilon  \right\},%\label{drccp_packing_infty}
\end{align*}
where the random parameters $\{\rzeta_i\}_{i\in [n]}$ are i.i.d. and nonnegative and cost vector $\bm c$ is nonnegative. Let us denote $1-y_i=x_i$ for all $i\in [n]$. Then chance constrained covering problem is equivalent to
\begin{align*}
v^*=	\min_{(\bm e -\bm y)\in\X}\left\{\bm c^\top(\bm e -\bm y)\colon \inf_{\Pr\in\P_\infty}	\Pr\left\{ \trxi\colon  (\bm e -\bm y)^\top{\trxi}\geq b_1\right\} \geq 1-\varepsilon  \right\}. %\label{drccp_packing_infty}
\end{align*}
As a result, the proof in Corollary~\ref{packing_exact_cor} simply follows, which is summarized below.
\begin{corollary}
	\label{covering_exact_cor}
	Consider a single  DRCCP with affine mappings $\bm{a}_1(\bm{x})=-\bm x$, $b_1(\bm x)=-b_1\leq 0$, the deterministic set $\X\subseteq \{0,1\}^n$, and the random parameters $\trzeta$ being i.i.d. and nonnegative. Then $\alsoxs$ is exact.
\end{corollary}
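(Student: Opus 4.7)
The plan is to carry the argument of Corollary~\ref{packing_exact_cor} over to the covering setting via the substitution $y_i = 1 - x_i$ already indicated in the excerpt, which recasts the covering DRCCP as $v^* = \min_{(\bm e - \bm y) \in \X}\{\bm c^\top(\bm e - \bm y)\colon \inf_{\Pr \in \P_\infty} \Pr\{\trxi\colon (\bm e - \bm y)^\top \trxi \geq b_1\} \geq 1 - \varepsilon\}$. My goal is to recognize that, because $\X \subseteq \{0,1\}^n$ and $\trzeta$ has i.i.d.\ nonnegative components, both the reformulated worst-case chance function $\hat{G}_\theta$ and the lower-level $\alsoxus$ objective $\overline{F}_\theta$ depend on $\bm x$ only through the cardinality $\ell = \bm e^\top \bm x$, and that they share the same monotonicity direction in $\ell$, so that Theorem~\ref{exactness_alsoxus_monotone} applies.

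First I would invoke proposition 8 of \cite{jiang2022also} to turn the worst-case covering constraint into the deterministic-reference form $\Pr_{\trzeta}\{\bm x^\top \trzeta \geq b_1 + \theta\|\bm x\|_*\} \geq 1 - \varepsilon$, and rewrite the lower-level $\alsoxus$ accordingly as a CVaR of $b_1 + \theta\|\bm x\|_* - \bm x^\top \trzeta$. Exchangeability of the i.i.d.\ $\tilde{\zeta}_i$ together with $\bm x \in \{0,1\}^n$ and $\bm e^\top \bm x = \ell$ implies that $\bm x^\top \trzeta$ has the same distribution as $\sum_{i=1}^\ell \tilde{\zeta}_i$, while $\|\bm x\|_*$ depends only on $\ell$; hence one can define $\hat{G}_\theta(\ell)$ and $\overline{F}_\theta(\ell)$ exactly in the style of the proof of Corollary~\ref{packing_exact_cor}. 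The required monotonicity now reads $\hat{G}_\theta(\ell+1) \leq \hat{G}_\theta(\ell)$ and $\overline{F}_\theta(\ell+1) \leq \overline{F}_\theta(\ell)$ (nonincreasing rather than nondecreasing), which I would obtain from the same pointwise coupling as in the packing argument: adding one more coordinate contributes a nonnegative $\tilde{\zeta}_{\ell+1}$ to $\bm x^\top \trzeta$, which strictly helps the covering constraint $\bm x^\top \trxi \geq b_1$ and drives both functions downward.

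Since the proof of Theorem~\ref{exactness_alsoxus_monotone} only uses that $\hat{G}_\theta$ and $\overline{F}_\theta$ share a common monotonicity direction---the chain $\overline{F}_\theta(\hat{\bm x}^\top \bm h) \leq \overline{F}_\theta((\bm x^*)^\top \bm h) \Rightarrow \hat{G}_\theta(\hat{\bm x}^\top \bm h) \leq \hat{G}_\theta((\bm x^*)^\top \bm h) \leq 0$ works equally well in the nonincreasing regime---invoking it yields exactness of $\alsoxus$, and Theorem~\ref{also_sharp_better_weak_sharp} then transfers exactness to $\alsoxs$. The main obstacle is the reversed direction of the inner worst-case Wasserstein problem in the covering case: in the packing argument, the inner maximization decomposes cleanly as ``i.i.d.\ sum plus $\theta\|\bm x\|_q$,'' whereas the covering inner minimization instead becomes ``i.i.d.\ sum minus a truncation of $\theta\|\bm x\|_q$,'' because the nonnegativity constraint $\rxi \geq 0$ may now be active. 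Because this truncation is itself a symmetric function of the first $\ell$ coordinates of $\trzeta$, cardinality symmetry together with the pointwise dominance argument still produces the nonincreasing monotonicity without any essentially new ingredient beyond Corollary~\ref{packing_exact_cor}.
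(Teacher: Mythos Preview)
Your proposal is correct and follows essentially the same approach as the paper, which simply remarks that ``the proof in Corollary~\ref{packing_exact_cor} simply follows'' after the substitution $x_i = 1 - y_i$. You have correctly filled in the details the paper leaves implicit: that both $\hat G_\theta$ and $\overline F_\theta$ depend only on the cardinality $\ell = \bm e^\top \bm x$ by exchangeability, that the monotonicity direction is reversed (nonincreasing rather than nondecreasing) in the covering case, that the proof of Theorem~\ref{exactness_alsoxus_monotone} goes through verbatim with any common monotonicity direction, and that the potential truncation in the inner worst-case minimization (from the support constraint $\bm\xi\ge\bm 0$) preserves both the cardinality symmetry and the pointwise dominance $M_{\ell+1}(\trzeta)\ge M_\ell(\trzeta)$ needed for the argument.
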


\subsection{Special Case III: Single DRCCPs with Empirical Reference Distribution}
In this subsection, we study the exactness of a single DRCCP with empirical reference distribution, %where we can evaluate the lower-level $\alsoxus$.  For instance,
%a single DRCCP \eqref{eq_drccp_infty}, 
where the affine mappings are $\bm{a}_1(\bm{x})=\bm x$, $b_1(\bm x)=b_1 \geq 0$, and the support is discrete. That is, we consider the following DRCCP:
\begin{align}
	v^*=\min_{\bm x\in\X}\left\{\bm c^\top\bm x\colon 	\Pr\left\{ \trzeta\colon \theta\left\|\bm x \right\|_*+ \bm x^\top {\trzeta}\leq b_1\right\} \geq 1-\varepsilon  \right\}. \label{drccp_discrete_infty}
\end{align}
% Suppose $\|\cdot\|_*=\|\cdot\|_1$, we have
% \begin{align*}
%    v^*= \min_{\bm x\in\Re^n_+}\left\{\bm c^\top\bm x\colon 	\Pr\left\{ \trzeta\colon \theta \bm e^\top \bm x+{\trzeta}^\top \bm x\leq b_1\right\} \geq 1-\varepsilon  \right\}. 
% \end{align*}
% with $\trzeta = (\Delta-\tilde{\delta})\bm e$ and
% \begin{align*}
% 	\Pr\left\{	\tilde{\delta} = \delta_i\right\}=p_i\geq 0, \forall i\in[N], \sum_{i\in[N]}p_i=1, \delta_1<\delta_2<\cdots<\delta_N, \Delta\geq \theta+\max_{i\in[N]}\delta_i.
% \end{align*}
\begin{corollary}
	\label{packing_exact}
	Consider a single  DRCCP \eqref{eq_drccp_infty} with affine mappings $\bm{a}_1(\bm{x})=\bm x$, $b_1(\bm x)=b_1 \geq 0$,  %the deterministic set $\X\subseteq\Re^n_+$, 
 and the norm $\|\cdot\|$ is the generalized Mahalanobis norm associated with the matrix $\rsigma=\bm\mu\bm\mu^\top$ and $\bm \mu^\top \bm x\geq 0$ for any $\bm{x}\in \X$. Suppose the support is discrete with
$\Pr\{\trzeta = \zeta_i\bm\mu\}=p_i\geq 0$ for all $ i\in[N]$, where   $\sum_{i\in[N]}p_i=1$ and $0\leq \zeta_1<\zeta_2<\cdots<\zeta_N$ are scalars.
 Then $\alsoxs$ is exact.
\end{corollary}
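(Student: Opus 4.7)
The plan is to apply Theorem~\ref{exactness_alsoxus_monotone} with the scalarization $\bm x^\top \bm h = \bm\mu^\top \bm x$. First, I would exploit the rank-one structure: since $\rsigma = \bm\mu\bm\mu^\top$, the dual norm reduces to $\|\bm x\|_* = \sqrt{\bm x^\top\rsigma\bm x} = |\bm\mu^\top\bm x| = \bm\mu^\top\bm x$ (using the assumption $\bm\mu^\top\bm x \geq 0$ on $\X$). Combined with $\trzeta = \tilde\zeta\bm\mu$, we get $\theta\|\bm x\|_* + \bm x^\top\trzeta = (\theta + \tilde\zeta)\,\bm\mu^\top\bm x$. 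Hence, writing $h := \bm\mu^\top\bm x \geq 0$, the DRCCP \eqref{drccp_discrete_infty} takes the form \eqref{drccp_exact_eq} with
\begin{equation*}
\hat G_\theta(h) = 1-\varepsilon - \Pr\{\tilde\zeta\colon (\theta+\tilde\zeta)h \leq b_1\} = 1-\varepsilon - \sum_{i\in[N]} p_i \I\{(\theta+\zeta_i) h \leq b_1\},
\end{equation*}
and the lower-level $\alsoxus$ takes the form \eqref{alsoxus_exact_eq_b} with
\begin{equation*}
\overline F_\theta(h) = \varepsilon\,\CVaR_{1-\varepsilon}\bigl[(\theta+\tilde\zeta)h - b_1\bigr],
\end{equation*}
after carrying out the unconstrained minimization over $\beta$ in \eqref{eq_also_cvar_q_wass_weak_b}.

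Next, I would verify monotonicity of both scalar functions on $h \geq 0$. Because $0 \leq \zeta_1 < \zeta_2 < \cdots < \zeta_N$ and $\theta \geq 0$, every coefficient $\theta + \zeta_i$ is nonnegative. Hence, for each realization, the affine map $h \mapsto (\theta+\zeta_i)h - b_1$ is nondecreasing, which yields that $\Pr\{(\theta+\tilde\zeta)h \leq b_1\}$ is nonincreasing in $h$, so $\hat G_\theta(\cdot)$ is nondecreasing. For $\overline F_\theta(\cdot)$, monotonicity follows from the comonotonicity and positive homogeneity of $\CVaR$: for $0 \leq h_1 \leq h_2$, the random variable $(\theta+\tilde\zeta)h_2 - b_1$ dominates $(\theta+\tilde\zeta)h_1 - b_1$ almost surely, which implies $\CVaR_{1-\varepsilon}[(\theta+\tilde\zeta)h_2 - b_1] \geq \CVaR_{1-\varepsilon}[(\theta+\tilde\zeta)h_1 - b_1]$.

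Finally, with both $\hat G_\theta(\cdot)$ and $\overline F_\theta(\cdot)$ monotone nondecreasing in the scalar summary $h = \bm\mu^\top \bm x$, the hypotheses of Theorem~\ref{exactness_alsoxus_monotone} are satisfied, so $\alsoxus$ is exact; Theorem~\ref{also_sharp_better_weak_sharp} then implies that $\alsoxs$ is also exact. The only delicate point — and the one I would be most careful about — is justifying that $\|\bm x\|_* = \bm\mu^\top \bm x$ under the pseudo-inverse convention and that the relevant sign/nonnegativity conditions $\theta + \zeta_i \geq 0$ and $\bm\mu^\top \bm x \geq 0$ hold throughout $\X$, since those are exactly what allow the $a.s.$ monotone comparison driving both conclusions.
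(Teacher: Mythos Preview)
Your proposal is correct and follows essentially the same route as the paper: reduce everything to the scalar $h=\bm\mu^\top\bm x$ via $\|\bm x\|_*=|\bm\mu^\top\bm x|$ and $\trzeta=\tilde\zeta\bm\mu$, identify $\hat G_\theta(h)=1-\varepsilon-\sum_{i}p_i\I\{(\theta+\zeta_i)h\le b_1\}$, and then invoke Theorem~\ref{exactness_alsoxus_monotone}. The only (minor) difference is that the paper writes out the closed-form discrete $\CVaR$ for $\overline F_\theta(h)$ and reads off that its linear coefficient in $h$ is nonnegative, whereas you leave $\overline F_\theta(h)=\varepsilon\,\CVaR_{1-\varepsilon}[(\theta+\tilde\zeta)h-b_1]$ and use the monotonicity of $\CVaR$ under almost-sure dominance; your argument is slightly cleaner and avoids the bookkeeping of the quantile index $K$.
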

\begin{proof}
We define $K\in[N]$ as $\sum_{i\in[K-1]}p_i<1-\varepsilon, \sum_{i\in[K]}p_i\geq 1-\varepsilon$. 
% Without loss of generality, we assume 
By definition, we have $\zeta_K>0$ and $\|\bm x\|_*=|\bm\mu^\top \bm x|$. According to the assumption that $\Pr\{\trzeta = \zeta_i\bm\mu\}=p_i\geq 0$ for all $ i\in[N]$, DRCCP \eqref{drccp_discrete_infty} can be simplified as
\begin{align*}
   v^*=\min_{\bm x\in\X}\left\{\bm c^\top\bm x\colon  \hat{G}_{\theta}(\bm \mu ^\top \bm x)=1-\varepsilon-\sum_{i\in [N]}p_i\I\left(\left(\zeta_i+ \theta\right)\bm \mu^\top \bm x   \leq b_1\right) \leq 0 \right\},
\end{align*}
% with the optimal value $b/(\Delta-\delta_K-\theta) $ due to the fact that  $\Delta-\delta_i-\theta\geq 0$ for all $i\in[N]$.
% In this case, the DRCCP \eqref{eq_drccp_infty} and the lower-level $\alsoxus$ \eqref{eq_also_cvar_q_wass_weak_b} can be simplified as
% 	\begin{align}
% 	& 	v^*=\min_{\bm x\in \{0,1\}^n} \left\{ \bm c^\top \bm x\colon \hat{G}_{\theta}(\bm \mu ^\top \bm x)=\bm \mu ^\top \bm x+\left(\mathrm{\Phi}^{-1}(1-\varepsilon)+\theta \right)\sqrt{\bm \mu^\top \bm x} - b_1 \leq 0 \right\},\label{drccp_exact_eq_c2} \\
%  & \bm x^*\in\argmin_{\bm{x}\in \{0,1\}^n} \left\{   \overline{F}_{\theta}(\bm \mu ^\top \bm x)=  \bm \mu ^\top \bm x +
% \left[\overline{G}\left(\left(\mathrm{\Phi}^{-1}(1-\varepsilon)\right)^2/2\right)/\varepsilon+\theta\right]\sqrt{\bm \mu^\top \bm x}  -b_1 \colon \bm{c}^\top \bm{x} \leq t \right\},\label{alsoxus_exact_eq_b_c2}
% \end{align}
and the corresponding lower-level $\alsoxus$ \eqref{eq_also_cvar_q_wass_weak_b} is equivalent to
\begin{align*}
	\bm x^*\in\argmin_{\bm{x}\in \X,\bm{c}^\top \bm{x} \leq t} \left\{ \overline{F}_{\theta}(\bm \mu ^\top \bm x)= \frac{1}{\varepsilon} \left[ \left(\sum_{i\in[K]}p_i - (1-\varepsilon)\right) \zeta_i + \sum_{j\in[K+1,N]}p_j \zeta_j  \right] \bm{\mu}^\top \bm{x} + \theta\|\bm\mu\|_2^{-1} \bm \mu^\top\bm x-b_1\right\}.
\end{align*}
Both functions $\hat{G}_{\theta}(\cdot)$ and $\overline F_{\theta}(\cdot)$ are monotone nondecreasing.
% the objective function in the lower-level $\alsoxus$ \eqref{eq_also_cvar_q_wass_weak_b} is monotone with respect to $ \bm{\mu}^\top \bm{x} $.
Therefore, the conditions in Theorem~\ref{exactness_alsoxus_monotone} are satisfied, and we conclude that $\alsoxs$ is exact.
\QEDA
\end{proof}

\section{Extensions:  DRCCPs under Type $q-$Wasserstein Ambiguity Set}
\label{sec_extension}
In this section, we extend our discussions to type $q-$Wasserstein ambiguity set with $q\in[1,\infty)$. We first provide equivalent reformulations. Then we show that under type $q-$Wasserstein ambiguity set, $\alsoxs$ can provide an optimal solution to a DRCCP. The results in this section rely on the equivalent reformulations of  DRCCP, $\alsox$, $\CVaR$ approximation, and $\alsoxs$ under type $q-$Wasserstein ambiguity set with $q\in[1,\infty)$, which are displayed in Appendix~\ref{sec_extension_eq_refor}.

\subsection{Comparisons of $\alsoxs$, $\alsoxus$, $\alsox$, and $\CVaR$ Approximation}
\label{extension_formulations}
As mentioned in Section~\ref{sec_better},  the main result of this paper in Theorem~\ref{also_cvar_better_alsox} cannot be extended to type $q-$Wasserstein ambiguity set with $q\in[1,\infty)$. That is, $\alsox$ and $\alsoxs$ are not comparable under type $q-$Wasserstein ambiguity set when $q\in [1,\infty)$. Below is an example.
\begin{example}\rm
\label{example_general_worse_convex}
	Consider a single DRCCP under type $1-$Wasserstein ambiguity set  with $\theta=1$ and $\|\cdot\|_*=\|\cdot\|_2$. Assume that the empirical distribution has $4$ equiprobable scenarios (i.e., $N=4$, $\Pr\{\trzeta=\rzeta^i\}=1/N$), risk parameter $\varepsilon=1/2$, the deterministic set $\X=\Re_+^3$, $\bm c=(-4, -2, -3)^\top$, function $\bm a_1(\bm x)^\top \rzeta-b_1({\bm x})=\bm x^\top\rzeta -3$, 
	$\rzeta^1=(4,6,3)^\top$, $\rzeta^2=(5,0,3)^\top$, $\rzeta^3=(2,1,4)^\top$, and $\rzeta^4=(0,2,5)^\top$. In this example, numerically, we can solve $\alsox$ $v_1^A$, $\alsoxs$ $v_1^{\as}$, and $\CVaR$ approximation $v_1^\CVaR$, where the approximated objective values are $v_1^A=-2.4929$, $v_1^{\as}=-2.4369$, and $v_1^\CVaR=-2.033$ with error bound $[-10^{-4}, 10^{-4}]$.
\QEDB
\end{example}
Albeit $\alsox$ and $\alsoxs$ are not comparable, following the similar proofs as those of Theorem~\ref{also_sharp_better_weak_sharp} and Theorem~\ref{also_sharp_better_cvar} in Section~\ref{sec_better}, we can prove that $\alsoxs$ is better than  $\CVaR$ approximation and $\alsoxs$ is better than  $\alsoxus$ under type $q-$Wasserstein ambiguity set with $q\in[1,\infty)$. Interested readers are referred to Appendix~\ref{sec_extension_eq_refor} for proofs.

\begin{restatable}{proposition}{alsosharpbettercvarqwass}\label{also_sharp_better_cvar_q_wass} 
Under type $q-$Wasserstein ambiguity set with $q\in[1,\infty)$, 
 suppose that for any objective upper bound t such that $t\geq \min_{\bm x\in\X}\bm c^\top \bm x$, $\alsoxs$ is better than  $\CVaR$ approximation.
\end{restatable}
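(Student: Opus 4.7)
The plan is to prove the proposition by transitivity, mirroring the two-step strategy used in Theorems~\ref{also_sharp_better_weak_sharp} and \ref{also_sharp_better_cvar} for the type $\infty-$Wasserstein case. Concretely, I would first establish an analog of Theorem~\ref{also_sharp_better_weak_sharp} under type $q-$Wasserstein ambiguity set, namely $v^{\as}\leq v^{\aus}$, and then establish an analog of Theorem~\ref{also_sharp_better_cvar}, namely $v^{\aus}\leq v^{\CVaR}$. Chaining the two inequalities yields $v^{\as}\leq v^{\CVaR}$, as desired.

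For the first step, I would start by invoking the equivalent reformulations of $\alsoxs$ and $\alsoxus$ under type $q-$Wasserstein ambiguity set given in Appendix~\ref{sec_extension_eq_refor}. The only structural difference between the two lower-level problems is the sign restriction $\beta\leq 0$. Fix an objective upper bound $t$ and assume the lower-level $\alsoxus$ returns an optimal solution $(\bar{\bm x},\bar\beta)$ with $\bar{\bm x}$ feasible to DRCCP \eqref{eq_drccp}. Using the Rockafellar--Uryasev identity (theorem 1 and equation (7) in \cite{rockafellar2000optimization}) applied to the worst-case loss over $\P_q$, I would show that one may replace $\bar\beta$ by $\bar\beta^*:=\VaR_{1-\varepsilon}$ of the worst-case random loss at $\bar{\bm x}$, which is nonpositive since $\bar{\bm x}$ is DRCCP-feasible, without changing the optimal value. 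Hence the two lower-level problems share the same optimum, and any optimal solution $(\hat{\bm x},\hat\beta)$ of $\alsoxs$ must also be optimal in $\alsoxus$; invoking the Rockafellar--Uryasev characterization once more, combined with $\hat\beta\leq 0$, forces $\hat{\bm x}$ to satisfy the upper-level chance constraint.

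For the second step, I would again use the equivalent $q-$Wasserstein reformulations from Appendix~\ref{sec_extension_eq_refor}. The key observation is that the lower-level $\alsoxus$ is obtained from the lower-level $\CVaR$ approximation by dropping the constraint $\beta\leq 0$, so it is a relaxation. Thus, whenever the lower-level $\CVaR$ approximation produces a nonpositive optimal value (equivalently, the feasibility check \eqref{cvar_drccp_formualtion} passes), the lower-level $\alsoxus$ also attains a nonpositive optimal value. Applying the standard conservative-approximation argument from \cite{nemirovski2007convex}, a nonpositive lower-level $\alsoxus$ objective certifies the worst-case chance constraint \eqref{eq_also_cvar_q_wass_weak_c}, which means the upper-level feasibility check of $\alsoxus$ succeeds for every $t$ for which the $\CVaR$ approximation succeeds. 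Binary search therefore drives $v^{\aus}\leq v^{\CVaR}$.

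The main obstacle will be the first step: unlike the $q=\infty$ case, the $q-$Wasserstein reformulation typically introduces extra dual variables and penalty terms that couple $\bm x$ and $\beta$ with the worst-case distribution, so I must carefully verify that the Rockafellar--Uryasev identity is still applicable to the worst-case expected loss over $\P_q$ and that the substitution $\bar\beta\leftarrow \bar\beta^*$ preserves optimality of the lower-level $\alsoxus$. Once this is verified, the relaxation argument in the second step is essentially syntactic, and transitivity closes the proof.
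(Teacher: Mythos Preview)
Your two-step detour through $\alsoxus$ is both unnecessary and, in the $q\in[1,\infty)$ setting, genuinely incomplete. The paper's proof is a one-line observation that you have overlooked: under the $q-$Wasserstein reformulations in Appendix~\ref{sec_extension_eq_refor}, the lower-level $\alsoxs$ problem \eqref{eq_alsoxs_q_b} and the lower-level $\CVaR$ problem \eqref{eq_cvar_q_b} are \emph{identical} (same objective, same feasible region including $\beta\le 0$ and $\lambda\ge 0$). Hence for any fixed $t$ they return the same optimizer $(\bm x^*,\lambda^*,\beta^*)$, and the only difference is the upper-level check: $\CVaR$ requires the optimal value to be nonpositive (which is a conservative certificate of DRCCP feasibility by \cite{nemirovski2007convex}), whereas $\alsoxs$ directly checks $\bm x^*\in Z_q$. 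So whenever $\CVaR$'s check passes, $\alsoxs$'s check passes too, giving $v_q^{\as}\le v_q^{\CVaR}$ immediately.

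Your Step~1 ($v^{\as}\le v^{\aus}$) is precisely the content of Proposition~\ref{also_sharp_better_weak_sharp_q_wass}, and that result carries an \emph{additional hypothesis} not present in Proposition~\ref{also_sharp_better_cvar_q_wass}: uniqueness of the lower-level $\alsoxus$ $\bm x$-solution. The obstacle you flag is real. In the $q<\infty$ case the dual variable $\lambda$ and the penalty $P_{q,i}(\bm x,\lambda)$ enter the reformulation, so knowing that $\hat\beta\le 0$ and that $(\hat{\bm x},\hat\lambda,\hat\beta)$ is optimal to the lower-level $\alsoxus$ only yields a $\VaR$ bound on the \emph{penalized} loss $\max_i\bigl(\bm a_i(\hat{\bm x})^\top\trzeta-b_i(\hat{\bm x})+P_{q,i}(\hat{\bm x},\hat\lambda)\bigr)$ under the reference distribution, which does not directly certify $\hat{\bm x}\in Z_q$. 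The paper circumvents this by assuming uniqueness to force $\hat{\bm x}=\bar{\bm x}$; without that assumption your Step~1 does not close. In short, abandon the transitivity route here and use the direct identity of the two lower-level problems.
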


\begin{restatable}{proposition}{alsosharpbetterweaksharpqwass}\label{also_sharp_better_weak_sharp_q_wass} 
Under type $q-$Wasserstein ambiguity set with $q\in[1,\infty)$, 
 suppose that for any objective upper bound t such that $t\geq \min_{\bm x\in\X}\bm c^\top \bm x$ and the lower-level $\alsoxus$ admits a unique optimal $\bm x$-solution, $\alsoxs$ is better than  $\alsoxus$.
\end{restatable}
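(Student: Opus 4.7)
The plan is to mimic the architecture of the proof of Theorem~\ref{also_sharp_better_weak_sharp}, with the extra uniqueness hypothesis compensating for the fact that the Rockafellar-Uryasev characterization is less transparent under a type $q$-Wasserstein ambiguity set with $q\in[1,\infty)$. As in the $q=\infty$ case, it suffices to show that whenever the lower-level $\alsoxus$ returns an $\bm{x}$-solution $\bar{\bm x}$ that is feasible to DRCCP~\eqref{eq_drccp}, any optimal $\bm x$-solution of the lower-level $\alsoxs$ must also be DRCCP-feasible.

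The first step is to verify that the two lower-level problems share the same optimal value. Since $\bar{\bm x}$ satisfies the worst-case chance constraint, the worst-case value-at-risk $\beta^\star := \sup_{\Pr \in \P_q} \VaR_{1-\varepsilon,\Pr}\{\max_{i\in[I]}\bm a_i(\bar{\bm x})^\top \trxi - b_i(\bar{\bm x})\}$ is nonpositive. I would then invoke a Rockafellar-Uryasev-type argument for the worst-case $\CVaR$: exchanging $\sup_{\Pr}$ and $\inf_{\beta}$ via the saddle-point/strong-duality property of the Wasserstein ambiguity set under Assumption~\ref{A_1} (which is what underlies the equivalent reformulations collected in Appendix~\ref{sec_extension_eq_refor}), one concludes that $(\bar{\bm x},\beta^\star)$ is also optimal to the lower-level $\alsoxus$. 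Because $\beta^\star \leq 0$, the pair $(\bar{\bm x},\beta^\star)$ is feasible—and therefore optimal—for the lower-level $\alsoxs$, so both lower-level problems share the same optimal objective value.

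The second step is the uniqueness argument. Let $(\hat{\bm x},\hat{\beta})$ be any optimal solution of the lower-level $\alsoxs$. Since $\alsoxs$ is a restriction of $\alsoxus$ and both have the same optimal value, $(\hat{\bm x},\hat{\beta})$ is also optimal to the lower-level $\alsoxus$. The uniqueness hypothesis on the $\bm x$-solution of the lower-level $\alsoxus$ then forces $\hat{\bm x} = \bar{\bm x}$, so $\hat{\bm x}$ inherits feasibility to DRCCP~\eqref{eq_drccp}. This yields $v^{\as} \leq v^{\aus}$.

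The main obstacle is the first step: constructing the optimal pair $(\bar{\bm x},\beta^\star)$ with $\beta^\star\leq 0$ under the general $q$-Wasserstein setting. Unlike the $q=\infty$ case, the worst-case $\CVaR$ at $\bar{\bm x}$ is not simply a standard $\CVaR$ at a shifted loss, so one must lift Rockafellar-Uryasev (equation~(7) in \cite{rockafellar2000optimization}) to the distributionally robust setting—relying on the tractable convex reformulation of the worst-case $\CVaR$ in Appendix~\ref{sec_extension_eq_refor} and verifying that its optimal $\beta$ coincides with the worst-case VaR. The uniqueness assumption is essential at this juncture: without it, different optimal $\bm x$-solutions of $\alsoxus$ could pair with different $\beta^\star$ values, and the one selected by $\alsoxs$ need not be DRCCP-feasible—exactly the obstruction illustrated by Example~\ref{example_worse_convex} in the $q=\infty$ setting.
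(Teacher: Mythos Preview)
Your architecture matches the paper's: show that whenever the lower-level $\alsoxus$ returns a DRCCP-feasible $\bar{\bm x}$, one can find an optimal pair $(\bar{\bm x},\beta)$ with $\beta\le 0$, so that $\alsoxs$ and $\alsoxus$ share the same lower-level optimum; then the uniqueness hypothesis forces $\hat{\bm x}=\bar{\bm x}$. The second step (uniqueness) is exactly what the paper does.

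The gap is in your Step~1, specifically in the choice of $\beta^\star$. You set
\[
\beta^\star:=\sup_{\Pr\in\P_q}\VaR_{1-\varepsilon,\Pr}\bigl\{\max_{i\in[I]}\bm a_i(\bar{\bm x})^\top\trxi-b_i(\bar{\bm x})\bigr\}
\]
and hope that ``the optimal $\beta$ coincides with the worst-case VaR.'' It does not. For fixed $\bar{\bm x}$, the function $g(\beta)=\sup_{\Pr}\bigl[\varepsilon\beta+\E_\Pr[(L(\bar{\bm x})-\beta)_+]\bigr]$ need not be minimized at $\sup_\Pr\VaR_\Pr$. A two-point ambiguity set already breaks this: take $\varepsilon=1/2$, $\bar\Pr^*$ with $L$ uniform on $[-10,0]$ (so $\VaR_{0.5}=-5$, $\CVaR_{0.5}=-2.5$) and $\Pr'$ with $L\equiv -4$ (so $\VaR=\CVaR=-4$). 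Then $\sup_\Pr\VaR=-4\le 0$, but $g(-4)=-1.2>-1.25=g(-5)$, so $(\bar{\bm x},-4)$ is \emph{not} optimal to the lower-level $\alsoxus$.

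The paper avoids this by working with the saddle point directly: it takes $\bar\Pr^*$ to be the CVaR-maximizing distribution (which exists by weak compactness under Assumption~\ref{A_1}) and sets
\[
\bar\beta^*:=\VaR_{1-\varepsilon,\bar\Pr^*}\bigl\{\max_{i\in[I]}\bm a_i(\bar{\bm x})^\top\trxi-b_i(\bar{\bm x})\bigr\}.
\]
This $\bar\beta^*$ is optimal for $\min_\beta g(\beta)$ because $(\bar\Pr^*,\bar\beta^*)$ is a genuine saddle point: Rockafellar--Uryasev applied to the single distribution $\bar\Pr^*$ gives $\bar\beta^*\in\argmin_\beta h_{\bar\Pr^*}(\beta)$, and $\bar\Pr^*\in\argmax_\Pr h_\Pr(\bar\beta^*)$ by construction. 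Moreover $\bar\beta^*\le\sup_\Pr\VaR_\Pr\le 0$, so $(\bar{\bm x},\bar\beta^*)$ is feasible for $\alsoxs$. Once you replace your $\beta^\star$ by this $\bar\beta^*$, the rest of your argument goes through unchanged.
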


\subsection{Exactness of $\alsoxs$}
\label{extension_exact}
In this subsection, we extend the discussion in Section~\ref{sec_exactness_elliptical} for the single DRCCP \eqref{eq_drccp_q_general} under elliptical distribution with affine mappings  $\bm{a}_1(\bm{x})=\bm x$ and $b_1(\bm x)=b_1$. Due to the page limit, we refer interested readers to Appendix~\ref{sec_extension_eq_refor} for detailed derivations.
Under type $q-$Wasserstein ambiguity set with elliptical reference distribution, DRCCP resorts to  
\begin{align}
v^*=\min_{\bm x\in\X}\left\{ \bm c^\top \bm  x\colon \bm \mu^\top \bm x+ \eta_q^*\sqrt{\bm{x}^\top\rsigma\bm{x}} -b_1\leq 0\right\}, \label{eq_drccp_q_general_formulation}
\end{align}
with
\begin{align*}
  \eta_q^* = \min_{\eta}\left\{ \eta\colon \int^\eta_{\mathrm{\Phi}^{-1}(1-\varepsilon)} \left(\eta-t\right)^q\bar{k}\hat g(t^2/2)dt\geq \theta^q, \eta\geq \mathrm{\Phi}^{-1}(1-\varepsilon)\right\}. 
\end{align*}
In this case, the lower-level $\alsoxus$ is equivalent to
\begin{align}
 \bm x^*\in\argmin_{\bm{x}\in \X,\bm{c}^\top \bm{x} \leq t} \left\{  \bm \mu^\top \bm x +
\left[\overline{G}\left(\left(\mathrm{\Phi}^{-1}(1-\varepsilon)\right)^2/2\right)/\varepsilon+\theta\varepsilon^{-\frac{1}{q}}\right]\sqrt{\bm x^\top\rsigma\bm x} -b_1 \right\}.\label{eq_alsoxs_q_b_formulation}
\end{align}
Let us make the same assumption as that in  Condition I of Section~\ref{sec_exactness_elliptical}, i.e., for a single DRCCP under an elliptical reference distribution, suppose that $\rsigma=\bm \mu\bm \mu^\top$ and $\bm \mu^\top \bm x\geq 0$ for any $\bm{x}\in \X$. Then DRCCP \eqref{eq_drccp_q_general_formulation} and the lower-level $\alsoxus$ \eqref{eq_alsoxs_q_b_formulation} can be simplified as
\begin{subequations}
	\begin{align*}
		&v^*=\min_{\bm x\in \X} \left\{ \bm c^\top \bm x\colon \hat{G}_{\theta}(\bm \mu ^\top \bm x)=\left(1+ \eta_q^* \right)\bm \mu ^\top \bm x   - b_1 \leq 0\right\},\\
  % \label{drccp_exact_general_c1}\\
  &\bm x^*\in\argmin_{\bm{x}\in \X} \left\{   \overline{F}_{\theta}(\bm \mu ^\top \bm x)= \left(1+\overline{G}\left(\left(\mathrm{\Phi}^{-1}(1-\varepsilon)\right)^2/2\right)/\varepsilon+\theta\varepsilon^{-\frac{1}{q}}\right)\bm \mu ^\top \bm x -b_1\colon  \bm{c}^\top \bm{x} \leq t\right\},
  % \label{alsoxus_exact_general_c1}
	\end{align*}
 \end{subequations}
%and %\eqref{eq_also_cvar_q_wass_weak_b} can be written as
respectively. The assumptions that  $\bm \mu^\top \bm x\geq 0$ for any $\bm{x}\in \X$ and $1+\eta_q^*\geq 0$ ensure that both functions $\hat{G}_{\theta}(\cdot)$ and $\overline F_{\theta}(\cdot)$ are monotone nondecreasing. Then the exactness result directly follows from Theorem~\ref{exactness_alsoxus_monotone} and Corollary~\ref{cor_drccp_exact_eq_c1}, which is summarized below.

\begin{corollary}
   Suppose that in a single DRCCP \eqref{eq_drccp_q_general_formulation},  the reference distribution ${\Pr_{\trzeta}}$ is elliptical with affine mappings $\bm{a}_1(\bm{x})=\bm x$, $b_1(\bm x)=b_1$, $\rsigma=\bm \mu\bm \mu^\top$, $\bm \mu^\top \bm x\geq 0$ for any $\bm{x}\in \X$, and $\eta_q^*\geq -1$. Then $\alsoxs$ is exact.
\end{corollary}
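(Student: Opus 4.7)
The plan is to reduce this corollary to Theorem~\ref{exactness_alsoxus_monotone} by showing that, under the stated assumptions, both the exact chance-constraint reformulation and the lower-level $\alsoxus$ objective collapse to univariate monotone nondecreasing functions of $\bm\mu^\top\bm x$. This mirrors the argument used in Corollary~\ref{cor_drccp_exact_eq_c1} for the $\infty$-Wasserstein case, with the only change being that the constant multiplying $\sqrt{\bm x^\top \rsigma \bm x}$ in DRCCP is now $\eta_q^*$ and in the $\alsoxus$ objective is $\overline{G}\!\left((\mathrm{\Phi}^{-1}(1-\varepsilon))^2/2\right)/\varepsilon+\theta\varepsilon^{-1/q}$.

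First I would plug $\rsigma=\bm\mu\bm\mu^\top$ into the reformulations \eqref{eq_drccp_q_general_formulation} and \eqref{eq_alsoxs_q_b_formulation}. Since $\bm x^\top\rsigma\bm x=(\bm\mu^\top\bm x)^2$ and $\bm\mu^\top\bm x\ge 0$ on $\X$, we have $\sqrt{\bm x^\top\rsigma\bm x}=\bm\mu^\top\bm x$. Therefore DRCCP reduces to the scalar constraint
\begin{align*}
\hat G_\theta(\bm\mu^\top\bm x)=\bigl(1+\eta_q^*\bigr)\bm\mu^\top\bm x-b_1\le 0,
\end{align*}
and the lower-level $\alsoxus$ becomes
\begin{align*}
\bm x^*\in\argmin_{\bm x\in\X,\,\bm c^\top\bm x\le t}\left\{\overline F_\theta(\bm\mu^\top\bm x)=\left(1+\frac{\overline G\!\left((\mathrm{\Phi}^{-1}(1-\varepsilon))^2/2\right)}{\varepsilon}+\theta\varepsilon^{-1/q}\right)\bm\mu^\top\bm x-b_1\right\}.
\end{align*}

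Second I would verify the monotonicity hypotheses of Theorem~\ref{exactness_alsoxus_monotone}. The coefficient in $\hat G_\theta$ is $1+\eta_q^*\ge 0$ by assumption, so $\hat G_\theta$ is nondecreasing in $\bm\mu^\top\bm x$. For $\overline F_\theta$, note that $\overline G(\tau)=G(\infty)-G(\tau)\ge 0$ since $G$ is a cumulative-type integral of the nonnegative generator $\hat g$, and $\theta\varepsilon^{-1/q}\ge 0$; hence the coefficient $1+\overline G(\cdot)/\varepsilon+\theta\varepsilon^{-1/q}$ is strictly positive, so $\overline F_\theta$ is nondecreasing as well.

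Third I would invoke Theorem~\ref{exactness_alsoxus_monotone} to conclude that $\alsoxus$ is exact, and then use Proposition~\ref{also_sharp_better_weak_sharp_q_wass} (which states $v^{\as}\le v^{\aus}$ under type $q-$Wasserstein ambiguity set) together with the trivial inequality $v^*\le v^{\as}$ to conclude $v^*=v^{\as}$, i.e., $\alsoxs$ is exact. There is no real obstacle here: the proof is essentially bookkeeping, and the only step that merits explicit checking is the sign of the coefficient of $\bm\mu^\top\bm x$ in $\overline F_\theta$, which follows immediately from the nonnegativity of $\overline G$ and of $\theta\varepsilon^{-1/q}$. Thus the argument is a clean analogue of Corollary~\ref{cor_drccp_exact_eq_c1}.
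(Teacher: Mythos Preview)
Your proposal is correct and takes essentially the same approach as the paper: substitute $\rsigma=\bm\mu\bm\mu^\top$ and use $\bm\mu^\top\bm x\ge 0$ to collapse both \eqref{eq_drccp_q_general_formulation} and \eqref{eq_alsoxs_q_b_formulation} to scalar functions of $\bm\mu^\top\bm x$, verify the two coefficients are nonnegative, and invoke Theorem~\ref{exactness_alsoxus_monotone}. Your third step factoring through Proposition~\ref{also_sharp_better_weak_sharp_q_wass} is unnecessary, since Theorem~\ref{exactness_alsoxus_monotone} already concludes that $\alsoxs$ (not merely $\alsoxus$) is exact.
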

Similarly, let us make the same assumption as that in Condition II of Section~\ref{sec_exactness_elliptical}, i.e., for a single DRCCP under an elliptical reference distribution, suppose that 
 $\X\subseteq\{0,1\}^n$, $\bm{\mu}\geq \bm 0$, and $\rsigma= \Diag (\bm \mu)$. In this case, DRCCP \eqref{eq_drccp_q_general_formulation} and the lower-level $\alsoxus$ \eqref{eq_alsoxs_q_b_formulation} can be simplified as
 \begin{subequations}
	\begin{align*}
	& 	v^*=\min_{\bm x\in \{0,1\}^n} \left\{ \bm c^\top \bm x\colon \hat{G}_{\theta}(\bm \mu ^\top \bm x)=\bm \mu ^\top \bm x+\eta_q^*\sqrt{\bm \mu^\top \bm x} - b_1 \leq 0 \right\},\\
 % \label{drccp_exact_general_c2} \\
 & \bm x^*\in\argmin_{\bm{x}\in \{0,1\}^n} \left\{   \overline{F}_{\theta}(\bm \mu ^\top \bm x)=  \bm \mu ^\top \bm x +
\left[\overline{G}\left(\left(\mathrm{\Phi}^{-1}(1-\varepsilon)\right)^2/2\right)/\varepsilon+\theta\varepsilon^{-\frac{1}{q}}\right]\sqrt{\bm \mu^\top \bm x}  -b_1 \colon \bm{c}^\top \bm{x} \leq t \right\}.
% \label{alsoxus_exact_general_c2}
	\end{align*}
 \end{subequations}
The assumptions  $\bm \mu\geq \bm 0$ and $\eta_q^*\geq 0$ guarantee that
both functions $\hat{G}_{\theta}(\cdot)$ and $\overline F_{\theta}(\cdot)$ are monotone nondecreasing. Then we have the following exactness result.
\begin{corollary}
  Suppose that in a single DRCCP \eqref{eq_drccp_q_general_formulation},  the reference distribution ${\Pr_{\trzeta}}$ is elliptical with affine mappings $\bm{a}_1(\bm{x})=\bm x$, $b_1(\bm x)=b_1$, $\X\subseteq\{0,1\}^n$, $\bm{\mu}\geq \bm 0$,  $\rsigma= \Diag (\bm \mu)$, and $\eta_q^*\geq 0$. Then $\alsoxs$ is exact.
\end{corollary}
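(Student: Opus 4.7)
The plan is to reduce this corollary to a direct application of Theorem~\ref{exactness_alsoxus_monotone} in essentially the same manner as Corollary~\ref{cor_drccp_exact_eq_c2}, the only difference being that the Wasserstein scale $\theta$ of the $\infty$-case is replaced by $\eta_q^*$ on the DRCCP side and by $\theta\varepsilon^{-1/q}$ on the $\alsoxus$ side. The key observation driving the reduction is that when $\bm x\in\{0,1\}^n$ and $\rsigma=\Diag(\bm\mu)$, we have $\bm x^\top\rsigma\bm x=\sum_{i\in[n]}\mu_ix_i^2=\bm\mu^\top\bm x$ since $x_i^2=x_i$, so $\sqrt{\bm x^\top\rsigma\bm x}=\sqrt{\bm\mu^\top\bm x}$. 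Combined with $\bm\mu\geq\bm 0$, this ensures the argument $\bm\mu^\top\bm x\geq 0$ and that both the chance constraint and the lower-level $\alsoxus$ objective depend on $\bm x$ only through the scalar quantity $s:=\bm\mu^\top\bm x$.

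I would first substitute this simplification into the two reformulations displayed immediately before the corollary, obtaining
\begin{align*}
\hat G_\theta(s) &= s + \eta_q^*\sqrt{s} - b_1,\\
\overline F_\theta(s) &= s + \left[\overline G\!\left(\tfrac{1}{2}(\mathrm{\Phi}^{-1}(1-\varepsilon))^2\right)\!\big/\varepsilon + \theta\varepsilon^{-1/q}\right]\sqrt{s} - b_1,
\end{align*}
so that DRCCP \eqref{eq_drccp_q_general_formulation} and its lower-level $\alsoxus$ counterpart fit the template \eqref{drccp_exact_eq}--\eqref{alsoxus_exact_eq} with the scalar linear statistic $\bm x^\top\bm h := \bm\mu^\top\bm x$. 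Next I would verify the monotonicity hypotheses of Theorem~\ref{exactness_alsoxus_monotone} on $s\geq 0$: for $\hat G_\theta$, monotonicity is immediate because $\eta_q^*\geq 0$ by assumption and $s\mapsto s+\eta_q^*\sqrt s$ is nondecreasing on $\Re_+$; for $\overline F_\theta$, it suffices to observe that $\overline G(\tau)=G(\infty)-G(\tau)\geq 0$ (being the tail of a probability density scaled by $\bar k$ of the generating function), that $\varepsilon,\theta>0$ and $q\geq 1$ make $\theta\varepsilon^{-1/q}\geq 0$, and hence the coefficient multiplying $\sqrt s$ is nonnegative, making $\overline F_\theta$ nondecreasing on $\Re_+$ as well.

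With both $\hat G_\theta$ and $\overline F_\theta$ nondecreasing, Theorem~\ref{exactness_alsoxus_monotone} applies directly and shows that $\alsoxs$ returns an optimal DRCCP solution, since the theorem already deduces exactness of $\alsoxs$ through $v^{\as}\leq v^{\aus}$ from Proposition~\ref{also_sharp_better_weak_sharp_q_wass}. The main (minor) obstacle is checking that the reformulations of DRCCP and of the lower-level $\alsoxus$ under type $q$-Wasserstein ambiguity with an elliptical reference indeed collapse to the scalar expressions above; this is a straightforward substitution using $\sqrt{\bm x^\top\rsigma\bm x}=\sqrt{\bm\mu^\top\bm x}$ on the binary hypercube, and no new analytic ingredient beyond those already established for the $\infty$-case in Corollary~\ref{cor_drccp_exact_eq_c2} is required. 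The proof is therefore essentially identical to that of Corollary~\ref{cor_drccp_exact_eq_c2} and may be omitted.
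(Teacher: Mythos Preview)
Your proposal is correct and follows essentially the same approach as the paper: substitute $\sqrt{\bm x^\top\rsigma\bm x}=\sqrt{\bm\mu^\top\bm x}$ on the binary hypercube, observe that both $\hat G_\theta$ and $\overline F_\theta$ are monotone nondecreasing in $s=\bm\mu^\top\bm x\geq 0$ under the stated sign assumptions, and then invoke Theorem~\ref{exactness_alsoxus_monotone}. The paper's treatment is in fact even terser than yours---it simply records the two monotone reformulations and states the corollary without further argument---so your write-up is slightly more detailed but identical in substance.
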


\section{Numerical Study}
\label{numerical_study}
In this section, we  numerically  demonstrate the efficacy of the proposed methods. All the instances in this section are executed in Python 3.9 with calls to solver Gurobi (version 9.1.1 with default settings) on a personal PC with an Apple M1 Pro processor and 16G of memory. 

%We also conduct numerical studies for DRCCP, and show how to find a reasonable penalty for the worst-case $\alsox$. We use ``$\CVaR$" to denote the (worst-case) $\CVaR$ approximation, use ``$\alsox$" to denote the $\alsox$ \Cref{alg_alsox} or the worst-case $\alsox$ \Cref{alg_alsox_drccp}, and use ``$\alsox+$" to denote the $\alsox+$ \Cref{alg_alsox_+} or the worst-case $\alsox$ \Cref{alg_alsox+_drccp}. 

\subsection{Synthetic Cases}

%\todo{how to generate these instances???}
We evaluate the differences among $\CVaR$ approximation, $\alsox$, and $\alsoxs$ using ``\textrm{Improvement from $\CVaR$ approximation}'' to denote the percentage of differences between the value of a proposed algorithm and $\CVaR$ approximation, i.e.,
\begin{align*}
	&\textrm{Improvement from $\CVaR$ approximation } (\%) \\
	&= \frac{ \CVaR \textrm{ approximation value}-\textrm{output value of a proposed algorithm} }{|\CVaR \textrm{ approximation value}|}\times 100.
\end{align*}
Typically, $\CVaR$ approximation is quite conservative. As a better alternative, we also use  ``\textrm{Improvement from $\alsox$}'' to denote the percentage of differences between the value of a proposed algorithm and $\alsox$ approximation, i.e.,
\begin{align*}
	\textrm{Improvement from $\alsox$ } (\%) = \frac{ \alsox \textrm{ value} -\textrm{output value of a proposed algorithm} }{|\alsox \textrm{ value}|}\times 100.
\end{align*}
We compare the performances of $\CVaR$ approximation, $\alsox$, and $\alsoxs$ of solving a single DRCCP 
% with a single uncertain linear constraint 
with different sizes of data points $N=400,600,1000$, varying risk level $\varepsilon=0.10, 0.20$, fixed Wasserstein radius $\theta= 0.05$, and also different dimensions of decision variables $n=20,40,100$. In $\alsox$ and $\alsoxs$ algorithm, we use the optimal value from $\CVaR$ approximation as an initial upper bound $t_U$ and the quantile bound from \cite{ahmed2017nonanticipative} as an initial lower bound $t_L$. For each parametric setting, we generate $5$ random instances and report their average performance. 

We separate our discussions into type $\infty-$Wasserstein ambiguity set and type $2-$Wasserstein ambiguity set, respectively.

\noindent\textbf{Case I. Testing a DRCCP with type $\infty-$Wasserstein ambiguity set.} We split the discussions into the continuous case and the binary case.

\noindent\textbf{1.1 Continuous Case.}
% Testing a DRCCP with type $\infty-$Wasserstein ambiguity set and $L_2$ norm. 
Let us first consider the following DRCCP:
\begin{align*}
	v^*=\min _{\bm{x}} &\left\{ \bm{c}^\top \bm{x}\colon \bm{x}\in [0,1]^n, \frac{1}{N}\sum_{i\in[N]} \one \left[ \theta\left\| \bm x\right\|_2+\sum_{j\in[n]} \zeta^i_j x_j\leq b^i \right] \geq 1-\varepsilon\right\}.
\end{align*}
Above, we generate the samples $\{\rzeta^i\}_{i\in [N]}$ by assuming that the random parameters $\trzeta$ are discrete and i.i.d. uniformly distributed between $1$ and $80$. We set $\delta_1=10^{-2}$  in $\alsoxs$ Algorithm~\ref{alg_alsox_sharp}. For each random instance, we assume the cost vector $\bm{c}$ to be random integer with each entry uniformly distributed between $-30$ and $-1$. And we assume the random parameter $\tilde{b}$ is discrete and i.i.d. uniformly distributed between $1$ and $20$.
%Since we have proven in \Cref{the1} that $\alsox$  \rev{delivers} a better solution than that of the $\CVaR$ approximation, in $\alsox$ \Cref{alg_alsox} and $\alsox +$ \Cref{alg_alsox_+}, we use the optimal value from $\CVaR$ approximation as an initial upper bound $t_U$, and the quantile bound from \cite{ahmed2017nonanticipative,song2014chance} as an initial lower bound $t_L$. 
%Besides, at each bisection iteration, we also use the upper bound of allowable increase or lower bound of allowable decrease of the current value $t$ to update its value for the next iteration. 
The numerical results are displayed in Table~\ref{tab_single_two_norm}. We see that although the computation time of $\alsoxs$ is comparable to that of $\alsox$, the solution quality of $\alsoxs$ is around 4\%-10\% better than that of $\alsox$. This demonstrates the effectiveness of our proposed $\alsoxs$. 

\begin{table}[htbp]
		\centering
\scriptsize
	\caption{Numerical Results of a DRCCP under Type $\infty-$Wasserstein Ambiguity Set with $\theta=0.05$}
	\renewcommand{\arraystretch}{1} % Default value: 1
	\label{tab_single_two_norm}
	\begin{tabular}{|c|c|r|r|rr|rr|}
		\hline
		\multirow{2}{*}{$\varepsilon$} & \multicolumn{1}{c|}{\multirow{2}{*}{$N$}} & \multicolumn{1}{c|}{\multirow{2}{*}{$n$}} & $\CVaR$  & \multicolumn{2}{c|}{$\alsox$}               & \multicolumn{2}{c|}{$\alsoxs$}            \\ \cline{4-8} 
		& \multicolumn{1}{c|}{}                   & \multicolumn{1}{c|}{}                   & Time (s) & \multicolumn{1}{r|}{Time (s)} & \makecell{Improvement from \\ $\CVaR$ approximation (\%)}  & \multicolumn{1}{r|}{Time (s)} &\makecell{Improvement from \\ $\alsox$ (\%)} \\ \hline
		\multirow{9}{*}{0.10}     & \multirow{3}{*}{400}                    & 20                                      & 0.15  & \multicolumn{1}{r|}{1.53}  & 13.28       & \multicolumn{1}{r|}{1.56}  & 8.73        \\ \cline{3-8} 
		&                                         & 40                                      & 0.29  & \multicolumn{1}{r|}{3.26}  & 11.29       & \multicolumn{1}{r|}{2.73}  & 5.86        \\ \cline{3-8} 
		&                                         & 100                                     & 0.73  & \multicolumn{1}{r|}{9.12}  & 14.59       & \multicolumn{1}{r|}{13.62} & 6.77        \\ \cline{2-8} 
		& \multirow{3}{*}{600}                    & 20                                      & 0.39  & \multicolumn{1}{r|}{2.27}  & 8.48        & \multicolumn{1}{r|}{2.78}  & 6.06        \\ \cline{3-8} 
		&                                         & 40                                      & 0.35  & \multicolumn{1}{r|}{4.64}  & 9.22        & \multicolumn{1}{r|}{4.41}   & 6.38        \\ \cline{3-8} 
		&                                         & 100                                     & 0.78  & \multicolumn{1}{r|}{11.82} & 9.79        & \multicolumn{1}{r|}{12.73} & 5.69        \\ \cline{2-8} 
		& \multirow{3}{*}{1000}                   & 20                                      & 0.25  & \multicolumn{1}{r|}{4.77}  & 10.76       & \multicolumn{1}{r|}{4.81}  & 9.16        \\ \cline{3-8} 
		&                                         & 40                                      & 0.72  & \multicolumn{1}{r|}{7.20}   & 9.16        & \multicolumn{1}{r|}{7.35}  & 5.10         \\ \cline{3-8} 
		&                                         & 100                                     & 1.62  & \multicolumn{1}{r|}{20.15} & 6.31        & \multicolumn{1}{r|}{21.98} & 3.38        \\ \hline \hline
		\multirow{9}{*}{0.20}     & \multirow{3}{*}{400}                    & 20                                      & 0.22  & \multicolumn{1}{r|}{2.75}  & 12.94       & \multicolumn{1}{r|}{2.43}  & 9.20         \\ \cline{3-8} 
		&                                 & 40                                      & 0.41  & \multicolumn{1}{r|}{2.74}  & 13.53       & \multicolumn{1}{r|}{2.34}  & 6.28        \\ \cline{3-8} 
		&                                         & 100                                     & 0.66  & \multicolumn{1}{r|}{4.79}  & 5.39        & \multicolumn{1}{r|}{5.51}  & 6.02        \\ \cline{2-8} 
		& \multirow{3}{*}{600}                    & 20                                      & 0.49  & \multicolumn{1}{r|}{3.49}  & 6.49        & \multicolumn{1}{r|}{3.24}  & 7.91        \\ \cline{3-8} 
		&                                         & 40                                      & 0.54  & \multicolumn{1}{r|}{6.18}  & 5.33        & \multicolumn{1}{r|}{4.07}  & 5.97        \\ \cline{3-8} 
		&                                         & 100                                     & 1.43  & \multicolumn{1}{r|}{12.87} & 3.82        & \multicolumn{1}{r|}{10.66} & 7.76        \\ \cline{2-8} 
		& \multirow{3}{*}{1000}                   & 20                                      & 0.52  & \multicolumn{1}{r|}{6.50}   & 8.59        & \multicolumn{1}{r|}{5.26}  & 5.76        \\ \cline{3-8} 
		&                                         & 40                                      & 0.84  & \multicolumn{1}{r|}{7.68}  & 4.21        & \multicolumn{1}{r|}{7.53}  & 5.09        \\ \cline{3-8} 
		&                                         & 100                                     & 2.38  & \multicolumn{1}{r|}{21.92} & 4.76        & \multicolumn{1}{r|}{18.47} & 3.73        \\ \hline
	\end{tabular}
\end{table}
\noindent\textbf{1.2 Binary Case.}
% : Testing a DRCCP with type $\infty-$Wasserstein ambiguity set and $L_1$ norm, i.e., $\|\cdot\|_*=\|\cdot\|_1$. }
Let us first consider the following DRCCP:
\begin{align*}
	v^*=\min _{\bm{x}} &\left\{ \bm{c}^\top \bm{x}\colon \bm{x}\in \{0,1\}^n, \frac{1}{N}\sum_{i\in[N]} \one \left[ \theta\left\| \bm x\right\|_1+\sum_{j\in[n]} \zeta^i_j x_j\leq b^i \right] \geq 1-\varepsilon\right\}.
\end{align*}
Above, we generate the samples $\{\rzeta^i\}_{i\in [N]}$ by assuming that the random parameters $\trzeta$ are discrete and i.i.d. uniformly distributed between $-10$ and $20$. We set $\delta_1=0.5$  in $\alsoxs$ Algorithm~\ref{alg_alsox_sharp}. For each random instance, we assume the cost vector $\bm{c}$ to be random integer, with each entry uniformly distributed between $-10$ and $-1$. And we assume the random parameter $\tilde{b}$ is discrete and i.i.d. uniformly distributed between $1$ and $200$. The numerical results for this case are displayed in Table~\ref{tab_single_bianry_dual_one_norm}. Similar to the results in Table~\ref{tab_single_two_norm}, we conclude that $\alsoxs$ enhances the solution quality from $\alsox$ by around 3\%-7\% improvement with a comparable computation time. 

\begin{table}[htbp]
	\centering
   \scriptsize
\caption{Numerical Results of a Binary DRCCP under Type $\infty-$Wasserstein Ambiguity Set with $\theta=0.05$}
\renewcommand{\arraystretch}{1} % Default value: 1
\label{tab_single_bianry_dual_one_norm}
%	\scriptsize
	\begin{tabular}{|c|c|r|r|rr|rr|}
		\hline
		\multirow{2}{*}{$\varepsilon$} & \multicolumn{1}{c|}{\multirow{2}{*}{$N$}} & \multicolumn{1}{c|}{\multirow{2}{*}{$n$}} & $\CVaR$   & \multicolumn{2}{c|}{$\alsox$}               & \multicolumn{2}{c|}{$\alsoxs$}            \\ \cline{4-8} 
		& \multicolumn{1}{c|}{}                   & \multicolumn{1}{c|}{}                   & Time (s) & \multicolumn{1}{r|}{Time (s)} & \makecell{Improvement from \\ $\CVaR$ approximation (\%)}  & \multicolumn{1}{r|}{Time (s)} &\makecell{Improvement from \\ $\alsox$ (\%)} \\ \hline
		\multirow{9}{*}{0.10}     & \multirow{3}{*}{400}                    & 20              
		& 0.30 & \multicolumn{1}{r|}{1.97} &  5.16  & \multicolumn{1}{r|}{2.54} & 6.67 
		                         \\ \cline{3-8} 
		&                                         & 40       & 0.31 & \multicolumn{1}{r|}{5.65} & 8.03 & \multicolumn{1}{r|}{8.34} & 7.10                                     \\ \cline{3-8} 
		&                                         & 100      & 0.24 & \multicolumn{1}{r|}{7.09} & 3.09 & \multicolumn{1}{r|}{7.97} & 4.36                                      \\ \cline{2-8} 
		& \multirow{3}{*}{600}                    & 20        & 0.31 & \multicolumn{1}{r|}{5.28} & 7.49 & \multicolumn{1}{r|}{4.75} & 5.33                                       \\ \cline{3-8} 
		&                                         & 40          & 1.29  & \multicolumn{1}{r|}{12.18}  & 5.11 &  \multicolumn{1}{r|}{15.50} & 7.14                                   \\ \cline{3-8} 
		&                                         & 100       & 1.35 & \multicolumn{1}{r|}{14.35} & 5.19 & \multicolumn{1}{r|}{16.23} & 5.56                                     \\ \cline{2-8} 
		& \multirow{3}{*}{1000}                   & 20     &  0.95 &  \multicolumn{1}{r|}{5.82} & 3.51  &  \multicolumn{1}{r|}{6.29} & 2.78                                        \\ \cline{3-8} 
		&                                         & 40          &  0.52 &  \multicolumn{1}{r|}{16.13}  & 3.63  &  \multicolumn{1}{r|}{17.48}  & 2.83                                  \\ \cline{3-8} 
		&                                         & 100         & 0.61 &  \multicolumn{1}{r|}{24.69} &  4.82 &  \multicolumn{1}{r|}{26.11} & 3.19                                 \\ \hline \hline
		\multirow{9}{*}{0.20}     & \multirow{3}{*}{400}                    & 20              &  0.40 &  \multicolumn{1}{r|}{5.75} & 4.73 &  \multicolumn{1}{r|}{5.59} & 7.35                                \\ \cline{3-8} 
		&                                         & 40     & 0.57 &  \multicolumn{1}{r|}{8.14} & 4.27 &  \multicolumn{1}{r|}{9.43} & 7.32                                       \\ \cline{3-8} 
		&                                         & 100     & 0.64 &  \multicolumn{1}{r|}{9.34} & 4.04 &  \multicolumn{1}{r|}{11.48} & 5.41                                     \\ \cline{2-8} 
		& \multirow{3}{*}{600}                    & 20      &  0.42 &  \multicolumn{1}{r|}{5.58} & 4.79 &  \multicolumn{1}{r|}{5.24} & 5.08                                    \\ \cline{3-8} 
		&                                         & 40         & 0.80 &  \multicolumn{1}{r|}{16.92} & 3.37  &  \multicolumn{1}{r|}{14.95} & 6.92                               \\ \cline{3-8} 
		&                                         & 100        &  1.45 &  \multicolumn{1}{r|}{21.12} & 4.58 &  \multicolumn{1}{r|}{21.06} & 6.36                                     \\ \cline{2-8} 
		& \multirow{3}{*}{1000}                   & 20     & 1.04 &  \multicolumn{1}{r|}{7.51} & 3.49 &  \multicolumn{1}{r|}{7.45} & 6.25                                     \\ \cline{3-8} 
		&                                         & 40      & 1.26 &  \multicolumn{1}{r|}{24.39} & 4.12 &  \multicolumn{1}{r|}{22.99} & 6.24                                     \\ \cline{3-8} 
		&                                         & 100       & 2.70 &  \multicolumn{1}{r|}{32.93}  & 2.87 & \multicolumn{1}{r|}{33.65} & 5.56                                 \\ \hline
	\end{tabular}
\end{table}

\noindent\textbf{Case II. Testing a DRCCP with type $2-$Wasserstein ambiguity set.} We split the discussions into the continuous case and the binary case.

\noindent\textbf{2.1 Continuous Case.}
% : Testing a DRCCP with type $2-$Wasserstein ambiguity set and $L_2$ norm. }
Let us first consider the following DRCCP:
\begin{align*}
	v^*=\min _{\bm{x}} &\left\{ \bm{c}^\top \bm{x}\colon \bm{x}\in [0,1]^n, \inf_{\Pr\in\P_2}\Pr\left\{\trxi\colon \bm x^\top \trxi \leq b  \right\} \geq 1-\varepsilon\right\}.
\end{align*}
Note that this DRCCP may not have an MIP reformulation (see, e.g., \cite{jiang2022dfo}).
Above, we generate the samples $\{\rzeta^i\}_{i\in [N]}$ by assuming that the random parameters $\trzeta$ are discrete and i.i.d. uniformly distributed between $1$ and $80$. We set $\delta_1=10^{-2}$  in $\alsoxs$ Algorithm~\ref{alg_alsox_sharp}. For each random instance, we assume $b=10$ and assume the cost vector $\bm{c}$ to be random integer with each entry uniformly distributed between $-20$ and $-1$.  The numerical results are displayed in Table~\ref{tab_single_two_norm_type_2}. We show that $\alsoxs$ yields around 5\%-11\% improvement. We also notice that $\alsox$ can improve the solution of $\CVaR$ approximation. 

\begin{table}[htbp]
	\centering
	\scriptsize
	\caption{Numerical Results of a DRCCP under Type $2-$Wasserstein Ambiguity Set with $\theta=0.50$}
	\renewcommand{\arraystretch}{1} % Default value: 1
	\label{tab_single_two_norm_type_2}
	\begin{tabular}{|c|c|r|r|rr|rr|}
		\hline
		\multirow{2}{*}{$\varepsilon$} & \multicolumn{1}{c|}{\multirow{2}{*}{$N$}} & \multicolumn{1}{c|}{\multirow{2}{*}{$n$}} & $\CVaR$  & \multicolumn{2}{c|}{$\alsox$}               & \multicolumn{2}{c|}{$\alsoxs$}            \\ \cline{4-8} 
		& \multicolumn{1}{c|}{}                   & \multicolumn{1}{c|}{}                   & Time (s) & \multicolumn{1}{r|}{Time (s)} & \makecell{Improvement from \\ $\CVaR$ approximation (\%)}  & \multicolumn{1}{r|}{Time (s)} &\makecell{Improvement from \\ $\CVaR$ approximation (\%)} \\ \hline
		\multirow{9}{*}{0.10}     & \multirow{3}{*}{400}                    & 20                                      &  0.09 & \multicolumn{1}{r|}{0.90 }  &7.21      & \multicolumn{1}{r|}{ 0.81}  & 7.22       \\ \cline{3-8} 
		&                                         & 40                                      & 0.25  & \multicolumn{1}{r|}{ 2.42}  & 6.43       & \multicolumn{1}{r|}{2.06  }  & 6.54      \\ \cline{3-8} 
		&                                         & 100                                     & 0.55  & \multicolumn{1}{r|}{5.76 }  & 4.73       & \multicolumn{1}{r|}{  3.97} & 4.73        \\ \cline{2-8} 
		& \multirow{3}{*}{600}                    & 20                                      & 0.20  & \multicolumn{1}{r|}{ 1.40}  & 8.60        & \multicolumn{1}{r|}{ 1.12}  & 8.60        \\ \cline{3-8} 
		&                                         & 40                                      & 0.38 & \multicolumn{1}{r|}{3.83 }  & 6.92        & \multicolumn{1}{r|}{2.72 }   & 6.92        \\ \cline{3-8} 
		&                                         & 100                                     & 1.16  & \multicolumn{1}{r|}{12.46 } & 5.27        & \multicolumn{1}{r|}{ 8.96} & 5.27       \\ \cline{2-8} 
		& \multirow{3}{*}{1000}                   & 20                                      & 0.23  & \multicolumn{1}{r|}{2.24 }  & 8.78      & \multicolumn{1}{r|}{1.88 }  & 8.82       \\ \cline{3-8} 
		&                                         & 40                                      & 0.76  & \multicolumn{1}{r|}{ 5.62}   & 7.82        & \multicolumn{1}{r|}{ 4.39}  & 7.94         \\ \cline{3-8} 
		&                                         & 100                                     & 1.48  & \multicolumn{1}{r|}{18.82 } & 5.82        & \multicolumn{1}{r|}{ 13.36} & 5.87       \\ \hline \hline
		\multirow{9}{*}{0.20}     & \multirow{3}{*}{400}                    & 20                                      & 0.25  & \multicolumn{1}{r|}{ 0.93}  &     10.45   & \multicolumn{1}{r|}{ 0.82}  &   10.61      \\ \cline{3-8} 
		&                                         & 40                                      & 0.45  & \multicolumn{1}{r|}{2.89}  &10.17          & \multicolumn{1}{r|}{ 2.16 }  &   10.24     \\ \cline{3-8} 
		&                                         & 100                                     & 0.66  & \multicolumn{1}{r|}{6.17    }  &  6.14    & \multicolumn{1}{r|}{  4.03} &     6.32    \\ \cline{2-8} 
		& \multirow{3}{*}{600}                    & 20                                      &  0.13  & \multicolumn{1}{r|}{1.41}  &   10.10  & \multicolumn{1}{r|}{ 1.19}  &    10.10     \\ \cline{3-8} 
		&                                         & 40                                      & 0.32  & \multicolumn{1}{r|}{3.93  }  & 8.46        & \multicolumn{1}{r|}{ 2.83 }  &   8.62      \\ \cline{3-8} 
		&                                         & 100                                     & 1.14  & \multicolumn{1}{r|}{ 13.38} & 6.77         & \multicolumn{1}{r|}{ 9.30} &    6.76    \\ \cline{2-8} 
		& \multirow{3}{*}{1000}                   & 20                                      & 0.28  & \multicolumn{1}{r|}{2.35 }   &    10.70     & \multicolumn{1}{r|}{ 2.05}  &     10.70   \\ \cline{3-8} 
		&                                         & 40                                      & 0.56  & \multicolumn{1}{r|}{5.67 }   & 10.33     & \multicolumn{1}{r|}{ 4.29}  &   10.33     \\ \cline{3-8} 
		&                                         & 100                                     & 1.83  & \multicolumn{1}{r|}{ 20.01  } & 6.82      & \multicolumn{1}{r|}{14.30} &   6.92    \\ \hline
	\end{tabular}
\end{table}

\noindent\textbf{2.2 Binary Case.}
% : Testing a DRCCP with type $2-$Wasserstein ambiguity set and $L_1$ norm, i.e., $\|\cdot\|_*=\|\cdot\|_1$. }
Let us first consider the following DRCCP:
\begin{align*}
		v^*=\min _{\bm{x}} &\left\{ \bm{c}^\top \bm{x}\colon \bm{x}\in \{0,1\}^n, \inf_{\Pr\in\P_2}\Pr\left\{ \trxi\colon\bm x^\top \trxi \leq b  \right\} \geq 1-\varepsilon\right\}.
\end{align*}
Above, we generate the samples $\{\rzeta^i\}_{i\in [N]}$ by assuming that the random parameters $\trzeta$ are discrete and i.i.d. uniformly distributed between $-20$ and $50$. We set $\delta_1=0.5$  in $\alsoxs$ Algorithm~\ref{alg_alsox_sharp}. For each random instance, we assume $b=400$ and assume the cost vector $\bm{c}$ to be random integer, with each entry uniformly distributed between $-20$ and $-10$.  In this numerical experiment, we suppose the dimensions of decision variables $n=20,40$. The numerical results for this case are displayed in Table~\ref{tab_single_bianry_dual_type_2_one_norm}. We conclude that $\alsoxs$ enhances the solution quality from $\CVaR$ approximation by around 5-10\% improvement.

\begin{table}[htbp]
	\centering
	\scriptsize
	\caption{Numerical Results of a  Binary DRCCP under Type $2-$Wasserstein Ambiguity Set with $\theta=0.20$}
	\renewcommand{\arraystretch}{1} % Default value: 1
	\label{tab_single_bianry_dual_type_2_one_norm}
	%	\scriptsize
	\begin{tabular}{|c|c|r|r|rr|rr|}
		\hline
		\multirow{2}{*}{$\varepsilon$} & \multicolumn{1}{c|}{\multirow{2}{*}{$N$}} & \multicolumn{1}{c|}{\multirow{2}{*}{$n$}} & $\CVaR$  & \multicolumn{2}{c|}{$\alsox$}               & \multicolumn{2}{c|}{$\alsoxs$}            \\ \cline{4-8} 
		& \multicolumn{1}{c|}{}                   & \multicolumn{1}{c|}{}                   & Time (s) & \multicolumn{1}{r|}{Time (s)} & \makecell{Improvement from \\ $\CVaR$ approximation (\%)}  & \multicolumn{1}{r|}{Time (s)} &\makecell{Improvement from \\ $\CVaR$ approximation (\%)} \\ \hline
		\multirow{9}{*}{0.10}     & \multirow{3}{*}{400}                    & 20              
		& 0.23 & \multicolumn{1}{r|}{1.55} &  5.80  & \multicolumn{1}{r|}{1.43} &6.02
		\\ \cline{3-8} 
		&                                         & 40       & 2.81 & \multicolumn{1}{r|}{34.11} & 8.23 & \multicolumn{1}{r|}{25.23} & 7.10                                     \\ \cline{2-8} 
	%	&                                         & 100      & 0.24 & \multicolumn{1}{r|}{7.09} & 3.09 & \multicolumn{1}{r|}{7.97} & 4.36                                      \\ \cline{2-8} 
		& \multirow{3}{*}{600}                    & 20        & 0.36 & \multicolumn{1}{r|}{2.60 } & 5.19 & \multicolumn{1}{r|}{ 2.69} & 5.19                                      \\ \cline{3-8} 
		&                                         & 40          & 5.77 & \multicolumn{1}{r|}{57.60 }  & 9.81 &  \multicolumn{1}{r|}{55.39 } & 10.19                                   \\ \cline{2-8} 
	%	&                                         & 100       & 1.35 & \multicolumn{1}{r|}{14.35} & 5.19 & \multicolumn{1}{r|}{16.23} & 5.56                                     \\ \cline{2-8} 
		& \multirow{3}{*}{1000}                   & 20     &  1.44 &  \multicolumn{1}{r|}{5.29 } & 7.92  &  \multicolumn{1}{r|}{4.78 } & 8.19                                        \\ \cline{3-8} 
		&                                         & 40          &  12.45 &  \multicolumn{1}{r|}{ 86.86}  & 9.67  &  \multicolumn{1}{r|}{85.98 }  & 9.67                                \\ \cline{2-8} 
	%	&                                         & 100         & 0.61 &  \multicolumn{1}{r|}{24.69} &  4.82 &  \multicolumn{1}{r|}{26.11} & 3.19                                 \\ 
		\hline \hline
		\multirow{9}{*}{0.20}     & \multirow{3}{*}{400}                    & 20              &  0.24 &  \multicolumn{1}{r|}{ 1.80} & 7.67 &  \multicolumn{1}{r|}{ 1.65} & 7.67                              \\ \cline{3-8} 
		&                                         & 40     & 3.49 &  \multicolumn{1}{r|}{ 35.70} & 9.46 &  \multicolumn{1}{r|}{ 27.71} & 9.46                                       \\ \cline{2-8} 
	%	&                                         & 100     & 0.64 &  \multicolumn{1}{r|}{9.34} & 4.04 &  \multicolumn{1}{r|}{11.48} & 5.41                                     \\ \cline{2-8} 
		& \multirow{3}{*}{600}                    & 20      &  0.57 &  \multicolumn{1}{r|}{ 3.14} & 7.73 &  \multicolumn{1}{r|}{ 3.08} & 7.82                                  \\ \cline{3-8} 
		&                                         & 40         & 13.85 &  \multicolumn{1}{r|}{ 60.79} & 9.82  &  \multicolumn{1}{r|}{ 74.74} & 11.50                              \\ \cline{2-8} 
	%	&                                         & 100        &  1.45 &  \multicolumn{1}{r|}{21.12} & 4.58 &  \multicolumn{1}{r|}{21.06} & 6.36                                     \\ \cline{2-8} 
		& \multirow{3}{*}{1000}                   & 20     & 1.01 &  \multicolumn{1}{r|}{5.61 } & 7.26 &  \multicolumn{1}{r|}{4.97 } & 7.66                                     \\ \cline{3-8} 
		&                                         & 40      & 16.48 &  \multicolumn{1}{r|}{82.65 } & 8.73 &  \multicolumn{1}{r|}{ 87.72} & 9.87                                     \\ \hline
		%\cline{2-8} 
	%	&                                         & 100       & 2.70 &  \multicolumn{1}{r|}{32.93}  & 2.87 & \multicolumn{1}{r|}{33.65} & 5.56                                 \\ \hline
	\end{tabular}
\end{table}

\subsection{Application: Resource Allocation in Wireless Communication}

% We compare $\CVaR$ approximation, $\alsox$, and $\alsoxs$ in the wireless communication network problem. 
 We compare $\alsoxs$ and the exact method in the wireless communication network problem, where we can use $\alsoxs$ to minimize the energy consumed. Specifically,
%  we consider a predictive resource allocation problem for energy-efficient video steaming (see, e.g., \cite{atawia2016joint}). We compare $\CVaR$ approximation, $\alsox$, and $\alsoxs$ in the wireless communication network problem. 
we consider a predictive resource allocation problem for energy-efficient video streaming (see, e.g., \cite{atawia2015chance,atawia2016joint}), where chance constraints are employed to ensure a high quality of service for each user. The objective of this problem is to minimize the energy consumption in transmitting the video content to the users while satisfying the chance constraints. The problem is formally formulated as
\begin{align}
\min_{\bm x\in[0,1]^{n\times T}} \left\{ \sum_{t\in[T]}\sum_{i\in[n]} x_{i,t}\colon\right.
&\inf_{\Pr\in\P_\infty}\Pr\left\{ \trxi\colon \sum_{t'\in[t]} \tilde{\xi}_{i,t'}x_{i,t'} \geq D_{i,t},\forall t\in[T] \right\}\geq 1-\varepsilon, \forall i\in[n], \notag\\
&\left.
\sum_{i\in[n]} x_{i,t}\leq 1, \forall t\in[T] \right\}, \label{allocation_obj}
\end{align}
where $x_{i,t}$ denotes resource allocation decision at time slot $t$ to user $i$, $n$ denotes the number of all users, $T$ denotes the number of time slots, and $D_{i,t}$ denotes the demand for each user $i\in[n]$ up to time $t\in [T]$. The random parameter ${\xi}_{i,t}$ denotes the random amount of available rate for user $i\in[n]$ at time slot $t$.

Above, we generate the samples $\{\rzeta^i\}_{i\in [N]}$ by assuming that the random parameters $\trzeta$ are discrete and i.i.d. uniformly distributed between $20$ and $40$. We set $\delta_1=10^{-1}$ in $\alsoxs$ Algorithm~\ref{alg_alsox_sharp}. And we assume for each user $i\in[n]$ up to time $t\in [T]$, the demand $D_{i,t}$ is $D_{i,t}=tD$ with $D=1.0, 1.5$.  We consider the number of data samples $N=56,72$, the risk level $\varepsilon=0.20, 0.30$, the Wasserstein radius $\theta= 0.50$, the number of users $n=8$, and the number of time slots $T=60$. We generate $5$ random instances to find their average performance. 

The proposed $\alsoxs$ can effectively identify better feasible solutions than the exact Big-M model with a much shorter solution time, which is typically required in many wireless communication applications. Since we consider the number of time slots $T=60$s, for a fair comparison, we set the time limit of each instance to {$60$s (i.e., 1 minute)}, and we use $``\textrm{UB}" $ and $``\textrm{LB}" $ to denote the best upper bound and the best lower bound found by the $\text{Big-M}$ model within the one-minute time limit. Since we may not be able to solve the $\text{Big-M}$ model to optimality in one minute, we use GAP to denote its optimality gap as $\textrm{GAP } (\%)=(| \textrm{UB} -\textrm{LB} |)/(|\textrm{LB}|)\times 100$, and we use the term ``Improvement from Big-M model'' to denote the solution quality improvement of $\alsoxs$, i.e., $\textrm{Improvement from Big-M model } (\%)= (\textrm{UB} - \alsoxs \textrm{ value})/(|\textrm{UB} |)\times 100$. 
 The numerical results for this case are displayed in Table~\ref{tab_resource_dual_norm_1_norm}. We find that $\alsoxs$ can provide better solutions than the $\text{Big-M}$ model in a much shorter time, which validates the efficacy of our proposed methods. We remark that  we can design real-time algorithms by using $\alsoxs$ as a future study (see, e.g., \cite{atawia2018utilization,atawia2017robust,atawia2016joint}).

  \begin{table}[htbp]
	\centering
	\scriptsize
	\caption{{Comparisons of $\alsoxs$ and $\text{Big-M}$ Model in Resource Allocation Problem \eqref{allocation_obj}}}
	\renewcommand{\arraystretch}{1} % Default value: 1
	\label{tab_resource_dual_norm_1_norm}
	\begin{tabular}{|c|c|r|rr|rr|}
		\hline
		\multirow{2}{*}{$D$}   & \multirow{2}{*}{$\varepsilon$} & \multirow{2}{*}{$N$}  & \multicolumn{2}{c|}{Big-M Model}   
	& \multicolumn{2}{c|}{$\alsoxs$}                         \\ \cline{4-7} 
		&                                & \multicolumn{1}{c|}{}                   & \multicolumn{1}{l|}{Gap (\%)} & \multicolumn{1}{l|}{Time (s)} & \multicolumn{1}{c|}{ \makecell{Improvement from \\ Big-M model (\%)}}  & \multicolumn{1}{l|}{Time (s)} \\ \hline
		\multirow{4}{*}{1.0} & \multirow{2}{*}{0.20}          & 56                                       & \multicolumn{1}{r|}{ 19.05}    &         60                   & \multicolumn{1}{r|}{0.10}        &       21.41                     \\ \cline{3-7} 
		&                                & 72                                      & \multicolumn{1}{r|}{ 19.88 }    &      60                      & \multicolumn{1}{r|}{ 0.12}        &   36.48                         \\ \cline{2-7} 
		& \multirow{2}{*}{0.30}          & 56                                       & \multicolumn{1}{r|}{25.53}    &         60                   & \multicolumn{1}{r|}{0.10}        &    25.35                        \\ \cline{3-7} 
		&                                & 72                                      & \multicolumn{1}{r|}{28.13}    &          60                  & \multicolumn{1}{r|}{0.21}        &         37.07                   \\ \hline
		\multirow{4}{*}{1.5} & \multirow{2}{*}{0.20}          & 56                                       & \multicolumn{1}{r|}{19.60}    &     60                       & \multicolumn{1}{r|}{0.14}        &  34.19                          \\ \cline{3-7} 
		&                                & 72                                      & \multicolumn{1}{r|}{20.68}    &    60                        & \multicolumn{1}{r|}{0.19}        &     41.42                     \\ \cline{2-7} 
		& \multirow{2}{*}{0.30}          & 56                                       & \multicolumn{1}{r|}{27.50 }    &            60                & \multicolumn{1}{r|}{0.23 }        &       36.93                     \\ \cline{3-7} 
		&                                & 72                                      & \multicolumn{1}{r|}{28.84 }    &     60                       & \multicolumn{1}{r|}{0.39}        &    42.06                        \\ \hline
	\end{tabular}	
 \end{table}

Subsequently, we demonstrate how to use the $\alsoxs$ result to save energy-consuming. For illustration, we consider the following parametric setting $D=1.0$, $N=56$, $\varepsilon=0.20$, and $n=8$. We first numerically choose a proper Wasserstein radius. We use generated data to solve the DRCCP with $\alsoxs$, the DRCCP with $\text{Big-M}$ model, and the regular CCP counterpart  (i.e., $\theta=0$) with $\alsoxs$, respectively. Then we generate new samples with the same sample size and obtain  95\% confidence intervals by plugging the solutions in the regular CCP to calculate the corresponding probability that the constraints are satisfied. We repeat the same procedure for a list of $\theta$ values and select the smallest $\theta$ that the confidence interval of the violation probability in the DRCCP is beyond that of the regular CCP. Specifically, to select the smallest $\theta$, we take the following steps: (i) for each $\theta\in\{0.1, 0.2, \cdots, 0.9,1.0\}$, we generate $N=56$ scenarios and solve the DRCCP and its regular CCP counterpart; (ii) generate $N=56$ new scenarios with the same parameters; (iii) plug the solution from part (i) into the newly generated scenarios and calculate the probability that the constraints are satisfied; (iv) repeat previous procedures $50$ times and output the  95\% confidence intervals; and (v) choose the smallest $\theta$ such that the confidence interval of the DRCCP is entirely above that of the regular CCP counterpart.  The result is shown in Figure~\ref{Fig_wireless_allocation_radius}. The DRCCP using $\alsoxs$ with Wasserstein radius $\theta=0.7$ can guarantee that the chance constraints are satisfied with a high probability. In contrast, the best Wasserstein radius is $\theta=1.0$ when applying the result from the $\text{Big-M}$ model. In this way, we conclude that compared with the $\text{Big-M}$ model, the average energy saving using the $\alsoxs$ is $1.93\%$. 

% \begin{figure}[htbp]
% \begin{center}
% \centering
% \includegraphics[width=0.8\textwidth]{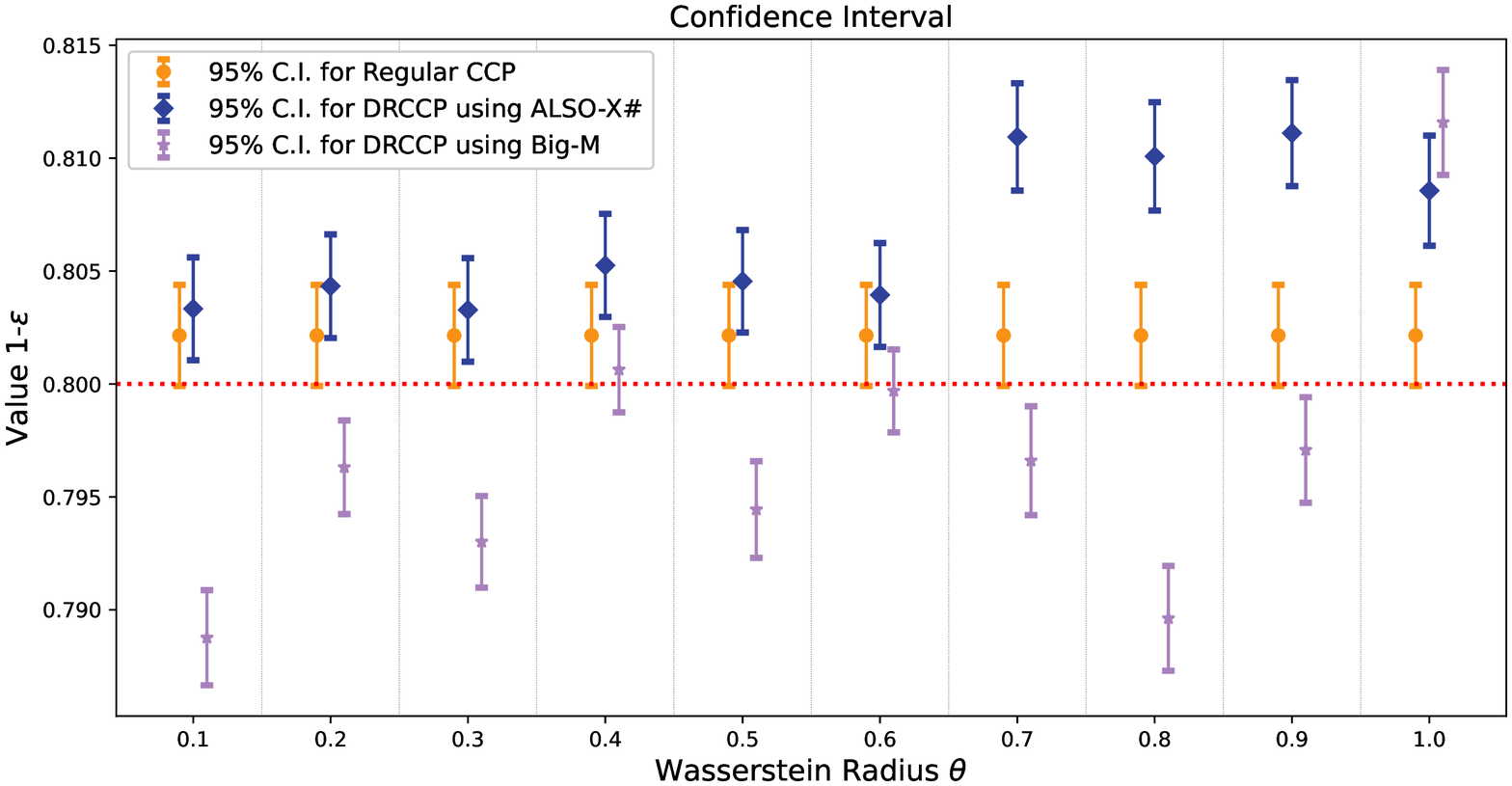}
% \caption{Comparisons of Tuning Wasserstein Radii}
% \label{Fig_wireless_allocation_radius}
% \end{center}
% \end{figure}

With the best-tuned Wasserstein radius, we compare the performances of the solutions from the $\text{Big-M}$ model and $\alsoxs$ by generating $100$ new scenarios to evaluate the probability that the constraints are satisfied and output the corresponding  95\% confidence intervals. The testing setting is the same as that of the training setting above, except that we assume that random parameters $\trzeta$ are discrete and i.i.d. uniformly distributed between $20\times (1-1.8\rho)$ and $40\times(1+0.4\rho)$ with $\rho\in[0.02,0.20]$, where the value $\rho$ represents the noise level in the training distribution. The result is displayed in Figure~\ref{Fig_wireless_allocation_evaluation}. It is seen that compared with the $\text{Big-M}$ model, $\alsoxs$ yields ideal lower constraint violations. Specifically, when the noise level is small, i.e., $0.02\leq \rho\leq 0.16$ in Figure~\ref{Fig_wireless_allocation_evaluation}, $\alsoxs$ often guarantees a lower violation of constraints. However, when the noise level is large, i.e., $\rho=0.20$ in Figure~\ref{Fig_wireless_allocation_evaluation}, both $\alsoxs$ and $\text{Big-M}$ model cannot provide the chance constraint guarantee. This result further demonstrates that using $\alsoxs$ with the best-tune Wasserstein radius can be better or at least achieve the similar constraint violation probability as the $\text{Big-M}$ model. As mentioned in the previous paragraph, $\alsoxs$ provides a better solution with lower energy consumption. This demonstrates the better solution quality of $\alsoxs$ compared to the $\text{Big-M}$ model.

% \begin{figure}[htbp]
% \begin{center}
% \centering
% \includegraphics[width=0.8\textwidth]{}
% \caption{Evaluations of $\alsoxs$ and $\text{Big-M}$ Model with the Best-tuned Wasserstein Radii}
% \label{Fig_wireless_allocation_evaluation}
% \end{center}
% \end{figure}

	\begin{figure}[htbp]
	\subfloat[Comparisons of Tuning Wasserstein Radii]{
		\includegraphics[width=.5\textwidth]{}\label{Fig_wireless_allocation_radius}}
	~
	\subfloat[Evaluations of $\alsoxs$ and $\text{Big-M}$ Model with the Best-tuned Wasserstein Radii]{
		\includegraphics[width=.5\textwidth]{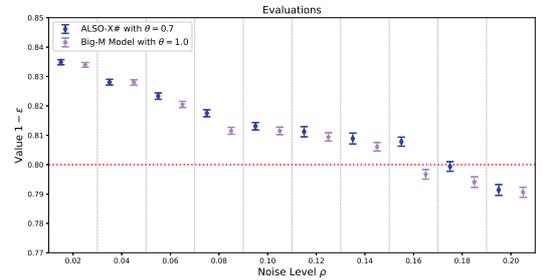}\label{Fig_wireless_allocation_evaluation}}
	%	\vspace{-1.1em}
	\caption{Tuning Wasserstein Radius and Comparing the $\alsoxs$ and $\text{Big-M}$ Model Solutions in Resource Allocation \eqref{allocation_obj}
	}	\label{Fig_wireless_allocation}
	%	\vspace{-2em}
\end{figure}

% After finding a proper radius, we 
% For comparison purposes, we consider the following two cases.

% \noindent\textbf{Case I. Comparisons When the Testing Distribution is the Same as the Training one.}  We follow the procedures described above and we assume that $\trzeta$ are discrete and i.i.d. uniformly distributed between $20$ and $40$.  Then, we generate $5$ random instances to find their corresponding objective values, respectively. 

% \noindent\textbf{Case II. Comparisons When the Testing Distribution is Different From the Training one.} 
% We assume the random parameters $\trzeta$ are discrete and i.i.d. uniformly distributed between $20\times(1-0.45)$ and $40\times(1-0.15)$. The result is displayed in Figure~\ref{Fig_wireless_allocation_b}. It is seen that the DRCCP using $\alsoxs$ with Wasserstein radius $\theta=0.5$ can guarantee that the chance constraints are satisfied with a high probability. In contrast, the best Wasserstein radius is $\theta=1.0$ to achieve the out-of-sample performance when using the $\text{Big-M}$ model to solve the DRCCP. Similarly, we generate $5$ random instances to compare their corresponding objective values, respectively. And we conclude that compared with the $\text{Big-M}$ model, the average reduced energy consumption using the $\alsoxs$ is $4.32\%$.

\section{Conclusion}
\label{sec_conclusion}
In this work, we proposed $\alsoxs$ for solving distributionally robust chance constrained programs (DRCCPs) by integrating $\alsox$ and $\CVaR$ approximation. We proved that $\alsoxs$ is better than $\CVaR$ approximation even when this deterministic set is nonconvex.  We provided sufficient conditions that $\alsoxs$ always outperforms  $\alsox$, i.e., when $\alsox$ admits a unique optimal solution. We showed that $\alsoxs$ can deliver an optimal solution to a DRCCP and also extended the discussions to the general Wasserstein ambiguity set. Our numerical studies demonstrated the effectiveness of $\alsoxs$. For a future study, it will be interesting to implement the $\alsoxs$  algorithm in real-time for wireless communication problems. Another interesting direction is to study the approximation guarantees of $\alsoxs$ when solving a DRCCP.

\section*{Acknowledgment}
This research has been supported in part by the National Science Foundation grants 2246414 and 2246417.

\section*{Declarations}

\noindent \textbf{Funding and/or Conflicts of interests/Competing interests:} This research has been supported by the National Science Foundation. Authors have no other competing interests to report.

\bibliography{alsocvar.bib}
%  \newpage
\newpage 
\begin{appendices}

\section{Proofs}
\label{appendix_proof_sec}

\subsection{Proof of Theorem~\ref{theorem_drccp_unique_continuous}}
\label{proof_theorem_drccp_unique_continuous}
\theoremdrccpuniquecontinuous*
\begin{proof} 
We split the proof into two parts by discussing whether there exists an optimal solution $\bm{x}^*$ to the lower-level $\alsox$ \eqref{drccp_alsox} such that $\supp(\bm{x}^*)=n$.
\begin{enumerate}[label=(\roman*)]
    \item Suppose that there exists an optimal solution $\bm{x}^*$ to the lower-level $\alsox$ such that $\supp(\bm{x}^*)=n$.

First, for any $\bm{x}$ with $\supp(\bm{x})=n$, according to the continuity of function $f(\tau) = \max\{\tau, 0\}$ and theorem 1 in \cite{rockafellar1982interchange}, we can interchange the
subdifferential operator and expectation, the first-order derivative $\hat {F}'(\bm x)$ is
\begin{align*}
\hat {F}'(\bm x)=\frac{\partial{\hat {F}(\bm x)}}{\partial \bm x}= \int_{ \mu(\bm x)} \frac{\partial}{\partial \bm x} \left[ \theta\left\|\bm x \right\|_*+\bm x^\top{\rzeta} \right] \Pr(d\rzeta),
\end{align*}
where $\mu(\bm x)= \{\rzeta: \theta\|\bm x \|_*+\bm x^\top{\rzeta}\geq b_1 \}$ with its boundary $\partial \mu(\bm x)= \{\rzeta: \theta\|\bm x \|_*+\bm x^\top{\rzeta}=b_1 \}$.

According to equation (4) in \cite{uryasev2000introduction}, the Hessian of $\hat {F}(\bm x)$ is
\begin{align*}
   H_{\hat F}(\bm x) =  \frac{1}{\left\|\bm x\right\|_2} \int_{\partial \mu(\bm x)}    \left( \frac{\partial \theta\left\|\bm x\right\|_*}{\partial \bm x}+\rzeta\right)  \left( \frac{\partial \theta\left\|\bm x\right\|_*}{\partial \bm x}+\rzeta\right)^\top \Pr(d\rzeta)+\theta \int_{ \mu(\bm x)}\frac{\partial^2 \|\bm x\|_*}{\partial \bm x^2}\Pr(d\rzeta). 
\end{align*}
Recall that $\bm x^*$ is optimal to the lower-level $\alsox$ \eqref{drccp_alsox} and $\bm x^*\neq \bm 0$. To show that $\bm x^*$ is the unique optimal solution, it suffices to show that its corresponding Hessian $H_{\hat F}(\bm x)$ is positive definite (PD). % or $\bm 0$ is the unique optimal solution.
% Let us define $\bar{b}= b_1 - \theta\left\|\bm x ^*\right\|_*$. 

 For any $\bm y\in \Re^n$ and $\bm y\neq \bm 0$, we have 
 \begin{align*}
    \bm y^\top  H_{\hat F}(\bm x) \bm y = & \frac{1}{\left\|\bm x\right\|_2}\int_{\partial \mu(\bm x)} \left( \bm y^\top\frac{\partial \theta\left\|\bm x\right\|_*}{\partial \bm x}+\bm y^\top\rzeta\right)^2\Pr(d\rzeta)+ \theta \int_{ \mu(\bm x)}\bm y^\top  \left(\frac{\partial^2 \|\bm x\|_*}{\partial \bm x^2} \right) \bm y\Pr(d\rzeta). %\\
  %  \geq & \frac{1}{\left\|\bm x\right\|_2}\int_{\partial \mu(\bm x)} \left( \bm y^\top\frac{\partial \theta\left\|\bm x\right\|_*}{\partial \bm x}+\bm y^\top\rzeta\right)^2\Pr(d\rzeta),
\end{align*}
For $\|\bm x\|_*=\|\bm x\|_p$ with $p\in (1,\infty)$, for each $i\in[n]$, we have
\begin{align*}
\frac{\partial \|\bm x\|_p}{\partial x_i} = \sign(x_i) |x_i|^{p-1} \left[\sum_{j\in[n]} |x_j|^{p}\right]^{\frac{1}{p}-1}, \forall i\in[n],
\end{align*}
and
\begin{align*}
& \frac{\partial^2 \|\bm x\|_p}{\partial x_i^2} =   |x_i|^{p-2}\left[\sum_{j\in[n]\setminus \{i\}} |x_j|^{p}\right] (p-1)\left[\sum_{j\in[n]} |x_j|^{p}\right]^{\frac{1}{p}-2} , \forall i\in[n],\\
   & \frac{\partial^2 \|\bm x\|_p}{\partial x_i\partial x_k} =   \sign(x_i)|x_i|^{p-1}\sign(x_k)|x_k|^{p-1} (1-p)\left[\sum_{j\in[n]} |x_j|^{p}\right]^{\frac{1}{p}-2}, \forall i\in[n], k\in[n]\setminus\{i\}.
\end{align*}
That is,
\begin{align*}
    \frac{\partial^2 \|\bm x\|_*}{\partial \bm x^2}=\frac{\partial^2 \|\bm x\|_p}{\partial \bm x^2} =  (p-1)\left(\sum_{j\in[n]} |x_j|^{p}\right)^{\frac{1}{p}-2} &\left[\left(\sum_{j\in[n]} |x_j|^{p}\right)\Diag\begin{pmatrix}|x_1|^{p-2}\\\vdots \\ |x_n|^{p-2}\end{pmatrix}\right.\\
   &\left.-\begin{pmatrix}\sign(x_1)|x_1|^{p-1}\\\vdots \\ \sign(x_n)|x_n|^{p-1}\end{pmatrix}\begin{pmatrix}\sign(x_1)|x_1|^{p-1}\\\vdots \\ \sign(x_n)|x_n|^{p-1}\end{pmatrix}^\top
   \right].
\end{align*}
Here, $\partial^2 \|\bm x\|_p/\partial \bm x^2$ is a positive semidefinite (PSD) matrix of rank $n-1$. This implies that (i) the value $\bm y^\top  ({\partial^2 \|\bm x\|_*}/{\partial \bm x^2} ) \bm y=0$ if and only if $\bm y=\tau\bm x$ with $\tau\neq 0$ (recall that we have $\bm y\neq \bm 0$); and (ii) if $\bm z$ is an alternative optimal solution, then we must have $\bm z=\ell \bm x^*$. Hence, if ${\bm y}\not\propto \bm{x}^*$, then we must have $\bm y^\top  ({\partial^2 \|\bm x^*\|_*}/{\partial \bm x^2} ) \bm y>0$, which implies that the Hessian $H_{\hat F}(\bm x^*)$ is PD, which confirms the uniqueness of the optimal solution $\bm x^*$. It remains to show that if the solution $\bm x^*$ is not unique, all optimal solutions should be proportional to $\bm x^*$. We split the proof into two steps by the sign of $b_1$.

\noindent{\textbf{Step I.}} When $b_1\not=0$, we show that the Hessian $H_{\hat F}(\bm x^*)$ is PD.
%$\bm y^\top\frac{\partial \theta\left\|\bm x\right\|_*}{\partial \bm x}+\bm y^\top\rzeta\neq 0$,
% there are two subcases to be discussed: \par
%Subcase (i) 
That is, it suffices to show that if $\bm y=\ell \bm{x}^*$ with $\ell\not=0$,  we must have 
\begin{align*}
\int_{\partial \mu(\bm x^*)}  \left( \bm y^\top\frac{\partial \theta\left\|\bm x^*\right\|_*}{\partial \bm x}+\bm y^\top\rzeta\right)^2\Pr(d\rzeta)>0.
\end{align*}
Indeed, we notice that
\begin{align*}
    \bm y^\top\frac{\partial \theta\left\|\bm x^*\right\|_*}{\partial \bm x}+\bm y^\top\rzeta = \left(\ell \bm{x}^* \right)^\top  \frac{\partial \theta\left\|\bm x^*\right\|_*}{\partial \bm x} +\left(\ell \bm{x}^* \right)^\top\rzeta =\ell \left[\theta\left\|\bm x^*\right\|_* +(\bm{x}^* )^\top\rzeta \right]
    % =\theta\left\|\bm y\right\|_*+{\bm y}^\top\rzeta
    =  \ell b_1 \neq 0.
\end{align*}
% Set $\partial\mu(\bm{x})\cap \{\rzeta:\theta\|\bm y\|_*+{\bm y}^\top\rzeta = 0\}$ has a dimension of $n-2$. Thus, the intersection of $\partial\mu(\bm{x})\cap \{\rzeta:|\theta\|\bm y\|_*+{\bm y}^\top\rzeta |\geq \omega\}$ for some $\omega>0$ contains a nonempty subset of the relative interior of $\partial\mu(\bm{x})$. %Therefore, there exists a measurable space such that $\theta\left\|\bm y\right\|_*+{\bm y}^\top\rzeta\geq \omega$ for any known positive $\omega>0$. 
 Hence, 
 \begin{align*}
    \int_{\partial \mu(\bm x^*)}  \left( \bm y^\top\frac{\partial \theta\left\|\bm x^*\right\|_*}{\partial \bm x}+\bm y^\top\rzeta\right)^2\Pr(d\rzeta)= \int_{\partial\mu(\bm{x}^*)}  \ell^2 b_1^2\Pr(d\rzeta) >0,
    % \int_{\partial\mu(\bm{x}^*)\cap \{\rzeta:|\theta\left\|\bm y\right\|_*+\bar{\bm y}^\top\rzeta |\geq \omega\}}  \omega^2\Pr(d\rzeta)>0, 
 \end{align*}
which implies that $ \bm y^\top  H_{\hat F}(\bm x^*) \bm y >0$.

% ${\bm y}^\top\rzeta=\ell \bar{b}\not=0$ and $\bm y^\top\frac{\partial \theta\left\|\bm x\right\|_*}{\partial \bm x}= \ell\theta\left\|\bm x^*\right\|_*$.
% Hence, $\bm y^\top\frac{\partial \theta\left\|\bm x\right\|_*}{\partial \bm x}+\bm y^\top\rzeta = b_1$.
% $ \int_{\partial \mu(\bm x^*)}  (\bm y^\top\rzeta)^2\Pr(d\rzeta)>0$, which implies that $ \bm y^\top  H_{\hat F} \bm y >0$; 
\par
% Subcase (ii)  If ${\bm y}\not\propto \bm{x}^*$,  
% then the set $\partial\mu(\bm{x})\cap \{\rzeta:{\bm y}^\top\rzeta = 0\}$ has a dimension of $n-2$. Thus, the intersection of $\partial\mu(\bm{x})\cap \{\rzeta:|{\bm y}^\top\rzeta |\geq \omega\}$ for some $\omega>0$ contains a nonempty subset of the relative interior of $\partial\mu(\bm{x})$. Therefore, 
% there exists a measurable space such that ${\bm y}^\top\rzeta\geq \omega$ for any known positive $\omega>0$. Hence, 
% $$ \int_{\partial \mu(\bm x^*)}  (\bm y^\top\rzeta)^2\Pr(d\rzeta)\geq \int_{\partial\mu(\bm{x}^*)\cap \{\rzeta:|\bar{\bm y}^\top\rzeta |\geq \omega\}}  \omega^2\Pr(d\rzeta)>0,$$
% which implies that $ \bm y^\top  H_{\hat F} \bm y >0$.
% We conclude that, in this case, the Hessian $H_{\hat F}$ at $\bm x^*$ is PD, which confirms the uniqueness of the optimal solution $\bm x^*$.

\noindent{\textbf{Step II.}} When $b_1=0$, we show that $\bm{x}^*$ must be the unique solution. Suppose that there exists another optimal solution $\bm z$. Then we must have $\bm z=\ell \bm x^*$ such that $\ell\neq 1$ according to the statement at the beginning of the proof. Three subcases remain to be discussed:\par

\noindent\textit{Subcase (i)} When $0<\ell<1$, according to the homogeneity of the objective function in the lower-level $\alsox$ \eqref{drccp_alsox}, i.e.,
\[\E_{\Pr_{\trzeta}}\left[\theta\left\|\bm z \right\|_*+\bm z^\top{\trzeta}\right]_+ =  \E_{\Pr_{\trzeta}}\left[\theta\left\|\ell \bm x^* \right\|_*+\left(\ell \bm x^*\right)^\top{\trzeta}\right]_+ = \ell v^A(t). \]
Hence, $\bm z=\ell \bm x^*$ yields a strict better objective value than that of $\bm x^*$ as $v^A(t)>0$, a contradiction; \par

\noindent\textit{Subcase (ii)} Similarly, when $\ell>1$, the current optimal solution $\bm x^*$ is strict better  than $\bm z$, a contradiction;  \par

\noindent\textit{Subcase (iii)} When $\ell<0$, since the objective function in  the lower-level $\alsox$ \eqref{drccp_alsox} and the feasible region is convex, any convex combination of $\bm x^*$ and $\bm z=\ell\bm x^*$ is also an optimal solution. That is, if we choose
\begin{align*}
\frac{-\ell}{|\ell|+1} \bm x^* + \frac{1}{|\ell|+1} (\ell\bm x^*) =\bm 0,
\end{align*}
then $\bm 0$ is one optimal solution with objective value $0$, which is strictly less than that of $\bm{x}^*$, a contradiction. Thus, we have that $\bm{x}^*$ with $|\supp(\bm x^*)|=n$ is the unique optimal solution since we have $v^A(t)>0$.

Hence, the objective function of the lower-level $\alsox$\eqref{drccp_alsox} admits a unique solution.

\item Suppose that there does not exist an optimal solution $\bm{x}^*$ to the lower-level $\alsox$ \eqref{drccp_alsox} such that $|\supp(\bm{x}^*)|=n$. Let $\bm{x}^*$ be an optimal solution that has the largest support. If $\bm z$ is another optimal solution to the lower-level $\alsox$ \eqref{drccp_alsox} and $\supp(\bm{x}^*)=\supp(\bm z)$, then following the proof in Part (i), we must have $\bm{x}^*=\bm z$. Thus, $\supp(\bm{x}^*)\neq \supp(\bm z)$. Now let us define 
\begin{align*}
    \gamma=\frac{1}{2}\frac{\min\{|x_i^*|:i\in \supp(\bm{x}^*) \}}{\min\{|x_i^*|:i\in \supp(\bm{x}^*) \}+\max\{|z_i|:i\in \supp(\bm{z}) \}}.
\end{align*}
Then $\bar{\bm x}:= (1-\gamma)\bm{x}^*+\gamma \bm z$ is another optimal solution, according to the convexity of the feasible set of the lower-level $\alsox$. However, $|\supp(\bar{\bm{x}})|\geq |\supp(\bm{x}^*)|+1$, a contradiction that $\bm{x}^*$ has the largest support. This completes the proof.    \QEDA
\end{enumerate}
\end{proof}

\subsection{Proof of Theorem~\ref{theorem_drccp_unique_continuous} with $q\in \{1,\infty\}$}
%Corollary~\ref{corollary_drccp_unique_continuous}
\label{proof_corollary_drccp_unique_continuous}
% \corollarydrccpuniquecontinuous*
\begin{restatable}{lemma}{corollarydrccpuniquecontinuous}\label{corollary_drccp_unique_continuous} 
	Suppose that in a single DRCCP \eqref{eq_drccp_infty}, the deterministic set $\X$ is convex, the reference distribution ${\Pr_{\trzeta}}$ is continuous with support $\Re^n$, $\|\cdot\|_*=\|\cdot\|_p$ with $p\in \{1,\infty\}$ and affine mappings $\bm{a}_1(\bm{x})=\bm x$ and $b_1(\bm x)=b_1$. Then the lower-level $\alsox$ \eqref{drccp_alsox} admits a unique optimal solution when $t\geq \min_{\bm x\in \X}\bm c^\top \bm x$ and $v^A(t)>0$.
\end{restatable}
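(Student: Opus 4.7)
The plan is to adapt the proof of Theorem~\ref{theorem_drccp_unique_continuous} to the nonsmooth cases $p \in \{1,\infty\}$ while preserving the same overall contradiction framework. I would assume two distinct optimal solutions $\bm x_1 \neq \bm x_2$ of the lower-level $\alsox$ exist with $v^A(t)>0$; convexity of $\hat F$ then forces the whole segment $\bm x_\lambda = (1-\lambda)\bm x_1 + \lambda \bm x_2$, $\lambda\in[0,1]$, to be optimal, and the target is to contradict this by showing that $\hat F$ is strictly convex along $\bm v = \bm x_2 - \bm x_1$ at a well-chosen interior point of the segment.

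The novelty is that $\|\cdot\|_p$ is only piecewise linear: its nonsmooth locus is a finite union of hyperplanes ($\{x_i=0\}$ for $p=1$; $\{x_i=\pm x_j\}$ for $p=\infty$). I would first dispatch, by induction on $n$, the degenerate case where the entire segment lies in some such hyperplane. In this situation both endpoints are confined to a coordinate subspace of dimension $n-1$, on which $\|\cdot\|_p$ restricts to the same $\ell_p$ norm in smaller dimension, and the marginal (or, for the $p=\infty$ case, the push-forward under the linear surjection $\rzeta\mapsto(\ldots,\zeta_i+\zeta_j,\ldots)$) of $\Pr_{\trzeta}$ remains continuous with full support on that subspace. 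The base $n=1$ is immediate from the strict convexity of $\E[\cdot]_+$ against a continuous full-support distribution, so the inductive hypothesis yields uniqueness in the degenerate case.

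For the generic case I would pick $\lambda^*\in(0,1)$ with $\bm x_{\lambda^*}$ in the interior of a smoothness cell, where $\|\cdot\|_p$ is locally affine. The key observation is that the second-derivative-of-norm term in the Hessian formula of Theorem~\ref{theorem_drccp_unique_continuous} vanishes identically, leaving only the boundary integral
\[
H_{\hat F}(\bm x_{\lambda^*}) \;=\; \frac{1}{\|\bm x_{\lambda^*}\|_2}\int_{\partial\mu(\bm x_{\lambda^*})}\left(\nabla(\theta\|\bm x_{\lambda^*}\|_*)+\rzeta\right)\left(\nabla(\theta\|\bm x_{\lambda^*}\|_*)+\rzeta\right)^{\top}\Pr(d\rzeta).
\]
To get $\bm v^\top H_{\hat F}(\bm x_{\lambda^*})\bm v>0$ I would split into two subcases. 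If $\bm v\not\propto\bm x_{\lambda^*}$, then $\rzeta\mapsto\bm v^\top\rzeta$ is nonconstant on the hyperplane $\{\rzeta:\bm x_{\lambda^*}^\top\rzeta=b_1-\theta\|\bm x_{\lambda^*}\|_*\}$, and continuity plus full support of $\Pr_{\trzeta}$ make the integrand strictly positive on a positive-measure subset. If $\bm v=\ell\bm x_{\lambda^*}$ with $\ell\neq 0$, Euler's identity $\bm x_{\lambda^*}^\top\nabla\|\bm x_{\lambda^*}\|_* = \|\bm x_{\lambda^*}\|_*$, valid at every smooth point for every $p\in[1,\infty]$, collapses the integrand to the constant $(\ell b_1)^2$, which is positive whenever $b_1\neq 0$.

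The only remaining case is $\bm v=\ell\bm x_{\lambda^*}$ together with $b_1=0$, and here I would recycle the homogeneity argument of Step II in Theorem~\ref{theorem_drccp_unique_continuous}: the relation $\bm x_2-\bm x_1\propto(1-\lambda^*)\bm x_1+\lambda^*\bm x_2$ forces $\bm x_2=\tau\bm x_1$ with $\tau\neq 1$, and the positive $1$-homogeneity of $\hat F$ at $b_1=0$ gives the contradiction immediately (either $\hat F(\tau\bm x_1)=\tau\hat F(\bm x_1)$ combined with $\hat F(\bm x_1)=\hat F(\bm x_2)=v^A(t)>0$ forces $\tau=1$, or for $\tau\le 0$ the segment passes through the origin, so $\bm 0$ is optimal with $\hat F(\bm 0)=0<v^A(t)$). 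The hardest part will be the inductive dimension-reduction step, where one must verify that the restricted norm is genuinely of the same $\ell_p$ type and that the restricted reference distribution retains the continuity and full-support properties needed to re-invoke the Hessian argument in lower dimensions.
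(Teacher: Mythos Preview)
Your proposal is correct and shares the core Hessian analysis with the paper, but organizes the nonsmooth part differently. At a smooth point, both you and the paper observe that the norm's second derivative vanishes for $p\in\{1,\infty\}$, so the Hessian of $\hat F$ reduces to the boundary integral alone; both then show this integral is positive definite by splitting into $\bm v\not\propto\bm x$ (where $\bm v^\top\rzeta$ is nonconstant on the boundary hyperplane, hence the squared integrand is positive on a set of positive surface measure) and $\bm v\propto\bm x$ (where Euler's identity collapses the integrand to $\ell^2 b_1^2$), and both fall back on the homogeneity argument when $b_1=0$. The difference is in reaching a smooth point: the paper selects an \emph{extremal} optimal solution---largest support for $p=1$, smallest tie-set for $p=\infty$---which automatically lies in the interior of a smoothness cell, and then rules out any other optimum by showing a suitable convex combination would have strictly larger support (respectively, strictly smaller tie-set), contradicting extremality. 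You instead run an explicit induction on $n$: if the segment between two putative optima lies entirely in a nonsmooth hyperplane, you project to $\Re^{n-1}$ (marginalizing $\zeta_i$ for $p=1$, pushing forward under $\rzeta\mapsto(\ldots,\zeta_i\pm\zeta_j,\ldots)$ for $p=\infty$) and invoke the inductive hypothesis. Your route is more uniform across the two values of $p$ and makes the dimension reduction explicit; the paper's extremal-selection device is slicker but needs a separate extremal property for each $p$ and in fact hides an implicit dimension reduction in its Part~(ii). One small caveat: your base case $n=1$ is not literally ``strict convexity of $\E[\cdot]_+$'' when $b_1=0$ (there $\hat F$ is piecewise linear), so you should fold the homogeneity argument into the base case as well; this is harmless since you already have the tool.
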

	% \proof
	% See Appendix~\ref{proof_corollary_drccp_unique_continuous}.
	% \QEDA

\begin{proof}
We split the proof into two cases based on the value of $p$.

\noindent{\textbf{Case I. $p=1$.}}	
We split the proof into two parts by discussing whether there exists an optimal solution $\bm{x}^*$ to the lower-level $\alsox$ \eqref{drccp_alsox} such that $\supp(\bm{x}^*)=n$.

\begin{enumerate}[label=(\roman*)]
    \item Suppose that there exists an optimal solution $\bm{x}^*$ to the lower-level $\alsox$ such that $\supp(\bm{x}^*)=n$. Recall that $\hat {F}(\bm x)$ denotes the objective function in the lower-level $\alsox$ \eqref{drccp_alsox} with the Hessian 
    \begin{align*}
          H_{\hat F}(\bm x) =  \frac{1}{\left\|\bm x\right\|_2} \int_{\partial \mu(\bm x)}    \left( \frac{\partial \theta\left\|\bm x\right\|_1}{\partial \bm x}+\rzeta\right)  \left( \frac{\partial \theta\left\|\bm x\right\|_1}{\partial \bm x}+\rzeta\right)^\top \Pr(d\rzeta)+\theta \int_{ \mu(\bm x)}\frac{\partial^2 \|\bm x\|_1}{\partial \bm x^2}\Pr(d\rzeta). 
    \end{align*}   
For function $\|\bm x\|_1$ with $\supp(\bm{x})=n$, we have
	\begin{align*}
		 & \frac{\partial \|\bm x\|_1}{\partial x_i} = \sign(x_i), \forall i\in[n];  \frac{\partial^2 \|\bm x\|_1}{\partial x_i\partial x_j} =  0, \forall i\in[n], j\in[n].
   % \frac{\partial^2 \|\bm x\|_1}{\partial x_i^2} =  0 , \forall i\in[n]; \frac{\partial^2 \|\bm x\|_1}{\partial x_i\partial x_k} =  0, \forall i\in[n], k\in[n]\setminus\{i\}.
		 \end{align*}
	% where $\delta(\cdot)$ denotes the Dirac delta function, defined as 
	 % \begin{align*}
		%     \delta(x)  =\left\{\begin{aligned}
		% 	 	+\infty, \quad & x=0, \\
		% 	 	0, \quad  & x\neq 0
		% 	 	\end{aligned}
		%  	\right..
		%  \end{align*}
Here, with the assumption that $\supp(\bm{x}^*)=n$, we have $\partial^2 \|\bm x^*\|_1/\partial \bm x^2=0$, which implies that $\bm y^\top  ({\partial^2 \|\bm x\|_1}/{\partial \bm x^2} ) \bm y=0$. Following the similar proof of Theorem~\ref{theorem_drccp_unique_continuous}, we have 
	 \begin{align*}
	 	\int_{\partial \mu(\bm x^*)}  \left( \bm y^\top\frac{\partial \theta\left\|\bm x^*\right\|_1}{\partial \bm x}+\bm y^\top\rzeta\right)^2\Pr(d\rzeta)>0.
	 \end{align*}
%The inequality holds from the discussions in step I and step II in the proof of Theorem~\ref{theorem_drccp_unique_continuous}.

\item The case when $|\supp(\bm{x}^*)|<n$ is identical to part (ii) in the proof of Theorem \ref{theorem_drccp_unique_continuous}  and is thus omitted. 
\end{enumerate} 

  % (i) Support of $\bm x$ is $n$. derivation and derivative; (ii) $\bm x$ has the smallest size of the largest absolute-value component. $\bm z$ is another optimal solution; (iii)  $x=1100$  $z=2222$

  \noindent{\textbf{Case II. $p=\infty$. }}  
  We split the proof into two parts by discussing whether there exists an optimal solution $\bm{x}^*$ to the lower-level $\alsox$ \eqref{drccp_alsox} such that $\supp(\bm{x}^*)=n$.

\begin{enumerate}[label=(\roman*)]
    \item  Suppose that there exists an optimal solution $\bm{x}^*$ to the lower-level $\alsox$ such that $\supp(\bm{x}^*)=n$. We split the following proof into two steps.

\noindent{\textbf{Step I.}} We denote $\S$ to be an index set corresponding to the largest absolute-value component, that is, for any subset $\S\subseteq [n]$, we assume $|x^*_i|=\|\bm x^*\|_\infty$ for each $i\in\S$. For the fixed subset $\S$ and function $\|\bm x\|_\infty$ with $\supp(\bm{x})=n$, we have
   \begin{align*}
	& \frac{\partial \|\bm x\|_\infty}{\partial x_i} = \left\{\begin{aligned}
  	 \sign(x_i), \quad  & i\in \S, \\
 		0, \quad &  i\notin \S
 	\end{aligned}
 	\right., \forall i\in[n],
\end{align*}
and
\begin{align*}
    \frac{\partial^2 \|\bm x\|_\infty}{\partial x_i\partial x_j} = 0, \forall i\in[n], k\in[n].
\end{align*}
% where function $\delta_{kj}$ is defined as
% \begin{align*}
% 	\delta_{kj} = \left\{\begin{aligned}
%  	1, \quad  & j\in \S,k\in \S, \\
% 		0, \quad &  j\in \S, k\notin \S
% 	\end{aligned}
% 	\right..
% \end{align*}
Here, with the presumptions, we have $\partial^2 \|\bm x^*\|_\infty/\partial \bm x^2=0$, which implies that $\bm y^\top  ({\partial^2 \|\bm x\|_\infty}/{\partial \bm x^2} ) \bm y=0$. Following the similar proof of Theorem~\ref{theorem_drccp_unique_continuous}, we have 
	 \begin{align*}
	 	\int_{\partial \mu(\bm x^*)}  \left( \bm y^\top\frac{\partial \theta\left\|\bm x^*\right\|_\infty}{\partial \bm x}+\bm y^\top\rzeta\right)^2\Pr(d\rzeta)>0.
	 \end{align*}
  Thus, given a subset $\S$, the optimal solution is unique.
% It remains to show that for $\bm y\not=\ell \bm{x}^*$ with $\ell\not=0$, we have 
% \begin{align*}
% 	\int_{\partial \mu(\bm x^*)}  \left( \bm y^\top\frac{\partial \theta\left\|\bm x^*\right\|_\infty}{\partial \bm x}+\bm y^\top\rzeta\right)^2\Pr(d\rzeta)>0,
% \end{align*}
% The inequality holds from the discussions in step I and step II in the proof of Theorem~\ref{theorem_drccp_unique_continuous}.

\noindent{\textbf{Step II.}} It remains to prove that for any subset $\S$, the optimal solution is unique, where $\S$ is an index set corresponding to the largest absolute-value components of $\bm x^*$. Suppose that $\bm x^*$ has the smallest size of the largest absolute-value components. Assume $\bm z \neq \bm x^*$ is another optimal solution to the lower-level $\alsox$ \eqref{drccp_alsox} with $\S_1$ being its corresponding index set with the largest absolute-value components. According to our assumption, we must have $|\S_1|\geq |\S|$.  Now let us define 
 \begin{align*}
 	\gamma=\frac{1}{2}\frac{\min\{|x_i^*|:i\in \supp(\bm{x}^*) \}}{\min\{|x_i^*|:i\in \supp(\bm{x}^*) \}+\max\{|z^*_i|:i\in \supp(\bm{z}^*) \}}.
 \end{align*}
Then the convexity of the feasible set of the lower-level $\alsox$ implies that $\bar{\bm x}:= (1-\gamma)\bm{x}^*+\gamma \bm z^*$ is another optimal solution.  However,  the new solution $\bar{\bm x}$ either  $\bm{x}^*$ is the unique solution of the lower-level $\alsox$ or contradicts  that $\bm x^*$ has the smallest size of the largest absolute-value components.

% Two subcases remain to be discussed:\par
%   \noindent\textit{Subcase (i)} $\S\subseteq \S_1$. 
%   % counterexample: $\bm x=[1,1,1/2,1/2,1/2]$, $\bm z=[-2,-2,1/2,1/2,1/2]$. Any convex combination of $\bm x$ and $\bm z$ cannot decrease the set of the largest absolute-value component.

% \noindent\textit{Subcase (ii)} $\S\not\subseteq\S_1$. 

% $\bar{\bm x} $ also has $k-1$ largest elements, a contradiction to the uniqueness of $\bm z^*$. 
% counterexample: $\bm x=[1,1,1/2,1/2,1/2]$, $\bm z=[1/2,1/2,-1,-1,-1]$. Any convex combination of $\bm x$ and $\bm z$ cannot decrease the set of the largest absolute-value component.

 \item The case when $|\supp(\bm{x}^*)|<n$ is identical to part (ii) in the proof of Theorem \ref{theorem_drccp_unique_continuous}  and is thus omitted. \QEDA
    % Suppose that there does not exist an optimal solution $\bm{x}^*$ to the lower-level $\alsox$ \eqref{drccp_alsox} such that $|\supp(\bm{x}^*)|=n$. Similarly, the proof is identical to part (ii) in the proof of Theorem \ref{theorem_drccp_unique_continuous}  and is thus omitted. 
 \end{enumerate}   
\end{proof}

\section{Equivalent Reformulations for DRCCPs under Type $q-$Wasserstein Ambiguity Set}\label{sec_extension_eq_refor}
% \vspace{5pt}

Under type $q-$Wasserstein ambiguity set with $q\in[1,\infty)$, the DRCCP \eqref{eq_drccp} can be written as
\begin{align}
	v_q^*=\min_{\bm x\in\X}\left\{	\bm c^\top \bm x\colon  \inf_{\Pr\in \P_q} \Pr\left\{\trxi\colon  \bm a_i(\bm x)^\top \trxi \leq  b_i(\bm x), \forall i\in[I] \right\}\geq 1-\varepsilon \right\}. \label{eq_drccp_q_general}
\end{align}
For the notational convenience, we denote the decision space induced by the worst-case chance constraint in DRCCP \eqref{eq_drccp_q_general} as the following Distributionally Robust Chance Constrained (DRCC) set
\begin{align}
	Z_q\colon= \left\{\bm{x}\in \Re^n\colon  \inf_{\Pr\in \P_q} \Pr\left\{\trxi\colon  \bm a_i(\bm x)^\top \trxi \leq  b_i(\bm x), \forall i\in[I] \right\}\geq 1-\varepsilon \right\}.	\label{drccp_q_general}
\end{align}

\subsection{Equivalent Reformulations of DRCCPs}
We can generalize the existing work in \cite{xie2021distributionally} on the equivalent formulation of DRCC set $Z_q$ \eqref{drccp_q_general} for any $q\in [1,\infty)$ and any reference distribution $\Pr_{\trzeta}$. 

\begin{restatable}{proposition}{thmdrccset}\label{thm_drcc_set} 
	(A generalization of corollary 1 in \cite{xie2021distributionally}) Under type $q-$Wasserstein ambiguity set, DRCC set $Z_q$ \eqref{drccp_q_general} is equivalent to
	\begin{equation}
		\label{q_cvar_interpretation_cvar}
		Z_q=\left\{ \bm{x}\in \Re^n\colon 
		\begin{aligned}
			&\theta^q\varepsilon^{-1}+\CVaR_{1-\varepsilon}\left[-f(\bm{x},\tilde\rzeta)^q\right]\leq 0,\\ &\theta^q\varepsilon^{-1}+\VaR_{1-\varepsilon}\left[-f(\bm{x},\tilde\rzeta)^q\right]\leq 0
		\end{aligned}
		\right\}.
	\end{equation}
	where
	\begin{equation*}
		f(\bm{x},\bm{\zeta})= \min \left\{ \min_{i\in[I]\setminus\mathcal{I}(\bm{x})}\frac{(b_i(\bm{x})-\bm a_i(\bm x)^\top \rzeta)_+}{\left\|\bm{a}_i(\bm{x})\right\|_*},\min_{i\in\mathcal{I}(\bm{x})}\chi_{\{\bm{x}\colon{b}_i(\bm{x})<0\}}(\bm{x})\right\},
	\end{equation*}
	and $\mathcal{I}(\bm{x}) = \emptyset$ if $\bm{a}_i(\bm{x})\not=\bm 0$ and $\mathcal{I}(\bm{x})=[I]$. %\todo{please define cvar and var}
\end{restatable}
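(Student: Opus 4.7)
The plan is to follow the argument used to prove corollary~1 in \cite{xie2021distributionally}, adapting it from the single power treated there to general $q\in[1,\infty)$. First, I would pass to the complement and rewrite the worst-case chance constraint $\inf_{\Pr\in\P_q}\Pr\{\trxi\colon \bm a_i(\bm x)^\top\trxi\le b_i(\bm x),\,\forall i\in[I]\}\ge 1-\varepsilon$ as $\sup_{\Pr\in\P_q}\Pr\{V(\bm x)\}\le \varepsilon$, where $V(\bm x)=\{\rxi\colon \bm a_i(\bm x)^\top\rxi>b_i(\bm x)\ \text{for some }i\in[I]\}$ is the open violation region. Next, I would identify $f(\bm x,\rzeta)$ as the distance from $\rzeta$ to $V(\bm x)$ in the primal norm: the distance to a single open halfspace $\{\bm a_i^\top\rxi>b_i\}$ equals $(b_i(\bm x)-\bm a_i(\bm x)^\top\rzeta)_+/\|\bm a_i(\bm x)\|_*$ by the dual-norm formula, and minimizing over $i\in[I]$ gives the distance to the union. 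The degenerate case $\bm a_i(\bm x)=\bm 0$ is absorbed into the characteristic function $\chi_{\{\bm x\colon b_i(\bm x)<0\}}$, exactly as written in the statement.

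Next I would invoke Wasserstein strong duality, which holds under Assumption~\ref{A_1} by the weak compactness of $\P_q$, to obtain for any Borel set $A$,
\begin{equation*}
\sup_{\Pr\in\P_q}\Pr\{A\}=\inf_{\lambda\ge 0}\Bigl\{\lambda\theta^q+\E_{\Pr_{\trzeta}}\bigl[\sup_{\rxi}\{\I_A(\rxi)-\lambda\|\rxi-\trzeta\|^q\}\bigr]\Bigr\}.
\end{equation*}
Plugging in $A=V(\bm x)$, at $\trzeta\in V(\bm x)$ the inner supremum is attained at $\rxi=\trzeta$ with value $1$, while at $\trzeta\notin V(\bm x)$ it is obtained by approaching the boundary of $V(\bm x)$ and equals $\max\{0,\,1-\lambda f(\bm x,\trzeta)^q\}$. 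Unifying via $f=0$ on $V(\bm x)$ yields the representation $\sup_{\Pr\in\P_q}\Pr\{V(\bm x)\}=\inf_{\lambda\ge 0}\{\lambda\theta^q+\E_{\Pr_{\trzeta}}[(1-\lambda f(\bm x,\trzeta)^q)_+]\}$.

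The main algebraic step translates $\inf_{\lambda\ge 0}\{\lambda\theta^q+\E[(1-\lambda f^q)_+]\}\le\varepsilon$ into a $\CVaR$ statement on $Y:=-f(\bm x,\trzeta)^q\le 0$. Substituting $\beta=-1/\lambda$ for $\lambda>0$ and using the identity $(1+\lambda Y)_+=\lambda(Y-\beta)_+$, the inequality becomes $\inf_{\beta<0}\{\beta+\varepsilon^{-1}\E[(Y-\beta)_+]\}\le -\theta^q\varepsilon^{-1}$, i.e., the $\CVaR$ condition $\theta^q\varepsilon^{-1}+\CVaR_{1-\varepsilon}(-f(\bm x,\trzeta)^q)\le 0$. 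Because $Y\le 0$ forces $\VaR_{1-\varepsilon}(Y)\le 0$, the optimizer of the $\CVaR$ representation can be taken nonpositive, so restricting to $\beta\le 0$ loses nothing.

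The hard part will be handling the $\VaR$ condition, which arises precisely because $V(\bm x)$ is open: when the infimum in the Wasserstein dual fails to be attained -- typically when $\Pr_{\trzeta}$ concentrates mass on the level set $\{\rzeta\colon f(\bm x,\rzeta)^q=\theta^q\varepsilon^{-1}\}$ -- the $\CVaR$ inequality can hold even though the original worst-case chance constraint fails, so a separate $\VaR$ condition is needed to rule out this boundary pathology. Following the approach in \cite{xie2021distributionally}, I would establish necessity by constructing a worst-case transport plan that shifts the $\varepsilon$-quantile of $\trzeta$ exactly onto the boundary of $V(\bm x)$, and recover sufficiency by showing that the $\CVaR$ and $\VaR$ bounds together preclude every admissible transport plan with $\Pr\{V(\bm x)\}>\varepsilon$. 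Carefully verifying this closed-versus-open boundary argument uniformly over $q\in[1,\infty)$, and tracking how the $q$-th power interacts with the level-set mass, is the delicate part of the generalization.
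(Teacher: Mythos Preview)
Your Wasserstein-duality derivation of the $\CVaR$ condition is essentially the content of the proof of corollary~1 in \cite{xie2021distributionally}, so that part would go through. However, the paper's own argument takes a much shorter route: it simply splits on whether $\theta=0$ or $\theta>0$. For $\theta>0$, the paper invokes the proof of corollary~1 in \cite{xie2021distributionally} directly to conclude that the $\CVaR$ inequality \emph{alone} characterizes $Z_q$, and then observes that the $\VaR$ inequality is \emph{redundant} because $\CVaR_{1-\varepsilon}[Y]\ge\VaR_{1-\varepsilon}[Y]$ for any random variable $Y$. For $\theta=0$, the ambiguity set collapses to the singleton $\{\Pr_{\trzeta}\}$, so $Z_q$ is the ordinary chance-constrained set; here the $\CVaR$ inequality is trivially satisfied (since $-f(\bm x,\trzeta)^q\le 0$ almost surely), and the $\VaR$ condition is what remains.

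This means your diagnosis of the ``hard part'' is off. You treat the $\VaR$ condition as a genuine boundary correction needed because $V(\bm x)$ is open and the dual infimum may fail to be attained, and you plan to construct worst-case transport plans to establish its necessity uniformly in $q$ and $\theta$. But the paper's case split shows no such construction is required: when $\theta>0$ the $\VaR$ constraint is automatically implied by the $\CVaR$ one, and when $\theta=0$ there is no Wasserstein duality to worry about at all. The $\VaR$ line in \eqref{q_cvar_interpretation_cvar} is present only so that a single formula covers the degenerate $\theta=0$ limit, not to patch an open-set pathology in the $\theta>0$ regime. Your plan would still lead to a correct characterization, but it expends substantial effort on a difficulty that the paper dissolves in one line; recognizing the $\theta=0$ versus $\theta>0$ dichotomy up front is the key shortcut you are missing.
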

	% \proof
	% See Appendix~\ref{proof_thm_drcc_set}.
	% \QEDA

 \begin{proof}
	We split our proof into two cases by discussing whether $\theta=0$ or not. 
	\begin{enumerate}[label={Case} \arabic*]
		\item	When $\theta=0$, DRCC set $Z_q$ reduces to the regular CCP under reference distribution $\Pr_{\tilde\rzeta}$, i.e.,
		\begin{align*}
			Z_q=\left\{ \bm{x}\in \Re^n\colon 
			\Pr\left\{\trzeta\colon\bm a_i(\bm x)^\top \trzeta-b_i(\bm{x})\leq 0,\forall i\in[I] \right\}\geq 1-\varepsilon \right\}.
		\end{align*}
		On the other hand, in \eqref{q_cvar_interpretation_cvar},
		the first constraint is redundant when $\theta=0$. Hence, the statement follows.
		\item	When $\theta>0$, the fact that the decision space induced by the first constraint in \eqref{q_cvar_interpretation_cvar} is equivalent to DRCC set $Z_q$ follows directly from the proof of corollary 1 in \cite{xie2021distributionally}. Thus, in this case, the second constraint in \eqref{q_cvar_interpretation_cvar} is redundant since for any random variable $\tilde{\bm{X}}$, we have $\CVaR_{1-\varepsilon}[\tilde{\bm{X}}]\geq \VaR_{1-\varepsilon}[\tilde{\bm{X}}]$.
	\end{enumerate}
	\QEDA
\end{proof}

The reformulation in Proposition~\ref{thm_drcc_set} can be simplified if $\|\bm{a}_i(\bm{x})\|_*=\|\bm{a}_1(\bm{x})\|_*$ for all $i\in[I]$, in which the condition can be viewed  as a generalization of $\bm a_1(\bm x)=\bm a_i(\bm x)$ for all $i\in[I]$, which has been discussed in the DRCCP literature (see, e.g., \cite{xie2021distributionally,chen2022data}). Notice that this condition always holds for a single DRCCP, where $I=1$.
\begin{restatable}{proposition}{qcvarinterpretationcvarsinglecoro}\label{q_cvar_interpretation_cvar_single_coro} 
	Suppose that $\|\bm{a}_i(\bm{x})\|_*=\|\bm{a}_1(\bm{x})\|_*$ for all $i\in[I]$. Then DRCC set $Z_q$ \eqref{drccp_q_general} can be simplified to 
\begin{equation}
\label{q_cvar_interpretation_cvar_single}
		Z_q=\left\{ \bm{x}\in \Re^n\colon
		\begin{aligned}
&\theta^q\varepsilon^{-1}\left\|\bm{a}_1(\bm{x})\right\|_*^q+\CVaR_{1-\varepsilon}\left[-\min_{i\in [I]}\left({b}_i(\bm{x})-\bm a_i(\bm x)^\top \trzeta\right)^q_+\right]\leq 0, \\
			&\Pr\left\{\trzeta\colon \bm a_i(\bm x)^\top \trzeta-b_i(\bm{x})\leq 0, \forall i\in[I] \right\}\geq 1-\varepsilon
		\end{aligned} \right\}.
\end{equation}
\end{restatable}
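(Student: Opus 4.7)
The plan is to deduce Proposition~\ref{q_cvar_interpretation_cvar_single_coro} from Proposition~\ref{thm_drcc_set}, which already expresses $Z_q$ in terms of $\CVaR_{1-\varepsilon}[-f(\bm x,\trzeta)^q]$ and $\VaR_{1-\varepsilon}[-f(\bm x,\trzeta)^q]$. The hypothesis $\|\bm a_i(\bm x)\|_* = \|\bm a_1(\bm x)\|_*$ for every $i\in[I]$ forces the vectors $\bm a_i(\bm x)$ either to vanish simultaneously or to be all nonzero, so the index set $\mathcal{I}(\bm x)$ from Proposition~\ref{thm_drcc_set} is either $\emptyset$ or all of $[I]$. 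I would therefore split the argument into a degenerate case $\bm a_1(\bm x)=\bm 0$ and a nondegenerate case $\bm a_1(\bm x)\neq \bm 0$.

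In the degenerate case, the uncertain constraint collapses to $0\leq b_i(\bm x)$, so $Z_q$ immediately reduces to $\{\bm x\colon b_i(\bm x)\geq 0,\ \forall i\in[I]\}$, since any deterministic event of probability at least $1-\varepsilon>0$ must in fact hold with probability one. The same set is picked out by \eqref{q_cvar_interpretation_cvar_single}: the CVaR inequality $\CVaR_{1-\varepsilon}[-\min_i (b_i(\bm x))_+^q]\leq 0$ is automatic because $-(\cdot)_+^q\leq 0$, while the explicit chance constraint reduces to the deterministic condition $b_i(\bm x)\geq 0$ for all $i$.

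For the nondegenerate case, the main step is a positive-homogeneity factorization. Using the common value of $\|\bm a_1(\bm x)\|_*$ together with monotonicity of $c\mapsto c^q$ on $[0,\infty)$ for $q\geq 1$, I would rewrite
\[
 f(\bm x,\trzeta)^q \;=\; \|\bm a_1(\bm x)\|_*^{-q}\,\min_{i\in[I]}\bigl(b_i(\bm x)-\bm a_i(\bm x)^\top\trzeta\bigr)_+^q,
\]
and pull the positive factor $\|\bm a_1(\bm x)\|_*^{-q}$ outside both $\CVaR_{1-\varepsilon}$ and $\VaR_{1-\varepsilon}$ via their positive homogeneity. Multiplying the two inequalities of Proposition~\ref{thm_drcc_set} through by $\|\bm a_1(\bm x)\|_*^q>0$ then reproduces the simplified CVaR inequality verbatim, and it remains to verify that the pair (simplified CVaR)$+$(chance constraint) is equivalent to (transformed CVaR)$+$(transformed VaR). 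In the forward direction, since $\Pr_{\trzeta}\in \P_q$, membership in $Z_q$ automatically forces the chance constraint under $\Pr_{\trzeta}$. In the reverse direction, when $\theta>0$ the simplified CVaR is strictly negative and hence dominates the transformed VaR constraint via $\CVaR\geq \VaR$, so the chance constraint is automatic; when $\theta=0$ both CVaR and VaR sides of Proposition~\ref{thm_drcc_set} are trivial, and $Z_q$ coincides with the singleton-ambiguity chance constraint under $\Pr_{\trzeta}$, which is exactly the chance constraint in \eqref{q_cvar_interpretation_cvar_single}.

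The main obstacle I expect is the bookkeeping at the boundary $\theta=0$: the CVaR reformulation degenerates there, so the explicit chance constraint is the only piece that carries probabilistic information, meaning the two halves of the simplified system play different roles in the $\theta=0$ and $\theta>0$ regimes and must be matched up carefully. A secondary subtlety is that the key identity $(\min_i c_i)^q=\min_i c_i^q$ driving the factorization holds only on the nonnegative orthant, which is precisely why the $(\cdot)_+$ appearing in the definition of $f$ cannot be removed during the manipulations.
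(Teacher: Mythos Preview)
Your proposal is correct and follows essentially the same route as the paper: a case split on $\|\bm a_1(\bm x)\|_*=0$ versus $\|\bm a_1(\bm x)\|_*>0$, with the nondegenerate case handled by pulling the common factor $\|\bm a_1(\bm x)\|_*^{-q}$ through $\CVaR$ and $\VaR$ via positive homogeneity. Your treatment is in fact more explicit than the paper's on the $\theta=0$ boundary and on why the $\VaR$ inequality from Proposition~\ref{thm_drcc_set} may be traded for the reference chance constraint; one small wording slip is that in the reverse direction for $\theta>0$ it is the $\VaR$ inequality (not the chance constraint) that becomes automatic from $\CVaR\geq\VaR$.
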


	\begin{proof}
		We split our proof into two cases by discussing whether $\|\bm{a}_1(\bm{x})\|_*=0$ or not. 
		\begin{enumerate}[label={Case} \arabic*]
			\item	When $\|\bm{a}_1(\bm{x})\|_*=0$ (i.e., $\bm{a}_1(\bm{x})=\bm{0}$), according to \eqref{q_cvar_interpretation_cvar}, set $Z_q\cap\{\bm{x}\in \Re^n:\|\bm{a}_1(\bm{x})\|_*=0 \}$ is equivalent to the set
			\begin{align*}
				\left\{ \bm{x}\in \Re^n\colon \left\|\bm{a}_1(\bm{x})\right\|_*=0
				,b_i(\bm{x})\geq 0 ,\forall i\in[I]\right\},
			\end{align*}
			which is equivalent to the right-hand side of \eqref{q_cvar_interpretation_cvar_single} intersecting with set $\{\bm{x}\in \Re^n:\|\bm{a}_1(\bm{x})\|_*=0 \}$ since its first constraint is redundant.

			\item 
			When $\|\bm{a}_1(\bm{x})\|_*>0$, according to Proposition~\ref{thm_drcc_set}, the function $f(\bm{x},\rzeta)$ becomes
			\begin{equation*}
				f(\bm{x},\rzeta)= \min_{i\in[I]}\frac{\left(b_i(\bm{x})-\bm{a}_i(\bm{x})^\top \rzeta\right)_+}{\left\|\bm{a}_1(\bm{x})\right\|_*}.
			\end{equation*}
			According to the positive homogeneity of the coherent risk measure $\CVaR$, set $Z_q\cap\{\bm{x}\in \Re^n:\|\bm{a}_1(\bm{x})\|_*>0 \}$ is equivalent to the set
			\begin{equation*}
				\left\{ \bm{x}\in \Re^n\colon 
				\begin{aligned}
					&\left\|\bm{a}_1(\bm{x})\right\|_*>0, \\
					&\theta^q\varepsilon^{-1}\left\|\bm{a}_1(\bm{x})\right\|_*^q+\CVaR_{1-\varepsilon}\left[-\min_{i\in[I]}\left(b_i(\bm{x})-\bm{a}_i(\bm{x})^\top\trzeta\right)_+^q\right]\leq 0,\\ &\Pr\left\{\trzeta\colon\bm{a}_i(\bm{x})^\top\trzeta-b_i(\bm{x})\leq 0,\forall i\in[I] \right\}\geq 1-\varepsilon 
				\end{aligned}
				\right\}.
			\end{equation*}
			This completes the proof.\QEDA
		\end{enumerate}
		
	\end{proof}
As a direct corollary of Proposition~\ref{q_cvar_interpretation_cvar_single_coro}, we remark that for the single DRCCP (i.e., $I=1$) with the elliptical reference
distribution $\Pr_{\trzeta}$ (see the discussions in Section~\ref{sec_exactness_elliptical}), DRCC set $Z_q$ admits a simple representation.

\begin{restatable}{corollary}{refeldrccp}\label{ref_el_drccp} 
	\begin{subequations}
	For the single DRCCP \eqref{eq_drccp_q_general}, when the affine mappings 
 are $\bm{a}_1(\bm{x})=\bm x$, $b_1(\bm x)=b_1$, the random parameters $\trzeta$ follow a joint elliptical distribution with $\trzeta\thicksim\Pr_{\mathrm{E}}(\bm{\mu},\rsigma,\hat g)$, and the norm defining the Wasserstein distance is the generalized Mahalanobis norm associated with the matrix $\bm{\mathrm{{\Sigma}}}$, DRCC set $Z_q$ \eqref{drccp_q_general} becomes
		\begin{align}
			Z_q=\left\{ \bm{x}\in \Re^n\colon b_1(\bm x)-\bm{\mu}^\top\bm{a}_1(\bm{x})\geq \eta_q^* \sqrt{\bm{a}_1(\bm{x})^\top\bm{\mathrm{{\Sigma}}}\bm{a}_1(\bm{x})} \right\},\label{drccp_el_formulation}
		\end{align}
		and $	\eta_q ^*$ is the unique minimizer of
		\begin{align}
			\eta_q^*=\min_\eta \left\{ \eta\colon \int_{\mathrm{\Phi}^{-1}(1-\varepsilon)}^{\eta}(\eta-t)^q\bar{k}\hat g(t^2/2) dt \geq\theta^q, \eta\geq \mathrm{\Phi}^{-1}(1-\varepsilon) \right\}.\label{drccp_el_formulation_eta}
		\end{align}
	\end{subequations}
\end{restatable}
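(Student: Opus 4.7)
The plan is to specialize Proposition~\ref{q_cvar_interpretation_cvar_single_coro} to the single elliptical setting and exploit the fact that any linear combination $\bm{x}^\top \trzeta$ of a joint elliptical vector is a univariate elliptical random variable with location $\bm{\mu}^\top \bm{x}$ and scale $\sqrt{\bm{x}^\top \rsigma \bm{x}}$. Since $I=1$ with $\bm{a}_1(\bm x)=\bm{x}$ and $b_1(\bm{x})=b_1$, and since the Mahalanobis norm $\|\bm{y}\|=\sqrt{\bm{y}^\top\rsigma^{\dagger}\bm{y}}$ has dual norm $\|\bm{y}\|_*=\sqrt{\bm{y}^\top\rsigma\bm{y}}$, Proposition~\ref{q_cvar_interpretation_cvar_single_coro} yields
\begin{equation*}
Z_q=\left\{\bm{x}\in\Re^n\colon \theta^q\varepsilon^{-1}(\bm{x}^\top\rsigma\bm{x})^{q/2}+\CVaR_{1-\varepsilon}\bigl[-(b_1-\bm{x}^\top\trzeta)_+^q\bigr]\leq 0,\ \Pr\{\bm{x}^\top\trzeta\leq b_1\}\geq 1-\varepsilon\right\}.
\end{equation*}
The degenerate case $\bm{x}^\top\rsigma\bm{x}=0$ reduces both this set and the claimed reformulation to $b_1\geq 0$, so I would dispose of it first and henceforth assume $\beta:=\sqrt{\bm{x}^\top\rsigma\bm{x}}>0$.

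Next I would perform the one-dimensional reduction. Writing $\alpha:=b_1-\bm{\mu}^\top\bm{x}$ and letting $\tilde Z\sim\Pr_{\mathrm{E}}(0,1,\hat g)$, the identity $\bm{x}^\top\trzeta\stackrel{d}{=}\bm{\mu}^\top\bm{x}+\beta\tilde Z$ gives $(b_1-\bm{x}^\top\trzeta)_+=\beta(\alpha/\beta-\tilde Z)_+$. Setting $\eta:=\alpha/\beta$ and invoking positive homogeneity of $\CVaR$, the first inequality becomes
\begin{equation*}
\theta^q+\varepsilon\,\CVaR_{1-\varepsilon}\bigl[-(\eta-\tilde Z)_+^q\bigr]\leq 0.
\end{equation*}
Because $-(\eta-\tilde Z)_+^q$ is a nondecreasing function of $\tilde Z$, its upper $\varepsilon$-tail coincides with $\{\tilde Z\geq \mathrm{\Phi}^{-1}(1-\varepsilon)\}$; a short case analysis shows the CVaR equals $0$ whenever $\eta\leq \mathrm{\Phi}^{-1}(1-\varepsilon)$, forcing infeasibility for $\theta>0$, and otherwise evaluates in closed form to $-\varepsilon^{-1}\int_{\mathrm{\Phi}^{-1}(1-\varepsilon)}^{\eta}(\eta-t)^q\bar k\hat g(t^2/2)\,dt$. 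Substitution produces the equivalent pair of conditions
\begin{equation*}
\int_{\mathrm{\Phi}^{-1}(1-\varepsilon)}^{\eta}(\eta-t)^q\bar k\hat g(t^2/2)\,dt\geq \theta^q,\qquad \eta\geq \mathrm{\Phi}^{-1}(1-\varepsilon).
\end{equation*}

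The original chance constraint $\Pr\{\bm{x}^\top\trzeta\leq b_1\}\geq 1-\varepsilon$ likewise reduces to $\eta\geq \mathrm{\Phi}^{-1}(1-\varepsilon)$ and is therefore already implied by the tail constraint above. Differentiating the integral in $\eta$ via Leibniz's rule shows that the left-hand side is continuous and strictly increasing on $[\mathrm{\Phi}^{-1}(1-\varepsilon),\infty)$, starting from $0$ and tending to infinity as $\eta\to\infty$ for any nondegenerate generator $\hat g$, so the threshold program \eqref{drccp_el_formulation_eta} admits a unique minimizer $\eta_q^*$. Hence $\bm{x}\in Z_q$ if and only if $\eta\geq \eta_q^*$, i.e., $\alpha\geq \eta_q^*\beta$, which is precisely \eqref{drccp_el_formulation}. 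The main obstacle is carrying out the CVaR case split rigorously for a general elliptical generator and verifying that the tail constraint indeed subsumes the original chance constraint across all sign configurations of $\mathrm{\Phi}^{-1}(1-\varepsilon)$.
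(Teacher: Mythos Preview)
Your proposal is correct and follows essentially the same route as the paper: apply Proposition~\ref{q_cvar_interpretation_cvar_single_coro}, dispose of the degenerate case, reduce to one dimension via the elliptical affine-closure property, compute the $\CVaR$ as a tail integral, and then argue monotonicity for the uniqueness of $\eta_q^*$. The only noteworthy difference is that you evaluate $\CVaR_{1-\varepsilon}[-(\eta-\tilde Z)_+^q]$ self-containedly by observing that $-(\eta-\tilde Z)_+^q$ is nondecreasing in $\tilde Z$ (so the worst $\varepsilon$-tail is $\{\tilde Z\geq \mathrm{\Phi}^{-1}(1-\varepsilon)\}$), whereas the paper reaches the same integral by citing an external result; your Leibniz argument for strict monotonicity is also more explicit than the paper's one-line remark.
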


	\begin{proof}
		We note that if $\bm{a}_1(\bm{x})=\bm{0}$, according to Proposition~\ref{q_cvar_interpretation_cvar_single_coro},  set $Z_q\cap\{\bm{x}\in \Re^n\colon\bm{a}_1(\bm{x})=\bm{0} \}$ becomes
		\begin{align*}
			\left\{ \bm{x}\in \Re^n\colon \bm{a}_1(\bm{x})=\bm{0}, b_1(\bm{x})\geq 0 \right\},
		\end{align*}
		which is equivalent to the right-hand side of \eqref{drccp_el_formulation} intersecting with set $\{\bm{x}\in \Re^n\colon\bm{a}_1(\bm{x})=\bm{0} \}$. Thus, without loss of generality, we assume that $\bm{a}_1(\bm{x})\neq\bm{0}$.
		
		Note that the linear function $\bm{a}_1(\bm{x})^\top \rzeta$ is still elliptically distributed (see, e.g., \cite{gupta2013elliptically}). For ease of notation, we denote the distribution of the linear function $\bm{a}_1(\bm{x})^\top\rzeta$ as $\Pr_{\mathrm{E}}(\mu_{\bm x},\sigma_{\bm x},\hat g)$ and denote its probability density function as 
		\begin{equation*}
			h(y) =\frac{\bar{k}}{\sigma_{\bm x}} \hat g\left(\frac{(y-\mu_{\bm x})^2}{2\sigma_{\bm x}^2}\right),
		\end{equation*}
		where $\mu_{\bm x}=\bm{\mu}^\top\bm{a}_1(\bm{x})$ and $\sigma_{\bm x}= \sqrt{\bm{a}_1(\bm{x})^\top\rsigma\bm{a}_1(\bm{x})}$.
		
		According to Proposition~\ref{q_cvar_interpretation_cvar_single_coro}, DRCC set $Z_q$ is
		\begin{equation*}
			Z_q=\left\{ \bm{x}\in \Re^n\colon
			\begin{aligned}
		&\theta^q\varepsilon^{-1}\left\|\bm{a}_1(\bm{x})\right\|_*^q+\CVaR_{1-\varepsilon}\left[-\left({b}_1(\bm{x})-\bm{a}_1(\bm{x})^\top\trzeta\right)^q_+\right]\leq 0, \\
				&\Pr\left\{\trzeta\colon\bm{a}_1(\bm{x})^\top\trzeta-b_1(\bm{x})\leq 0 \right\}\geq 1-\varepsilon
			\end{aligned} \right\}.
		\end{equation*}
		Following the similar derivation in theorem 7 \cite{chen2021sharing} and according to the definition of $\CVaR$ (see, e.g., \cite{rockafellar2000optimization}), set $Z_q$ is equivalent to
		\begin{equation*}
			Z_q=\left\{ \bm{x}\in \Re^n\colon
			\begin{aligned}
				\frac{1}{\varepsilon} \int_{\VaR_{1-\varepsilon}[\bm{a}_1(\bm{x})^\top \trzeta]}^{b_1(\bm x)}(b_1(\bm x)-y)^qh(y)dy \geq \theta^q\varepsilon^{-1}\left\|\bm{a}_1(\bm{x})\right\|^q_*,b_1(\bm x) \geq \VaR_{1-\varepsilon}\left[\bm{a}_1(\bm{x})^\top \trzeta\right]
			\end{aligned} \right\}.
		\end{equation*}
		According to theorem 3 in \cite{prekopa1974programming}, we have
		\begin{align*}
			\VaR_{1-\varepsilon}\left[\bm{a}_1(\bm{x})^\top \trzeta\right]=\mu_{\bm x}+ \mathrm{\Phi}^{-1}(1-\varepsilon) \sigma_{\bm x}.
		\end{align*}
		Now let $t=(y-\mu_{\bm x})/\sigma_{\bm x}, y=t\sigma_{\bm x}+\mu_{\bm x}$ and $\eta=(b_1(\bm x)-\mu_{\bm x})/\sigma_{\bm x}$, and then DRCC set $Z_q$ is further equal to
		\begin{equation*}
			%\label{q_cvar_interpretation_cvar_single}
			Z_q=\left\{ \bm{x}\in \Re^n\colon
			\begin{aligned}b_1(\bm x)-\mu_{\bm x}\geq \eta \sigma_{\bm x},
				\int_{\mathrm{\Phi}^{-1}(1-\varepsilon)}^{\eta}(\eta\sigma_{\bm x}-t\sigma_{\bm x})^qh(t\sigma_{\bm x}+\mu_{\bm x})dt \geq\theta^q \sigma_{\bm x}^{q-1}, \eta\geq \mathrm{\Phi}^{-1}(1-\varepsilon)
			\end{aligned} \right\}.
		\end{equation*}
		%
		%the first constraint can be written as
		%\begin{equation*}
		%\frac{1}{\varepsilon} \int_{q_{1-\varepsilon}}^{v}(v-y)^qh(y)dy = \frac{\sigma}{\varepsilon} \int_{\frac{q_{1-\varepsilon}-\mu_s}{\sigma_{\bm x}}}^{\frac{v-\mu_s}{\sigma}}(v-t\sigma_{\bm x}-\mu_{\bm x})^qh(t\sigma_{\bm x}+\mu_{\bm x})dt \geq \frac{\theta^q\left\|\bm{a}_1(\bm{x})\right\|^q_*}{\varepsilon},
		%\end{equation*} 
		where we use the fact that $\|\bm{a}_1(\bm{x})\|_*=\sigma_{\bm x}$. We see that set $Z_q$ expands as $\eta$ decreases, and thus, we can replace it by the minimal $\eta_q^*$ defined \eqref{drccp_el_formulation_eta}.
		%Replace $ (v-\mu_s)/\sigma$ by $\eta$, we have 
		%\begin{align*}
		%\eta_q^*=\min_\eta \left\{\eta\colon \int_{\mathrm{\Phi}^{-1}(1-\varepsilon)}^{\eta}(\eta\sigma-t\sigma)^qh(t\sigma+\mu_s)dt \geq\theta^q \sigma^{q-1}, \eta\geq \mathrm{\Phi}^{-1}(1-\varepsilon) \right\}.
		%\end{align*}
		Substituting the generating function $\hat g(\cdot)$, we arrive at \eqref{drccp_el_formulation}. Finally, we note that $\int_{\mathrm{\Phi}^{-1}(1-\varepsilon)}^{\eta}(\eta-t)^q\bar{k}\hat g(t^2/2) dt$ is monotone increasing in $\eta$, which demonstrates the uniqueness of $\eta_q^*$.\QEDA
	\end{proof}

 \subsection{Equivalent Reformulations of $\alsox$ }
Similar to $\alsox$ \eqref{drccp_alsox_formulation}, we extend the $\alsox$ under type $q-$Wasserstein ambiguity set. For any $q\in [1,\infty)$, $\alsox$ is formally defined as
\begin{subequations}
	\label{drccp_alsox_q_general}
	\begin{align}
		v_q^A =\min _{ {t}}\quad & t, \label{drccp_alsox_q_general_a}\\
		\text{s.t.}\quad&\bm x^*\in\argmin _{\bm {x}\in \mathcal{X}}\, \sup _{\Pr\in\P_q} \left\{ \E_\Pr\left[\max_{i\in [I]}\left(\bm a_i(\bm x)^\top \trxi-b_i(\bm x)\right)_+\right]\colon \bm{c}^\top \bm{x} \leq t \right\},\label{drccp_alsox_q_general_b}\\
		&\bm x^*\in Z_q.\label{drccp_alsox_q_general_c}
	\end{align}
\end{subequations} 
We then derive an equivalent reformulation of the hinge-loss approximation \eqref{drccp_alsox_q_general_b} under type $q-$Wasserstein ambiguity set. 
\begin{restatable}{proposition}{qalsoxtheorem}\label{q_alsox_theorem} 
	Under type $q-$Wasserstein ambiguity set with $q\in[1,\infty)$, hinge-loss approximation \eqref{drccp_alsox_q_general_b} is equivalent to
		\begin{align}
			v_q^A(t)=\min _{ \begin{subarray}{c}\bm {x}\in \X,\\ \lambda\geq 0 \end{subarray}} \left\{ \lambda\theta^q+\E_{\Pr_{\trzeta}}\left[  \max_{ i\in[I]}\left(\bm a_i(\bm x)^\top \trzeta-b_i(\bm{x}) +P_{q,i}(\bm x,\lambda)\right)_+\right]\colon 
			\bm{c}^\top \bm{x} \leq t\right\},	\label{wasser_q}
		\end{align}
		where for each $i\in [I]$, $P_{q,i}(\bm x,\lambda)=(\|\bm{a}_i(\bm x)\|_*)^{\frac{q}{q-1}}\lambda^{-\frac{1}{q-1}}q^{-\frac{q}{q-1}} (q-1)$ with its limit being
		\begin{align*}
			\lim_{q\rightarrow 1_+} P_{q,i}(\bm x,\lambda)= \lim_{q\rightarrow 1_+}(\left\|\bm{a}_i(\bm{x})\right\|_*)^{\frac{q}{q-1}}\lambda^{-\frac{1}{q-1}}q^{-\frac{q}{q-1}} (q-1)
			= \chi_{\{\Re^n:\left\|\bm{a}_i(\bm{x})\right\|_* \leq \lambda \}} (\bm x). 
		\end{align*}
	\end{restatable}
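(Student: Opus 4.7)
\textbf{Proof proposal for Proposition~\ref{q_alsox_theorem}.} The plan is to invoke the strong duality for worst-case expectation under the type-$q$ Wasserstein ambiguity set and then to reduce the dualized inner supremum to a closed form via a one-dimensional calculus problem. Specifically, fix $\bm x \in \X$ with $\bm c^\top \bm x \leq t$, and consider the inner worst-case expectation $h(\bm x) := \sup_{\Pr \in \P_q} \E_\Pr[\max_{i \in [I]}(\bm a_i(\bm x)^\top \trxi - b_i(\bm x))_+]$ appearing inside \eqref{drccp_alsox_q_general_b}. The integrand is a proper, upper semicontinuous, piecewise linear function of $\trxi$, and the sub-Gaussian Assumption~\ref{A_1} guarantees weak compactness of $\P_q$ and integrability; hence the standard strong-duality theorem for Wasserstein distributionally robust expectations (see, e.g., Gao and Kleywegt, Blanchet and Murthy, or Esfahani and Kuhn) applies and yields
\begin{align*}
h(\bm x)=\inf_{\lambda\geq 0}\left\{\lambda\theta^q+\E_{\Pr_{\trzeta}}\!\left[\sup_{\rxi\in\Xi}\Bigl\{\max_{i\in[I]}\bigl(\bm a_i(\bm x)^\top\rxi-b_i(\bm x)\bigr)_+-\lambda\|\rxi-\trzeta\|^q\Bigr\}\right]\right\}.
\end{align*}

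Next, I would collapse the inner supremum in closed form. Using $\max\{0,\max_i f_i\}=\max_i(f_i)_+$ and the change of variables $\bm v=\rxi-\trzeta$, the inner bracket factorizes as
\begin{align*}
\max_{i\in[I]}\Bigl[\bm a_i(\bm x)^\top\trzeta-b_i(\bm x)+\sup_{\bm v}\bigl\{\bm a_i(\bm x)^\top\bm v-\lambda\|\bm v\|^q\bigr\}\Bigr]_+.
\end{align*}
For each $i$, the one-variable optimization $\sup_{\bm v}\{\bm a_i(\bm x)^\top\bm v-\lambda\|\bm v\|^q\}$ reduces, by the definition of the dual norm, to $\sup_{r\geq 0}\{\|\bm a_i(\bm x)\|_* r-\lambda r^q\}$. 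For $q>1$ and $\lambda>0$ the unique maximizer is $r^*=(\|\bm a_i(\bm x)\|_*/(\lambda q))^{1/(q-1)}$, and substitution gives exactly $P_{q,i}(\bm x,\lambda)=\|\bm a_i(\bm x)\|_*^{q/(q-1)}\lambda^{-1/(q-1)}q^{-q/(q-1)}(q-1)$; the boundary cases $\lambda=0$ (supremum $=+\infty$ when $\bm a_i(\bm x)\neq\bm 0$) and $\bm a_i(\bm x)=\bm 0$ (supremum $=0$) are consistent with defining $P_{q,i}$ by convention. Plugging this back into $h(\bm x)$, minimizing jointly over $(\bm x,\lambda)$, and noting that the outer infimum over $\lambda\geq 0$ is attained (the dual objective is coercive in $\lambda$ whenever some $\bm a_i(\bm x)\neq\bm 0$), I obtain the reformulation \eqref{wasser_q}.

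Finally, I would handle the limiting case $q\to 1_+$ separately. A direct asymptotic expansion of $P_{q,i}(\bm x,\lambda)$ shows that $P_{q,i}(\bm x,\lambda)\to 0$ if $\|\bm a_i(\bm x)\|_*\leq \lambda$ and $P_{q,i}(\bm x,\lambda)\to +\infty$ otherwise, so its pointwise limit is precisely the indicator $\chi_{\{\|\bm a_i(\bm x)\|_*\leq\lambda\}}(\bm x)$; this matches the known type-$1$ Wasserstein reformulation and shows that \eqref{wasser_q} remains valid at $q=1$.

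The main obstacle is the justification of strong duality and, in particular, the exchange of the inner supremum with $\E_{\Pr_{\trzeta}}$: one must verify that the value function $\rzeta\mapsto\sup_{\rxi}\{\max_i(\bm a_i(\bm x)^\top\rxi-b_i(\bm x))_+-\lambda\|\rxi-\rzeta\|^q\}$ is measurable and integrable under $\Pr_{\trzeta}$, which follows from piecewise linearity in $\rxi$, the sub-Gaussian tail bound in Assumption~\ref{A_1}, and a standard measurable selection argument. Once this is in place, the rest is routine calculus and the result follows.
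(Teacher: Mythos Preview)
Your proposal is correct and follows essentially the same route as the paper's own proof: both invoke the Gao--Kleywegt/Blanchet--Murthy strong duality for the worst-case expectation, interchange the inner $\sup_{\rxi}$ (equivalently, the paper's $\inf_{\rxi}$) with the finite $\max_{i\in[I]}$ and the ``$0$'' branch, and then evaluate $\sup_{\bm v}\{\bm a_i(\bm x)^\top\bm v-\lambda\|\bm v\|^q\}$ in closed form to obtain $P_{q,i}$. The only cosmetic differences are that the paper works in the $-\inf$ form and appeals to H\"older's inequality where you reduce via the dual norm to a scalar problem in $r$; you are also slightly more explicit than the paper about attainment of the infimum in $\lambda$ and about the measurability/integrability needed for the interchange, which is a plus.
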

	% \proof
	% See Appendix~\ref{proof_q_alsox_theorem}.
	% \QEDA

	\begin{subequations}
		\begin{proof}
			According to theorem 1 in \cite{gao2022distributionally} or theorem 1 in \cite{blanchet2019quantifying}, the inner supremum $\sup _{\Pr\in\P_q} \E_\Pr[\max_{i\in [I]}(\bm a_i(\bm x)^\top \trxi-b_i(\bm x))_+]$ in \eqref{drccp_alsox_q_general_b} can be reformulated as
			\begin{equation*}
				%	\sup _{\Pr\in\mathcal{P}} \E_\Pr\left[\max_{i\in [I]}(\tilde{\bm {\xi}}^\top a_i(\bm x)-b_i(\bm x))_+\right]
				\min_{\bm{x}\in \mathcal{X},\lambda\geq 0} \lambda\theta^q-\E_{\Pr_{\trzeta}}\left[\inf_{{\bm{\xi}}}\left\{\lambda\left\|{\bm{\xi}}-\tilde{\rzeta}\right\|^q-\max_{i\in [I]}(\bm a_i(\bm x)^\top \rxi-b_i(\bm x))_+ \right\} \right].
			\end{equation*}
			Next, we split the proof into two steps.\par
			\noindent\textbf{Step 1.} We first reformulate the term $ \inf_{\rxi}\{\lambda\|\rxi-\trzeta\|^q-\max_{i\in [I]}(\bm a_i(\bm x)^\top \rxi-b_i(\bm x))_+ \}$.
			%	as follows
			%	\begin{align*}
				%\inf_{{\bm{\xi}}}\left\{\lambda\left\|{\bm{\xi}}-\rzeta\right\|^q-\max_{i\in [I]}\bigl\{\max\{{\bm{\xi}}^\top\bm{a}_i(\bm{x})-b_i(\bm{x}) \},0\bigr\} \right\}.
				%	\end{align*}
			Moving the minus sign into the maximum operators, we have 
			\begin{align*}
				\inf_{{\bm{\xi}}}\left\{ \min_{ i\in[I]}\min\left\{ \lambda\left\|\rxi-\trzeta\right\|^q-\bm a_i(\bm x)^\top \rxi+b_i(\bm{x}) ,\lambda\left\|{\bm{\xi}}-\tilde{\rzeta}\right\|^q\right\} \right\}.
			\end{align*}
			Then interchange the minimum and infimum, we obtain
			\begin{align*}
				\min_{ i\in[I]}\min\left\{ \inf_{\rxi}\left\{ \lambda\left\|\rxi-\trzeta\right\|^q-\bm a_i(\bm x)^\top \rxi+b_i(\bm{x})\right\} ,\inf_{\rxi} \lambda\left\|\rxi-\trzeta\right\|^q \right\}.
			\end{align*}
			Note that $\inf_{\rxi} \lambda\|\rxi-\trzeta\|^q=0$ and it remains to simplify $\inf_{\rxi}\{ \lambda\|\rxi-\trzeta\|^q-\bm a_i(\bm x)^\top \rxi+b_i(\bm{x}) \}$ for each $i\in [I]$. Letting $\hat\rzeta=\rxi-\trzeta$, we have
			\begin{equation*}
				\inf_{{\bm{\xi}}}\left\{ \lambda\left\|\rxi-\trzeta\right\|^q-\bm a_i(\bm x)^\top \rxi+b_i(\bm{x}) \right\}
				=\inf_{\hat\rzeta}\left\{ \lambda\left\|\hat\rzeta\right\|^q-\bm a_i(\bm x)^\top \hat\rzeta \right\}-\bm a_i(\bm x)^\top \trzeta+b_i(\bm{x}).
			\end{equation*}
			According to H\"older's inequality and the fact that infimum is attainable, we have 
			\begin{align*}
				\inf_{\hat\rzeta}\left\{ \lambda\left\|\hat\rzeta\right\|^q-\bm a_i(\bm x)^\top \hat\rzeta \right\}
				&	= \inf_{\hat\rzeta}\left\{ \lambda\left\|\hat\rzeta\right\|^q-\left\|\bm{a}_i(\bm{x})\right\|_*\left\|\hat\rzeta\right\| \right\}\nonumber
				\\
				&= \left(\left\|\bm{a}_i(\bm{x})\right\|_*\right)^{\frac{q}{q-1}}\lambda^{-\frac{1}{q-1}}q^{-\frac{q}{q-1}} (1-q).
				%\begin{cases}
				%		\chi_{\{\Re^n:\left\|\bm{a}_i(\bm{x})\right\|_* \leq \lambda \}} (\bm x), & \text{ if }\ q=1, \\
				%		\left(\left\|\bm{a}_i(\bm{x})\right\|_*\right)^{\frac{q}{q-1}}\lambda^{-\frac{1}{q-1}}q^{-\frac{q}{q-1}} (1-q),&\text{ if }\quad q>1.\nonumber
				%		\end{cases}
		\end{align*}
		Note that when $q\rightarrow 1_+$,  the hinge-loss approximation reduces to \begin{align*}
			\lim\limits_{q\rightarrow 1_+}(\left\|\bm{a}_i(\bm{x})\right\|_*)^{\frac{q}{q-1}}\lambda^{-\frac{1}{q-1}}q^{-\frac{q}{q-1}} (q-1)
			= \chi_{\{\Re^n:\left\|\bm{a}_i(\bm{x})\right\|_* \leq \lambda \}} (\bm x).
		\end{align*}
		
		\noindent\textbf{Step 2.} According to Step 1, the hinge-loss approximation \eqref{drccp_alsox_q_general_b} becomes
		\begin{align*}
			v_q^A(t)=\min _{\bm {x}\in \X,\lambda\geq 0} &\left\{ \lambda\theta^q-\E_{\Pr_{\trzeta}}\left[\min_{i\in[I]}\min\left(-\bm a_i(\bm x)^\top\trzeta+b_i(\bm{x})-(\left\|\bm{a}_i(\bm{x})\right\|_*)^{\frac{q}{q-1}}\lambda^{-\frac{1}{q-1}}q^{-\frac{q}{q-1}} (q-1),0\right)\right]\colon \right.\\
			&\left.
			\bm{c}^\top \bm{x} \leq t\right\}.
		\end{align*}
		Moving the minus sign inside the expectation, we arrive at the conclusion.
		\QEDA
	\end{proof}
\end{subequations}
 
 % In general, the $\CVaR$ approximation is defined by replacing the worst-case chance constraint $Z_q$ by the worst-case $\CVaR$ constraint, which is
  \subsection{Equivalent Reformulations of $\CVaR$ Approximation }
 For DRCCP \eqref{eq_drccp_q_general}, its $\CVaR$ approximation is defined as 
\begin{equation*}
v_q^{\CVaR}=\min_{\bm x\in\X}\left\{\bm c^\top \bm x \colon \sup _{{\Pr}\in\P_q} \inf_{\beta\leq 0}\left[ \beta+ \frac{1}{\varepsilon}\E_{\Pr}\left[ \left( \max_{i\in[I]} \left(\bm a_i(\bm x)^\top \trxi-b_i(\bm{x}) \right)-\beta \right)_+ \right] \right] \leq 0 \right\}.
\end{equation*}
Since the ambiguity set $P_q$ is weakly compact according to Assumption \ref{A_1} and theorem 1 in \cite{yue2022linear}, we can interchange the infimum with the supremum and multiply both sides by $\varepsilon$. Then  for any $q\in[1,\infty)$, $\CVaR$ approximation can be formulated as
\begin{align*}
	%\label{worse_case_cvar_q}
	v_q^{\CVaR}=\min_{\bm x\in\X}\left\{ \bm c^\top \bm x\colon \inf_{\beta\leq 0}\sup _{{\Pr}\in\P_q} \left[ \varepsilon\beta+ \E_{\Pr}\left[ \left( \max_{i\in[I]} \left(\bm a_i(\bm x)^\top \trxi-b_i(\bm{x}) \right)-\beta \right)_+ \right] \right] \leq 0 \right\}.
\end{align*}
Following similar derivations as those of Proposition~\ref{thm_drcc_set} and Proposition~\ref{q_alsox_theorem}, we obtain the equivalent reformulation of $\CVaR$ approximation for DRCCP \eqref{eq_drccp_q_general}.

\begin{restatable}{proposition}{qcvartheorem}\label{q_cvar_theorem} 
Under type $q-$Wasserstein ambiguity set, $\CVaR$ approximation of DRCCP \eqref{eq_drccp_q_general} is equivalent to
	\begin{align}
		v_q^{\CVaR}=\min_{\bm x\in\X,\lambda\geq 0, \beta\leq 0 }\left\{ \bm c^\top \bm x\colon \varepsilon\beta+\lambda\theta^q+\E_{\Pr_{\trzeta}}\left[  \max_{ i\in[I]}\left(\bm a_i(\bm x)^\top \trzeta-b_i(\bm{x}) +P_{q,i}(\bm x,\lambda)-\beta\right)_+\right]\leq 0\right\}, \label{worse_case_cvar_q_formulation}
	\end{align}
	where for each $i\in [I]$, $P_{q,i}(\bm x,\lambda)$ is defined in Proposition~\ref{q_alsox_theorem}.
\end{restatable}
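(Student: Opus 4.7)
The plan is to mirror the two-step argument already used to prove Proposition~\ref{q_alsox_theorem}, now applied to the $\beta$-shifted hinge function
$h_\beta(\rxi):=\bigl(\max_{i\in[I]}(\bm a_i(\bm x)^\top\rxi-b_i(\bm x))-\beta\bigr)_+$.
As a preliminary step, I would invoke the weak compactness of $\P_q$ (which follows from Assumption~\ref{A_1} and theorem~1 in \cite{yue2022linear}) together with Sion's minimax theorem to interchange the outer $\inf_{\beta\le 0}$ and $\sup_{\Pr\in\P_q}$ that appear in the original definition of $v_q^{\CVaR}$, so that the problem takes the form
\[
v_q^{\CVaR}=\min_{\bm x\in\X}\Bigl\{\bm c^\top\bm x \colon \inf_{\beta\le 0}\Bigl[\varepsilon\beta+\sup_{\Pr\in\P_q}\E_\Pr[h_\beta(\trxi)]\Bigr]\le 0\Bigr\}.
\]

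Next, I would apply the type $q$-Wasserstein strong-duality reformulation (theorem~1 in \cite{gao2022distributionally} or theorem~1 in \cite{blanchet2019quantifying}), exactly as in the proof of Proposition~\ref{q_alsox_theorem}, to rewrite the inner worst-case expectation as
\[
\sup_{\Pr\in\P_q}\E_\Pr[h_\beta(\trxi)]
=\inf_{\lambda\ge 0}\Bigl\{\lambda\theta^q-\E_{\Pr_{\trzeta}}\bigl[\inf_{\rxi}\{\lambda\|\rxi-\trzeta\|^q-h_\beta(\rxi)\}\bigr]\Bigr\}.
\]
The inner infimum is then computed by the same Step~1 trick of Proposition~\ref{q_alsox_theorem}: writing $h_\beta(\rxi)=\max\{g_0(\rxi),\max_{i\in[I]}g_i(\rxi)\}$ with $g_0\equiv 0$ and $g_i(\rxi)=\bm a_i(\bm x)^\top\rxi-b_i(\bm x)-\beta$, interchanging the outer min over pieces with the infimum over $\rxi$, substituting $\hat\rzeta=\rxi-\trzeta$ and using H\"older's inequality to evaluate $\inf_{\hat\rzeta}\{\lambda\|\hat\rzeta\|^q-\bm a_i(\bm x)^\top\hat\rzeta\}$ in closed form. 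The zero piece contributes a trivial $0$, and the other pieces contribute $-\bigl(\bm a_i(\bm x)^\top\trzeta-b_i(\bm x)-\beta+P_{q,i}(\bm x,\lambda)\bigr)$, so after combining and moving the outer minus sign inside, the expectation becomes $\E_{\Pr_{\trzeta}}[(\max_{i\in[I]}(\bm a_i(\bm x)^\top\trzeta-b_i(\bm x)+P_{q,i}(\bm x,\lambda)-\beta))_+]$.

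Finally, since both $\inf_{\beta\le 0}$ and $\inf_{\lambda\ge 0}$ now appear inside a one-sided constraint of the form $(\cdots)\le 0$, I would lift both infima into the outer minimization to arrive at \eqref{worse_case_cvar_q_formulation}. The main obstacle is the justification of the infimum/supremum interchange at the start: we need the ambiguity set $\P_q$ to be weakly compact and the integrand $h_\beta$ to be upper semicontinuous and uniformly integrable in a suitable sense, which is precisely what Assumption~\ref{A_1} (sub-Gaussian reference distribution) guarantees via the cited result. The rest of the argument is essentially a mechanical adaptation of the hinge-loss derivation in Proposition~\ref{q_alsox_theorem}, with the $g_0\equiv 0$ dummy piece being the only bookkeeping addition needed to handle the outer $(\cdot)_+$ in the CVaR-loss.
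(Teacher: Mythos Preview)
Your proposal is correct and follows essentially the same route as the paper: the paper also first interchanges $\inf_{\beta\le 0}$ and $\sup_{\Pr\in\P_q}$ using weak compactness of $\P_q$ (Assumption~\ref{A_1} and theorem~1 in \cite{yue2022linear}), then invokes the same strong-duality and H\"older-inequality computation from the proof of Proposition~\ref{q_alsox_theorem}. The only point the paper makes slightly more explicit is that the resulting infimum over $\beta$ is actually attained---the left-hand side is continuous, convex in $\beta$, and tends to $+\infty$ as $\beta\to -\infty$---which is what justifies lifting $\beta$ (and $\lambda$) into the outer minimization rather than leaving them as unattained infima inside the constraint.
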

	% \proof
	% See Appendix~\ref{proof_q_cvar_theorem}.
	% \QEDA

\begin{proof}
	According to the similar derivations in Proposition~\ref{thm_drcc_set} and Proposition~\ref{q_alsox_theorem}, we have
	\begin{align*}
		v_q^{\CVaR}=\min_{\bm x\in\X}\left\{ \bm{c}^\top\bm x\colon \inf_{\beta\leq 0}\left\{\varepsilon\beta+\lambda\theta^q+\E_{\Pr_{\trzeta}}\left[  \max_{ i\in[I]}\left(\bm a_i(\bm x)^\top \trzeta-b_i(\bm{x}) +P_{q,i}(\bm x,\lambda)-\beta\right)_+\right]\right\}\leq 0,  \lambda\geq 0 \right\}.
	\end{align*}
	Note that the infimum is achievable since the left-hand function is continuous and convex in $\beta$ and when $\beta\rightarrow -\infty$, the left-hand function goes to positive infinity. This completes the proof.\QEDA
\end{proof}
Equivalently, we also recast the CVaR approximation \eqref{worse_case_cvar_q_formulation} as a bilevel program. That is,
\begin{subequations}
	\label{eq_cvar_q}
	\begin{align}
		&v_q^{\CVaR} =\min _{ {t}}\quad  t, \label{eq_cvar_q_a} \\
		&\textup{s.t.}\quad(\bm x^*,\lambda^*,\beta^*)\in\argmin _{\begin{subarray}{c}\bm {x}\in \X, \bm{c}^\top \bm{x} \leq t,\\ \lambda\geq 0, \beta\leq 0 \end{subarray}}\,\left\{ \varepsilon\beta+ \lambda\theta^q+\E_{\Pr_{\trzeta}}\left[  \max_{ i\in[I]}\left(\bm a_i(\bm x)^\top \trzeta-b_i(\bm{x}) +P_{q,i}(\bm x,\lambda)-\beta\right)_+\right]\right\},\label{eq_cvar_q_b}\\
		& \quad\quad \varepsilon\beta^*+ \lambda^*\theta^q+\E_{\Pr_{\trzeta}}\left[  \max_{ i\in[I]}\left(\bm a_i(\bm x^*)^\top \trzeta-b_i(\bm{x^*}) +P_{q,i}(\bm x^*,\lambda^*)-\beta\right)_+\right]\leq 0.\label{eq_cvar_q_c}
	\end{align}
\end{subequations}  

In the following result, we observe that under the same premise as 
Proposition~\ref{q_cvar_interpretation_cvar_single_coro}, the $\CVaR$ approximation \eqref{worse_case_cvar_q_formulation} can be simplified.

\begin{restatable}{proposition}{cvarqformulationcorollary}\label{cvar_q_formulation_corollary} 
	Suppose that $\|\bm{a}_i(\bm{x})\|_*=\|\bm{a}_1(\bm{x})\|_*$ for all $i\in[I]$. Then  $\CVaR$ approximation \eqref{worse_case_cvar_q_formulation} is
	\begin{equation}
	v_q^{\CVaR}=\min_{\bm x\in\X}\left\{ \bm{c}^\top\bm x\colon\theta\varepsilon^{-\frac{1}{q}}\left\|\bm{a}_1(\bm{x})\right\|_* + \CVaR_{1-\varepsilon}\left[ \max_{i\in[I]} \left\{\bm{a}_i(\bm{x})^\top\trzeta-b_i(\bm{x})\right\}\right]\leq 0 \right\}.	\label{worse_case_cvar_q_formulation_a}
	\end{equation}
\end{restatable}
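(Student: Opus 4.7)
The plan is to exploit the key fact that, under the assumption $\|\bm a_i(\bm x)\|_*=\|\bm a_1(\bm x)\|_*$ for all $i\in[I]$, the penalty term $P_{q,i}(\bm x,\lambda)$ appearing inside the $\max$ in \eqref{worse_case_cvar_q_formulation} depends only on $\lambda$ and $\|\bm a_1(\bm x)\|_*$ and not on $i$, so it can be pulled out of the maximum. Writing $P(\bm x,\lambda):=P_{q,1}(\bm x,\lambda)=(q-1)q^{-q/(q-1)}\|\bm a_1(\bm x)\|_*^{q/(q-1)}\lambda^{-1/(q-1)}$, the CVaR constraint in \eqref{worse_case_cvar_q_formulation} becomes
\[
\varepsilon\beta+\lambda\theta^q+\E_{\Pr_{\trzeta}}\Bigl[\max_{i\in[I]}\bigl(\bm a_i(\bm x)^\top\trzeta-b_i(\bm x)\bigr)+P(\bm x,\lambda)-\beta\Bigr]_+\le 0.
\]

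The next step is to substitute $\hat\beta:=\beta-P(\bm x,\lambda)$ and invoke the Rockafellar--Uryasev identity $\inf_{\hat\beta}\{\varepsilon\hat\beta+\E[Y-\hat\beta]_+\}=\varepsilon\,\CVaR_{1-\varepsilon}(Y)$ applied to $Y=\max_i(\bm a_i(\bm x)^\top\trzeta-b_i(\bm x))$. The sign restriction $\beta\le 0$ (equivalently $\hat\beta\le -P(\bm x,\lambda)$) is inactive at optimality whenever the constraint is feasible, because the optimal $\hat\beta$ equals $\VaR_{1-\varepsilon}(Y)$ which, together with $\lambda\geq 0$ and $P(\bm x,\lambda)\geq 0$, is bounded above by the CVaR that must be $\leq 0$; this is the same argument used in \cite{nemirovski2007convex} and in the proof of Proposition~\ref{q_cvar_theorem}. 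After minimizing over $\hat\beta$, the constraint reduces to
\[
\lambda\theta^q+\varepsilon P(\bm x,\lambda)+\varepsilon\,\CVaR_{1-\varepsilon}\Bigl[\max_{i\in[I]}\bigl(\bm a_i(\bm x)^\top\trzeta-b_i(\bm x)\bigr)\Bigr]\le 0.
\]

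The remaining task is to minimize the scalar function $\lambda\mapsto\lambda\theta^q+\varepsilon P(\bm x,\lambda)$ over $\lambda\ge0$ in closed form. This is a standard one-variable convex optimization: setting the derivative to zero yields $\lambda^*=\bigl(\varepsilon\|\bm a_1(\bm x)\|_*^{q/(q-1)}q^{-q/(q-1)}\bigr)^{(q-1)/q}\theta^{-(q-1)}$, and routine algebraic simplification (combining the two resulting powers of $(q-1)$) gives
\[
\min_{\lambda\ge 0}\bigl\{\lambda\theta^q+\varepsilon P(\bm x,\lambda)\bigr\}\;=\;\varepsilon^{(q-1)/q}\,\theta\,\|\bm a_1(\bm x)\|_*.
\]
Dividing by $\varepsilon$ yields exactly the term $\theta\varepsilon^{-1/q}\|\bm a_1(\bm x)\|_*$ in \eqref{worse_case_cvar_q_formulation_a}. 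The boundary cases $\|\bm a_1(\bm x)\|_*=0$ (for which $\lambda=0$ is optimal) and $q\to 1_+$ (for which $P_{q,i}$ degenerates to the indicator of $\{\|\bm a_i(\bm x)\|_*\le\lambda\}$ and the optimal $\lambda^*=\|\bm a_1(\bm x)\|_*$, giving $\theta\varepsilon^{-1}\|\bm a_1(\bm x)\|_*$) must be checked separately but both match the claimed formula.

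The main obstacle is mostly book-keeping: verifying that the closed-form minimization of $\lambda\theta^q+\varepsilon P(\bm x,\lambda)$ collapses to the clean expression $\varepsilon^{(q-1)/q}\theta\|\bm a_1(\bm x)\|_*$ and carefully handling the limit $q\to 1_+$ as well as the degenerate case $\bm a_1(\bm x)=\bm 0$, paralleling the two-case split used in the proof of Proposition~\ref{q_cvar_interpretation_cvar_single_coro}.
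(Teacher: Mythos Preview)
Your proposal is correct and follows essentially the same approach as the paper: both exploit that $P_{q,i}(\bm x,\lambda)=P_{q,1}(\bm x,\lambda)$ to pull the penalty out of the $\max$, perform the change of variable $\hat\beta=\beta-P_{q,1}(\bm x,\lambda)$ to recover the Rockafellar--Uryasev $\CVaR$ expression, and then minimize the residual term $\lambda\theta^q+\varepsilon P_{q,1}(\bm x,\lambda)$ over $\lambda\ge 0$ in closed form. If anything, your treatment is slightly more careful than the paper's, since you explicitly justify why the constraint $\beta\le 0$ is inactive at optimality and separately verify the boundary cases $\bm a_1(\bm x)=\bm 0$ and $q\to 1_+$, whereas the paper drops the $\beta\le 0$ constraint silently after the substitution.
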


	\begin{proof}
	Since $\|\bm{a}_i(\bm{x})\|_*=\|\bm{a}_1(\bm{x})\|_*$ for all $i\in[I]$, we must have $P_{q,i}(\bm{x},\lambda)=P_{q,1}(\bm{x},\lambda)$, for all $ i\in[I]$. Then $\CVaR$ approximation  is equivalent to
		\begin{equation*}
		v_q^{\CVaR}=\min_{\bm x\in\X, \lambda,\beta}\left\{ \bm{c}^\top\bm x\colon
			\begin{aligned}
&\lambda\theta^q+\E_{\Pr_{\tilde\rzeta}}\left[ \max\left\{\max_{i\in[I]}\left\{ \bm{a}_i(\bm{x})^\top\trzeta-b_i(\bm{x})+P_{q,1}(\bm{x},\lambda) \right\},\beta\right\}\right]-(1-\varepsilon)\beta \leq 0,\\
				&\lambda \geq 0
			\end{aligned}\right\}.
		\end{equation*}
		Subtracting the $\beta$ in the inner maximum operator and redefining $\beta:=\beta-P_{q,1}(\bm{x},\lambda)$, we have
		\begin{align*}
		v_q^{\CVaR}=\min_{\bm x\in\X}\left\{ \bm{c}^\top\bm x\colon
			\begin{aligned}
				& \lambda\theta^q+ \varepsilon P_{q,1}(\bm{x},\lambda) + \varepsilon\beta +\E_{\Pr_{\tilde\rzeta}}\left[ \max_{i\in[I]}\left\{ \bm{a}_i(\bm{x})^\top\trzeta-b_i(\bm{x})-\beta \right\}_+\right] \leq 0,\\
				&\lambda \geq 0
			\end{aligned}\right\}.
		\end{align*}
		Replacing the existence of $\beta$ and $\lambda\geq 0$ by the minimum operator over $\beta$ and $\lambda\geq 0$ in the left-hand side of the first constraint, we arrive at
		\begin{align*}
			v_q^{\CVaR}=\min_{\bm x\in\X}\left\{ \bm{c}^\top\bm x\colon
			\min_{\lambda\geq 0}\left\{\lambda\theta^q+\varepsilon P_{q,1}(\bm{x},\lambda)\right\}+\varepsilon\CVaR_{1-\varepsilon}\left[ \max_{i\in[I]} \left\{\bm{a}_i(\bm{x})^\top\trzeta-b_i(\bm{x})\right\}\right]\leq 0 \right\}.
		\end{align*}
		Note that for any given $\bm{x}$, the function $\lambda\theta^q+\varepsilon P_{q,1}(\bm{x},\lambda)$ is convex in $\lambda$ over the domain $\lambda\in [0,\infty)$.
		Let us take its first-order derivative with respect to $\lambda$, and set it to be $0$, which has a nonnegative root 
		\begin{align*}
			\lambda^* = \varepsilon^{\frac{q-1}{q}}\left\|\bm{a}_1(\bm{x})\right\|_*q^{-1}\theta^{\frac{1}{q-1}}\geq 0.    
		\end{align*}	
		Thus, $\lambda^*$ solves $ \min_{\lambda\geq 0}\left\{\lambda\theta^q+\varepsilon P_{q,1}(\bm{x},\lambda)\right\}$.
		Substituting $\lambda^*$ into $\CVaR$ approximation, we arrive at the equivalent representation \eqref{worse_case_cvar_q_formulation}.
		\QEDA
	\end{proof}
 
% Similarly, we can write $C_{q}$ as
% \begin{align}
	% C_q=\left\{ \bm{x}\in \Re^n\colon \varepsilon\beta+\lambda\theta^q+\E_{\Pr_{\trzeta}}\left[  \max_{ i\in[I]}\left(\bm a_i(\bm x)^\top \trzeta-b_i(\bm{x}) +P_{q,i}(\bm x,\lambda)-\beta\right)_+\right]\leq 0, \forall i\in[I], \lambda\geq 0, \beta\leq 0 \right\}.
	% \end{align}
%We remark that (i) since the $\CVaR$ approximation is an inner one, we must have 
% $v^* \leq v^{\CVaR}$ and 
% $C_{q}\subseteq Z_q$; 
%and (ii) if $\Pr_{\trzeta}$ is an empirical distribution with $N$ i.i.d. samples, the result in Proposition~\ref{q_cvar_theorem} recovers proposition 4.3 in \cite{hota2019data}. 

%Similarly, according to \cite{jiang2022also},
%we can also recast the $\CVaR$ approximation as a bilevel program, where the upper-level problem is to search the best objective value and the lower level problem is to minimize the $\CVaR$ loss function given that the objective function is bounded from above by a given value. That is,
%\begin{subequations}
%	\label{drccp_cvar_formulation}
%	\begin{align}
%		v_q^{\CVaR}=\min _{ {t}}\quad & t,\\
%		\text{s.t.}\quad&\bm x^*\in\argmin _{\bm {x}\in \X,\beta\leq 0}\,\sup _{{\Pr}\in\P_q} \left\{ \varepsilon\beta+ \E_{\Pr}\left[ \left( \max_{i\in[I]} \left(\bm a_i(\bm x)^\top \trxi-b_i(\bm{x}) \right)-\beta \right)_+ \right] \colon \bm{c}^\top \bm{x} \leq t \right\},\label{drccp_cvar}\\
%		&\bm x^*\in C_q.\label{cvar_drccp_formualtion}
%	\end{align}
%\end{subequations} 
%The $\CVaR$ approximation has been shown to work quite well for solving DRCCPs \cite{chen2022data}.
 \subsection{Equivalent Reformulations of $\alsoxs$ and $\alsoxus$}
 
According to the reformulations above, under type $q-$Wasserstein ambiguity set with $q\in[1,\infty)$, $\alsoxs$ admits the form of
\begin{subequations}
	\label{eq_alsoxs_q}
	\begin{align}
		&v_q^{\as} =\min _{ {t}}\quad  t, \label{eq_alsoxs_q_a} \\
		&\textup{s.t.}\quad(\bm x^*,\lambda^*,\beta^*)\in\argmin _{\begin{subarray}{c}\bm {x}\in \X, \bm{c}^\top \bm{x} \leq t,\\ \lambda\geq 0, \beta\leq 0 \end{subarray}}\,\left\{ \varepsilon\beta+ \lambda\theta^q+\E_{\Pr_{\trzeta}}\left[  \max_{ i\in[I]}\left(\bm a_i(\bm x)^\top \trzeta-b_i(\bm{x}) +P_{q,i}(\bm x,\lambda)-\beta\right)_+\right]\right\},\label{eq_alsoxs_q_b}\\
		& \quad\quad \bm x^*\in Z_q.\label{eq_alsoxs_q_c}
	\end{align}
\end{subequations}  
Under type $q-$Wasserstein ambiguity set with $q\in[1,\infty)$, we show that $\alsoxs$ is better than $\CVaR$ approximation.

\alsosharpbettercvarqwass*
\begin{proof}
	For a given objective upper bound $t$, if the solution of lower-level $\CVaR$ approximation \eqref{eq_cvar_q_b} satisfies \eqref{eq_cvar_q_c}, it is feasible to the DRCCP. Since the lower-level $\alsoxs$ \eqref{eq_alsoxs_q_b} and the lower-level $\CVaR$ approximation \eqref{eq_cvar_q_b} coincide, then $\alsoxs$ finds a feasible solution to the DRCCP if the $\CVaR$ approximation is able to find one. This completes the proof.
    \QEDA
\end{proof}

Similarly, we introduce the $\alsoxus$ by dropping the constraint $\beta\leq 0$ in the lower-level $\alsoxs$ \eqref{eq_alsoxs_q_b}, which has the following formulation:
\begin{subequations}
\label{eq_alsoxus_q}
\begin{align}
v_q^{\aus} &=\min _{ {t}}\, t, \label{eq_alsoxus_q_a} \\
\textup{s.t.} &\quad(\bm x^*,\lambda^*,\beta^*)\in\argmin _{\begin{subarray}{c}\bm {x}\in \X, \bm{c}^\top \bm{x} \leq t,\\ \lambda\geq 0, \beta \end{subarray}}\,\left\{ \varepsilon\beta+ \lambda\theta^q+\E_{\Pr_{\trzeta}}\left[  \max_{ i\in[I]}\left(\bm a_i(\bm x)^\top \trzeta-b_i(\bm{x}) +P_{q,i}(\bm x,\lambda)-\beta\right)_+\right]\right\},\label{eq_alsoxus_q_b}\\
&  \quad\quad \bm x^*\in Z_q.\label{eq_alsoxus_q_c}
\end{align}
\end{subequations} 

Following the similar proof in Theorem~\ref{also_sharp_better_weak_sharp}, we can prove that $\alsoxs$ is better than  $\alsoxus$ under type $q-$Wasserstein ambiguity set with $q\in[1,\infty)$.

 \alsosharpbetterweaksharpqwass*
 \begin{proof}
 It is sufficient to show that for a given objective upper bound $t$, if the lower-level $\alsoxus$ \eqref{eq_alsoxus_q_b} yields a feasible solution to DRCCP, i.e., satisfying \eqref{eq_alsoxus_q_c}, then the lower-level $\alsoxs$ \eqref{eq_alsoxs_q_b} will also provide a feasible solution. Let $(\hat{\bm x}, \hat{\beta})$ denote an optimal solution from the lower-level  $\alsoxs$ \eqref{eq_alsoxs_q_b} and let $(\bar{\bm x}, \bar{\beta})$ denote an optimal solution from the lower-level $\alsoxus$ \eqref{eq_alsoxus_q_b}. Suppose that $\bar{\bm x}$ is feasible to DRCCP \eqref{drccp_q_general}, i.e., $\bar{\bm x}\in Z_q$  \eqref{q_cvar_interpretation_cvar}. Now let $\bar{\Pr}^*$ denote the worst-case distribution of $\trxi$ in the lower-level $\alsoxus$ \eqref{eq_alsoxus_q_b} and let 
\begin{align*}
\bar{\beta}^*: = \bar{\Pr}^*\text{-}\VaR_{1-\varepsilon} \left\{ \max_{i\in [I]} \bm a_i(\bar{\bm x})^\top \trxi- b_i(\bar{\bm x}) \right\}\leq  \sup_{\Pr\in \P}\Pr\text{-}\VaR_{1-\varepsilon} \left\{  \max_{i\in [I]} \bm a_i(\bar{\bm x})^\top \trxi- b_i(\bar{\bm x}) \right\}\leq 0.
\end{align*}
Then according to theorem 1 in \cite{rockafellar2000optimization} (see, e.g., equation (7) in \cite{rockafellar2000optimization}) and the discussions in Theorem~\ref{also_sharp_better_weak_sharp}, we have that $(\hat{\bm x}, \bar{\beta}^*)$ is another optimal solution to the lower-level $\alsoxus$ \eqref{eq_alsoxus_q_b}. Since the only difference between the lower-level $\alsoxs$ \eqref{eq_alsoxs_q_b} and the lower-level $\alsoxus$ \eqref{eq_alsoxus_q_b} is the constraint $\beta\leq 0$, with the assumption that the lower-level $\alsoxus$ \eqref{eq_alsoxus_q_b} admits a unique optimal solution of $\bm x$, we must have $\bar{\bm x}=\hat{\bm x}$. That is, for a given objective upper bound $t$, both lower-level problems have the same optimal value and optimal $\bm x$-solution.  %In this case, for any optimal solution from  the lower-level $\alsoxs$ \eqref{eq_alsoxs_q_b}, it should also be optimal to the lower-level $\alsoxus$ \eqref{eq_alsoxus_q_b}. Hence, $(\hat{\bm x}, \hat{\beta})$ is also optimal to the lower-level $\alsoxus$ \eqref{eq_alsoxus_q_b}. Let $\hat{\Pr}^*$ denote the worst-case distribution of $\trxi$ in the lower-level $\alsoxs$ \eqref{eq_alsoxs_q_b} with $\hat{\bm x}$,
%based on theorem 1 in \cite{rockafellar2000optimization} (see, e.g., equation (7) in \cite{rockafellar2000optimization}), we have 
% \begin{align*}
% 	\hat\beta\geq  \hat{\Pr}^*\text{-}\VaR_{1-\varepsilon} \left\{  \bm a_i(\hat{\bm x})^\top \trxi -  b_i(\hat{\bm x}), \forall i\in[I] \right\}.
% \end{align*}
% Combining with the condition that $\hat\beta\leq 0$, we have
% \begin{align*}
% \hat{\Pr}^*\text{-}\VaR_{1-\varepsilon} \left\{  \bm a_i(\hat{\bm x})^\top \trxi- b_i(\hat{\bm x}), \forall i\in[I] \right\}\leq 0,
% \end{align*}
This implies that the lower-level $\alsoxs$ \eqref{eq_alsoxs_q_b} yields a feasible solution to DRCCP. %$\bar{\bm x}$ satisfies \eqref{q_cvar_interpretation_cvar}, i.e., $\bar{\bm x}$ is also feasible to DRCCP \eqref{drccp_q_general}.
%This completes the proof.
      \QEDA
 \end{proof}

We remark that the uniqueness of the lower-level $\alsoxus$ can be achieved in many DRCCPs. For example, one condition is that  the affine mappings 
 are $\bm{a}_1(\bm{x})=\bm x$, $b_1(\bm x)=b_1$, the random parameters $\trzeta$ follow a joint elliptical distribution with $\trzeta\thicksim\Pr_{\mathrm{E}}(\bm{\mu},\rsigma,\hat g)$, and the norm defining the Wasserstein distance is the generalized Mahalanobis norm associated with the positive definite matrix $\rsigma$. Under this setting, the lower-level $\alsoxus$ can be written as
\begin{align*}
 \bm x^*\in\argmin_{\bm{x}\in \X,\bm{c}^\top \bm{x} \leq t} \left\{   \overline{F}(\bm x)\colon=\bm \mu^\top \bm x +
\left[\overline{G}\left(\left(\mathrm{\Phi}^{-1}(1-\varepsilon)\right)^2/2\right)/\varepsilon+\theta\varepsilon^{-\frac{1}{q}}\right]\sqrt{\bm x^\top\rsigma\bm x} -b_1 \right\}, 
% \label{eq_alsoxs_worst_ellipitical}
\end{align*}
and its first-order and second-order derivatives are
\begin{align*}
   &  \frac{\partial \overline{F}(\bm x)}{\partial \bm x} = \bm \mu^\top + \left[\overline{G}\left(\left(\mathrm{\Phi}^{-1}(1-\varepsilon)\right)^2/2\right)/\varepsilon+\theta\varepsilon^{-\frac{1}{q}}\right] \rsigma\bm x^\top,\\
    & \frac{\partial^2 \overline{F}(\bm x)}{\partial \bm x^2} = \left[\overline{G}\left(\left(\mathrm{\Phi}^{-1}(1-\varepsilon)\right)^2/2\right)/\varepsilon+\theta\varepsilon^{-\frac{1}{q}}\right] \rsigma\succ \bm 0.
\end{align*}
Hence, the lower-level $\alsoxus$ admits a unique solution whenever set $\X$ is convex.

We conclude this section by providing theoretical comparisons among the output objective values of  $\alsoxs$, $\alsoxus$, $\alsox$, and $\CVaR$ approximation under type $q-$Wasserstein ambiguity set with $q\in[1,\infty)$, which are shown in Figure~\ref{summary_figure_general}.
\vspace{-2em}
\begin{figure}[htbp]\centering
\small
	\begin{tikzpicture}[framed,
		every path/.style={>=latex},
		every node/.style={draw}
		]
		\node[rectangle, align=left] (sharp) { $\alsoxs$ };
		\node[rectangle, left = of sharp, align=left] (drccp) { DRCCP };
		\draw [dashed,-,circle] (drccp) -- (sharp); 
		\draw [dashed,-,circle,draw opacity=0] (drccp) -- (sharp) node[above,midway,sloped]{$\leq$}; 
		
		\node[rectangle, below right= of sharp, align=left] (wsharp) {$\alsoxus$ };
		\node[rectangle, above right= of sharp, align=left] (alsox) { $\alsox$};
		
		\node[rectangle, above right= of wsharp, align=left] (cvar) {$\CVaR$ Approximation};  
		
		\draw [dashed,-,circle] (sharp) -- (cvar); 
		\draw [dashed,-,circle,draw opacity=0] (sharp) -- (cvar) node[above,midway,sloped]{$\leq$}; 
		\draw [dashed,-,rectangle,draw opacity=0] (sharp) -- (cvar) 
		node[below,sloped,pos=0.5]{ Proposition~\ref{also_sharp_better_cvar_q_wass}};
		
				\draw [dashed,-,circle] (sharp) -- (wsharp); 
		\draw [dashed,-,circle,draw opacity=0] (sharp) -- (wsharp) 	
node[above,midway,sloped]{$\leq$}; 
\draw [dashed,-,rectangle,draw opacity=0] (sharp) -- (wsharp) 
node[below,sloped,pos=0.36]{ Proposition~\ref{also_sharp_better_weak_sharp_q_wass}};
		
		\draw [dashed,-,circle] (sharp) -- (alsox); 
		\draw [dashed,-,circle,draw opacity=0] (sharp) -- (alsox)
		node[left,pos=0.7,sloped]{\LARGE x};  %node[above,midway,sloped]{$\leq$}; 
%		\draw [dashed,-,rectangle,draw opacity=0] (sharp) -- (alsox) 
%		node[below,sloped,pos=0.6]{Theorem~\ref{also_cvar_better_alsox}};
%		
		\draw [dashed,-,circle] (alsox) -- (cvar); 
		\draw [dashed,-,circle,draw opacity=0] (alsox) -- (cvar)
		node[left,pos=0.6,sloped]{\LARGE x}; 
		 %node[above,midway,sloped]{$\leq$}; 
		% 		\draw [dashed,-,rectangle,draw opacity=0] (alsox) -- (cvar)
		% 		node[below,sloped,pos=0.45]{ Convex set $\X$};
		%\draw [dashed,-,rectangle,draw opacity=0] (alsox) -- (cvar)
	%	node[below,sloped,pos=0.3]{ \small{theorem 1 in \cite{jiang2022also}}};
		
		\draw [dashed,-,circle] (wsharp) -- (cvar); 
	\draw [dashed,-,circle,draw opacity=0] (wsharp) -- (cvar)
	% node[left,pos=0.6,sloped]{\Huge x}; 
	 node[above,midway,sloped]{$\leq$}; 
		\draw [dashed,-,rectangle,draw opacity=0] (wsharp) -- (cvar)
		node[below,sloped,pos=0.65]{Theorem~\ref{also_sharp_better_cvar} };
		
		\draw [dashed,-,circle] (wsharp) -- (alsox); 
		\draw [dashed,-,circle,draw opacity=0] (wsharp) -- (alsox) node[left,pos=0.65,sloped]{\LARGE x}; 

   	\matrix [densely dotted,draw opacity=0.5,below] at (6,-1) {
			\node[font=\fontsize{1pt}{2pt},draw opacity=0] { X: non-comparable}; \\
		};

		%	\path [dashed] (sharp) -- node [midway,above,align=center ] {$\geq$} (wsharp);
		
	\end{tikzpicture}
%	\vspace{-1.5em}
	\vspace{-0.5em}
	\caption{Summary of Comparisons under Type $q-$Wasserstein ambiguity set with $q\in[1,\infty)$}
	\label{summary_figure_general}
	\vspace{-2em}
\end{figure}
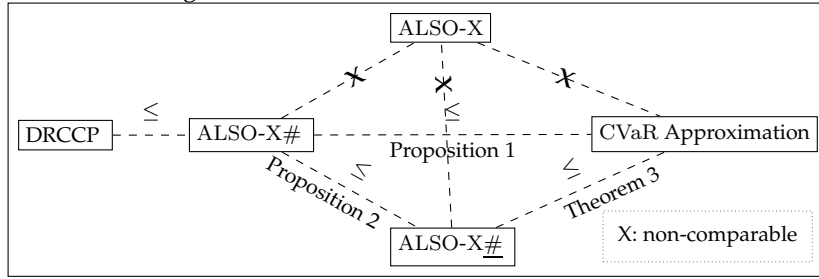 
	\vspace{-1.1em}

\end{appendices}

\end{document}